\numberwithin{equation}{section}
\newtheorem{theorem}{Theorem}[section]
\newtheorem{lemma}[theorem]{Lemma}
\newtheorem{corollary}[theorem]{Corollary}
\newtheorem{remark}[theorem]{Remark}
\title[Rigidity of MHD in thin domain]{Ideal MHD. Part II:\\ Rigidity from infinity for ideal Alfv\'en waves in 3D thin domains}
\author[Mengni Li]{Mengni Li}
\address{Southeast University, No.2 SEU Road, Nanjing 211189, P.R. China}
\email{krisymengni@163.com,  lmn@seu.edu.cn}
\begin{document}
\begin{abstract}
This paper concerns 
the rigidity from infinity for Alfv\'en waves governed by  ideal incompressible magnetohydrodynamic  equations subjected to 
strong background magnetic fields 
along the $x_1$-axis in 3D  thin domains $\Omega_\delta=\mathbb{R}^2\times(-\delta,\delta)$ with $\delta\in(0,1]$ and slip boundary conditions. 
We show that 
in any thin domain $\Omega_\delta$,  
Alfv\'en waves must vanish identically if their scattering fields vanish at infinities. 
As an application, the rigidity of Alfv\'en waves in $\Omega_{\delta}$, propagating along the horizontal direction, can be approximated by the rigidity of Alfv\'en waves in $\mathbb{R}^2$ 
when $\delta$ is sufficiently small. Our proof relies on 
the uniform (with respect to $\delta$) weighted energy estimates with a position parameter in weights to track the center of Alfv\'en waves. The key issues in the analysis 
include dealing with the nonlinear nature of Alfv\'en waves and the geometry of thin domains.

\smallskip
	
\noindent
\textbf{Running title}: Rigidity of MHD	in thin domain\\
\textbf{Keywords}: 	Alfv\'en wave, 
 thin domain, scattering field, null structure, energy method\\
\textbf{2020 Mathematics Subject Classification}: 76W05, 35B30, 35B40
\end{abstract}

\maketitle
\tableofcontents

\section{Introduction}

Magnetohydrodynamics (MHD for short) is the study of interactions between magnetic fields and electrically conducting fluids. 
It is known that, in a perfectly conducting fluid, magnetic field lines 
tend to be frozen into the fluid in the sense that they move with the fluid. On this basis, one may assume that the fluid flows along a strong constant background magnetic field $B_0$ and it is perturbed by a small velocity field $v$ perpendicular to $B_0$. 
There subsequently exist two kinds of restoring forces in MHD leading to different waves, namely, the magnetic pressure force parallel to magnetic field lines produces 
magnetosonic waves propagating orthogonal to $B_0$, whereas the  magnetic tension force perpendicular 
 to magnetic field lines generates a completely new type of waves called Alfv\'en waves propagating 
 along $B_0$. The phenomenon of Alfv\'en waves is significant 
  in wide applications  such as solar wind, dynamo theory, and magnetic reconnection, 
and it  particularly underpins the existing explanations for the origin of the Earth's magnetic field and of the solar field. Readers are referred to the textbook \cite{Davidson} for more details and related topics.

Historically, the MHD theory was pioneered in 1942 \cite{Alfven} by  the Swedish plasma physicist Hannes Alfv\'en, who was awarded the Nobel prize in 1970 for his outstanding contributions to this field, and the Alfv\'en waves were named after him.  
An interesting case, together with its linearized analysis considered  therein, can be revisited as follows: 
If the electric conductivity is set to be infinite, the fluid density and permeability to be $1$, and $B_0$ to be homogenous and parallel to the $x_1$-axis, then 
by elementary calculation the 3D MHD equations become a 
1D wave equation for the magnetic field $b$ as 
$\frac{\partial^2 b}{\partial x_1^2}=\frac{4\pi}{B_0^2}\frac{\partial^2 b}{\partial t^2}$, which implies that the Alfv\'en waves move along $B_0$ in both directions with the velocity $V_A=\frac{B_0}{\sqrt{4\pi}}$. 
Since then, the MHD equations have been considered extensively by mathematicians and physicists working in this field, especially when studying the dynamics of Alfv\'en waves. 

It behooves us to first review some significant  progress  
made in studying the global existence 
of Alfv\'en waves governed by  incompressible MHD systems  in strong magnetic field backgrounds with small perturbations. In \cite{Bardos-Sulem-Sulem}, Bardos, Sulem, and Sulem  
investigated  the ideal case via convolutions with fundamental solutions, which gave rise to global existence results 
 in H\"older spaces. 
In \cite{Lin-Zhang, Xu-Zhang}, Lin, Xu, and Zhang   obtained global existence results
 in energy spaces for the viscous case with strong fluid viscosity 
but no Ohmic dissipation. However, the latter works required the smallness of 
data to rely on viscosity so that the Fourier methods effective in 
studying Navier--Stokes equations can be adapted. Later on, He, Xu, and Yu \cite{He-Xu-Yu} contributed  to the ideal case  as well as the viscous case with small diffusion. In their work, with inspiration from the stability of Minkowski spacetime \cite{Christodoulou-Klainerman}, 
the global nonlinear stability of Alfv\'en waves was proved by more natural energy methods, and moreover the smallness of data was independent of viscosity.
 Alternative proofs can also be found in 
 the work of Cai and Lei \cite{Cai-Lei} as well as  the works of Wei and 
 Zhang  \cite{Wei-Zhang-2017,Wei-Zhang-2018}.  
 
 In recent years, the 
energy methods in \cite{He-Xu-Yu} have been extended in the following 
two aspects. One is a crosswise extension \cite{Xu}, where an important and
 interesting type of  
3D thin domains $\Omega_{\delta}=\mathbb{R}^2\times (-\delta,\delta)$ with a thickness parameter $\delta$ (the thickness of domain as $2\delta$) that is sufficiently small has been  
considered.  
In \cite{Xu}, Xu derived global existence and uniform (with respect to $\delta$) energy estimates  of Alfv\'en waves in $\Omega_\delta$.  
It was then proved that  3D Alfv\'en waves in $\Omega_\delta$ propagating along the horizontal direction can be approximated by 2D Alfv\'en waves in $\mathbb{R}^2$ as
 $\delta$ goes to zero. 
The other aspect in which the aforementioned energy methods have been extended 
is a lengthwise extension \cite{Li-Yu}, where the scattering behavior of 3D 
 Alfv\'en waves at infinities has  been further 
 studied.  
 Specifically, by choosing suitable position parameters in weighted energy estimates, Li and Yu constructed the rigidity from infinity theorems for Alfv\'en waves in $\mathbb{R}^{3}$ (which indeed hold in $\mathbb{R}^{2}$ as well, see the appendix in this paper) that Alfv\'en waves must vanish identically if their scattering fields vanish at infinities. This conclusion is consistent with the physical phenomenon 
that there are no Alfv\'en waves emanating from the plasma if no waves are detected by faraway observers. Based on these results, 
it appears promising that the rigidity from infinity for 3D Alfv\'en waves in thin domains $\Omega_\delta$ can  also be constructed. In fact, to demonstrate this rigidity will be the main purpose of the present  paper.

It is meaningful to point out that thin-domain problems are encountered in  mathematical models  from many applications.
 For example, in  solid mechanics and especially in elasticity, one considers thin rods, beams, plates, and shells; in fluid dynamics, one is concerned with fluid lubrication, blood circulation,  ocean dynamics, and meteorology  problems; 
  and in MHD, one deals with  solar tachocline, 
 wave heating in the solar corona, thin airfoils in MHD, and shallow water MHD. Other  applied areas   include physiology, nanotechnology, material engineering, etc.  Readers can also consult  \cite{Raugel,Hale-Raugel} about more details regarding physical backgrounds. Most of the above problems are described by various 
partial differential equations on thin domains, their  solutions are always compared with corresponding ones on 
domains of  lower dimension with the thin directions reduced, 
 and a fairly satisfactory understanding has been achieved for
 what impact the thickness of thin domains has on solutions. 
From pioneering works to recent ones, we list some representative 
contributions to the global existence of solutions in thin domains as follows: See the works  of Hale and Raugel \cite{Hale-Raugel-JMPA,Hale-Raugel-TAMS} for reaction-diffusion equations and damped hyperbolic equations; the work of Raugel and Sell \cite{Raugel-Sell-1} for Navier-Stokes equations;  the work of Marsden, Ratiu, and Raugel \cite{Marsden-Ratiu-Raugel}  for Euler equations; as well as the aforementioned work \cite{Xu}   for MHD equations.   The works  \cite{Prizzi-Rybakowski-2,Iftimie-Raugel,Cai-Wang,Elsken-Prizzi} and 
 their related references, in which more equations are considered or further results obtained, are also relevant.

We now turn to  
review the wave scattering theory and especially its rigidity from infinity aspect. 
While we do not give a complete history of this classical topic here, 
special mention should be made of the simplest instance therein, i.e., the smooth solution $\phi
$ to the 3D linear wave equation $\Box\phi=0$. 
In the situation  where $\phi=O(\frac{1}{t})$, if the initial data are in a sufficiently 
 mathematically elegant  space 
 	[all we require is that for any 
 radius	 $r$ and any center $y$, the integral $\int_{B_r(y)} (|\phi(0,x)|+\partial\phi(0,x)|)d\sigma(x)$,  	with $\sigma$ the induced surface measure, is uniformly bounded], then the scattering field is exactly the Radon transform of the initial data, and hence the uniqueness of the Radon transform yields the rigidity from infinity that the solution must vanish identically if its scattering field vanishes at infinities. This result can be found in \cite{Lax-P} and other relevant works, 
such as  \cite{Friedlander80,Ludwig}. We point out that this kind of rigidity is also called unique continuation. Readers are referred to the survey paper of Ionescu and Klainerman  \cite{Ionescu_Klainerman_Survey} for more details on an  essential role of unique continuation in  studying the uniqueness of black holes. Such an idea was then applied to the above example of linear waves in more general forms by Alexakis and Shao, see \cite{A-S} for instance. 
 Regarding the method, these unique continuation results were mainly obtained by making use of the Carleman  estimates.

Our construction of scattering fields for 3D Alfv\'en waves in $\Omega_{\delta}$ (see Theorem \ref{thm1}) is similar to the corresponding one for the above 3D linear waves, and 
our main rigidity from infinity result for 3D Alfv\'en waves in $\Omega_{\delta}$ 
(see Theorem \ref{thm2}) is indeed an  analogue of these unique continuation results in spirit.  These results are consistent with the physical intuition that  the 3D Alfv\'en waves  produced from the plasma in thin domains are characterized by their scattering fields detected by faraway observers, and accordingly  there are no  Alfv\'en waves  
in thin domains if there are no waves  detected by faraway observers. 
 However, both the nonlinear nature of Alfv\'en waves and the geometry of thin domains  distinguish our problem from the aforementioned situations. 
 Instead of applying Carleman-type estimates, we will investigate  the uniform (with respect to $\delta$) weighted energy estimates  with a position parameter in weights to track the center of Alfv\'en waves. Although difficulties in the nonlinear setting still exist, the energy method will be more natural and effective  for Alfv\'en waves. We will also see that the rigidity of 3D Alfv\'en waves in $\Omega_{\delta}$ converges to the rigidity of 2D Alfv\'en waves in $\mathbb{R}^2$ in the limit that the thickness parameter $\delta$ goes to zero (see Corollary \ref{thm3}). 
 This is consistent with the geometric  fact that $\Omega_{\delta}$ can be viewed as $\mathbb{R}^2$ when $\delta\to 0$ as well as the 
 approximation result in \cite{Xu} that  3D Alfv\'en waves in $\Omega_{\delta}$ converge to  2D Alfv\'en waves in $\mathbb{R}^2$ 
when $\delta\to 0$.  Moreover, the rigidity of 2D Alfv\'en waves in $\mathbb{R}^2$ deduced from the approximation in Corollary \ref{thm3} coincides with the 2D version (see Theorem \ref{rigidity theorem 1 2d}) of the rigidity from infinity for Alfv\'en waves established in \cite{Li-Yu}. For clarity of presentation,  
we will put together these relations in Figure \ref{fig:relations}.

\subsubsection*{\bf Structure of the paper}

 In Section \ref{sec:notation}, we introduce  
 some preliminary notations for  MHD equations in 3D thin domains $\Omega_{\delta}$, adapt 
 the uniform weighted energy estimates for the study of rigidity 
 and revisit the 
 necessary proof with main steps. This new version of uniform weighted energy estimates  with a position parameter in weights 
 also allows us to construct the global solution (Alfv\'en waves) in $\Omega_{\delta}$. We  devote Section \ref{sec:more estimates} 
 to the  estimates on the nonlinear term and  Section \ref{sec:pressure}  
to the estimates on the pressure term. 
All these estimates are the main ingredients in 
studying the dynamical behavior of Alfv\'en waves in $\Omega_{\delta}$. Based on these estimates, Section \ref{sec:scattering} then 
involves constructing scattering fields of Alfv\'en waves in $\Omega_{\delta}$ together with their weighted Sobolev spaces at infinities, and describing the large time behavior of  Alfv\'en waves in $\Omega_{\delta}$ with vanishing scattering fields 
by smallness conditions of energies; see our first main Theorem \ref{thm1} for details. In Section \ref{sec:rigidity}, we prove our second main Theorem \ref{thm2} which concerns the rigidity from infinity for Alfv\'en waves in $\Omega_{\delta}$. In Section \ref{sec:approximation}, the 
convergence between the rigidity of Alfv\'en waves in $\Omega_{\delta}$ propagating along the horizontal direction and the rigidity of Alfv\'en waves in $\mathbb{R}^2$  
is provided in Corollary \ref{thm3} as an immediate consequence because of the rigidity condition that the scattering fields are vanishing at infinities. Finally, in the appendix,  we 
provide Theorem \ref{rigidity theorem 1 2d} concerning the rigidity for Alfv\'en waves in $\mathbb{R}^2$ as an extension of \cite{Li-Yu} concerning  the  rigidity  for Alfv\'en waves in $\mathbb{R}^3$. We remark  that though derivations are different, both Corollary \ref{thm3} (as $\delta$ goes to zero) and 
Theorem \ref{rigidity theorem 1 2d} demonstrate the  the rigidity   for Alfv\'en waves in $\mathbb{R}^2$, and  in particular these two perspectives are complementary with each other and coexist in a harmonious way.

\subsubsection*{\bf Acknowledgement} 
Part of this 
work was carried out during the author's PhD time at Tsinghua University. 
The author is deeply indebted to Prof. Pin Yu  for many  discussions and suggestions on the problem. 
The author also would like to acknowledge Prof. Li Xu for 
sharing experiences and valuable insights on  \cite{Xu}. This work was partially 
supported by NSFJS (No. BK20220792). 

\section{Notations and 
energy estimates}\label{sec:notation} 
Throughout this paper, we consider 3D thin domains  $\Omega_{\delta}=\mathbb{R}^2\times(-\delta,\delta)$ with $\delta\in(0,1]$. 
In $\Omega_\delta$, it is clear that 
the horizontal variables are $x_1$ and $x_2$ while the vertical variable is $x_3$ and the thin direction is 
the  vertical direction. 
We are interested in the most interesting situation where a strong background magnetic field $B_0$ presents along the horizontal direction and therefore a small initial perturbation will generate Alfv\'en waves which propagate along $B_0$.  
Without loss of generality, we assume  $B_0$ to be a 
constant magnetic field  parallel to the $x_1$-axis: $B_0=(1,0,0)$. The geometry of this part is illustrated in the following Figure \ref{fig:Omegadelta} with a stereo 
view on the left hand side and a sectional view on the right hand side.

\begin{figure}[ht]
		\vspace{-0.1cm}
	\centering
	\includegraphics[width=5in]{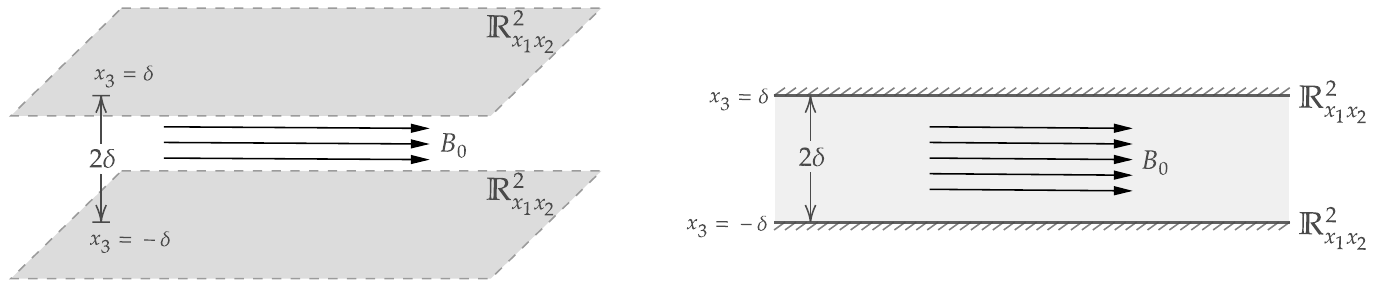}
	\vspace{-0.1cm}
	\caption{$\Omega_\delta=\mathbb{R}^2\times(-\delta,\delta)$}
	\label{fig:Omegadelta}
		\vspace{-0.1cm}
\end{figure}

Let us 
consider the ideal incompressible MHD equations in $\Omega_\delta$ with slip boundary conditions:
\begin{equation}\label{MHD general}
	\begin{cases}
		&\partial_t  v+ v\cdot \nabla v = -\nabla p + (\nabla\times b)\times b, \\
		&\partial_t b + v\cdot \nabla b =  b \cdot \nabla v,\\
		&\operatorname{div} v =0,\ \ 
		\operatorname{div} b =0,\\
		&v^3|_{x_3=\pm \delta}=0,\ \ b^3|_{x_3=\pm\delta}=0,
\end{cases}\end{equation}
where $b=(b^1,b^2,b^3)$ is the magnetic field, $v=(v^1,v^2,v^3)$  the velocity of fluid, and $p$  the scalar pressure of fluid. By  
writing $(\nabla\times b)\times b=-\nabla(\frac{1}{2}| b|^2)+b \cdot \nabla  b$ and  $p'=p+\frac{1}{2}|b|^2$, the first equation in \eqref{MHD general} can be rephrased as 
\[\partial_t  v+ v\cdot \nabla v = -\nabla p' + b \cdot \nabla b.\]
For the sake of simplicity, we will still use $p$ to denote $p'$ in what follows.

We now employ the Els\"{a}sser variables $Z_+ = v +b$ and $Z_- = v-b$ to diagonalize the system \eqref{MHD general} as  
\begin{equation}\label{MHD in Elsasser}\begin{cases}
		&\partial_t  Z_+ +Z_- \cdot \nabla Z_+ = -\nabla p, \\
		&\partial_t  Z_- +Z_+ \cdot \nabla Z_- = -\nabla p,\\
		&\operatorname{div} Z_+ =0,\ \
		\operatorname{div} Z_- =0,\\ 
		&Z_+^3|_{x_3=\pm\delta}=0,\ \ Z_-^3|_{x_3=\pm\delta}=0.
\end{cases}\end{equation}
The fluctuations $z_{+}=Z_{+}-B_0$ and $z_{-}=Z_{-}-(-B_0)=Z_-+B_0$ then
 verify  
\begin{equation}\label{MHD equation}\begin{cases}
		&\partial_{t}z_{+}+(z_--B_0)\cdot \nabla z_{+} =-\nabla p,\\
		&\partial_{t}z_{-}+(z_++B_0)\cdot \nabla z_{-} =-\nabla p,\\
		&\operatorname{div} z_{+}=0,\ \
		\operatorname{div} z_{-}=0,\\
		&z_+^3|_{x_3=\pm\delta}=0,\ \ z_-^3|_{x_3=\pm\delta}=0.
\end{cases}\end{equation}
When $t=0$, 
the initial data $(z_{+,0}(x),z_{-,0}(x))$ also satisfy the divergence  conditions  $\operatorname{div} z_{+,0}=0$,  
$\operatorname{div} z_{-,0}=0$ and the boundary conditions 
$z_{+,0}^3|_{x_3=\pm\delta}=0$,  $z_{-,0}^3|_{x_3=\pm\delta}=0$.

To investigate the  
influence of the thickness of   $\Omega_\delta$, we first perform the following rescalings  
for any $x\in \Omega_1$:  
\begin{align*}
	z_{\ (\delta)}^h(t,x_h, x_3) &:=z^h(t,x_h,\delta x_3),\\
	z_{\ (\delta)}^3(t,x_h, x_3) &:=\delta^{-1}z^3(t,x_h,\delta x_3),\\
	p_{(\delta)}(t,x_h, x_3) &:=p(t,x_h,\delta x_3).
\end{align*}
Hereinafter, we always denote $z=(z^h,z^3)$,  $z^h=(z^1,z^2)$ and $x_h=(x_1,x_2)$. 
Moreover, all above notations for $z$ adapt not only to $z_\pm$ and  $z_{\pm(\delta)}$ here  but also to $z_{\pm,0}$ and $z_{\pm(\delta),0}$ in the sequel. 
Then the MHD system \eqref{MHD equation} in $\Omega_\delta$ for $z_{\pm}$ can be written as the following rescaled system in $\Omega_1$ for $z_{\pm(\delta)}$: 
\begin{equation}\label{eq:rescale}
	\begin{cases}
		&	\partial_tz_{+(\delta)}+\big(z_{-(\delta)}-B_0\big)\cdot\nabla z_{+(\delta)}=-\nabla_\delta p_{(\delta)},\\
		&	\partial_tz_{-(\delta)}+\big(z_{+(\delta)}+B_0\big)\cdot\nabla z_{-(\delta)}=-\nabla_\delta p_{(\delta)},\\
		&	\operatorname{div} z_{+(\delta)}=0,\ \ 
		\operatorname{div} z_{-(\delta)}=0,\\
		&z_{+(\delta)}^3|_{x_3=\pm 1}=0,\ \ z_{-(\delta)}^3|_{x_3=\pm 1}=0,
	\end{cases}
\end{equation}
where  $\nabla_\delta=(\partial_1,\partial_2,\delta^{-2}\partial_3)$, and  
the initial data $(z_{+(\delta),0}(x),z_{-(\delta),0}(x))$ also satisfy the divergence  conditions  $\operatorname{div} z_{+(\delta),0}=0$,  
$\operatorname{div} z_{-(\delta),0}=0$ and the boundary conditions 
$z_{+(\delta),0}^3|_{x_3=\pm 1}=0$,  $z_{-(\delta),0}^3|_{x_3=\pm 1}=0$. 
In particular, the anisotropic property of the rescaled system \eqref{eq:rescale} will help us deal with the thickness of $\Omega_{\delta}$ in the rest of this paper.

We turn to review the following Sobolev lemma for functions defined on thin domains $\Omega_\delta$. The interested readers can consult Lemma 2.6 (ii) in \cite{Xu} about its proof.
\begin{lemma}[Sobolev lemma]\label{lemma:sobolev}
	For any $f(x)\in H^2(\Omega_{\delta})$, we have 
	\[\|f\|_{L^\infty_x}\leqslant C\sum_{k+l\leqslant 2}\delta^{l-\frac{1}{2}}\|\nabla^k_h\partial_3^lf\|_{L^2_x}.\]
\end{lemma}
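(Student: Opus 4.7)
The plan is to reduce the $\delta$-dependent anisotropic Sobolev estimate on $\Omega_\delta$ to a standard Sobolev embedding on the fixed slab $\Omega_1=\mathbb{R}^2\times(-1,1)$ by rescaling the vertical variable, and then to track the powers of $\delta$ introduced by this rescaling.

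First I would define the rescaled function $\tilde f(x_h,y_3):=f(x_h,\delta y_3)$ on $\Omega_1$, for which $\|\tilde f\|_{L^\infty(\Omega_1)}=\|f\|_{L^\infty(\Omega_\delta)}$. A routine change of variables gives, for each pair $(k,l)$ with $k+l\leqslant 2$,
\[
\|\nabla_h^k\partial_{y_3}^l \tilde f\|_{L^2(\Omega_1)}=\delta^{\,l-\frac12}\,\|\nabla_h^k\partial_3^l f\|_{L^2(\Omega_\delta)},
\]
where the $\delta^{-1/2}$ comes from the Jacobian of $x_3=\delta y_3$, while each vertical derivative contributes an additional factor of $\delta$ (squared in the $L^2$-norm). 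Summing these identities after squaring then bounds $\|\tilde f\|_{H^2(\Omega_1)}$ by precisely (the square of) the right-hand side of the claimed inequality.

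The remaining step is to invoke the standard Sobolev embedding $H^2(\Omega_1)\hookrightarrow L^\infty(\Omega_1)$ on the fixed Lipschitz slab $\Omega_1$, whose constant is entirely independent of $\delta$ --- for instance by reflecting $\tilde f$ across $x_3=\pm 1$ into an $H^2$ function on a neighborhood in $\mathbb{R}^3$ and applying the classical embedding $H^2(\mathbb{R}^3)\hookrightarrow L^\infty(\mathbb{R}^3)$. Combining this embedding with the scaling identity above yields the desired inequality. The only point that requires care is the bookkeeping of the $\delta$ exponents in the change of variables, and there is no essential analytic obstacle, since the Sobolev embedding on the $\delta$-free slab $\Omega_1$ is classical.
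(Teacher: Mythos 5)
Your argument is correct, and since the paper itself does not prove this lemma but only cites Lemma 2.6(ii) of \cite{Xu}, your rescaling route is a legitimate self-contained alternative; it is also consonant with the anisotropic rescaling $z\mapsto z_{(\delta)}$ that the paper sets up right after this lemma. The scaling identity $\|\nabla_h^k\partial_{y_3}^l\tilde f\|_{L^2(\Omega_1)}=\delta^{l-\frac12}\|\nabla_h^k\partial_3^l f\|_{L^2(\Omega_\delta)}$ is verified exactly as you say (factor $\delta^{2l}$ from the chain rule, factor $\delta^{-1}$ from the Jacobian, inside the squared norm), and combining it with a $\delta$-independent embedding $H^2(\Omega_1)\hookrightarrow L^\infty(\Omega_1)$ gives precisely the stated inequality. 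The one imprecision is your parenthetical justification of that embedding: a plain even reflection across $x_3=\pm1$ preserves $H^1$ but not $H^2$, since the normal derivative of the reflected function generically jumps across the interface, producing a distributional singularity in the second derivative. This is harmless --- replace it by a Hestenes-type higher-order reflection (a finite linear combination $\sum_j\lambda_j\tilde f(x_h,\pm1\mp j(x_3\mp1))$ with coefficients matching derivatives up to order $2$) or simply invoke the Stein extension operator for the fixed slab $\Omega_1$; either yields an $H^2(\mathbb{R}^3)$ extension with a constant depending only on $\Omega_1$, after which $H^2(\mathbb{R}^3)\hookrightarrow L^\infty(\mathbb{R}^3)$ closes the argument. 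With that single repair the proof is complete.
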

 
Let us introduce characteristic functions as  
\begin{equation}\label{def:u}
u_\pm=u_\pm(t,x_1)=x_1\mp t,
\end{equation}
and  weight functions as 
\begin{equation}\label{def:weight}
\langle u_\pm\rangle=(1+|u_\pm\mp a|^2)^{\frac{1}{2}}=(1+|x_1\mp(t+a)|^2)^{\frac{1}{2}},
\end{equation}
where $a$ represents the position parameter tracking the centers of Alfv\'en waves. Some elementary properties of $\langle u_\pm\rangle$ can be collected as the following lemma. The proof is by direct calculation and so is omitted here. 
 
\begin{lemma}[Properties of weights]\label{lemma:weights}
For any  
 $\sigma\in (0,\frac{1}{3})$, there  hold:
\begin{enumerate}[(i)]
\item For $|x_h-y_h|\leqslant 2$, we have 
\begin{equation}\label{eq:weight1}
	\left(\langle u_\mp\rangle^{1+\sigma}\langle u_\pm\rangle^{\frac{1}{2}(1+\sigma)}\right)(t,x)\lesssim\left(\langle u_\mp\rangle^{1+\sigma}\langle u_\pm\rangle^{\frac{1}{2}(1+\sigma)}\right)(t,y).
\end{equation}
\item For $|x_h-y_h|\geqslant 1$, we have
\begin{equation}\label{eq:weight2}
\left(\langle u_\mp\rangle^{1+\sigma}\langle u_\pm\rangle^{\frac{1}{2}(1+\sigma)}\right)(t,x)\lesssim |x_h-y_h|^{\frac{3}{2}(1+\sigma)}\left(\langle u_\mp\rangle^{1+\sigma}\langle u_\pm\rangle^{\frac{1}{2}(1+\sigma)}\right)(t,y).
\end{equation}
\item For all $k\in\mathbb{Z}_{\geqslant 0}$ with $k\leqslant 3$, we have 
\begin{equation}\label{eq:weight3}
\left|\nabla^k\left(\langle u_\mp\rangle^{1+\sigma}\langle u_\pm\rangle^{\frac{1}{2}(1+\sigma)}\right)\right|\lesssim\langle u_\mp\rangle^{1+\sigma}\langle u_\pm\rangle^{\frac{1}{2}(1+\sigma)}.
\end{equation}
\item For all $k,l\in\mathbb{Z}_{\geqslant 0}$ with $k+l\leqslant 2$, we have 
\begin{equation}\label{eq:weightde}
\left|\nabla_h^k\partial_3^l\left(\frac{\langle u_\pm\rangle^{1+\sigma}}{\langle u_\mp\rangle^{\frac{1}{2}(1+\sigma)}}\right)\right|\lesssim\frac{\langle u_\pm\rangle^{1+\sigma}}{\langle u_\mp\rangle^{\frac{1}{2}(1+\sigma)}}.
\end{equation}
\item For the product of $\langle u_+\rangle$ and $\langle u_-\rangle$, we have 
\begin{equation}\label{eq:product}
\langle u_+\rangle\langle u_-\rangle\gtrsim 1+|t+a|.
\end{equation}
\end{enumerate}
Here, the notation $A\lesssim B$ means that there is a universal constant $C$ (independent of $a$)  such that $A\leqslant CB$; the notation $A\gtrsim B$ means that there is a universal constant $C$ (independent of $a$) such that $A\geqslant CB$. 
\end{lemma}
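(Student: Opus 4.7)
The plan is to reduce all five claims to elementary estimates on the scalar function $s\mapsto(1+s^2)^{1/2}$, exploiting the fact that each weight depends only on the single variable $v_\pm(t,x_1):=x_1\mp(t+a)$, and to track at every step that every implicit constant is genuinely independent of $a$.

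For (i) and (ii), I would start from the 1-Lipschitz bound
\[
\bigl|\langle u_\pm\rangle(t,x)-\langle u_\pm\rangle(t,y)\bigr|\leqslant |x_1-y_1|,
\]
which follows because $v_\pm$ is 1-Lipschitz in $x_1$ and $s\mapsto(1+s^2)^{1/2}$ is 1-Lipschitz in $s$. When $|x_h-y_h|\leqslant 2$, this together with $\langle u_\pm\rangle\geqslant 1$ gives $\langle u_\pm\rangle(t,x)\lesssim\langle u_\pm\rangle(t,y)$, and multiplying the two instances ($+$ and $-$) with the appropriate exponents yields (i). When $|x_h-y_h|\geqslant 1$, the same Lipschitz bound produces $\langle u_\pm\rangle(t,x)\lesssim |x_h-y_h|\,\langle u_\pm\rangle(t,y)$; raising to the exponents $1+\sigma$ and $\tfrac{1}{2}(1+\sigma)$ and multiplying gives the power $\tfrac{3}{2}(1+\sigma)$ required in (ii).

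For (iii) and (iv), the crucial observation is that each weight is a function of $x_1$ alone, so every pure $\partial_3$ derivative vanishes and (iv) collapses to a purely horizontal statement. A direct chain-rule computation gives $\partial_1\langle u_\pm\rangle=v_\pm/\langle u_\pm\rangle$, which is pointwise bounded by $1$, and by induction one has $|\partial_1^k\langle u_\pm\rangle|\lesssim 1$ for $k\leqslant 3$. Applying the Leibniz rule to the products $\langle u_\mp\rangle^{1+\sigma}\langle u_\pm\rangle^{(1+\sigma)/2}$ and to the quotients $\langle u_\pm\rangle^{1+\sigma}/\langle u_\mp\rangle^{(1+\sigma)/2}$, and using $\langle u_\pm\rangle\geqslant 1$ in every denominator that appears, then recovers the claimed pointwise bounds.

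Part (v) is the only item that actually uses the time variable, and I would handle it via the identity $u_-(t,x_1)-u_+(t,x_1)=2t$, which after the $a$-shift becomes $v_-(t,x_1)-v_+(t,x_1)=2(t+a)$. The triangle inequality then forces $\max(|v_+|,|v_-|)\geqslant |t+a|$, so $\max(\langle u_+\rangle,\langle u_-\rangle)\gtrsim 1+|t+a|$; since both weights are at least $1$, their product dominates their maximum and (v) follows. I do not anticipate any serious obstacle in this lemma — the entire argument is bookkeeping — but the one point that requires care is to verify at each step that the implicit constants are genuinely independent of $a$, since this uniformity is precisely what will later permit the position parameter to be sent to $\pm\infty$ when tracking the centers of Alfv\'en waves.
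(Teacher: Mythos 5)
Your argument is correct, and it is exactly the "direct calculation" that the paper alludes to when it omits the proof: the Lipschitz bound on $s\mapsto(1+s^2)^{1/2}$ combined with $\langle u_\pm\rangle\geqslant 1$ for (i)--(ii), the chain/Leibniz rule with $|\partial_1^k\langle u_\pm\rangle|\lesssim 1$ and the vanishing of $\partial_3$-derivatives for (iii)--(iv), and the identity $v_--v_+=2(t+a)$ with the triangle inequality for (v). All implicit constants indeed arise only from these $a$-independent elementary facts, so the required uniformity in the position parameter holds.
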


 Now we state the following weighted div-curl lemma. This lemma will help us control the gradient of vectors by their divergence and curl. In particular, the readers are referred to Lemma 2.4 in \cite{Xu} for a divergence-free version of this lemma.
\begin{lemma}[Weighted div-curl lemma]\label{lemma:div}
	Let $\lambda(x)\geqslant 1$ 
	be a smooth positive function on $\Omega_\delta$ with the additional property $|\nabla\lambda|\lesssim \lambda$. For any smooth vector field $v(x)\in H^1(\Omega_\delta)$, we have 
	\begin{equation}\label{eq:d-c}
		\big\|\sqrt\lambda\nabla v\big\|_{L^2(\Omega_\delta)}^2 \lesssim \big\|\sqrt\lambda\operatorname{div }\nabla v\big\|_{L^2(\Omega_\delta)}^2+  \big\|\sqrt{\lambda}\operatorname{curl }\nabla v\big\|_{L^2(\Omega_\delta)}^2+ \big\|\sqrt\lambda v\big\|_{L^2(\Omega_\delta)}^2+\Big|\int_{\partial\Omega_\delta} \lambda (v^h\cdot\nabla_h v^3-v^3\nabla_hv^h)dx_h\Big|,
	\end{equation}
	provided $\sqrt{\lambda}v\in L^2({\Omega_\delta})$ and $\sqrt{\lambda}\nabla v\in L^2({\Omega_\delta})$.
\end{lemma}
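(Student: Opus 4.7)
The plan is to reduce \eqref{eq:d-c} to a pointwise algebraic identity plus a weighted integration by parts. The pointwise identity I would use is
\[
|\nabla v|^2 = (\operatorname{div} v)^2 + |\operatorname{curl} v|^2 + \partial_j\bigl(v^i\partial_i v^j\bigr) - \partial_i\bigl(v^i\partial_j v^j\bigr),
\]
in summation convention, which holds for any smooth vector field and follows from expanding $(\operatorname{div} v)^2$ and $|\operatorname{curl} v|^2 = |\nabla v|^2 - \sum_{i,j}\partial_j v^i\partial_i v^j$ and then observing that the remaining cross term $\sum_{i,j}(\partial_j v^i\partial_i v^j - \partial_i v^i\partial_j v^j)$ is an exact divergence; the would-be obstruction involving second derivatives of $v$ cancels by commutativity of mixed partials.

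Multiplying this identity by $\lambda$ and integrating over $\Omega_\delta$, the first two terms on the right yield exactly $\|\sqrt\lambda\operatorname{div} v\|_{L^2}^2 + \|\sqrt\lambda\operatorname{curl} v\|_{L^2}^2$. Integrating by parts in each of the two divergence terms produces a boundary contribution on $x_3=\pm\delta$ and an interior contribution involving $\nabla\lambda$. Since the outward unit normal on $\partial\Omega_\delta$ is $\pm e_3$, only the $e_3$-components survive: the first divergence contributes $\int_{\partial\Omega_\delta}\lambda(v^h\cdot\nabla_h v^3 + v^3\partial_3 v^3)\,dx_h$ and the second contributes $\int_{\partial\Omega_\delta}\lambda\,v^3(\nabla_h\cdot v^h + \partial_3 v^3)\,dx_h$. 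The $v^3\partial_3 v^3$ pieces cancel, and the difference collapses exactly to the stated boundary integrand $\lambda(v^h\cdot\nabla_h v^3 - v^3\nabla_h\cdot v^h)$, as required.

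The interior error terms arising from $\nabla\lambda$ are schematically of the form $\int|\nabla\lambda|\,|v|\,|\nabla v|$. The hypothesis $|\nabla\lambda|\lesssim\lambda$ rewrites this as $\int(\sqrt\lambda\,|v|)(\sqrt\lambda\,|\nabla v|)$, which Cauchy--Schwarz with a small parameter $\varepsilon>0$ bounds by $\varepsilon\|\sqrt\lambda\nabla v\|_{L^2}^2 + C_\varepsilon\|\sqrt\lambda v\|_{L^2}^2$; the first piece is then absorbed into the left-hand side. The horizontal integrations by parts as $|x_h|\to\infty$ are justified by a routine cutoff-and-limit argument using $\sqrt\lambda v,\sqrt\lambda\nabla v\in L^2(\Omega_\delta)$. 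I do not anticipate any serious obstacle here: the only step requiring care is verifying that the $v^3\partial_3 v^3$ contributions cancel (rather than double) between the two divergences, which is precisely the calculation that pins the boundary integrand down to the form stated in the lemma, and the weight $\lambda$ plays a role only through the inequality $|\nabla\lambda|\lesssim\lambda$ that enables the final absorption.
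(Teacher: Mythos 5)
Your proof is correct. The paper obtains the same estimate by starting from the second-order vector identity $-\Delta v=-\nabla(\operatorname{div}v)+\operatorname{curl}\operatorname{curl}v$, pairing it with $\lambda v$, and integrating each of the three resulting terms by parts; you instead start from the first-order pointwise identity $|\nabla v|^2=(\operatorname{div}v)^2+|\operatorname{curl}v|^2+\partial_j(v^i\partial_iv^j)-\partial_i(v^i\partial_jv^j)$ and integrate by parts only in the two exact-divergence terms. The two computations are dual to one another: they produce identical boundary contributions (the cancellation of the $v^3\partial_3v^3$ pieces that you flag is exactly the computation the paper carries out in \eqref{div2} and the line after it, using $n|_{x_3=\pm\delta}=(0,0,\pm1)^T$ and $dS=dx_h$), and the same interior errors of the schematic form $\int|\nabla\lambda||v||\nabla v|$, absorbed in the same way via $|\nabla\lambda|\lesssim\lambda$ and Cauchy--Schwarz. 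What your route buys is that the argument never manipulates second derivatives of $v$ outside of exact divergences, so it sits more comfortably with the stated hypothesis $v\in H^1(\Omega_\delta)$, whereas the paper's use of $\Delta v$, $\nabla(\operatorname{div}v)$ and $\operatorname{curl}\operatorname{curl}v$ strictly presupposes $v\in H^2$ or a preliminary density argument. One cosmetic point: the quantities you actually bound are $\|\sqrt\lambda\operatorname{div}v\|_{L^2}$ and $\|\sqrt\lambda\operatorname{curl}v\|_{L^2}$, which is consistent with the final display of the paper's own proof; the extra $\nabla$ appearing in $\operatorname{div}\nabla v$ and $\operatorname{curl}\nabla v$ in the statement \eqref{eq:d-c} is evidently a typo, so no correction on your side is needed.
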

\begin{proof}
We first recall the vector calculus identity 
\begin{equation*}
	-\Delta v=-\nabla(\operatorname{div}v)+\operatorname{curl}\operatorname{curl}v.
\end{equation*}
Multiplying this identity by $\lambda v$ and then integrating over $\Omega_\delta$ lead us to 
\begin{equation*} 
	-\int_{\Omega_\delta}\lambda v\cdot \Delta vdx=-\int_{\Omega_\delta}\lambda v\cdot \nabla(\operatorname{div}v)dx+\int_{\Omega_\delta}\lambda v\cdot \operatorname{curl}\operatorname{curl}vdx.
\end{equation*}

After integration by parts, we infer that
\begin{align*}
	&-\int_{\Omega_\delta}\lambda v\cdot \Delta vdx		
	=-\int_{\partial\Omega_\delta} \lambda v\cdot \nabla v\cdot ndS+\int_{\Omega_\delta} \lambda|\nabla v|^2dx+\int_{\Omega_\delta}\nabla \lambda\cdot v\cdot \nabla vdx,\\
	&-\int_{\Omega_\delta}\lambda v\cdot \nabla(\operatorname{div}v)dx
	=-\int_{\partial\Omega_\delta} \lambda v\cdot (\operatorname{div} v) ndS+\int_{\Omega_\delta} \lambda|\operatorname{div} v|^2dx+\int_{\Omega_\delta}\nabla \lambda\cdot  v \operatorname{div} vdx,\\
	&\int_{\Omega_\delta}\lambda v\cdot \operatorname{curl}\operatorname{curl}vdx
	=-\int_{\partial\Omega_\delta} \lambda v\cdot (\operatorname{curl} v\times n)dS+\int_{\Omega_\delta} \lambda|\operatorname{curl} v|^2dx+\int_{\Omega_\delta}(\nabla \lambda\wedge v)\cdot \operatorname{curl} vdx,
\end{align*}
where $n$ is the unit outward normal of $\partial\Omega_{\delta}$ and $dS$ is the surface measure of $\partial\Omega_{\delta}$. 
Therefore we derive
\begin{align*}
	\int_{\Omega_\delta} \lambda|\nabla v|^2dx&=\int_{\partial\Omega_\delta} \lambda v\cdot \big[\nabla v\cdot n-
	(\operatorname{div}v) n-(\operatorname{curl}v\times n)\big]dS+\int_{\Omega_\delta} \lambda|\operatorname{div} v|^2dx+\int_{\Omega_\delta} \lambda|\operatorname{curl} v|^2dx\\
	&\ \ \ \ -\int_{\Omega_\delta}\nabla \lambda\cdot v\cdot \nabla vdx+\int_{\Omega_\delta}\nabla \lambda\cdot v \operatorname{div} vdx+\int_{\Omega_\delta}(\nabla \lambda\wedge v)\cdot \operatorname{curl} vdx.\stepcounter{equation}\tag{\theequation}\label{div1}
\end{align*}

By direct calculation, we acquire 
\begin{equation}\label{div2}
\int_{\partial\Omega_\delta} \lambda v\cdot \big[\nabla v\cdot n-
(\operatorname{div}v) n-(\operatorname{curl}v\times n)\big]dS=\int_{\partial\Omega_\delta} \lambda v^j(\partial_jv^in_i-\partial_iv^in_j)dS.
\end{equation}
In view of \eqref{div1} and \eqref{div2}, we deduce that 
\begin{align*}
	\int_{\Omega_\delta} \lambda|\nabla v|^2dx&\leqslant\Big|\int_{\partial\Omega_\delta} \lambda v^j(\partial_jv^in_i-\partial_iv^in_j)dS\Big|+\int_{\Omega_\delta} \lambda|\operatorname{div} v|^2dx+\int_{\Omega_\delta} \lambda|\operatorname{curl} v|^2dx\\
	&\ \ \ \ +\int_{\Omega_\delta}|\nabla \lambda||v||\nabla v|dx
	+\int_{\Omega_\delta}|\nabla \lambda| |v||\operatorname{div} v|dx+\int_{\Omega_\delta}|\nabla \lambda||v||\operatorname{curl} v|dx. 
\end{align*}

Noticing that $n\big|_{\partial\Omega_{\delta}}=n\big|_{x_3=\pm\delta}=(0,0,\pm 1)^T$ and $dS=dx_h$, we obtain
\[
	\Big|\int_{\partial\Omega_\delta} \lambda v^j(\partial_jv^in_i-\partial_iv^in_j)dS\Big|=\Big|\int_{\partial\Omega_\delta} \lambda (v\cdot\nabla v^3-v^3\operatorname{div}v)dx_h\Big|=\Big|\int_{\partial\Omega_\delta} \lambda (v^h\cdot\nabla_h v^3-v^3\nabla_hv^h)dx_h\Big|.\]
Then using Cauchy-Schwarz inequality gives rise to 
\begin{align*}
\int_{\mathbb{R}^3} \lambda|\nabla v|^2dx&\leqslant\Big|\int_{\partial\Omega_\delta} \lambda (v^h\cdot\nabla_h v^3-v^3\nabla_hv^h)dx_h\Big|+\frac{3}{2}\int_{\mathbb{R}^3} \lambda|\operatorname{div} v|^2dx
+\frac{3}{2}\int_{\mathbb{R}^3} \lambda|\operatorname{curl} v|^2dx\\
&\ \ \ \ +\frac{3}{2}\int_{\mathbb{R}^3}\frac{|\nabla\lambda|^2}{\lambda}|v|^2dx
+\frac{1}{2}\int_{\mathbb{R}^3}\lambda|\nabla v|^2dx.
\end{align*}
 	Since 
 	$|\nabla\lambda|\lesssim\lambda$ gives $\frac{|\nabla\lambda|}{\sqrt{\lambda}}\lesssim\sqrt{\lambda}$, we finally summarize that 
 	\begin{equation*}
 		\big\|\sqrt\lambda\nabla v\big\|_{L^2(\mathbb{R}^3)}^2 \lesssim\big\|\sqrt\lambda\operatorname{div }v\big\|_{L^2(\mathbb{R}^3)}^2+ \big\|\sqrt\lambda\operatorname{curl }v\big\|_{L^2(\mathbb{R}^3)}^2+ \big\|\sqrt\lambda v\big\|_{L^2(\mathbb{R}^3)}^2+\Big|\int_{\partial\Omega_\delta} \lambda (v^h\cdot\nabla_h v^3-v^3\nabla_hv^h)dx_h\Big|.
 	\end{equation*}
This ends the proof of the lemma. 
\end{proof}

As a consequence of Lemma \ref{lemma:div}, there also holds:
 \begin{lemma}[Weighted div-curl lemma with higher order derivatives]\label{lemma:divcurl}
	Let $\lambda(x)\geqslant 1$ 
be a smooth positive function on $\Omega_\delta$ with the additional property $|\nabla\lambda|\lesssim \lambda$. For any smooth vector field $v(x)\in H^m(\Omega_\delta)$ $(m\in \mathbb{Z}_{\geqslant 1})$, we have 
\begin{align*}
	\big\|\sqrt\lambda\nabla^k v\big\|_{L^2(\Omega_\delta)}^2 &\lesssim\sum_{l=0}^{k-1}\big\|\sqrt\lambda\operatorname{div }\nabla^lv\big\|_{L^2(\Omega_\delta)}^2+\sum_{l=0}^{k-1} \big\|\sqrt{\lambda}\operatorname{curl }\nabla^lv\big\|_{L^2(\Omega_\delta)}^2+ \big\|\sqrt\lambda v\big\|_{L^2(\Omega_\delta)}^2\\
	&\ \ \ \ +\sum_{l=0}^{k-1}\Big|\int_{\partial\Omega_\delta} \lambda (\nabla^{l}v^h\cdot\nabla^{l}\nabla_h v^3-\nabla^{l}v^3\cdot\nabla^{l}\nabla_hv^h)dx_h\Big|,\stepcounter{equation}\tag{\theequation}\label{eq:d-c2}
\end{align*}
provided $\sqrt{\lambda}v\in L^2(\Omega_\delta)$ and $\sqrt{\lambda}\nabla^k v\in L^2(\Omega_\delta)$, where $1\leqslant k\leqslant m$. 
 \end{lemma}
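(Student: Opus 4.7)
The plan is to establish \eqref{eq:d-c2} by induction on $k\in\{1,\ldots,m\}$, with Lemma \ref{lemma:div} serving as the base case. When $k=1$, the inequality \eqref{eq:d-c2} reduces exactly to \eqref{eq:d-c}, so nothing needs to be proved. Assuming the statement holds up to order $k-1$, my strategy is to apply Lemma \ref{lemma:div} componentwise to the derivatives of $v$ and then feed the resulting lower-order term back into the inductive hypothesis.

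Concretely, for the inductive step I would fix a multi-index $\alpha$ with $|\alpha|=k-1$ and apply Lemma \ref{lemma:div} to the smooth vector field $\partial^\alpha v$ (note that $\partial^\alpha v$ belongs to $H^1(\Omega_\delta)$ because $v\in H^m(\Omega_\delta)$ with $m\geqslant k$, and since derivatives commute with both $\operatorname{div}$ and $\operatorname{curl}$ one has $\operatorname{div}\partial^\alpha v=\partial^\alpha\operatorname{div}v$ and $\operatorname{curl}\partial^\alpha v=\partial^\alpha\operatorname{curl}v$). Summing the resulting estimates over all $|\alpha|=k-1$ bounds $\|\sqrt\lambda\nabla^k v\|_{L^2(\Omega_\delta)}^2$ by a sum of three kinds of terms: the divergence/curl contributions $\|\sqrt\lambda\operatorname{div}\nabla^{k-1}v\|_{L^2}^2$ and $\|\sqrt\lambda\operatorname{curl}\nabla^{k-1}v\|_{L^2}^2$; the zeroth-order remainder $\|\sqrt\lambda\nabla^{k-1}v\|_{L^2}^2$; and the boundary contribution corresponding to $l=k-1$ that appears in \eqref{eq:d-c2}. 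I would then invoke the inductive hypothesis on the remainder $\|\sqrt\lambda\nabla^{k-1}v\|_{L^2}^2$, which expands it into the analogous sum over $l=0,\ldots,k-2$ together with $\|\sqrt\lambda v\|_{L^2}^2$. Combining the two bounds yields \eqref{eq:d-c2}.

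The only mildly delicate point is a bookkeeping one, namely making sure that $\operatorname{div}\nabla^l v$ and $\operatorname{curl}\nabla^l v$ on the right-hand side of \eqref{eq:d-c2} are interpreted consistently as the collection $\{\operatorname{div}\partial^\alpha v,\operatorname{curl}\partial^\alpha v\}_{|\alpha|=l}$, so that Lemma \ref{lemma:div} applies tensorially at each order; and similarly that the boundary term is read componentwise over multi-indices of the appropriate length. Beyond this, no genuinely new analytic obstacle appears: the weight $\lambda$ is fixed along the induction and still satisfies $|\nabla\lambda|\lesssim\lambda$, so the implicit constants in the weighted estimates remain uniform in $k$. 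The boundary integrals arising from integration by parts are precisely those of the form stated in \eqref{eq:d-c2}, since $n|_{\partial\Omega_\delta}=(0,0,\pm 1)^T$ again reduces the surface contribution to the horizontal expression $\int_{\partial\Omega_\delta}\lambda(v^h\cdot\nabla_h v^3 - v^3\nabla_h v^h)dx_h$ that already appeared in the proof of Lemma \ref{lemma:div}.
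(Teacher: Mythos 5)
Your proposal is correct and follows essentially the same route as the paper: apply Lemma \ref{lemma:div} to $\nabla^{k-1}v$ (viewed componentwise over multi-indices), then close the argument by induction on $k$, absorbing the remainder $\|\sqrt\lambda\nabla^{k-1}v\|_{L^2}^2$ via the inductive hypothesis. The bookkeeping remarks you add about interpreting $\operatorname{div}\nabla^l v$, $\operatorname{curl}\nabla^l v$ and the boundary terms tensorially are consistent with how the paper reads these expressions.
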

 
 \begin{proof}
For any $1\leqslant k\leqslant m$, we have
$\nabla^{k-1}v\in H^{m-k+1}(\Omega_{\delta})\subset H^1(\Omega_{\delta})$. Thus, applying \eqref{eq:d-c} to the vector field $\nabla^{k-1}v$ yields
\begin{align*}
\big\|\sqrt\lambda\nabla^k v\big\|_{L^2(\Omega_\delta)}^2 &\lesssim\big\|\sqrt\lambda\operatorname{div }\nabla^{k-1}v\big\|_{L^2(\Omega_\delta)}^2+ \big\|\sqrt\lambda\operatorname{curl }\nabla^{k-1}v\big\|_{L^2(\Omega_\delta)}^2+ \big\|\sqrt\lambda\nabla^{k-1} v\big\|_{L^2(\Omega_\delta)}^2\\
&\ \ \ \ +\Big|\int_{\partial\Omega_\delta} \lambda (\nabla^{k-1}v^h\cdot\nabla^{k-1}\nabla_h v^3-\nabla^{k-1}v^3\cdot\nabla^{k-1}\nabla_hv^h)dx_h\Big|.
\end{align*}
 By induction on $k$, we can infer 
 \eqref{eq:d-c2} immediately. 
Hence the lemma is proved.
 \end{proof}

Let $t^*$ be the lifespan of solutions to the system \eqref{MHD equation}, which indeed also adapts to the rescaled system \eqref{eq:rescale}. Here and subsequently, 
we follow the notations of energy  and flux used in \cite{Xu}. For all $k,l\in\mathbb{Z}_{\geqslant 0}$, we denote the weighted energy norms on $[0,t^*]\times\Omega_\delta$ as  
\begin{align*}
	&E^{(k,l)}_{\pm}(z_\pm(t))=
	\sum_{|\alpha_h|=k}\big\|\langle u_\mp\rangle^{1+\sigma}\partial_h^{\alpha_h}\partial_3^lz_{\pm}(t,x)\big\|_{L^2(\Omega_{\delta})}^2,\ \ E^{(k,l)}_{\pm}(z_\pm)=\sup_{0\leqslant t\leqslant t^*}E^{(k,l)}_{\pm}(z_\pm(t)),\\
	&E^{(k,l)}_{\pm}(z_\pm^h(t))=
	\sum_{|\alpha_h|=k}\big\|\langle u_\mp\rangle^{1+\sigma}\partial_h^{\alpha_h}\partial_3^lz_{\pm}^h(t,x)\big\|_{L^2(\Omega_{\delta})}^2,\ \ E^{(k,l)}_{\pm}(z_\pm^h)=\sup_{0\leqslant t\leqslant t^*}E^{(k,l)}_{\pm}(z_\pm^h(t)),\\
	&E^{(k,l)}_{\pm}(z_\pm^3(t))=
	\sum_{|\alpha_h|=k}\big\|\langle u_\mp\rangle^{1+\sigma}\partial_h^{\alpha_h}\partial_3^lz_{\pm}^3(t,x)\big\|_{L^2(\Omega_{\delta})}^2,\ \ E^{(k,l)}_{\pm}(z_\pm^3)=\sup_{0\leqslant t\leqslant t^*}E^{(k,l)}_{\pm}(z_\pm^3(t)),
\end{align*}
and the weighted flux norms on $[0,t^*]\times\Omega_\delta$  as 
\begin{align*}
	&F^{(k,l)}_{\pm}(z_\pm(t))=\sum_{|\alpha_h|=k}\int_{[0,t]\times\Omega_\delta}\frac{\langle u_{\mp}\rangle^{2(1+\sigma)}}{\langle u_{\pm}\rangle^{1+\sigma}}|\partial_h^{\alpha_h}\partial_3^lz_{\pm}(\tau,x)|^2dxd\tau,\ \ F^{(k,l)}_{\pm}(z_\pm)=F^{(k,l)}_{\pm}(z_\pm(t^*)),\\
	&F^{(k,l)}_{\pm}(z_\pm^h(t))=\sum_{|\alpha_h|=k}\int_{[0,t]\times\Omega_\delta}\frac{\langle u_{\mp}\rangle^{2(1+\sigma)}}{\langle u_{\pm}\rangle^{1+\sigma}}|\partial_h^{\alpha_h}\partial_3^lz_{\pm}^h(\tau,x)|^2dxd\tau,\ \ F^{(k,l)}_{\pm}(z_\pm^h)=F^{(k,l)}_{\pm}(z_\pm^h(t^*)),\\
	&F^{(k,l)}_{\pm}(z_\pm^3(t))=\sum_{|\alpha_h|=k}\int_{[0,t]\times\Omega_\delta}\frac{\langle u_{\mp}\rangle^{2(1+\sigma)}}{\langle u_{\pm}\rangle^{1+\sigma}}|\partial_h^{\alpha_h}\partial_3^lz_{\pm}^3(\tau,x)|^2dxd\tau,\ \  F^{(k,l)}_{\pm}(z^3_\pm)=F^{(k,l)}_{\pm}(z^3_\pm(t^*)).
\end{align*}
	For any $\alpha_h\in(\mathbb{Z}_{\geqslant0})^2$ and $l\geqslant 0$, we see that  
\begin{align*}
	&E_\pm^{(\alpha_h,l)}(z^h_{\pm(\delta)})=\delta^{2(l-\frac{1}{2})}E_\pm^{(\alpha_h,l)}(z^h_{\pm}),\ \ 
	E_\pm^{(\alpha_h,0)}(z^3_{\pm(\delta)})=\delta^{-3}E_\pm^{(\alpha_h,0)}(z^3_{\pm}),\\
	&E_\pm^{(\alpha_h,l)}(z^3_{\pm(\delta)})=\delta^{2(l-\frac{3}{2})}E_\pm^{(\alpha_h,l)}(z^3_{\pm})=\delta^{2(l-\frac{3}{2})}E_\pm^{(\alpha_h,l-1)}(\nabla_h\cdot z^h_{\pm})\ \text{ for }l\geqslant 1,\\
	&F_\pm^{(\alpha_h,l)}(z^h_{\pm(\delta)})=\delta^{2(l-\frac{1}{2})}F_\pm^{(\alpha_h,l)}(z^h_{\pm}),\ \ 
	F_\pm^{(\alpha_h,0)}(z^3_{\pm(\delta)})=\delta^{-3}F_\pm^{(\alpha_h,0)}(z^3_{\pm}),\\
	&F_\pm^{(\alpha_h,l)}(z^3_{\pm(\delta)})=\delta^{2(l-\frac{3}{2})}F_\pm^{(\alpha_h,l)}(z^3_{\pm})=\delta^{2(l-\frac{3}{2})}F_\pm^{(\alpha_h,l-1)}(\nabla_h\cdot z^h_{\pm})\ \text{ for }l\geqslant 1.
\end{align*}
We denote the energy and flux on  $[0,t^*]\times\Omega_1$ as
\begin{align*}
	&	E_\pm^{(k)}(z_{\pm(\delta)})=\!\sum_{k'+l'=k}\!E_\pm^{(k',l')}(z_{\pm(\delta)}),\ \ E_\pm^{(k)}(z^h_{\pm(\delta)})=\!\sum_{k'+l'=k}\!E_\pm^{(k',l')}(z^h_{\pm(\delta)}),\ \ E_\pm^{(k)}(z^3_{\pm(\delta)})=\!\sum_{k'+l'=k}\!E_\pm^{(k',l')}(z^3_{\pm(\delta)}),\\
	&	F_\pm^{(k)}(z_{\pm(\delta)})=\!\sum_{k'+l'=k}\!F_\pm^{(k',l')}(z_{\pm(\delta)}),\ \ F_\pm^{(k)}(z^h_{\pm(\delta)})=\!\sum_{k'+l'=k}\!F_\pm^{(k',l')}(z^h_{\pm(\delta)}),\ \ F_\pm^{(k)}(z^3_{\pm(\delta)})=\!\sum_{k'+l'=k}\!F_\pm^{(k',l')}(z^3_{\pm(\delta)}).
\end{align*}

We are now ready to review the global existence and uniform (with respect to $\delta$) weighted energy estimates of the system \eqref{MHD equation}: 
\begin{theorem}[Uniform weighted energy estimates in $\Omega_{\delta}$; adapted from Theorem 1.1 in  \cite{Xu} for the study of rigidity] 
	\label{lemma:global}
	Let $N\in \mathbb{Z}_{\geqslant 5}$, $\delta\in(0,1]$ and  $\sigma\in(0,\frac{1}{3})$.  
	There exists a universal constant $\varepsilon_0\in(0,1)$ such that if the initial data $(z_{+,0}(x),z_{-,0}(x))$ of the system \eqref{MHD equation}  satisfy
	\begin{equation*} 
		\mathcal{E}(0):= 	\sum_{+,-}\bigg(\!\sum_{k+l\leqslant 2N}\!\delta^{2(l-\frac{1}{2})}E_{\pm}^{(k,l)}(z_{\pm,0})	+\!\sum_{k\leqslant 2N-1}\!\delta^{-3}E_{\pm}^{(k,0)}(z_{\pm,0}^3) +\!\sum_{k+l\leqslant N+2}\!\delta^{2(l-\frac{1}{2})}E_{\pm}^{(k,l)}(\partial_3z_{\pm,0})\bigg) \leqslant\varepsilon_0^2,
	\end{equation*}
	then the system \eqref{MHD equation}  admits a unique global  solution $\big(z_+(t,x),z_-(t,x)\big)$. 
	Moreover,  
	there is a universal constant $C$ such that the following  uniform (with respect to $\delta$) weighted energy estimates hold:
	\begin{align*}
		\mathcal{E}:= & \sum_{+,-}\bigg(\sum_{k+l\leqslant 2N}\delta^{2(l-\frac{1}{2})}\big(E_{\pm}^{(k,l)}(z_{\pm})+F_{\pm}^{(k,l)}(z_{\pm})\big)   +\sum_{k\leqslant 2N-1}\delta^{-3}\big(E_{\pm}^{(k,0)}(z_{\pm}^3)+F_{\pm}^{(k,0)}(z_{\pm}^3)\big)\\
		&\ \ \ \ \ \ \  +\sum_{k+l\leqslant N+2}\delta^{2(l-\frac{1}{2})}\big(E_{\pm}^{(k,l)}(\partial_3z_{\pm})+F_{\pm}^{(k,l)}(\partial_3z_{\pm})\big)\bigg)\\
		&\leqslant C\mathcal{E}(0).\stepcounter{equation}\tag{\theequation}\label{eq:estimate}
	\end{align*} 
In particular, both the constants  $\varepsilon_0$ and $C$ are independent of the thickness parameter $\delta$ and the position parameter $a$. These facts are indeed important keys to ensuring the further study on rigidity. 
\end{theorem}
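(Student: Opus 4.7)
\medskip
\noindent\textbf{Proof proposal.}

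The plan is a continuity/bootstrap argument for the rescaled system \eqref{eq:rescale}, following the scheme of \cite{Xu} but verifying that every universal constant can be chosen independent of both $\delta\in(0,1]$ and the newly introduced position parameter $a\in\mathbb{R}$. Since local well-posedness is classical, the real task is to propagate a uniform-in-$(\delta,a)$ energy bound: on the maximal interval $[0,T^*]$ where the bootstrap hypothesis $\mathcal{E}\leqslant 2C\mathcal{E}(0)$ holds, I would aim to improve it to $\mathcal{E}\leqslant C\mathcal{E}(0)$ provided $\varepsilon_0$ is sufficiently small, which by continuity forces $T^*=t^*=+\infty$.

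For the weighted energy identity at each order $(\alpha_h,l)$, I would apply $\partial_h^{\alpha_h}\partial_3^l$ to both Els\"asser equations in \eqref{eq:rescale} and test the result against a multiplier built from $\langle u_\mp\rangle^{1+\sigma}$ together with a ghost-weight factor tied to $\langle u_\pm\rangle^{-(1+\sigma)}$. The key null structure $(\partial_t\mp\partial_1)\langle u_\pm\rangle=0$ means that $\langle u_\mp\rangle$ is constant along the linearized characteristics of the $z_\pm$-equation, so the linear part of the identity generates no loss, while $(\partial_t\mp\partial_1)$ acting on the ghost-weight factor yields a positive spacetime integrand equivalent to $\tfrac{\langle u_\mp\rangle^{2(1+\sigma)}}{\langle u_\pm\rangle^{1+\sigma}}|\partial_h^{\alpha_h}\partial_3^l z_{\pm(\delta)}|^2$, precisely the flux $F^{(\alpha_h,l)}_\pm$. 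The nonlinear convection $z_{\mp(\delta)}\cdot\nabla z_{\pm(\delta)}$ splits into top-order and commutator pieces: Lemma \ref{lemma:sobolev} places the lower-order factor in $L^\infty$ with the correct $\delta$-scaling, Lemma \ref{lemma:weights}(i)--(ii) controls weight ratios between spatially separated points, and Lemma \ref{lemma:weights}(v) converts $(1+|t+a|)^{-1}$ factors into products of the two weights, producing bounds of the shape $\sqrt{E_\pm}\,\sqrt{F_\mp}$ absorbable by the bootstrap.

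I would then upgrade the purely horizontal-derivative control in $E^{(k,l)}_\pm$ to control of all mixed $\nabla^k\partial_3^l$ via the weighted div-curl Lemma \ref{lemma:divcurl} with $\lambda=\langle u_\mp\rangle^{2(1+\sigma)}$, which satisfies $|\nabla\lambda|\lesssim\lambda$ by Lemma \ref{lemma:weights}(iii). The slip boundary condition $z_{\pm(\delta)}^3|_{x_3=\pm 1}=0$ and its tangential derivatives $\nabla_h^k z_{\pm(\delta)}^3|_{x_3=\pm 1}=0$ make the boundary integrals in \eqref{eq:d-c2} vanish; the divergence is zero by hypothesis, and the curl is read off from the equations, reducing to already-estimated nonlinear and pressure terms. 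The pressure $p_{(\delta)}$ is recovered by taking $\operatorname{div}$ of an Els\"asser equation, yielding an anisotropic elliptic problem whose weighted bounds, developed in Sections \ref{sec:more estimates}--\ref{sec:pressure}, contribute quadratic remainders that the bootstrap absorbs. The anisotropy of $\nabla_\delta=(\partial_1,\partial_2,\delta^{-2}\partial_3)$ is the origin of the asymmetric powers of $\delta$ in $\mathcal{E}$.

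The main obstacle, in my view, is twofold. First, the $\delta$-dependence at top order requires care: the apparently singular factor $\delta^{-2}$ in $\nabla_\delta$ is tamed via the divergence-free relation $\delta^{-2}\partial_3 z_{\pm(\delta)}^3=-\delta^{-2}\nabla_h\cdot z_{\pm(\delta)}^h$, which is precisely what produces the $\delta^{-3}$ weight in front of $E_\pm^{(k,0)}(z_\pm^3)$ and forces the separate $\partial_3 z_\pm$ block in the definition of $\mathcal{E}$. Second, and this is the novelty relative to \cite{Xu}, verifying that no constant anywhere depends on $a$: the MHD system \eqref{MHD equation} is invariant under $x_1$-translation, so the only $a$-dependence in the analysis enters through the weights; since Lemma \ref{lemma:weights} is stated with constants independent of $a$ and the energy scheme uses only those geometric properties, the $a$-uniformity propagates through the bootstrap. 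This uniformity is precisely what will later allow the position parameter to serve as an independent probe of the rigidity from infinity.
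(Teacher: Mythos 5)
Your proposal follows the same route as the paper: a bootstrap/continuity argument that defers the detailed weighted (ghost-weight) energy identities, the div-curl upgrade, and the pressure bounds to the scheme of \cite{Xu}, with the only genuinely new ingredient being that the position parameter $a$ enters solely through the weights, whose properties (Lemma \ref{lemma:weights}) hold with constants uniform in $a$ — which is exactly the paper's argument. One small slip: the null identity should read $(\partial_t\mp\partial_1)\langle u_\mp\rangle=0$ (it is $u_\mp$, not $u_\pm$, that is constant along the linearized characteristics of $z_\pm$), as your very next sentence in fact states correctly.
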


The proof of Theorem \ref{lemma:global} is based on the standard continuity method, and 
we refer the readers to Theorem 1.1 in \cite{Xu} for the details.  
For the sake of use, we recall the main bootstrap argument therein as follows.

\textbf{(Bootstrap Assumption)} We assume that 
\begin{equation}\label{assumption1}
	\|z_\pm^1\|_{L^\infty_tL^\infty_x}\leqslant 1,
\end{equation}
and 
\begin{align*}
	&\delta^{2(l-\frac{1}{2})}E_\pm^{(k,l)}(z_\pm)\leqslant 2C_1\varepsilon^2,\ \ \ \ \ \delta^{2(l-\frac{1}{2})}F_\pm^{(k,l)}(z_\pm)\leqslant 2C_1\varepsilon^2,\ \ \ \ \text{for }k+l\leqslant 2N,\\
	&\delta^{-3}E_\pm^{(k,0)}(z_\pm^3)\leqslant 2C_1\varepsilon^2,\ \ \ \ \ \ \ \ \  \delta^{-3}F_\pm^{(k,0)}(z_\pm^3)\leqslant 2C_1\varepsilon^2,\ \ \ \ \ \  \ \text{for }k\leqslant 2N-1,\\
	&\delta^{2(l-\frac{1}{2})}E_\pm^{(k,l)}(\partial_3z_\pm)\leqslant 2C_1\varepsilon^2,\ \  \delta^{2(l-\frac{1}{2})}F_\pm^{(k,l)}(\partial_3z_\pm)\leqslant 2C_1\varepsilon^2,\ \text{for }k+l\leqslant N+2,\stepcounter{equation}\tag{\theequation}\label{assumption2}
\end{align*}
where $C_1$ can be determined by the energy estimates. We remark here that the assumption \eqref{assumption2} is legitimate since  
it holds for the initial data and then remains correct for at least a short time interval $[0,t^*]$.

\textbf{(Bootstrap Expectation)} We show that   
there exists a universal constant   $\varepsilon_0$ such that for all $\varepsilon\leqslant \varepsilon_0$, the constants in \eqref{assumption1}-\eqref{assumption2} can be improved to their corresponding halves, i.e. 
\begin{equation}\label{improve1}
	\|z_\pm^1\|_{L^\infty_tL^\infty_x}\leqslant \frac{1}{2},
\end{equation}
and 
\begin{align*}
	&\delta^{2(l-\frac{1}{2})}E_\pm^{(k,l)}(z_\pm)\leqslant C_1\varepsilon^2,\ \ \ \ \  \delta^{2(l-\frac{1}{2})}F_\pm^{(k,l)}(z_\pm)\leqslant C_1\varepsilon^2,\ \ \ \ \text{for }k+l\leqslant 2N,\\
	&\delta^{-3}E_\pm^{(k,0)}(z_\pm^3)\leqslant C_1\varepsilon^2,\ \ \ \ \ \ \ \ \  \delta^{-3}F_\pm^{(k,0)}(z_\pm^3)\leqslant C_1\varepsilon^2,\ \ \ \ \ \ \ \text{for }k\leqslant 2N-1,\\
	&\delta^{2(l-\frac{1}{2})}E_\pm^{(k,l)}(\partial_3z_\pm)\leqslant C_1\varepsilon^2,\ \  \delta^{2(l-\frac{1}{2})}F_\pm^{(k,l)}(\partial_3z_\pm)\leqslant C_1\varepsilon^2,\ \text{for }k+l\leqslant N+2.\stepcounter{equation}\tag{\theequation}\label{improve2}
\end{align*}

We point out that both the constants $\varepsilon_0$ and $C_1$ are independent of the thickness parameter $\delta$, the position parameter $a$ as well as the lifespan $[0,t^*]$. Based on the last fact, 
there exists an endless continuation of the lifespan from $[0,t^*]$ to $[0,+\infty]$, which thus leads to the global existence result. Therefore  
the proof of  
this theorem reduces to showing the uniform energy estimates \eqref{eq:estimate}, that is, we only need to show \eqref{improve1}-\eqref{improve2} under 
 \eqref{assumption1}-\eqref{assumption2}. The rest of the proof is similar to Page 22-48 in \cite{Xu} with the only difference  
 being the appearance of the position parameter $a$. In fact, as discussed in Lemma \ref{lemma:weights}, the position parameter only influences the properties of weights, which also  
 accounts for the independence of universal constants  
 on the position parameter.

 \begin{remark}
 In the rest of this paper, we will directly use  \eqref{improve1}-\eqref{improve2} to derive other estimates. 
\end{remark}

Based on the notations before, we have
\begin{align*}
	\mathcal{E}_\delta:=	& \sum_{+,-}\bigg(\sum_{k\leqslant 2N}\big(E_{\pm}^{(k)}(z^h_{\pm(\delta)})+F_{\pm}^{(k)}(z^h_{\pm(\delta)})\big) +\sum_{k\leqslant 2N-1}\big(E_{\pm}^{(k)}(z_{\pm(\delta)}^3)+F_{\pm}^{(k)}(z_{\pm(\delta)}^3)\big)\\
&\ \ \ \ \ \  +\delta^2\big(E_{\pm}^{(2N)}(z_{\pm(\delta)}^3)+F_{\pm}^{(2N)}(z_{\pm(\delta)}^3)\big)
+\delta^{-2}\sum_{k\leqslant N+2}\big(E_{\pm}^{(k)}(\partial_3z^h_{\pm(\delta)})+F_{\pm}^{(k)}(\partial_3z^h_{\pm(\delta)})\big)\bigg)\\
\sim&\ \mathcal{E}.
\end{align*}
We remark that similar equivalences will not be repeated in the sequel. Then	the following  corollary holds 
as a direct renormalization consequence of Theorem \ref{lemma:global}:
\begin{corollary}[Uniform weighted energy estimates in $\Omega_1$]
	\label{coro1}
	Let $N\in \mathbb{Z}_{\geqslant 5}$, $\delta\in(0,1]$ and  $\sigma\in(0,\frac{1}{3})$. 
	There exists a universal constant $\varepsilon_1\in(0,\varepsilon_0]$ such that if the initial data $(z_{+(\delta),0}(x),z_{-(\delta),0}(x))$ of the rescaled system \eqref{eq:rescale}  satisfy
	\begin{equation*} 
			\mathcal{E}_{\delta}(0):=	\sum_{+,-}\bigg(\!	\sum_{k\leqslant 2N} E_{\pm}^{(k)}(z^h_{\pm(\delta),0})	+\!\!\sum_{k\leqslant 2N-1}E_{\pm}^{(k)}(z_{\pm(\delta),0}^3)+\delta^2E_{\pm}^{(2N)}(z_{\pm(\delta),0}^3) +\delta^{-2}\!\!\sum_{k\leqslant N+2}E_{\pm}^{(k)}(\partial_3z^h_{\pm(\delta),0})\!\bigg)\leqslant\varepsilon_1^2,
	\end{equation*}
	then the rescaled system \eqref{eq:rescale}  admits a unique global  solution $\big(z_+(t,x),z_-(t,x)\big)$. 
	Moreover,  
	there is a universal constant $C$ such that the following  uniform (with respect to $\delta$) weighted energy estimates hold:
	\[\mathcal{E}_\delta\leqslant C\mathcal{E}_\delta(0).\]
	In particular, both the constants  $\varepsilon_1$ and $C$ are independent of the thickness parameter $\delta$ and the position parameter $a$. These facts are indeed important keys to ensuring the further study on rigidity. 
\end{corollary}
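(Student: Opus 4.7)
The proof is essentially a change of variables, so my plan is to translate everything through the anisotropic rescaling defined before the corollary and then invoke Theorem \ref{lemma:global}. Starting from initial data $(z_{+(\delta),0}, z_{-(\delta),0})$ on $\Omega_1$ satisfying the divergence-free and slip boundary conditions, I would first undo the rescaling to produce initial data $(z_{+,0}, z_{-,0})$ on $\Omega_\delta$ by setting $z^h(0,x_h,x_3) = z^h_{(\delta)}(0,x_h,x_3/\delta)$ and $z^3(0,x_h,x_3) = \delta\, z^3_{(\delta)}(0,x_h,x_3/\delta)$. The constraint $\mathrm{div}\, z_{\pm,0} = 0$ and the boundary conditions $z^3_{\pm,0}|_{x_3=\pm\delta}=0$ are automatic from those on $z_{\pm(\delta),0}$ by a direct chain-rule computation.

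Next, I would verify the equivalence $\mathcal{E}(0) \sim \mathcal{E}_\delta(0)$ term by term using the scaling identities already listed in the paper. The horizontal part is immediate from $\delta^{2(l-1/2)} E_\pm^{(k,l)}(z^h_{\pm,0}) = E_\pm^{(k,l)}(z^h_{\pm(\delta),0})$, and similarly for the $\partial_3 z^h$ block, where the overall $\delta^{-2}$ prefactor in $\mathcal{E}_\delta$ exactly absorbs the shift $l \mapsto l+1$ in the scaling exponent. For the vertical component, the $l=0$ contribution in $\mathcal{E}$ comes with $\delta^{-3}$ and matches $E_\pm^{(k,0)}(z^3_{\pm(\delta),0})$ directly; for $l \geq 1$ at orders $k + l \leq 2N - 1$, I would trade $\partial_3 z^3_\pm = -\nabla_h \cdot z^h_\pm$ via incompressibility (the identity already recorded in the excerpt) so that these contributions reduce to horizontal $\partial_3$-energies of $z^h_\pm$ already controlled. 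The only piece that does not fit into the lower-order blocks is the top order $k+l = 2N$ of $z^3$, which reappears as the isolated $\delta^2 E_\pm^{(2N)}(z^3_{\pm(\delta),0})$ term. Choosing $\varepsilon_1 \in (0, \varepsilon_0]$ smaller than $\varepsilon_0$ divided by the comparison constant from this equivalence, the hypothesis $\mathcal{E}_\delta(0) \leq \varepsilon_1^2$ forces $\mathcal{E}(0) \leq \varepsilon_0^2$.

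Theorem \ref{lemma:global} then provides a unique global solution $(z_+, z_-)$ of \eqref{MHD equation} on $\Omega_\delta$ with $\mathcal{E} \leq C \mathcal{E}(0)$, and defining $(z_{+(\delta)}, z_{-(\delta)})$ by the forward rescaling gives the unique global solution to \eqref{eq:rescale} on $\Omega_1$. The same term-by-term dictionary, applied now to the flux norms in addition to the energy norms, converts $\mathcal{E} \leq C \mathcal{E}(0)$ into $\mathcal{E}_\delta \leq C \mathcal{E}_\delta(0)$ after adjusting $C$ by the universal comparison constant. The $\delta$- and $a$-independence of $\varepsilon_1$ and $C$ is inherited from Theorem \ref{lemma:global}, since the scaling dictionary involves only explicit powers of $\delta$ and does not touch the weights $\langle u_\pm \rangle$. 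The only mildly delicate bookkeeping step is the handling of the $l \geq 1$ vertical derivatives of $z^3$ via incompressibility; everything else is an entirely mechanical renormalization and requires no new continuity or smallness argument.
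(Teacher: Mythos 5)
Your proposal is correct and follows exactly the paper's route: the paper derives Corollary \ref{coro1} as a ``direct renormalization consequence'' of Theorem \ref{lemma:global} via the displayed equivalence $\mathcal{E}_\delta\sim\mathcal{E}$, which is precisely your term-by-term scaling dictionary (including the use of $\partial_3 z^3_\pm=-\nabla_h\cdot z^h_\pm$ for the $l\geqslant 1$ vertical contributions and the isolated top-order $\delta^2 E^{(2N)}_\pm(z^3_{\pm(\delta)})$ term). You merely spell out the bookkeeping the paper leaves implicit, and your choice of $\varepsilon_1$ and the transfer of uniqueness and $\delta$-, $a$-independence are all as intended.
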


To end this section, we recall the approximation theory 
of 
the global solution (Alfv\'en waves) to the system \eqref{MHD equation} in $\Omega_{\delta}$ as $\delta\to 0$:

\begin{theorem}[Asymptotics of the global solution  from $\Omega_{\delta}$ to $\mathbb{R}^2$ as  $\delta$ goes to zero; extracted from Theorem 1.3 in \cite{Xu}] 
	\label{lemma:approx}
	Let $N\in \mathbb{Z}_{\geqslant 5}$, $\delta\in(0,1]$ and  $\sigma\in(0,\frac{1}{3})$.  
	Assume that   
	the initial data $(z_{+(\delta),0},z_{-(\delta),0})$ converge to $(z_{+(0),0},z_{-(0),0})$ in $H^{N+1}(\Omega_1)$
	with respect to $\delta$:
	\begin{equation*}
		\lim_{\delta\to 0}	\big(z_{\pm(\delta),0}^h(x_h,x_3),z_{\pm(\delta),0}^3(x_h,x_3)\big)
	=\big(z_{\pm(0),0}^h(x_h),0\big)\ \ \text{ in }H^{N+1}(\Omega_1),
\end{equation*}
i.e. 
\begin{equation*}
	\lim_{\delta\to 0}\sum_{k\leqslant N+1}\big\|\langle u_\mp\rangle^{1+\sigma}\nabla^k(z_{\pm(\delta),0} -z_{\pm(0),0} )\big\|_{L^2(\Omega_1)}=0,
\end{equation*}
where $z_{\pm(0),0}^h$ satisfies $\nabla_h\cdot z_{\pm(0),0}^h=0$. 
If $\big(z_{+(\delta)}(t,x),z_{-(\delta)}(t,x)\big)$ is a solution to the rescaled system  \eqref{eq:rescale}, then there exist functions $z_{\pm(0)}^h(t,x_h)$ such that for any $x_3\in(-1,1)$, there hold
\begin{equation*}
	\begin{split}
		\lim_{\delta\to 0}z_{\pm(\delta)}^h(t,x_h,x_3)&=z_{\pm(0)}^h(t,x_h)\ \ \text{ in }H^N(\mathbb{R}^2),\\
		\lim_{\delta\to 0}z_{\pm(\delta)}^3(t,x_h,x_3)&=0\ \ \text{ in }H^{N-1}(\mathbb{R}^2).
\end{split}\end{equation*}
\end{theorem}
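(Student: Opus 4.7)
The plan is to exploit the uniform energy estimates of Corollary~\ref{coro1}, which furnish $\delta$-independent bounds on $(z^h_{\pm(\delta)}, z^3_{\pm(\delta)})$ in weighted $H^{N}$-type spaces over $\Omega_1$, together with the crucial anisotropic decay built into the structure of $\mathcal{E}_\delta$. In particular, the prefactor $\delta^{-2}$ in front of $\sum_{k\leqslant N+2}\bigl(E_{\pm}^{(k)}(\partial_3 z^h_{\pm(\delta)})+F_{\pm}^{(k)}(\partial_3 z^h_{\pm(\delta)})\bigr)$ forces $\partial_3 z^h_{\pm(\delta)} = O(\delta)$ in a weighted $H^{N+2}$ norm, while the hypothesis $z^3_{\pm(\delta),0}\to 0$ in $H^{N+1}(\Omega_1)$ makes the vertical component asymptotically negligible at the initial time. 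These two facts suggest that the correct limiting object is a pair of functions $z^h_{\pm(0)}(t,x_h)$, independent of $x_3$.

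First I would use weak-$*$ compactness along a subsequence $\delta_n\to 0$ to extract weighted limits $z^h_{\pm(0)}$ and $z^3_{\pm(0)}$ in the natural function spaces, then upgrade to strong convergence on compact sets in $(t,x_h)$ via Aubin--Lions (uniform time-derivative bounds come from feeding the spatial estimates back into \eqref{eq:rescale}). The vertical-derivative decay $\partial_3 z^h_{\pm(\delta)}\to 0$ forces $\partial_3 z^h_{\pm(0)}=0$, so $z^h_{\pm(0)}$ depends only on $(t,x_h)$ as claimed.

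Next I would pass to the limit in the system \eqref{eq:rescale}. The horizontal equations yield the 2D incompressible MHD system for $z^h_{\pm(0)}$ with initial data $z^h_{\pm(0),0}$, which is divergence-free by hypothesis. To handle the vertical equation I rewrite it as $\delta^2\bigl[\partial_t z^3_{\pm(\delta)}+(z_{\mp(\delta)}\mp B_0)\cdot\nabla z^3_{\pm(\delta)}\bigr]=-\partial_3 p_{(\delta)}$ and let $\delta\to 0$, whence $\partial_3 p_{(0)}=0$. Combining the divergence-free condition $\nabla_h\cdot z^h_{\pm(\delta)}+\partial_3 z^3_{\pm(\delta)}=0$, the boundary conditions $z^3_{\pm(\delta)}|_{x_3=\pm 1}=0$, and the fact that 2D MHD preserves $\nabla_h\cdot z^h_{\pm(0)}=0$, I conclude $\partial_3 z^3_{\pm(0)}=0$ and hence $z^3_{\pm(0)}\equiv 0$.

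For the slice convergence in the theorem, I would use the decomposition
\begin{equation*}
z^h_{\pm(\delta)}(t,x_h,x_3)-z^h_{\pm(0)}(t,x_h)=\int_0^{x_3}\partial_3 z^h_{\pm(\delta)}(t,x_h,\tau)\,d\tau+\bigl[z^h_{\pm(\delta)}(t,x_h,0)-z^h_{\pm(0)}(t,x_h)\bigr],
\end{equation*}
in which the first term is $O(\delta)$ in $H^N(\mathbb{R}^2)$ by the vertical-derivative decay and the second tends to zero by the strong convergence established above; an analogous argument handles $z^3_{\pm(\delta)}\to 0$ in $H^{N-1}(\mathbb{R}^2)$. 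The main obstacle will be controlling the pressure $p_{(\delta)}$ uniformly with respect to the anisotropic gradient $\nabla_\delta$ so that the pressure term does not obstruct passage to the limit; once that is in place, uniqueness of the 2D MHD limit removes the subsequence extraction and yields convergence of the entire family as $\delta\to 0$.
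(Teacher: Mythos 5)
This theorem is not proved in the paper at all: it is quoted verbatim from Theorem 1.3 of \cite{Xu}, so there is no in-paper argument to compare against. Your outline correctly identifies the mechanisms that make the cited proof work — the $\delta^{-2}$-weighted bound in $\mathcal{E}_\delta$ forcing $\partial_3 z^h_{\pm(\delta)}=O(\delta)$, the rescaled vertical momentum equation yielding $\partial_3 p_{(0)}=0$, the divergence constraint plus the boundary condition $z^3_{\pm(\delta)}|_{x_3=\pm1}=0$ killing the vertical component, and the fundamental-theorem-of-calculus decomposition in $x_3$ for the slice convergence — and this is essentially the argument of \cite{Xu}, so the proposal is a sound (if high-level) reconstruction of the standard proof.
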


\begin{remark}
In particular, $\big(z_{+(0)}^h(t,x_h),z_{-(0)}^h(t,x_h)\big)$ solves the 2D version of the rescaled system \eqref{eq:rescale} with the initial data $\big(z_{+(0),0}^h(t,x_h),z_{-(0),0}^h(t,x_h)\big)$. Theorem \ref{lemma:approx} indeed shows that 3D Alfv\'en waves in $\Omega_\delta$ propagating along the horizontal direction can be approximated by 2D Alfv\'en waves in $\mathbb{R}^2$ as
$\delta$ goes to zero. 
\end{remark}

\section{Estimates on the nonlinear term}\label{sec:more estimates}

In this section, let us provide some important estimates for the nonlinear term.

\begin{lemma}[Estimate for $\mathbf{I}_{\pm}^{(\alpha_h,l)}$]\label{lemma1}
For any $\alpha_h\in(\mathbb{Z}_{\geqslant 0})^2$ and $l\in\mathbb{Z}_{\geqslant 0}$ with $|\alpha_h|+l\leqslant 2N-1$, there holds  
\[\delta^{l+\frac{1}{2}}\big\|\langle u_\mp\rangle^{1+\sigma}\langle u_\pm\rangle^{\frac{1}{2}(1+\sigma)}\mathbf{I}_{\pm}^{(\alpha_h,l)}\big\|_{L^2_tL^2_x}\lesssim C_1\varepsilon^2,\]
where \[\mathbf{I}_{\pm}^{(\alpha_h,l)}:=-\partial_h^{\alpha_h}\partial_3^l(\nabla z_\mp\cdot\nabla z_\pm).\]
We remark that given $N\in\mathbb{Z}_{\geqslant 5}$, this result also holds for any $\alpha_h\in(\mathbb{Z}_{\geqslant 0})^2$ and any  $l\in\mathbb{Z}_{\geqslant 0}$ with $0\leqslant|\alpha_h|+l\leqslant N+2$.
\end{lemma}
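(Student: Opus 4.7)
The plan is to expand $\mathbf{I}_\pm^{(\alpha_h,l)}$ via the Leibniz rule and to estimate each resulting product by H\"older's inequality, placing the ``low-derivative'' factor in $L^\infty_x$ (controlled by the Sobolev lemma on $\Omega_\delta$) and the ``high-derivative'' factor in weighted $L^2_x$ (controlled by a flux norm). Since the two Leibniz factors share a total of $|\alpha_h|+l \leq 2N-1$ derivatives, at least one of them always has order $\leq N-1$, which after the at-most-two extra derivatives coming from Lemma~\ref{lemma:sobolev} still fits comfortably inside the bootstrap ranges in \eqref{improve2}.

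Concretely, I would split the squared weight
\[ \langle u_\mp\rangle^{2(1+\sigma)}\langle u_\pm\rangle^{1+\sigma} \;=\; \langle u_\pm\rangle^{2(1+\sigma)} \cdot \frac{\langle u_\mp\rangle^{2(1+\sigma)}}{\langle u_\pm\rangle^{1+\sigma}}, \]
so that the first factor matches the $E_\mp$-weight while the second matches the $F_\pm$-weight. For a typical Leibniz term $\partial^{\beta_1}\!\nabla z_\mp \cdot \partial^{\beta_2}\!\nabla z_\pm$ with $|\beta_1|\leq N-1$ (the opposite case being symmetric), H\"older would dominate it by
\[ \bigl\|\langle u_\pm\rangle^{1+\sigma}\partial^{\beta_1}\!\nabla z_\mp\bigr\|_{L^\infty_{t,x}} \cdot \Bigl\|\tfrac{\langle u_\mp\rangle^{1+\sigma}}{\langle u_\pm\rangle^{(1+\sigma)/2}}\partial^{\beta_2}\!\nabla z_\pm\Bigr\|_{L^2_t L^2_x}. \]
I would control the $L^\infty_{t,x}$ factor by Lemma~\ref{lemma:sobolev}, absorbing derivatives of the weight through Lemma~\ref{lemma:weights}\,(iii), so that it becomes a finite sum of anisotropic energy norms for $z_\mp$ (or for $\partial_3 z_\mp$ whenever a vertical derivative is split off) with prefactor $\delta^{l'-1/2}$; all arising derivative orders are bounded by $|\beta_1|+3 \leq N+2$ and thus covered by \eqref{improve2}. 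The weighted $L^2_t L^2_x$ factor is, by construction, essentially $F_\pm^{(|\beta_2|+1)}(z_\pm)^{1/2}$, with $|\beta_2|+1 \leq 2N$ again inside the bootstrap range.

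Combining these bounds with the prefactor $\delta^{l+1/2}$, the several powers of $\delta$ arising from the Sobolev loss $\delta^{l'-1/2}$, from the anisotropic $E_\mp$-scaling $\delta^{1/2-(\cdot)}$, and from the anisotropic $F_\pm$-scaling $\delta^{1/2-(\cdot)}$ telescope to $\delta^0$ in the generic case, and to positive powers of $\delta$ in the remaining cases, giving $\lesssim C_1\varepsilon^2$. The main obstacle is precisely the book-keeping of these $\delta$-exponents when the trailing $\nabla$ strikes the vertical component $z_\pm^3$ or $z_\mp^3$: the enhanced $\delta^{-3}$-scaling of $E^{(\cdot,0)}(z^3)$ and $F^{(\cdot,0)}(z^3)$ is good news, but once a vertical derivative has already been applied to $z^3$ a naive estimate threatens an uncompensated $\delta^{-1}$. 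To avoid this I would either invoke the $\partial_3 z_\pm$ bootstrap on the $L^\infty$ factor whenever its total derivative count is $\leq N+2$, or appeal to the incompressibility identity $\partial_3 z^3 = -\nabla_h\cdot z^h$ to convert a vertical derivative on $z^3$ into a horizontal one on $z^h$, which carries the more favorable $\delta$-scaling. The extra statement for $|\alpha_h|+l \leq N+2$ then follows by the same argument in a strictly easier regime, as every factor involved already lies within the $\partial_3 z_\pm$ bootstrap range and no derivative trading is needed.
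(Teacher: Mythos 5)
Your proposal follows essentially the same route as the paper's proof: Leibniz expansion, the energy/flux weight split $\langle u_\pm\rangle^{2(1+\sigma)}\cdot\langle u_\mp\rangle^{2(1+\sigma)}\langle u_\pm\rangle^{-(1+\sigma)}$, H\"older with the lower-order factor placed in $L^\infty_x$ via the anisotropic Sobolev lemma (with the case split at $N-1$ derivatives), and crucially the substitution $\partial_3 z^3=-\nabla_h\cdot z^h$ to defuse the $\delta^{-1}$ loss from a vertical derivative hitting the vertical component — which is exactly the first step of the paper's argument. The $\delta$-exponent bookkeeping you describe does telescope as claimed, so the plan is sound.
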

\begin{proof}
	We only derive the estimate for $\mathbf{I}_+^{(\alpha_h,l)}$. The estimate on $\mathbf{I}_-^{(\alpha_h,l)}$ can be given in the same way.

By the divergence free condition $\operatorname{div}z_-=0$, we see that 
\begin{align*}
	\nabla z_-\cdot\nabla z_+
	&=\nabla z_-^k\cdot\partial_kz_+=\nabla z_-^h\cdot\nabla_hz_++\nabla z_-^3\cdot\partial_3z_+\\
	&=(\nabla_hz_-^h,\partial_3z_-^h) \cdot\nabla_hz_++(\nabla_hz_-^3,\partial_3 z_-^3)\cdot\partial_3z_+\\
	&=(\nabla_hz_-^h,\partial_3z_-^h) \cdot\nabla_hz_++(\nabla_hz_-^3,-\nabla_h z_-^h)\cdot\partial_3z_+,  
\end{align*}
and therefore
\begin{align*}
	\big|\mathbf{I}_{+}^{(\alpha_h,l)}\big|
	&\lesssim \sum_{\beta_h\leqslant \alpha_h\atop l_1\leqslant l}\Big(\underbrace{\big|\partial_h^{\alpha_h-\beta_h}\partial_3^{l-l_1}\nabla_h z_-^h\big|\cdot \big|\partial_h^{\beta_h}\partial_3^{l_1}\nabla_hz_+\big|}_{\mathbf{I}_{+,1}^{(\beta_h,l_1)}} +\underbrace{\big|\partial_h^{\alpha_h-\beta_h}\partial_3^{l-l_1}\nabla_h z_-\big|\cdot \big|\partial_h^{\beta_h}\partial_3^{l_1}\partial_3z_+\big|}_{\mathbf{I}_{+,2}^{(\beta_h,l_1)}}\\ 
	&\ \ \ \ \ \ \ \ \ \ \ \ \ \ \ \ \ \  +\underbrace{\big|\partial_h^{\alpha_h-\beta_h}\partial_3^{l-l_1}\partial_3 z_-^h\big|\cdot \big|\partial_h^{\beta_h}\partial_3^{l_1}\nabla_hz_+\big| }_{\mathbf{I}_{+,3}^{(\beta_h,l_1)}}\Big).
\end{align*}
In this way we obtain that 
\begin{align*}
	\delta^{l+\frac{1}{2}}\big\|\langle u_-\rangle^{1+\sigma}\langle u_+\rangle^{\frac{1}{2}(1+\sigma)}\mathbf{I}_{+}^{(\alpha_h,l)}\big\|_{L^2_tL^2_x}
	&\lesssim \delta^{l+\frac{1}{2}}\big\|\langle u_-\rangle^{1+\sigma}\langle u_+\rangle^{\frac{1}{2}(1+\sigma)}\mathbf{I}_{+,1}^{(\beta_h,l_1)}\big\|_{L^2_tL^2_x}\\
	&\ \ \ \ +\delta^{l+\frac{1}{2}}\big\|\langle u_-\rangle^{1+\sigma}\langle u_+\rangle^{\frac{1}{2}(1+\sigma)}\mathbf{I}_{+,2}^{(\beta_h,l_1)}\big\|_{L^2_tL^2_x}\\
	&\ \ \ \ +\delta^{l+\frac{1}{2}}\big\|\langle u_-\rangle^{1+\sigma}\langle u_+\rangle^{\frac{1}{2}(1+\sigma)}\mathbf{I}_{+,3}^{(\beta_h,l_1)}\big\|_{L^2_tL^2_x}.\stepcounter{equation}\tag{\theequation}\label{eqrho1}
\end{align*}

According to the size of $|\beta_h|+l_1$, we now have  two cases:
\[|\beta_h|+l_1\leqslant N-1\ \ \text{ and }\ \ N\leqslant|\beta_h|+l_1\leqslant  |\alpha_h|+l\leqslant 2N-1.\]
\subsubsection*{\bf Case 1:  $|\beta_h|+l_1\leqslant N-1$} Thanks to  $(|\beta_h|+l_1+1)+2\leqslant N+2$, we can use the Sobolev lemma to bound  $L^\infty_x$ norms of $\nabla_h^{|\beta_h|+1}\partial_3^{l_1} z_{+}$ in $\mathbf{I}_{+,1}^{(\beta_h,l_1)}$ as well as in  $\mathbf{I}_{+,3}^{(\beta_h,l_1)}$ and $\nabla_h^{|\beta_h|}\partial_3^{l_1+1} z_{+}$ in $\mathbf{I}_{+,2}^{(\beta_h,l_1)}$ respectively. From this, we have 
\begin{align*}
	&\ \ \ \ \delta^{l+\frac{1}{2}}\big\|\langle u_-\rangle^{1+\sigma}\langle u_+\rangle^{\frac{1}{2}(1+\sigma)}\mathbf{I}_{+,1}^{(\beta_h,l_1)}\big\|_{L^2_tL^2_x}\\
	&\stackrel{\text{H\"older}}{\lesssim} \delta^{l+\frac{1}{2}}\big\|\langle u_+\rangle^{1+\sigma}\nabla_h^{|\alpha_h-\beta_h|+1}\partial_3^{l-l_1}z_-^h\big\|_{L^\infty_tL^2_x}\cdot\Big\|\frac{\langle u_-\rangle^{1+\sigma}}{\langle u_+\rangle^{\frac{1}{2}(1+\sigma)}}\nabla_h^{|\beta_h|+1}\partial_3^{l_1}z_+\Big\|_{L^2_tL^\infty_x}\\
	&\stackrel{\text{Lemma } \ref{lemma:sobolev}}{\lesssim}\delta^{l+\frac{1}{2}}\big\|\langle u_+\rangle^{1+\sigma}\nabla_h^{|\alpha_h-\beta_h|+1}\partial_3^{l-l_1}z_-^h\big\|_{L^\infty_tL^2_x}\cdot \sum_{k_2+l_2\leqslant 2}\delta^{l_2-\frac{1}{2}}\Big\|\nabla_h^{k_2}\partial_3^{l_2}\Big(\frac{\langle u_-\rangle^{1+\sigma}}{\langle u_+\rangle^{\frac{1}{2}(1+\sigma)}}\nabla_h^{|\beta_h|+1}\partial_3^{l_1}z_+\Big)\Big\|_{L^2_tL^2_x} \\
	&\stackrel{\eqref{eq:weightde}}{\lesssim}\delta^{l+\frac{1}{2}}\big\|\langle u_+\rangle^{1+\sigma}\nabla_h^{|\alpha_h-\beta_h|+1}\partial_3^{l-l_1}z_-^h\big\|_{L^\infty_tL^2_x}\cdot  \sum_{k_2+l_2\leqslant 2}\delta^{l_2-\frac{1}{2}} \Big\|\frac{\langle u_-\rangle^{1+\sigma}}{\langle u_+\rangle^{\frac{1}{2}(1+\sigma)}}\nabla_h^{|\beta_h|+1+k_2}\partial_3^{l_1+l_2}z_+\Big\|_{L^2_tL^2_x}\\
	&\lesssim\sum_{k_1\leqslant|\alpha_h|+1}\delta^{l-l_1-\frac{1}{2}}\big(E_-^{(k_1,l-l_1)}(z_-)\big)^{\frac{1}{2}}\cdot\sum_{k_2+l_2\leqslant N+2}\delta^{l_2-\frac{1}{2}}\big(F_+^{(k_2,l_2)}(z_+)\big)^{\frac{1}{2}}
	\stackrel{\eqref{improve2}}{\lesssim} C_1\varepsilon^2,\stepcounter{equation}\tag{\theequation}\label{eqA5}\\
	&\ \ \ \ \delta^{l+\frac{1}{2}}\big\|\langle u_-\rangle^{1+\sigma}\langle u_+\rangle^{\frac{1}{2}(1+\sigma)}\mathbf{I}_{+,2}^{(\beta_h,l_1)}\big\|_{L^2_tL^2_x}\\
	&\stackrel{\text{H\"older}}{\lesssim} \delta^{l+\frac{1}{2}}\big\|\langle u_+\rangle^{1+\sigma}\nabla_h^{|\alpha_h-\beta_h|+1}\partial_3^{l-l_1}z_-\big\|_{L^\infty_tL^2_x}\cdot \Big\|\frac{\langle u_-\rangle^{1+\sigma}}{\langle u_+\rangle^{\frac{1}{2}(1+\sigma)}}\nabla_h^{|\beta_h|}\partial_3^{l_1+1}z_+\Big\|_{L^2_tL^\infty_x}\\
	&\stackrel{\text{Lemma } \ref{lemma:sobolev}}{\lesssim}\delta^{l+\frac{1}{2}}\big\|\langle u_+\rangle^{1+\sigma}\nabla_h^{|\alpha_h-\beta_h|+1}\partial_3^{l-l_1}z_-\big\|_{L^\infty_tL^2_x}\cdot\sum_{k_2+l_2\leqslant 2}\delta^{l_2-\frac{1}{2}}\Big\|\nabla_h^{k_2}\partial_3^{l_2}\Big(\frac{\langle u_-\rangle^{1+\sigma}}{\langle u_+\rangle^{\frac{1}{2}(1+\sigma)}}\nabla_h^{|\beta_h|}\partial_3^{l_1+1}z_+\Big)\Big\|_{L^2_tL^2_x}\\
	&\stackrel{\eqref{eq:weightde}}{\lesssim} \delta^{l+\frac{1}{2}}\big\|\langle u_+\rangle^{1+\sigma}\nabla_h^{|\alpha_h-\beta_h|+1}\partial_3^{l-l_1}z_-\big\|_{L^\infty_tL^2_x}\cdot\sum_{k_2+l_2\leqslant 2}\delta^{l_2-\frac{1}{2}}\Big\|\frac{\langle u_-\rangle^{1+\sigma}}{\langle u_+\rangle^{\frac{1}{2}(1+\sigma)}}\nabla_h^{|\beta_h|+k_2}\partial_3^{l_1+1+l_2}z_+\Big\|_{L^2_tL^2_x}\\
	&\lesssim
	  \sum_{k_1\leqslant|\alpha_h|+1}\delta^{l-l_1-\frac{1}{2}}\big(E_-^{(k_1,l-l_1)}(z_-)\big)^{\frac{1}{2}}\cdot\sum_{k_2+l_2\leqslant N+1}\delta^{l_2+\frac{1}{2}}\big(F_+^{(k_2,l_2+1)}(z_+)\big)^{\frac{1}{2}}
	  \stackrel{\eqref{improve2}}{\lesssim} C_1\varepsilon^2,\stepcounter{equation}\tag{\theequation}\label{eqA6}\\
	&\ \ \ \ \delta^{l+\frac{1}{2}}\big\|\langle u_-\rangle^{1+\sigma}\langle u_+\rangle^{\frac{1}{2}(1+\sigma)}\mathbf{I}_{+,3}^{(\beta_h,l_1)}\big\|_{L^2_tL^2_x}\\
	&\stackrel{\text{H\"older}}{\lesssim} \delta^{l+\frac{1}{2}}\big\|\langle u_+\rangle^{1+\sigma}\nabla_h^{|\alpha_h-\beta_h|}\partial_3^{l-l_1+1}z_-^h\big\|_{L^\infty_tL^2_x} \cdot  \Big\|\frac{\langle u_-\rangle^{1+\sigma}}{\langle u_+\rangle^{\frac{1}{2}(1+\sigma)}}\nabla_h^{|\beta_h|+1}\partial_3^{l_1}z_+\Big\|_{L^2_tL^\infty_x}\\
	& \stackrel{\text{Lemma } \ref{lemma:sobolev}}{\lesssim}\delta^{l+\frac{1}{2}}\big\|\langle u_+\rangle^{1+\sigma}\nabla_h^{|\alpha_h-\beta_h|}\partial_3^{l-l_1+1}z_-^h\big\|_{L^\infty_tL^2_x} \cdot \sum_{k_2+l_2\leqslant 2}\delta^{l_2-\frac{1}{2}}\Big\|\nabla_h^{k_2}\partial_3^{l_2}\Big(\frac{\langle u_-\rangle^{1+\sigma}}{\langle u_+\rangle^{\frac{1}{2}(1+\sigma)}}\nabla_h^{|\beta_h|+1}\partial_3^{l_1}z_+\Big)\Big\|_{L^2_tL^2_x}\\
	&\stackrel{\eqref{eq:weightde}}{\lesssim} \delta^{l+\frac{1}{2}}\big\|\langle u_+\rangle^{1+\sigma}\nabla_h^{|\alpha_h-\beta_h|}\partial_3^{l-l_1+1}z_-^h\big\|_{L^\infty_tL^2_x} \cdot \sum_{k_2+l_2\leqslant 2}\delta^{l_2-\frac{1}{2}}\Big\|\frac{\langle u_-\rangle^{1+\sigma}}{\langle u_+\rangle^{\frac{1}{2}(1+\sigma)}}\nabla_h^{|\beta_h|+1+k_2}\partial_3^{l_1+l_2}z_+\Big\|_{L^2_tL^2_x}\\
	&\lesssim
	 \sum_{k_1\leqslant|\alpha_h|}\delta^{l-l_1+\frac{1}{2}}\big(E_-^{(k_1,l-l_1+1)}(z_-)\big)^{\frac{1}{2}}\cdot\sum_{k_2+l_2\leqslant N+2} \delta^{l_2-\frac{1}{2}}\big(F_+^{(k_2,l_2)}(z_+)\big)^{\frac{1}{2}}
	 \stackrel{\eqref{improve2}}{\lesssim} C_1\varepsilon^2.\stepcounter{equation}\tag{\theequation}\label{eqA7}
\end{align*}
In this case, substituting \eqref{eqA5}-\eqref{eqA7} into \eqref{eqrho1} immediately gives the desired result.

\subsubsection*{\bf Case 2:  $N\leqslant|\beta_h|+l_1\leqslant|\alpha_h|+l\leqslant 2N-1$} 
	However, in this case, $L^\infty_x$ estimates are not directly applicable to $\nabla_h^{|\beta_h|+1}\partial_3^{l_1} z_{+}$ in $\mathbf{I}_{+,1}^{(\beta_h,l_1)}$ as well as in $\mathbf{I}_{+,3}^{(\beta_h,l_1)}$  and $\nabla_h^{|\beta_h|}\partial_3^{l_1+1} z_{+}$ in $\mathbf{I}_{+,2}^{(\beta_h,l_1)}$ anymore. This is because $(|\beta_h|+l_1+1)+2> N+2$ and one cannot afford more than $N+2$ derivatives to close the energy estimates in flux terms.  
	 Instead, we shall adapt $L^\infty_x$ estimates to $\nabla_h^{|\alpha_h-\beta_h|+1}\partial_3^{l-l_1}z_-^h$ in $\mathbf{I}_{+,1}^{(\beta_h,l_1)}$,$\nabla_h^{|\alpha_h-\beta_h|+1}\partial_3^{l-l_1}z_-$ in $\mathbf{I}_{+,2}^{(\beta_h,l_1)}$  and  $\nabla_h^{|\alpha_h-\beta_h|}\partial_3^{l-l_1+1}z_-^h$ in $\mathbf{I}_{+,3}^{(\beta_h,l_1)}$  via the Sobolev lemma as substitutes:
\begin{align*}
&\ \ \ \ \delta^{l+\frac{1}{2}}\big\|\langle u_-\rangle^{1+\sigma}\langle u_+\rangle^{\frac{1}{2}(1+\sigma)}\mathbf{I}_{+,1}^{(\beta_h,l_1)}\big\|_{L^2_tL^2_x}\\
&\stackrel{\text{H\"older}}{\lesssim} \delta^{l+\frac{1}{2}}\big\|\langle u_+\rangle^{1+\sigma}\nabla_h^{|\alpha_h-\beta_h|+1}\partial_3^{l-l_1}z_-^h\big\|_{L^\infty_tL^\infty_x}\cdot \Big\|\frac{\langle u_-\rangle^{1+\sigma}}{\langle u_+\rangle^{\frac{1}{2}(1+\sigma)}}\nabla_h^{|\beta_h|+1}\partial_3^{l_1}z_+\Big\|_{L^2_tL^2_x}\\
&\stackrel{\text{Lemma } \ref{lemma:sobolev}}{\lesssim}\delta^{l+\frac{1}{2}}
\sum_{k_2+l_2\leqslant 2}\delta^{l_2-\frac{1}{2}}\big\|\nabla_h^{k_2}\partial_3^{l_2}\big(\langle u_+\rangle^{1+\sigma}\nabla_h^{|\alpha_h-\beta_h|+1}\partial_3^{l-l_1}z_-^h\big)\big\|_{L^\infty_tL^2_x}\cdot \Big\|\frac{\langle u_-\rangle^{1+\sigma}}{\langle u_+\rangle^{\frac{1}{2}(1+\sigma)}}\nabla_h^{|\beta_h|+1}\partial_3^{l_1}z_+\Big\|_{L^2_tL^2_x}\\
&\stackrel{\eqref{eq:weightde}}{\lesssim}	\delta^{l+\frac{1}{2}}\sum_{k_2+l_2\leqslant 2}\delta^{l_2-\frac{1}{2}}\big\|\langle u_+\rangle^{1+\sigma}\nabla_h^{|\alpha_h-\beta_h|+1+k_2}\partial_3^{l-l_1+l_2}z_-^h\big\|_{L^\infty_tL^2_x}\cdot \Big\|\frac{\langle u_-\rangle^{1+\sigma}}{\langle u_+\rangle^{\frac{1}{2}(1+\sigma)}}\nabla_h^{|\beta_h|+1}\partial_3^{l_1}z_+\Big\|_{L^2_tL^2_x}\\
&\lesssim \sum_{k_2+l_2\leqslant N+2}\delta^{l_2-\frac{1}{2}}\big(E_-^{(k_2,l_2)}(z_-)\big)^{\frac{1}{2}}\cdot\sum_{k_1\leqslant|\alpha_h|+1}\delta^{l_1-\frac{1}{2}}\big(F_+^{(k_1,l_1)}(z_+)\big)^{\frac{1}{2}}
\stackrel{\eqref{improve2}}{\lesssim} C_1\varepsilon^2,\stepcounter{equation}\tag{\theequation}\label{eqA8}\\
&\ \ \ \ \delta^{l+\frac{1}{2}}\big\|\langle u_-\rangle^{1+\sigma}\langle u_+\rangle^{\frac{1}{2}(1+\sigma)}\mathbf{I}_{+,2}^{(\beta_h,l_1)}\big\|_{L^2_tL^2_x}\\
&\stackrel{\text{H\"older}}{\lesssim} \delta^{l+\frac{1}{2}}\big\|\langle u_+\rangle^{1+\sigma}\nabla_h^{|\alpha_h-\beta_h|+1}\partial_3^{l-l_1}z_-\big\|_{L^\infty_tL^\infty_x}\cdot  \Big\|\frac{\langle u_-\rangle^{1+\sigma}}{\langle u_+\rangle^{\frac{1}{2}(1+\sigma)}}\nabla_h^{|\beta_h|}\partial_3^{l_1+1}z_+\Big\|_{L^2_tL^2_x}\\
&\stackrel{\text{Lemma } \ref{lemma:sobolev}}{\lesssim}\delta^{l+\frac{1}{2}}
\sum_{k_2+l_2\leqslant 2}\delta^{l_2-\frac{1}{2}}\big\|\nabla_h^{k_2}\partial_3^{l_2}\big(\langle u_+\rangle^{1+\sigma}\nabla_h^{|\alpha_h-\beta_h|+1}\partial_3^{l-l_1}z_-\big)\big\|_{L^\infty_tL^2_x}\cdot  \Big\|\frac{\langle u_-\rangle^{1+\sigma}}{\langle u_+\rangle^{\frac{1}{2}(1+\sigma)}}\nabla_h^{|\beta_h|}\partial_3^{l_1+1}z_+\Big\|_{L^2_tL^2_x}\\
&\stackrel{\eqref{eq:weightde}}{\lesssim}\delta^{l+\frac{1}{2}}	\sum_{k_2+l_2\leqslant 2}\delta^{l_2-\frac{1}{2}}
\big\|\langle u_+\rangle^{1+\sigma}\nabla_h^{|\alpha_h-\beta_h|+1+k_2}\partial_3^{l-l_1+l_2}z_-\big\|_{L^\infty_tL^2_x}\cdot  \Big\|\frac{\langle u_-\rangle^{1+\sigma}}{\langle u_+\rangle^{\frac{1}{2}(1+\sigma)}}\nabla_h^{|\beta_h|}\partial_3^{l_1+1}z_+\Big\|_{L^2_tL^2_x}\\
&\lesssim
\sum_{k_2+l_2\leqslant N+2}\delta^{l_2-\frac{1}{2}}\big(E_-^{(k_2,l_2)}(z_-)\big)^{\frac{1}{2}}\cdot\sum_{k_1\leqslant|\alpha_h|}\delta^{l_1+\frac{1}{2}}\big(F_+^{(k_1,l_1+1)}(z_+)\big)^{\frac{1}{2}}
\stackrel{\eqref{improve2}}{\lesssim} C_1\varepsilon^2,\stepcounter{equation}\tag{\theequation}\label{eqA9}\\
&\ \ \ \ \delta^{l+\frac{1}{2}}\big\|\langle u_-\rangle^{1+\sigma}\langle u_+\rangle^{\frac{1}{2}(1+\sigma)}\mathbf{I}_{+,3}^{(\beta_h,l_1)}\big\|_{L^2_tL^2_x}\\
&\stackrel{\text{H\"older}}{\lesssim} \delta^{l+\frac{1}{2}}\big\|\langle u_+\rangle^{1+\sigma}\nabla_h^{|\alpha_h-\beta_h|}\partial_3^{l-l_1+1}z_-^h\big\|_{L^\infty_tL^\infty_x}\cdot \Big\|\frac{\langle u_-\rangle^{1+\sigma}}{\langle u_+\rangle^{\frac{1}{2}(1+\sigma)}}\nabla_h^{|\beta_h|+1}\partial_3^{l_1}z_+\Big\|_{L^2_tL^2_x}\\
&\stackrel{\text{Lemma } \ref{lemma:sobolev}}{\lesssim}\delta^{l+\frac{1}{2}}
\sum_{k_2+l_2\leqslant 2}\delta^{l_2-\frac{1}{2}}\big\|\nabla_h^{k_2}\partial_3^{l_2}\big(\langle u_+\rangle^{1+\sigma}\nabla_h^{|\alpha_h-\beta_h|}\partial_3^{l-l_1+1}z_-^h\big)\big\|_{L^\infty_tL^2_x}\cdot \Big\|\frac{\langle u_-\rangle^{1+\sigma}}{\langle u_+\rangle^{\frac{1}{2}(1+\sigma)}}\nabla_h^{|\beta_h|+1}\partial_3^{l_1}z_+\Big\|_{L^2_tL^2_x}\\
&\stackrel{\eqref{eq:weightde}}{\lesssim}\delta^{l+\frac{1}{2}}\sum_{k_2+l_2\leqslant 2}\delta^{l_2-\frac{1}{2}}\big\|\langle u_+\rangle^{1+\sigma}\nabla_h^{|\alpha_h-\beta_h|+k_2}\partial_3^{l-l_1+1+l_2}z_-^h\big\|_{L^\infty_tL^2_x}\cdot \Big\|\frac{\langle u_-\rangle^{1+\sigma}}{\langle u_+\rangle^{\frac{1}{2}(1+\sigma)}}\nabla_h^{|\beta_h|+1}\partial_3^{l_1}z_+\Big\|_{L^2_tL^2_x}\\
&\lesssim	
\sum_{k_2+l_2\leqslant N+1}\delta^{l_2+\frac{1}{2}}\big(E_-^{(k_2,l_2+1)}(z_-)\big)^{\frac{1}{2}}\cdot\sum_{k_1\leqslant|\alpha_h|+1}\delta^{l_1-\frac{1}{2}}\big(F_+^{(k_1,l_1)}(z_+)\big)^{\frac{1}{2}}
\stackrel{\eqref{improve2}}{\lesssim} C_1\varepsilon^2.\stepcounter{equation}\tag{\theequation}\label{eqA10}
\end{align*}
In this case, using \eqref{eqA8}-\eqref{eqA10} together with \eqref{eqrho1} also yields the desired result.

Finally, collecting the above two cases finishes the proof of this lemma. 
\end{proof}

\begin{lemma}[Estimate for $\mathbf{J}_\pm^{(\alpha_h,l)}$]\label{lemma2}
	For any $\alpha_h\in(\mathbb{Z}_{\geqslant 0})^2$ and any $l\in\mathbb{Z}_{\geqslant 0}$ with $0\leqslant|\alpha_h|+l\leqslant 2N-1$, 
	there holds
	\[\delta^{l-\frac{1}{2}}\big\|\langle u_\mp\rangle^{1+\sigma}\langle u_\pm\rangle^{\frac{1}{2}(1+\sigma)}\mathbf{J}_\pm^{(\alpha_h,l)}\big\|_{L^2_tL^2_x}
	\lesssim C_1\varepsilon^2,\]
	where 
	\[\mathbf{J}_\pm^{(\alpha_h,l)}:=\partial_h^{\alpha_h}\partial_3^{l}(z_\mp\cdot \nabla z_\pm).\]
	We remark that given $N\in\mathbb{Z}_{\geqslant 5}$, this result also holds for any $\alpha_h\in(\mathbb{Z}_{\geqslant 0})^2$ and any $l\in\mathbb{Z}_{\geqslant 0}$ with $0\leqslant|\alpha_h|+l\leqslant N+2$.
\end{lemma}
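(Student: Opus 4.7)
The plan is to mirror the structure of Lemma \ref{lemma1} while accommodating the fact that the low-derivative factor is now $z_\mp$ rather than $\nabla z_\mp$. I will prove the bound for $\mathbf{J}_+^{(\alpha_h,l)}$; the estimate for $\mathbf{J}_-^{(\alpha_h,l)}$ follows by symmetry. First I decompose
\begin{equation*}
z_-\cdot\nabla z_+ \;=\; z_-^h\cdot\nabla_h z_+ \;+\; z_-^3\,\partial_3 z_+.
\end{equation*}
As in Lemma \ref{lemma1}, whenever a $\partial_3$-derivative lands on $z_-^3$ through Leibniz I use $\operatorname{div} z_-=0$ in the form $\partial_3 z_-^3=-\nabla_h\cdot z_-^h$ to convert it into a horizontal derivative of $z_-^h$. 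The remaining terms in which $z_-^3$ appears undifferentiated in $x_3$ are controlled through the $z^3$-specific energy and flux norms, whose $\delta$-scalings $E_\pm^{(\alpha_h,0)}(z^3_{\pm(\delta)})=\delta^{-3}E_\pm^{(\alpha_h,0)}(z_\pm^3)$ and $F_\pm^{(\alpha_h,0)}(z^3_{\pm(\delta)})=\delta^{-3}F_\pm^{(\alpha_h,0)}(z_\pm^3)$ already encode the $\delta^{3/2}$ gain ultimately coming from the boundary condition $z_-^3|_{x_3=\pm\delta}=0$.

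After distributing derivatives via Leibniz, I split into the same two cases used in Lemma \ref{lemma1}: Case~1 with $|\beta_h|+l_1\leqslant N-1$, in which the high-order $z_+$-factor has sufficient regularity to be placed in $L^2_t L^\infty_x$ via Lemma \ref{lemma:sobolev} against an $L^\infty_t L^2_x$ bound on the low-order $z_-$-factor; and Case~2 with $N\leqslant|\beta_h|+l_1\leqslant|\alpha_h|+l\leqslant 2N-1$, in which the roles are reversed and Lemma \ref{lemma:sobolev} is applied to the $z_-$-factor instead. In each case I apply H\"older's inequality, distribute the weights $\langle u_-\rangle^{1+\sigma}$ and $\langle u_-\rangle^{1+\sigma}/\langle u_+\rangle^{(1+\sigma)/2}$ across the two factors using \eqref{eq:weightde}, and then feed the resulting weighted $L^2$ norms into the improved bootstrap bounds \eqref{improve2}, using in particular the $\delta^{-3}$-weighted bound on $z_\pm^3$ whenever the retained $z_-^3$-factor appears.

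The main bookkeeping obstacle is to verify that the $\delta$ powers close correctly: the overall prefactor here is $\delta^{l-1/2}$, which is $\delta^{-1}$ smaller than the $\delta^{l+1/2}$ prefactor of Lemma \ref{lemma1}. This deficit is exactly compensated by the fact that the $z_-$-factor carries one fewer derivative than in Lemma \ref{lemma1}: the rescaled $z_-^h$-norm loses one power of $\delta^{1/2}$ and the rescaled $z_-^3$-norm loses one power of $\delta^{3/2}$ relative to their $\nabla$-counterparts, which covers the missing $\delta$. The final statement for $0\leqslant|\alpha_h|+l\leqslant N+2$ is handled verbatim, noting that in this subrange Case~2 is automatically subsumed by Case~1 whenever $N\geqslant 5$.
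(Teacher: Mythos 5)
Your proposal follows essentially the same route as the paper: the same Leibniz decomposition into $z_-^h\cdot\nabla_h z_+$ and $z_-^3\partial_3 z_+$ pieces, the same case split at $|\beta_h|+l_1=N-1$ with the Sobolev lemma applied alternately to the $z_+$- or $z_-$-factor, the same use of $\partial_3 z_-^3=-\nabla_h\cdot z_-^h$ and of the $\delta^{-3}$-weighted norms on $z_-^3$, and the same $\delta$-bookkeeping. The only slight imprecision is the closing claim that Case~2 is ``subsumed by Case~1'' in the range $|\alpha_h|+l\leqslant N+2$ (it is not, since $|\beta_h|+l_1$ can still reach $N$), but this is immaterial because that range is already contained in $|\alpha_h|+l\leqslant 2N-1$ when $N\geqslant 5$.
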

\begin{proof}
Based on symmetry, we only need to derive bound related to $\mathbf{J}_{+}^{(\alpha_h,l)}$. 
	
	Our proof starts with the observation that 
	\begin{align*} \big|\mathbf{J}_+^{(\alpha_h,l)}\big|
		&=\big|\sum_{\beta_h\leqslant\alpha_h\atop l_1\leqslant l}\partial_h^{\alpha_h-\beta_h}\partial_3^{l-l_1}z_-\cdot \nabla\partial_h^{\beta_h}\partial_3^{l_1} z_{+}\big|\lesssim\sum_{\beta_h\leqslant\alpha_h\atop l_1\leqslant l}\big|\partial_h^{\alpha_h-\beta_h}\partial_3^{l-l_1}z_-\cdot \nabla\partial_h^{\beta_h}\partial_3^{l_1} z_{+}\big|\\
		&
		=\sum_{\beta_h\leqslant\alpha_h\atop l_1\leqslant l}\big|\partial_h^{\alpha_h-\beta_h}\partial_3^{l-l_1}z_-^h\cdot \partial_h\partial_h^{\beta_h}\partial_3^{l_1} z_{+}+\partial_h^{\alpha_h-\beta_h}\partial_3^{l-l_1}z_-^3\cdot \partial_3\partial_h^{\beta_h}\partial_3^{l_1} z_{+}\big|\\
		&\lesssim\sum_{\beta_h\leqslant\alpha_h\atop l_1\leqslant l}\Big(\underbrace{\big|\partial_h^{\alpha_h-\beta_h}\partial_3^{l-l_1}z_-^h\big|\cdot \big|\nabla_h^{|\beta_h|+1}\partial_3^{l_1} z_{+}\big|}_{\mathbf{J}_{+,1}^{(\beta_h,l_1)}}+\underbrace{\big|\partial_h^{\alpha_h-\beta_h}\partial_3^{l-l_1}z_-^3\big|\cdot \big|\nabla_h^{|\beta_h|}\partial_3^{l_1+1} z_{+}\big|}_{\mathbf{J}_{+,2}^{(\beta_h,l_1)}}\Big).
	\end{align*}
	As a result, there holds
	\begin{align*}
		\delta^{l-\frac{1}{2}}\big\|\langle u_-\rangle^{1+\sigma}\langle u_+\rangle^{\frac{1}{2}(1+\sigma)}\mathbf{J}_+^{(\alpha_h,l_1)}\big\|_{L^2_tL^2_x}&\lesssim \delta^{l-\frac{1}{2}}\big\|\langle u_-\rangle^{1+\sigma}\langle u_+\rangle^{\frac{1}{2}(1+\sigma)}\mathbf{J}_{+,1}^{(\beta_h,l_1)}\big\|_{L^2_tL^2_x}\\
		&\ \ \ \  +\delta^{l-\frac{1}{2}}\big\|\langle u_-\rangle^{1+\sigma}\langle u_+\rangle^{\frac{1}{2}(1+\sigma)}\mathbf{J}_{+,2}^{(\beta_h,l_1)}\big\|_{L^2_tL^2_x}.\stepcounter{equation}\tag{\theequation}\label{eq:I1I2}
	\end{align*}
	
Due to the number of derivatives (of $z_+$ related), we distinguish the following two cases:	
	\[|\beta_h|+l_1\leqslant N-1\ \ \text{ and }\ \ N\leqslant|\beta_h|+l_1\leqslant |\alpha_h|+l\leqslant 2N-1.\]
	\subsubsection*{\bf Case 1:  $|\beta_h|+l_1\leqslant N-1$} 
	Due to $(|\beta_h|+l_1+1)+2\leqslant N+2$, we can always derive $L^\infty_x$ estimates on $\nabla_h^{|\beta_h|+1}\partial_3^{l_1} z_{+}$ in $\mathbf{J}_{+,1}^{(\beta_h,l_1)}$ and $\nabla_h^{|\beta_h|}\partial_3^{l_1+1} z_{+}$ in $\mathbf{J}_{+,2}^{(\beta_h,l_1)}$ via the Sobolev lemma. Consequently, we can  carry out the estimates as follows:
	\begin{align*}
		&\ \ \ \ \delta^{l-\frac{1}{2}}\big\|\langle u_-\rangle^{1+\sigma}\langle u_+\rangle^{\frac{1}{2}(1+\sigma)}\mathbf{J}_{+,1}^{(\beta_h,l_1)}\big\|_{L^2_tL^2_x}\\
		&\stackrel{\text{H\"older}}{\lesssim}\delta^{l-\frac{1}{2}}\big\|\langle u_+\rangle^{1+\sigma}\partial_h^{\alpha_h-\beta_h}\partial_3^{l-l_1}z_-^h\big\|_{L^\infty_tL^2_x}\Big\|\frac{\langle u_-\rangle^{1+\sigma}}{\langle u_+\rangle^{\frac{1}{2}(1+\sigma)}}\nabla_h^{|\beta_h|+1}\partial_3^{l_1} z_{+}\Big\|_{L^2_tL^\infty_x}\\
		& \stackrel{\text{Lemma } \ref{lemma:sobolev}}{\lesssim}\delta^{l-\frac{1}{2}}\big\|\langle u_+\rangle^{1+\sigma}\partial_h^{\alpha_h-\beta_h}\partial_3^{l-l_1}z_-^h\big\|_{L^\infty_tL^2_x} \sum_{k_2+l_2\leqslant 2}\delta^{l_2-\frac{1}{2}}\Big\|\nabla_h^{k_2}\partial_3^{l_2}\Big(\frac{\langle u_-\rangle^{1+\sigma}}{\langle u_+\rangle^{\frac{1}{2}(1+\sigma)}}\nabla_h^{|\beta_h|+1}\partial_3^{l_1} z_{+}\Big)\Big\|_{L^2_tL^2_x}\\
		&\stackrel{\eqref{eq:weightde}}{\lesssim}\delta^{l-\frac{1}{2}}\big\|\langle u_+\rangle^{1+\sigma}\partial_h^{\alpha_h-\beta_h}\partial_3^{l-l_1}z_-^h\big\|_{L^\infty_tL^2_x}\sum_{k_2+l_2\leqslant 2}\delta^{l_2-\frac{1}{2}}\Big\|\frac{\langle u_-\rangle^{1+\sigma}}{\langle u_+\rangle^{\frac{1}{2}(1+\sigma)}}\nabla_h^{|\beta_h|+1+k_2}\partial_3^{l_1+l_2} z_{+}\Big\|_{L^2_tL^2_x}\\
		&\lesssim\sum_{k_1\leqslant|\alpha_h|}\delta^{l-l_1-\frac{1}{2}} \big(E_-^{(k_1,l-l_1)}(z_-^h)\big)^{\frac{1}{2}}\sum_{k_2+l_2\leqslant N+ 2}\delta^{l_2-\frac{1}{2}}\big(F_+^{(k_2,l_2)}(z_+)\big)^{\frac{1}{2}}
		\stackrel{\eqref{improve2}}{\lesssim} C_1\varepsilon^2,\stepcounter{equation}\tag{\theequation}\label{eqA1}\\
		&\ \ \ \ \delta^{l-\frac{1}{2}}\big\|\langle u_-\rangle^{1+\sigma}\langle u_+\rangle^{\frac{1}{2}(1+\sigma)}\mathbf{J}_{+,2}^{(\beta_h,l_1)}\big\|_{L^2_tL^2_x}\\
		&\stackrel{\text{H\"older}}{\lesssim}\delta^{l-\frac{1}{2}}\big\|\langle u_+\rangle^{1+\sigma}\partial_h^{\alpha_h-\beta_h}\partial_3^{l-l_1}z_-^3\big\|_{L^\infty_tL^2_x}\Big\|\frac{\langle u_-\rangle^{1+\sigma}}{\langle u_+\rangle^{\frac{1}{2}(1+\sigma)}}\nabla_h^{|\beta_h|}\partial_3^{l_1+1} z_{+}\Big\|_{L^2_tL^\infty_x}\\
		&\stackrel{\text{Lemma } \ref{lemma:sobolev}}{\lesssim}\delta^{l-\frac{1}{2}}\big\|\langle u_+\rangle^{1+\sigma}\partial_h^{\alpha_h-\beta_h}\partial_3^{l-l_1}z_-^3\big\|_{L^\infty_tL^2_x} \sum_{k_2+l_2\leqslant 2}\delta^{l_2-\frac{1}{2}}\Big\|\nabla_h^{k_2}\partial_3^{l_2}\Big(\frac{\langle u_-\rangle^{1+\sigma}}{\langle u_+\rangle^{\frac{1}{2}(1+\sigma)}}\nabla_h^{|\beta_h|}\partial_3^{l_1+1} z_{+}\Big)\Big\|_{L^2_tL^2_x}\\
		&\stackrel{\eqref{eq:weightde}}{\lesssim}\delta^{l-\frac{1}{2}}\big\|\langle u_+\rangle^{1+\sigma}\partial_h^{\alpha_h-\beta_h}\partial_3^{l-l_1}z_-^3\big\|_{L^\infty_tL^2_x}\sum_{k_2+l_2\leqslant 2}\delta^{l_2-\frac{1}{2}}\Big\|\frac{\langle u_-\rangle^{1+\sigma}}{\langle u_+\rangle^{\frac{1}{2}(1+\sigma)}}\nabla_h^{|\beta_h|+k_2}\partial_3^{l_1+1+l_2} z_{+}\Big\|_{L^2_tL^2_x}\\
		&\lesssim\sum_{k_1\leqslant|\alpha_h|}\delta^{l-l_1-\frac{1}{2}} \big(E_-^{(k_1,l-l_1)}(z_-^3)\big)^{\frac{1}{2}}\sum_{k_2+l_2\leqslant N+ 1}\delta^{l_2-\frac{1}{2}}\big(F_+^{(k_2,l_2)}(\partial_3z_+)\big)^{\frac{1}{2}}
		\stackrel{\eqref{improve2}}{\lesssim} C_1\varepsilon^2.\stepcounter{equation}\tag{\theequation}\label{eqA2}
	\end{align*}
	Together with \eqref{eq:I1I2}, these two estimates \eqref{eqA1}-\eqref{eqA2} lead us to the desired result for 
	this case.

	\subsubsection*{\bf Case 2:  $N\leqslant|\beta_h|+l_1\leqslant |\alpha_h|+l\leqslant 2N-1$}
	By virtue of 
	$(|\beta_h|+l_1+1)+2> N+2$, 
	the terms related to $z_+$ now
	cannot be controlled by the $L^\infty_x$ estimates via the  Sobolev lemma and energy flux estimates as the previous case. Likewise, we also turn our attention to $L^\infty_x$ estimates on the terms related to $z_-$ at this time:
	\begin{align*}
		&\ \ \ \ \delta^{l-\frac{1}{2}}	\big\|\langle u_-\rangle^{1+\sigma}\langle u_+\rangle^{\frac{1}{2}(1+\sigma)}\mathbf{J}_{+,1}^{(\beta_h,l_1)}\big\|_{L^2_tL^2_x}\\
		&\stackrel{\text{H\"older}}{\lesssim}\delta^{l-\frac{1}{2}}\big\|\langle u_+\rangle^{1+\sigma}\partial_h^{\alpha_h-\beta_h}\partial_3^{l-l_1}z_-^h\big\|_{L^\infty_tL^\infty_x}\Big\|\frac{\langle u_-\rangle^{1+\sigma}}{\langle u_+\rangle^{\frac{1}{2}(1+\sigma)}}\nabla_h^{|\beta_h|+1}\partial_3^{l_1} z_{+}\Big\|_{L^2_tL^2_x}\\
		& \stackrel{\text{Lemma } \ref{lemma:sobolev}}{\lesssim} \delta^{l-\frac{1}{2}}\sum_{k_2+l_2\leqslant 2}\delta^{l_2-\frac{1}{2}}\Big\|\nabla_h^{k_2}\partial_3^{l_2}\big(\langle u_+\rangle^{1+\sigma}\partial_h^{\alpha_h-\beta_h}\partial_3^{l-l_1}z_-^h\big)\Big\|_{L^\infty_tL^2_x}\Big\|\frac{\langle u_-\rangle^{1+\sigma}}{\langle u_+\rangle^{\frac{1}{2}(1+\sigma)}}\nabla_h^{|\beta_h|+1}\partial_3^{l_1} z_{+}\Big\|_{L^2_tL^2_x}\\
		&\stackrel{\eqref{eq:weightde}}{\lesssim} \delta^{l-\frac{1}{2}}\sum_{k_2+l_2\leqslant 2}\delta^{l_2-\frac{1}{2}}\big\|\langle u_+\rangle^{1+\sigma}\nabla_h^{|\alpha_h-\beta_h|+k_2}\partial_3^{l-l_1+l_2}z_-^h\big\|_{L^\infty_tL^2_x}\Big\|\frac{\langle u_-\rangle^{1+\sigma}}{\langle u_+\rangle^{\frac{1}{2}(1+\sigma)}}\nabla_h^{|\beta_h|+1}\partial_3^{l_1} z_{+}\Big\|_{L^2_tL^2_x}\\
		&\lesssim \sum_{k_2+l_2\leqslant N+1}\delta^{l_2-\frac{1}{2}} \big(E_-^{(k_2,l_2)}(z_-^h)\big)^{\frac{1}{2}}\sum_{k_1\leqslant|\alpha_h|+1}\delta^{l_1-\frac{1}{2}} \big(F_+^{(k_1,l_1)}(z_+)\big)^{\frac{1}{2}}
		\stackrel{\eqref{improve2}}{\lesssim} C_1\varepsilon^2,\stepcounter{equation}\tag{\theequation}\label{eqA3}\\
		&\ \ \ \ 	\delta^{l-\frac{1}{2}}\big\|\langle u_-\rangle^{1+\sigma}\langle u_+\rangle^{\frac{1}{2}(1+\sigma)}\mathbf{J}_{+,2}^{(\beta_h,l_1)}\big\|_{L^2_tL^2_x}\\
		&\stackrel{\text{H\"older}}{\lesssim}\delta^{l-\frac{1}{2}}\big\|\langle u_+\rangle^{1+\sigma}\partial_h^{\alpha_h-\beta_h}\partial_3^{l-l_1}z_-^3\big\|_{L^\infty_tL^\infty_x}\Big\|\frac{\langle u_-\rangle^{1+\sigma}}{\langle u_+\rangle^{\frac{1}{2}(1+\sigma)}}\nabla_h^{|\beta_h|}\partial_3^{l_1+1} z_{+}\Big\|_{L^2_tL^2_x}\\
		&\stackrel{\text{Lemma } \ref{lemma:sobolev}}{\lesssim} \delta^{l-\frac{1}{2}}\sum_{k_2+l_2\leqslant 2}\delta^{l_2-\frac{1}{2}}\Big\|\nabla_h^{k_2}\partial_3^{l_2}\big(\langle u_+\rangle^{1+\sigma}\partial_h^{\alpha_h-\beta_h}\partial_3^{l-l_1}z_-^3\big)\Big\|_{L^\infty_tL^2_x}\Big\|\frac{\langle u_-\rangle^{1+\sigma}}{\langle u_+\rangle^{\frac{1}{2}(1+\sigma)}}\nabla_h^{|\beta_h|}\partial_3^{l_1+1} z_{+}\Big\|_{L^2_tL^2_x}\\
		&\stackrel{\eqref{eq:weightde}}{\lesssim} \delta^{l-\frac{1}{2}}\sum_{k_2+l_2\leqslant 2}\delta^{l_2-\frac{1}{2}}\big\|\langle u_+\rangle^{1+\sigma}\nabla_h^{|\alpha_h-\beta_h|+k_2}\partial_3^{l-l_1+l_2}z_-^3\big\|_{L^\infty_tL^2_x}\Big\|\frac{\langle u_-\rangle^{1+\sigma}}{\langle u_+\rangle^{\frac{1}{2}(1+\sigma)}}\nabla_h^{|\beta_h|}\partial_3^{l_1+1} z_{+}\Big\|_{L^2_tL^2_x}\\
		&\lesssim\sum_{k_2+l_2\leqslant N+1}\delta^{l_2-\frac{3}{2}} \big(E_-^{(k_2,l_2)}(z_-^3)\big)^{\frac{1}{2}}\cdot\delta^{l_1+\frac{1}{2}} \big(F_+^{(|\beta_h|,l_1+1)}(z_+)\big)^{\frac{1}{2}}\\
		&=\Big(\sum_{k_2\leqslant N+1}\delta^{-\frac{3}{2}} \big(E_-^{(k_2,0)}(z_-^3)\big)^{\frac{1}{2}}+\sum_{k_2+l_2\leqslant N+1,\ l_2-1\geqslant 0}\!\!\!\!\!\!\delta^{l_2-\frac{3}{2}} \big(E_-^{(k_2,l_2-1)}(\underbrace{\partial_3z_-^3}_{=-\partial_hz_-^h})\big)^{\frac{1}{2}}\Big)\cdot\delta^{l_1+\frac{1}{2}} \big(F_+^{(|\beta_h|,l_1+1)}(z_+)\big)^{\frac{1}{2}}\\
		&\lesssim\Big(\sum_{k_2\leqslant N+1}\delta^{-\frac{3}{2}} \big(E_-^{(k_2,0)}(z_-^3)\big)^{\frac{1}{2}}+\sum_{k_2+1+l_2-1\leqslant N+1,\ l_2-1\geqslant 0}\!\!\!\!\!\!\delta^{l_2-\frac{3}{2}} \big(E_-^{(k_2+1,l_2-1)}(z_-^h)\big)^{\frac{1}{2}}\Big)\cdot\delta^{l_1+\frac{1}{2}} \big(F_+^{(|\beta_h|,l_1+1)}(z_+)\big)^{\frac{1}{2}}\\
		&\lesssim\Big(\sum_{k_2\leqslant N+1}\delta^{-\frac{3}{2}} \big(E_-^{(k_2,0)}(z_-^3)\big)^{\frac{1}{2}}+\sum_{k_2+l_2\leqslant N+1}\delta^{l_2-\frac{1}{2}} \big(E_-^{(k_2,l_2)}(z_-^h)\big)^{\frac{1}{2}}\Big)\sum_{k_1\leqslant|\alpha_h|}\delta^{l_1+\frac{1}{2}} \big(F_+^{(k_1,l_1+1)}(z_+)\big)^{\frac{1}{2}}
		\stackrel{\eqref{improve2}}{\lesssim} C_1\varepsilon^2.\stepcounter{equation}\tag{\theequation}\label{eqA4}
	\end{align*}
	In such a case, putting the estimates \eqref{eq:I1I2} and \eqref{eqA3}-\eqref{eqA4} together also gives the desired result.

	This ends the proof in the same manner.
\end{proof}

\begin{remark}
	By virtue of $\operatorname{div} z_\pm=0$, we notice that  $\partial_3z_\pm^3=-\partial_hz_\pm^h$. In fact, we have adopted this observation in the proof of \eqref{eqA4}. This can help us transform 
	some direct estimates on $\partial_3z_\pm^3$, which should have been bad, to good estimates on $\partial_hz_\pm^h$ and thus make up for 
	 the possible loss of $\delta$ in coefficients. 
	We remark that  this
	observation will be frequently used in this paper without further comment.
\end{remark}

\begin{remark}[Estimate for $\mathbf{J}_{\pm(\delta)}^{(\alpha_h,l)}$]\label{remarkJ}
Based on the notations in Section \ref{sec:notation},   Lemma \ref{lemma2} also gives the following renormalization result immediately: 
	For any $\alpha_h\in(\mathbb{Z}_{\geqslant 0})^2$ and any $l\in\mathbb{Z}_{\geqslant 0}$ with $0\leqslant|\alpha_h|+l\leqslant N+2$, 
	there holds
	\[\big\|\langle u_\mp\rangle^{1+\sigma}\langle u_\pm\rangle^{\frac{1}{2}(1+\sigma)}\mathbf{J}_{\pm(\delta)}^{(\alpha_h,l)}\big\|_{L^2_tL^2_x}
	\lesssim C_1\varepsilon^2,\]
	where 
	\[\mathbf{J}_{\pm(\delta)}^{(\alpha_h,l)}:=\partial_h^{\alpha_h}\partial_3^{l}(z_{\mp(\delta)}\cdot \nabla z_{\pm(\delta)}).\]
	This result will be used in the approximation part of Section \ref{sec:approximation}.
\end{remark}
  
\begin{lemma}[Estimate for $\mathbf{K}_\pm^{(\alpha_h,l)}$]\label{lemma3}
	For any $\alpha_h\in(\mathbb{Z}_{\geqslant 0})^2$ and any $l\in\mathbb{Z}_{\geqslant 0}$ with $0\leqslant|\alpha_h|+l\leqslant N+2$, 
	there holds
	\[\delta^{l-\frac{1}{2}}\big\|\langle u_\mp\rangle^{1+\sigma}\langle u_\pm\rangle^{\frac{1}{2}(1+\sigma)}\mathbf{K}_\pm^{(\alpha_h,l)}\big\|_{L^2_tL^2_x}
	\lesssim C_1\varepsilon^2,\]
	where 
	\[\mathbf{K}_\pm^{(\alpha_h,l)}:=\partial_h^{\alpha_h}\partial_3^{l}\partial_3(z_\mp\cdot \nabla z_\pm).\]
\end{lemma}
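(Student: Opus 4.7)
The plan is to mirror the proof of Lemma \ref{lemma2} almost verbatim, since $\mathbf{K}_\pm^{(\alpha_h,l)} = \mathbf{J}_\pm^{(\alpha_h,l+1)}$, while accounting for the fact that the claimed weight $\delta^{l-\frac{1}{2}}$ is one power of $\delta$ sharper than what direct application of Lemma \ref{lemma2} with $l$ replaced by $l+1$ would deliver. The first step is to extract a single $\partial_3$ from the product by the Leibniz rule, writing
\[
\mathbf{K}_\pm^{(\alpha_h,l)} = \partial_h^{\alpha_h}\partial_3^l\bigl(\partial_3 z_\mp \cdot \nabla z_\pm\bigr) + \partial_h^{\alpha_h}\partial_3^l\bigl(z_\mp \cdot \nabla \partial_3 z_\pm\bigr),
\]
so that in every term obtained by fully expanding, at least one of the two factors always carries an extra $\partial_3$ that can be absorbed into the sharper bootstrap controls on $\partial_3 z_\pm$ (respectively $\partial_3 z_\mp$) furnished by the third line of \eqref{improve2}.

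Next, I would apply Leibniz to the two pieces above in full, decomposing $z_\mp = (z_\mp^h, z_\mp^3)$ and $\nabla z_\pm = (\nabla_h z_\pm, \partial_3 z_\pm)$ exactly as in Lemma \ref{lemma2}, to obtain a finite sum of typical terms of the shape $\partial_h^{\alpha_h-\beta_h}\partial_3^{l-l_1+\epsilon_1}z_\mp^{(\cdot)} \cdot \nabla_h^{|\beta_h|+\eta}\partial_3^{l_1+\epsilon_2}z_\pm^{(\cdot)}$ with $\epsilon_1 + \epsilon_2 = 1$ and $\eta \in \{0,1\}$. Each such term would then be estimated by H\"older, splitting the weight $\langle u_\mp\rangle^{1+\sigma}\langle u_\pm\rangle^{\frac{1}{2}(1+\sigma)}$ as $\langle u_\pm\rangle^{1+\sigma}$ times $\langle u_\mp\rangle^{1+\sigma}\langle u_\pm\rangle^{-\frac{1}{2}(1+\sigma)}$, and invoking Lemma \ref{lemma:sobolev} together with \eqref{eq:weightde} to transfer $L^\infty_x$ onto the factor carrying fewer derivatives. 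The dichotomy $|\beta_h|+l_1 \leq N-1$ versus $N \leq |\beta_h|+l_1 \leq |\alpha_h|+l$ decides which factor enters via the Sobolev lemma and which via an $L^2$ flux norm, in complete parallel with the two cases of Lemma \ref{lemma2}.

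Whenever a $\partial_3$ lands on a vertical component $z^3$ in a configuration that would not close the $\delta$ bookkeeping, I would replace $\partial_3 z^3$ by $-\nabla_h \cdot z^h$ using the divergence-free condition, exchanging a ``bad'' vertical derivative for a ``good'' horizontal one and thereby gaining a power of $\delta$, exactly as in the derivation of \eqref{eqA4}. Closure then proceeds by inserting the bootstrap bounds \eqref{improve2} on $E_\pm^{(k,l)}(z_\pm^h)$, $E_\pm^{(k,0)}(z_\pm^3)$, $E_\pm^{(k,l)}(\partial_3 z_\pm)$ and their flux counterparts.

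The main obstacle is precisely the $\delta$ bookkeeping: the naive transcription of Lemma \ref{lemma2} with $l$ replaced by $l+1$ yields $\delta^{l+\frac{1}{2}}$, which is a factor of $\delta$ short of the target $\delta^{l-\frac{1}{2}}$. This gap is bridged exactly because the estimates on $\partial_3 z_\pm$ in \eqref{improve2} control $\delta^{2(l-\frac{1}{2})}E_\pm^{(k,l)}(\partial_3 z_\pm)$ (a weight that is $\delta^{-2}$ better than the default weight $\delta^{2(l+\frac{1}{2})}$ on $E_\pm^{(k,l+1)}(z_\pm)$) for $k+l\leq N+2$, and analogously for the flux. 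The initial Leibniz splitting ensures that one of the $l+1$ vertical derivatives always rests on $\partial_3 z_\pm$ or $\partial_3 z_\mp$, so that this sharper scaling is always available, and the remaining computation reduces to the same two-case $L^\infty$/$L^2$ Sobolev argument already carried out in Lemma \ref{lemma2}.
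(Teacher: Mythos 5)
Your proposal follows essentially the same route as the paper: the paper's proof of Lemma \ref{lemma3} likewise expands $\mathbf{K}_+^{(\alpha_h,l)}$ by Leibniz into the two typical products $\mathbf{K}_{+,1}^{(\beta_h,l_1)}$ and $\mathbf{K}_{+,2}^{(\beta_h,l_1)}$ and then repeats the H\"older/Sobolev argument of \eqref{eqA3}--\eqref{eqA4}, with the extra power of $\delta$ recovered exactly as you describe, namely from the sharper bootstrap weights on $E_\pm^{(k,l)}(\partial_3 z_\pm)$, $F_\pm^{(k,l)}(\partial_3 z_\pm)$ in \eqref{improve2} and from the substitution $\partial_3 z^3_\mp=-\nabla_h\cdot z^h_\mp$. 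Your identification of the $\delta$-bookkeeping as the only genuinely new point beyond Lemma \ref{lemma2} is precisely what the paper's one-line reduction ``similar to \eqref{eqA3}--\eqref{eqA4}'' is relying on.
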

\begin{proof}
	Based on symmetry, it suffices to consider $\mathbf{K}_{+}^{(\alpha_h,l)}$. We also note that 
		\begin{align*} \big|\mathbf{K}_+^{(\alpha_h,l)}\big|
		&\lesssim\sum_{\beta_h\leqslant\alpha_h\atop l_1\leqslant l+1}\Big(\underbrace{\big|\partial_h^{\alpha_h-\beta_h}\partial_3^{l+1-l_1}z_-^h\big|\cdot \big|\nabla_h^{|\beta_h|+1}\partial_3^{l_1} z_{+}\big|}_{\mathbf{K}_{+,1}^{(\beta_h,l_1)}}+\underbrace{\big|\partial_h^{\alpha_h-\beta_h}\partial_3^{l+1-l_1}z_-^3\big|\cdot \big|\nabla_h^{|\beta_h|}\partial_3^{l_1+1} z_{+}\big|}_{\mathbf{K}_{+,2}^{(\beta_h,l_1)}}\Big).
	\end{align*}
Similar to \eqref{eqA3}-\eqref{eqA4}, we can infer that 
	\begin{align*}
	&\ \ \ \ \delta^{l-\frac{1}{2}}	\big\|\langle u_-\rangle^{1+\sigma}\langle u_+\rangle^{\frac{1}{2}(1+\sigma)}\mathbf{K}_{+,1}^{(\beta_h,l_1)}\big\|_{L^2_tL^2_x}\\
	&\lesssim \sum_{k_2+l_2\leqslant N+5}\delta^{l_2-\frac{1}{2}} \big(E_-^{(k_2,l_2)}(z_-^h)\big)^{\frac{1}{2}}\sum_{k_1\leqslant|\alpha_h|+1}\delta^{l_1-\frac{1}{2}} \big(F_+^{(k_1,l_1)}(z_+)\big)^{\frac{1}{2}}
	\lesssim C_1\varepsilon^2,\\
	&\ \ \ \ 	\delta^{l-\frac{1}{2}}\big\|\langle u_-\rangle^{1+\sigma}\langle u_+\rangle^{\frac{1}{2}(1+\sigma)}\mathbf{K}_{+,2}^{(\beta_h,l_1)}\big\|_{L^2_tL^2_x}\\
	&\lesssim\Big(\sum_{k_2\leqslant N+5}\delta^{-\frac{3}{2}} \big(E_-^{(k_2,0)}(z_-^3)\big)^{\frac{1}{2}}+\sum_{k_2+l_2\leqslant N+5}\delta^{l_2-\frac{1}{2}} \big(E_-^{(k_2,l_2)}(z_-^h)\big)^{\frac{1}{2}}\Big)\sum_{k_1\leqslant|\alpha_h|}\delta^{l_1-\frac{1}{2}} \big(F_+^{(k_1,l_1)}(\partial_3z_+)\big)^{\frac{1}{2}}
	\lesssim  C_1\varepsilon^2.
\end{align*}
Therefore we can conclude from these estimates to obtain the lemma. 
\end{proof}

\section{Estimates on the pressure term}\label{sec:pressure}
Due to the facts $\operatorname{div} z_\pm=0$ and the boundary conditions $z_+^3|_{x_3=\pm\delta}=0$, $z_-^3|_{x_3=\pm\delta}=0$, we turn to
take divergence of the first equation in \eqref{MHD equation} to derive the following system in 
$\Omega_\delta$:
\begin{equation}\label{eqpressure-system}
	\begin{cases}
		&-\Delta p=\partial_iz_+^j\partial_jz_-^i,\\
		&\partial_3p\big|_{x_3=\pm\delta}=0.
	\end{cases}
\end{equation}
Using the Green's function $G_\delta(x,y)$ on $\Omega_\delta\times\Omega_\delta$, we 
solve this system \eqref{eqpressure-system} and write the pressure as
\begin{equation}\label{eqpressure-formula}
	 p(t,x)=\int_{\Omega_\delta}  G_\delta(x,y)(\partial_iz_+^j\partial_jz_-^i)(t,y)dy.
\end{equation} 
However, it is rather difficult to give an explicit formula of $G_\delta(x,y)$ herein (up to pointwise convergence), which hinders further investigation on the pressure term $p$. Nevertheless, as we will see, what really counts in the rest of this paper is its gradient, and therefore we only need to give the explicit formula of $\nabla p$. 

In fact,
taking gradient of \eqref{eqpressure-formula} immediately yields 
\begin{equation}\label{nabla pressure}
	\nabla p(t,x)=\int_{\Omega_\delta} \nabla_xG_\delta(x,y)(\partial_iz_+^j\partial_jz_-^i)(t,y)dy,
\end{equation}
where
$\nabla_xG_\delta(x,y)$ is well defined on $\Omega_\delta\times\Omega_\delta$ by
\begin{equation}\label{Green1} \nabla_xG_\delta(x,y):=\frac{1}{4\pi}\sum_{k=-\infty}^{\infty}\nabla_x\frac{1}{\big(|x_h-y_h|^2+|(-1)^k(x_3-2k\delta)-y_3|^2\big)^{\frac{1}{2}}}.
\end{equation}
Here we refer the interested readers to Lemma 2.1 in \cite{Xu} for more details of \eqref{nabla pressure} and \eqref{Green1}. 
Before proceeding further, we first review Corollary 2.3 in \cite{Xu} with  proof, which provides an important bound on derivatives of the Green's function.
\begin{lemma}\label{lemma:boundgreen}
	For any $l\in\mathbb{Z}_{\geqslant 1}$, there holds
	\begin{equation}\label{bound of Green}
		\nabla_x^lG_\delta(x,y)\lesssim\frac{1}{\delta}\frac{1}{|x_h-y_h|^l}.
	\end{equation} 

\end{lemma}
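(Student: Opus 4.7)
The approach is to apply the method of images explicitly: the series \eqref{Green1} expresses $\nabla_x G_\delta$ as an infinite sum over reflected copies of the free-space Newtonian potential, so one can differentiate term by term and reduce the proof to a sharp estimate on the resulting tail sum.

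Concretely, I would first use the standard pointwise bound
\[
\Big|\nabla_x^{l}\frac{1}{|x-y_k^\ast|}\Big|\lesssim \frac{1}{|x-y_k^\ast|^{l+1}},
\]
where $y_k^\ast$ denotes the $k$-th image of $y$ appearing in \eqref{Green1}. Thus, writing $\rho:=|x_h-y_h|$ and $w_k:=(-1)^k(x_3-2k\delta)-y_3$, it suffices to show
\[
\sum_{k\in\mathbb{Z}}\frac{1}{(\rho^2+w_k^2)^{\frac{l+1}{2}}}\;\lesssim\;\frac{1}{\delta\,\rho^{l}}.
\]
The key geometric observation is that since $x_3,y_3\in(-\delta,\delta)$, the vertical components $w_k$ of the image points are spaced roughly $2\delta$ apart, so $|w_k|\gtrsim |k|\delta$ for $|k|\ge 2$. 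I would then split the sum at the threshold $|k|\sim \rho/\delta$.

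For the \emph{near regime} $|k|\lesssim\rho/\delta$, I bound each summand by $\rho^{-(l+1)}$; the number of such indices is $O(\rho/\delta)$, giving a contribution of $O(1/(\delta\rho^{l}))$. For the \emph{far regime} $|k|\gtrsim\rho/\delta$, I use $w_k^2\gtrsim(k\delta)^2$ so that each summand is dominated by $(|k|\delta)^{-(l+1)}$, and I estimate the tail
\[
\sum_{|k|\gtrsim \rho/\delta}\frac{1}{(|k|\delta)^{l+1}}\;\lesssim\;\frac{1}{\delta^{l+1}}\cdot\Big(\frac{\rho}{\delta}\Big)^{-l}\;=\;\frac{1}{\delta\,\rho^{l}},
\]
by comparison with the integral $\int_{\rho/\delta}^{\infty}s^{-(l+1)}ds$; here $l\ge 1$ ensures convergence and, together with the factor $1/l$, absorbs into the implicit constant. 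Summing the two contributions yields \eqref{bound of Green}.

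The one point that requires a bit of care, and which I expect to be the only mild obstacle, is verifying cleanly that $|w_k|\gtrsim |k|\delta$ uniformly in $x_3,y_3\in(-\delta,\delta)$: the alternating sign $(-1)^k$ in the definition of $w_k$ makes the expression slightly asymmetric in $k$, but the elementary inequality $|(-1)^k(x_3-2k\delta)-y_3|\ge 2|k|\delta-|x_3|-|y_3|\ge 2(|k|-1)\delta$ suffices, after which the small-$|k|$ terms are absorbed into the near regime and the argument above applies without change.
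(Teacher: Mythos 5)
Your proof follows essentially the same route as the paper's: differentiate the image series term by term, bound each term by $|x-y_k^\ast|^{-(l+1)}$, use $x_3,y_3\in(-\delta,\delta)$ to see that the vertical offsets of the images are of size roughly $2k\delta$, and then sum over the images. The only difference is that the paper evaluates the resulting sum by comparison with the single integral $\int_0^\infty\big(|x_h-y_h|^2+(2\tau\delta)^2\big)^{-\frac{l+1}{2}}d\tau$, which after a change of variables produces the factor $\frac{1}{\delta}\frac{1}{|x_h-y_h|^l}$ directly, whereas you split the sum at the threshold $|k|\sim|x_h-y_h|/\delta$; the two computations are equivalent.
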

\begin{proof}
First of all, \eqref{Green1} can be rephrased as
\begin{align*}
\nabla_xG_\delta(x,y)
	&=\frac{1}{4\pi}\bigg(\sum_{k=0}+\sum_{k=1}^{\infty}+\sum_{k=-\infty}^{-1}\bigg)\nabla_x\frac{1}{\big(|x_h-y_h|^2+|(-1)^k(x_3-2k\delta)-y_3|^2\big)^{\frac{1}{2}}}\\
	&=\frac{1}{4\pi}\bigg(\nabla_x\frac{1}{\big(|x_h-y_h|^2+|x_3-y_3|^2\big)^{\frac{1}{2}}}
	\\&\ \ \ \ \ \ \ \ \ 
	 +\sum_{k=1}^{\infty}\Big(\nabla_x\frac{1}{\big(|x_h-y_h|^2+|(x_{+,k})_3-y_3|^2\big)^{\frac{1}{2}}}+\nabla_x\frac{1}{\big(|x_h-y_h|^2+|(x_{-,k})_3-y_3|^2\big)^{\frac{1}{2}}}\Big)\bigg)\\
	&=\frac{1}{4\pi}\bigg(\nabla_x\frac{1}{|x-y|}+\sum_{k=1}^{\infty}\Big(\nabla_x\frac{1}{|x_{+,k}-y|}+\nabla_x\frac{1}{|x_{-,k}-y|}\Big)\bigg),
\end{align*}
where we denote
\begin{equation*}
		x_{+,k}=\big(x_h,(-1)^k(x_3-2k\delta)\big),\ \  x_{-,k}=\big(x_h,(-1)^k(x_3+2k\delta)\big),\ \   k\in\mathbb{Z}_{\geqslant 1}.
\end{equation*}
 
For any $l\in\mathbb{Z}_{\geqslant 1}$,  it subsequently follows that 
	\begin{equation*} 
		\nabla_x^lG_\delta(x,y)=\frac{1}{4\pi}\Big(\nabla_x^l\frac{1}{|x-y|}+\sum_{k=1}^\infty\big(\nabla_x^l\frac{1}{|x_{+,k}-y|}+\nabla_x^l\frac{1}{|x_{-,k}-y|}\big)\Big).
	\end{equation*}
By virtue of $(x,y)\in\Omega_\delta\times\Omega_\delta$, we have  $x_3,y_3\in(-\delta,\delta)$ and then  $\nabla_x^lG_\delta(x,y)$ can be bounded as follows:
\begin{align*}
	&\ \ \ \ |\nabla_x^lG_\delta(x,y)|\\
	&\lesssim\frac{1}{4\pi}\bigg(\frac{1}{|x-y|^{l+1}}+\sum_{k=1}^\infty\Big(\frac{1}{|x_{+,k}-y|^{l+1}}+\frac{1}{|x_{-,k}-y|^{l+1}}\Big)\bigg)\\
	&=\frac{1}{4\pi}\Bigg(\frac{1}{\big(|x_h-y_h|^2+|x_3-y_3|^2\big)^{\frac{l+1}{2}}}\\ 
	&\ \ \ \ \ \ \ \ +\sum_{k=1}^\infty\bigg(\frac{1}{\big(|x_h-y_h|^2+|x_3-(-1)^ky_3-2k\delta|^2\big)^{\frac{l+1}{2}}}+\frac{1}{\big(|x_h-y_h|^2+|x_3-(-1)^ky_3+2k\delta|^2\big)^{\frac{l+1}{2}}}\bigg)\Bigg)\\
	&\leqslant\frac{1}{4\pi}\Big(\frac{1}{|x_h-y_h|^{l+1}}+\sum_{k=1}^\infty\frac{2}{\big(|x_h-y_h|^2+|(2k-1)\delta|^2\big)^{\frac{l+1}{2}}}\Big)
	\leqslant\frac{3}{4\pi}\sum_{k=0}^\infty\frac{1}{\big(|x_h-y_h|^2+|2k\delta|^2\big)^{\frac{l+1}{2}}}\\
	&\leqslant \frac{3}{4\pi}\int_0^\infty\frac{1}{\big(|x_h-y_h|^2+|2\tau\delta|^2\big)^{\frac{l+1}{2}}}d\tau=\frac{3}{4\pi}\frac{1}{2\delta}\int_0^\infty\frac{1}{\big(|x_h-y_h|^2+|2\delta\tau|^2\big)^{\frac{l+1}{2}}}d(2\delta\tau)\\
	&=\frac{3}{4\pi}\frac{1}{2\delta}\int_0^\infty\frac{1}{\big(|x_h-y_h|^2+(|x_h-y_h|u)^2\big)^{\frac{l+1}{2}}}d(|x_h-y_h|u)\\
	&=\frac{3}{4\pi}\frac{1}{2\delta}\frac{1}{|x_h-y_h|^l}\int_0^\infty\frac{1}{\big(1+u^2\big)^{\frac{l+1}{2}}}du\lesssim\frac{1}{\delta}\frac{1}{|x_h-y_h|^l}.
\end{align*}
This proves \eqref{bound of Green} for all $l\in\mathbb{Z}_{\geqslant 1}$. 
Moreover, when $l=1$, this bound implies that the right hand side of \eqref{Green1} is summable, which guarantees the well-definedness of the definition  \eqref{Green1}. The proof of the lemma is now complete.
\end{proof}

Furthermore, to avoid the non-integrable singularity $\frac{1}{|x_h-y_h|}$,
the following facts on $\frac{1}{|x_h|^\gamma}$ are necessary throughout the pressure estimates:
\begin{equation}\label{observe integration}\begin{split}
		&\frac{1}{|x_h|^\gamma}\text{\Large$\text{\Large$\chi$}$}_{|x_h|\leqslant 2}\in L^1(\mathbb{R}^2)\iff \gamma<2;\ \ \ \ \ \
		\frac{1}{|x_h|^\gamma}\text{\Large$\text{\Large$\chi$}$}_{|x_h|\geqslant 1}\in L^1(\mathbb{R}^2)\iff \gamma>2;\\
		&\frac{1}{|x_h|^\gamma}\text{\Large$\text{\Large$\chi$}$}_{|x_h|\leqslant 2}\in L^2(\mathbb{R}^2)\iff \gamma<1;\ \ \ \ \ \ 
		\frac{1}{|x_h|^\gamma}\text{\Large$\text{\Large$\chi$}$}_{|x_h|\geqslant 1}\in L^2(\mathbb{R}^2)\iff \gamma>1.
	\end{split}
\end{equation}

Now we can proceed  
to derive bounds on derivatives of the pressure term; see the following lemmas.

\begin{lemma}\label{lemma:H}
	For any $\alpha_h\in(\mathbb{Z}_{\geqslant 0})^2$ with 
	$0\leqslant|\alpha_h|\leqslant N+2$, there holds 
	\begin{equation*}
		\delta^{-\frac{1}{2}}\big\|\langle u_-\rangle^{1+\sigma}\langle u_+\rangle^{\frac{1}{2}(1+\sigma)}\partial_{h}^{\alpha_h} \nabla p\big\|_{L^2_tL^2_x}
		\lesssim C_1\varepsilon^2.
	\end{equation*}
\end{lemma}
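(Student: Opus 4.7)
The plan is to exploit the explicit Green's function representation \eqref{nabla pressure} of $\nabla p$, combined with the decay bound of Lemma \ref{lemma:boundgreen}, the weight manipulation properties in Lemma \ref{lemma:weights}, the integrability facts \eqref{observe integration}, and the nonlinear source bound from Lemma \ref{lemma1}.

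First, I would commute $\partial_h^{\alpha_h}$ under the integral in \eqref{nabla pressure}. Since the kernel $G_\delta(x,y)$ depends on the horizontal variables only through the difference $x_h - y_h$ (as is evident from \eqref{Green1}), the identity $\partial_{x_i} G_\delta = -\partial_{y_i} G_\delta$ for any horizontal index $i$ lets me transfer all $|\alpha_h|$ horizontal derivatives off the Green's function and onto the source term by integrating by parts in $y_h$. These horizontal integrations by parts are tangential to $\partial\Omega_\delta$, so they produce no boundary contributions. The outcome is
\[
\partial_h^{\alpha_h} \nabla p(t,x) \;=\; (-1)^{|\alpha_h|+1} \int_{\Omega_\delta} \nabla_x G_\delta(x,y)\, \mathbf{I}_+^{(\alpha_h,0)}(t,y)\,dy,
\]
which reduces the problem to bounding an integral operator whose kernel carries only a single derivative of $G_\delta$, acting on the source $\mathbf{I}_+^{(\alpha_h,0)}$ already controlled by Lemma \ref{lemma1}.

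Setting $w(t,x):=\langle u_-\rangle^{1+\sigma}(t,x)\langle u_+\rangle^{\frac{1}{2}(1+\sigma)}(t,x)$, I would insert the weight by multiplying and dividing by $w(t,y)$, then decompose the $y$-integral via a smooth partition of unity into a near region $\{|x_h-y_h|\leq 2\}$ and a far region $\{|x_h-y_h|\geq 1\}$. In the near region, \eqref{eq:weight1} bounds $w(t,x)/w(t,y)\lesssim 1$, so by Lemma \ref{lemma:boundgreen} the integrand is majorized by $\tfrac{1}{\delta|x_h-y_h|}\, w(t,y)\,|\mathbf{I}_+^{(\alpha_h,0)}|(t,y)$; the horizontal integrability of $1/|x_h-y_h|$ on a bounded ball (from \eqref{observe integration}) combined with the vertical integration $\int_{-\delta}^\delta dy_3$ producing a factor $2\delta$ precisely cancels the $\delta^{-1}$ in Lemma \ref{lemma:boundgreen}. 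In the far region, \eqref{eq:weight2} introduces a polynomial factor $|x_h-y_h|^{\frac{3}{2}(1+\sigma)}$ into the kernel, which for small $|\alpha_h|$ is not absorbed by the available $|x_h-y_h|^{-1}$ decay; there I would exploit the divergence-free identity $\partial_i z_+^j\partial_j z_-^i = \partial_{y_i}\partial_{y_j}(z_+^j z_-^i)$ to perform one or two further integrations by parts in $y$, transferring additional derivatives back onto $G_\delta$ and boosting the kernel decay to $|x_h-y_h|^{-3}$ or $|x_h-y_h|^{-4}$, which outweighs the polynomial factor by \eqref{observe integration}. The boundary contributions of these further integrations by parts on $\{x_3=\pm\delta\}$ vanish since $z_\pm^3$ vanishes there.

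Combining the two regions by Minkowski's integral inequality (equivalently Schur's test), I would obtain a pointwise-in-$t$ bound of the form
\[
\|w\,\partial_h^{\alpha_h}\nabla p(t,\cdot)\|_{L^2_x}\,\lesssim\, \|w\,\mathbf{I}_+^{(\alpha_h,0)}(t,\cdot)\|_{L^2_x} \;+\; (\text{lower-order corrections from the far-region IBP}).
\]
Taking $L^2_t$ and invoking Lemma \ref{lemma1} with $l=0$ then delivers the claimed estimate with the stated $\delta^{-\frac{1}{2}}$ prefactor, the $\delta$-powers matching because of the exact cancellation of the $\delta^{-1}$ from Lemma \ref{lemma:boundgreen} against the length-$\delta$ vertical integration.

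The principal obstacle is the slow $|x_h-y_h|^{-1}$ decay of $\nabla G_\delta$ at large horizontal distance (reflecting the essentially two-dimensional character of the slab Laplacian), which for small $|\alpha_h|$ sits right at the borderline of what the weight growth in \eqref{eq:weight2} can absorb. Extracting enough extra decay by further integrations by parts, while respecting the asymmetry between horizontal derivatives of $G_\delta$ (which transfer freely between $x$ and $y$) and vertical derivatives (which do not, due to the image reflections in \eqref{Green1}), and keeping careful track of the vanishing of all boundary terms thanks to $z_\pm^3|_{x_3=\pm\delta}=0$, is the main technical difficulty.
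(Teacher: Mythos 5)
Your overall architecture (Green's function representation, near/far cutoff, weight transfer via Lemma \ref{lemma:weights}, Minkowski/Young, cancellation of the $\delta^{-1}$ in Lemma \ref{lemma:boundgreen} against the thin vertical integration) matches the paper's, but there is a genuine gap in the $\delta$-bookkeeping at the very last step. Writing $w=\langle u_-\rangle^{1+\sigma}\langle u_+\rangle^{\frac{1}{2}(1+\sigma)}$ as you do, your reduction yields the schematic bound $\|w\,\partial_h^{\alpha_h}\nabla p\|_{L^2_tL^2_x}\lesssim\|w\,\mathbf{I}_+^{(\alpha_h,0)}\|_{L^2_tL^2_x}$ with no net gain in $\delta$ (the kernel's $\delta^{-1}$ is exactly offset by the two factors of $(2\delta)^{1/2}$ coming from the $x_3$- and $y_3$-integrations). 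To conclude you would therefore need $\delta^{-\frac{1}{2}}\|w\,\mathbf{I}_+^{(\alpha_h,0)}\|_{L^2_tL^2_x}\lesssim C_1\varepsilon^2$, whereas Lemma \ref{lemma1} with $l=0$ only provides $\delta^{+\frac{1}{2}}\|w\,\mathbf{I}_+^{(\alpha_h,0)}\|_{L^2_tL^2_x}\lesssim C_1\varepsilon^2$. Your argument thus ends with $\delta^{-1}C_1\varepsilon^2$, one full power of $\delta$ short of the statement; the claimed ``exact matching of $\delta$-powers'' does not hold. The loss is intrinsic to taking the full $\mathbf{I}_+^{(\alpha_h,0)}=-\partial_h^{\alpha_h}(\nabla z_-\cdot\nabla z_+)$ as the source: it contains bilinear terms in which both factors carry a vertical derivative or a $z^3$-component at high horizontal order, and the normalizations in \eqref{improve2} only afford such products the weaker $\delta^{l+\frac{1}{2}}$ weight recorded in Lemma \ref{lemma1}.

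The paper avoids this by never forming $\nabla z_-\cdot\nabla z_+$ as the source. It first rewrites $\partial_iz_+^j\partial_jz_-^i=\partial_i\partial_j(z_+^jz_-^i)$, transfers both contracted derivatives (together with $\partial_h^{\alpha_h}\nabla$) onto $G_\delta$, and only then returns exactly one of $\partial_i,\partial_j$ to the source --- choosing which one according to whether $|\beta_h|\leqslant N-1$ or $|\beta_h|\geqslant N$ --- so that every resulting bilinear term has the shape $(\text{low-order }\partial z_\mp)\cdot(\partial^{\,|\alpha_h|+1}z_\pm)$, with all vertical derivatives and $z^3$-components covered by the favorably normalized quantities $E^{(k,0)}(\partial_3z_\pm)$, $E^{(k,0)}(z^3_\pm)$ and the corresponding fluxes. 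To repair your proof you must either reproduce this selective integration by parts, or first establish a strengthened version of Lemma \ref{lemma1} with prefactor $\delta^{l-\frac{1}{2}}$ in the range $|\alpha_h|+l\leqslant N+2$, which is not what that lemma states and would itself require rerunning its proof with the $\partial_3z$-energies in place of the $(k,l+1)$-energies. A secondary, repairable point: in the far region the effective kernel after weight transfer decays like $|x_h-y_h|^{-(3-\frac{3}{2}(1+\sigma))}$, which by \eqref{observe integration} lies in $L^2(\mathbb{R}^2)$ but not $L^1(\mathbb{R}^2)$, so Young's inequality must there be applied with the source measured in $L^1_{x_h}$ (a product of two $L^2_{x_h}$ factors); your pointwise-in-$t$ bound of the form $\|w\,\partial_h^{\alpha_h}\nabla p\|_{L^2_x}\lesssim\|w\,\mathbf{I}_+^{(\alpha_h,0)}\|_{L^2_x}$ is not the right shape for that piece.
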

\begin{proof}
	We start from \eqref{nabla pressure} together with the facts $\operatorname{div}z_{\pm}=0$ and the boundary conditions $z^3_{+}|_{x_3=\pm\delta}=0$, $z^3_{-}|_{x_3=\pm\delta}=0$. 
	Applying derivatives $\partial_h^{\alpha_h}$ to $\nabla p$ and 	integrating by parts, we obtain
	\begin{align*}
		\partial_h^{\alpha_h} \nabla p(\tau,x)
		&=\int_{\Omega_\delta} \partial_{x_h}^{\alpha_h} \nabla_x G_\delta(x,y)(\partial_iz_+^j\partial_jz_-^i)(\tau,y)dy
		=\int_{\Omega_\delta} \partial_i\partial_j\partial_{x_h}^{\alpha_h} \nabla_x G_\delta(x,y)(z_+^jz_-^i)(\tau,y)dy\\
		&=(-1)^{|\alpha_h|}\int_{\Omega_\delta}\partial_i\partial_j \partial_{y_h}^{\alpha_h} \nabla_yG_\delta(x,y)(z_+^jz_-^i)(\tau,y)dy
		=-\int_{\Omega_\delta}\partial_i\partial_j G_\delta(x,y)\partial_{y_h}^{\alpha_h}\nabla_y(z_+^jz_-^i)(\tau,y)dy\\
		&=\underbrace{-\int_{\Omega_\delta}\partial_i\partial_j G_\delta(x,y)\theta(|x_h-y_h|)\partial_{h}^{\alpha_h}\nabla(z_+^jz_-^i)(\tau,y)dy}_{\mathbf{L}_{\mathbf{1}}^{(\alpha_h,0)}(\tau,x)}\\
		&\ \ \ \ \underbrace{-\int_{\Omega_\delta}\partial_i\partial_j G_\delta(x,y)\big(1-\theta(|x_h-y_h|)\big)\partial_{h}^{\alpha_h}\nabla(z_+^jz_-^i)(\tau,y)dy}_{\mathbf{L}_{\mathbf{2}}^{(\alpha_h,0)}(\tau,x)},\stepcounter{equation}\tag{\theequation}\label{partialp-decomposition}
	\end{align*} 
	where the smooth cut-off function $\theta(r)$ is chosen so that
	\begin{equation}\label{cutoff}
		\theta(r)=\begin{cases}
			&	1,\ \ \ \ \text{for }|r|\leqslant 1,\\
			&	0,\ \ \ \ \text{for }|r|\geqslant 2.
		\end{cases}
	\end{equation} 
	In view of the property of the cut-off function $\theta(r)$, we derive 
	\begin{align*}
		\delta^{-\frac{1}{2}}\big\|\langle u_-\rangle^{1+\sigma}\langle u_+\rangle^{\frac{1}{2}(1+\sigma)}\partial_h^{\alpha_h}\nabla p\big\|_{L^2_tL^2_x}&\lesssim \delta^{-\frac{1}{2}}\big\|\langle u_-\rangle^{1+\sigma}\langle u_+\rangle^{\frac{1}{2}(1+\sigma)}\mathbf{L}_{\mathbf{1}}^{(\alpha_h,0)}\big\|_{L^2_tL^2_x}\\
		&\ \ \ \ +\delta^{-\frac{1}{2}}\big\|\langle u_-\rangle^{1+\sigma}\langle u_+\rangle^{\frac{1}{2}(1+\sigma)}\mathbf{L}_{\mathbf{2}}^{(\alpha_h,0)}\big\|_{L^2_tL^2_x}.\stepcounter{equation}\tag{\theequation}\label{eq:H1H2}
	\end{align*}

	\subsubsection*{\bf Estimate of $\mathbf{L}_{\mathbf{1}}^{(\alpha_h,0)}$}
	There holds the following decomposition:
	\begin{equation}\label{eqH1-decomposition-total}
		\mathbf{L}_{\mathbf{1}}^{(\alpha_h,0)}(\tau,x)
		=\sum_{\beta_h\leqslant\alpha_h}C_{\alpha_h,\beta_h}\underbrace{\int_{\Omega_\delta}\partial_i\partial_j G_\delta(x,y)\theta(|x_h-y_h|)\nabla\big(\partial_{h}^{\alpha_h-\beta_h}z_+^j\partial_{h}^{\beta_h}z_-^i\big)(\tau,y)dy}_{\mathbf{L}_{\mathbf{1}}^{(\beta_h,0)}(\tau,x)}.
	\end{equation}
	According to the number of derivatives, we will distinguish two cases: 
	\[0\leqslant|\beta_h|\leqslant N-1\ \ \text{ and }\ \ N\leqslant|\beta_h|\leqslant |\alpha_h|.\]
	\subsubsection*{\bf Case 1:  $0\leqslant|\beta_h|\leqslant N-1$}
	Thanks to the fact $\operatorname{div}z_+=0$ and the boundary condition $ z^3_{+}|_{x_3=\pm\delta}=0$, integration by parts similar to \eqref{partialp-decomposition} then yields
	\begin{align*}
		\mathbf{L}_{\mathbf{1}}^{(\beta_h,0)}(\tau,x)
		&= \underbrace{-\int_{\Omega_\delta}\partial_i G_\delta(x,y)\theta(|x_h-y_h|)\nabla\big(\partial_{h}^{\alpha_h-\beta_h}z_+^j\partial_{h}^{\beta_h}\partial_jz_-^i\big)(\tau,y)dy}_{ \mathbf{L}_{\mathbf{11}}^{(\beta_h,0)}(\tau,x)}\\
		&\ \ \ \   \underbrace{-\int_{\Omega_\delta}\partial_i G_\delta(x,y)\partial_j\theta(|x_h-y_h|)\nabla\big(\partial_{h}^{\alpha_h-\beta_h}z_+^j\partial_{h}^{\beta_h}z_-^i\big)(\tau,y)dy}_{ \mathbf{L}_{\mathbf{12}}^{(\beta_h,0)}(\tau,x)}.\stepcounter{equation}\tag{\theequation}\label{eqH1-decomposition}
	\end{align*}
	Using definition, we deduce that  
	\begin{align*}
		&\ \ \ \ \delta^{-\frac{1}{2}}\big\|\langle u_-\rangle^{1+\sigma}\langle u_+\rangle^{\frac{1}{2}(1+\sigma)}\mathbf{L}_{\mathbf{11}}^{(\beta_h,0)}(\tau,x)\big\|_{L^2_tL^2_x}\\
		&\stackrel{\text{Lemma \ref{lemma:boundgreen}}}{\lesssim}\delta^{-\frac{1}{2}}\Big\|\langle u_-\rangle^{1+\sigma}\langle u_+\rangle^{\frac{1}{2}(1+\sigma)}\int_{-\delta}^{\delta}\int_{|x_h-y_h|\leqslant 2}\frac{1}{\delta}\frac{1}{|x_h-y_h|} \big|\nabla\big(\partial_{h}^{\alpha_h-\beta_h}z_+^j\partial_{h}^{\beta_h}\partial_jz_-^i\big)(\tau,y)\big|dy_hdy_3\Big\|_{L^2_tL^2_{x_h}L^2_{x_3}}\\
		&\stackrel{\eqref{eq:weight1}}{\lesssim}\delta^{-\frac{1}{2}}\Big\|\int_{-\delta}^{\delta}\int_{|x_h-y_h|\leqslant 2}\frac{1}{\delta}\frac{1}{|x_h-y_h|} \Big(\langle u_-\rangle^{1+\sigma}\langle u_+\rangle^{\frac{1}{2}(1+\sigma)}\big|\nabla(\partial_{h}^{\alpha_h-\beta_h}z_+^j\partial_{h}^{\beta_h}\partial_jz_-^i)\big|\Big)(\tau,y)dy_hdy_3\Big\|_{L^2_tL^2_{x_h}L^2_{x_3}}\\
		&=\delta^{-\frac{3}{2}}\bigg(\!\int_0^t\!\int_{-\delta}^{\delta}\Big\|\!\int_{-\delta}^{\delta}\!\int_{|x_h-y_h|\leqslant 2}\!\frac{1}{|x_h-y_h|} \Big(\!\langle u_-\rangle^{1+\sigma}\langle u_+\rangle^{\frac{1}{2}(1+\sigma)}\big|\nabla(\partial_{h}^{\alpha_h-\beta_h}z_+^j\partial_{h}^{\beta_h}\partial_jz_-^i)\big|\Big)(\tau,y)dy_hdy_3\Big\|^2_{L^2_{x_h}}\!\!dx_3d\tau\!\bigg)^{\frac{1}{2}}\\
		&=\delta^{-\frac{3}{2}}\bigg(\int_0^t2\delta\Big\|\int_{-\delta}^{\delta}\int_{|x_h-y_h|\leqslant 2}\frac{1}{|x_h-y_h|} \Big(\langle u_-\rangle^{1+\sigma}\langle u_+\rangle^{\frac{1}{2}(1+\sigma)}\big|\nabla(\partial_{h}^{\alpha_h-\beta_h}z_+^j\partial_{h}^{\beta_h}\partial_jz_-^i)\big|\Big)(\tau,y)dy_hdy_3\Big\|^2_{L^2_{x_h}}d\tau\bigg)^{\frac{1}{2}}\\
		&\stackrel{\text{Minkowski}}{\lesssim}\delta^{-1}\bigg(\!\int_0^t\!\int_{-\delta}^{\delta}\Big\|\int_{|x_h-y_h|\leqslant 2}\!\frac{1}{|x_h-y_h|} \Big(\langle u_-\rangle^{1+\sigma}\!\langle u_+\rangle^{\frac{1}{2}(1+\sigma)}\big|\nabla(\partial_{h}^{\alpha_h-\beta_h}z_+^j\partial_{h}^{\beta_h}\partial_jz_-^i)\big|\Big)(\tau,y)dy_h\Big\|^2_{L^2_{x_h}}\!dy_3d\tau\!\bigg)^{\frac{1}{2}}\\
		&\stackrel{\text{Young}}{\lesssim}\delta^{-1}\bigg(\int_0^t\int_{-\delta}^{\delta}\Big\|\frac{1}{|x_h|}\chi_{|x_h|\leqslant 2}\Big\|^2_{L^1_{x_h}} \Big\|\Big(\langle u_-\rangle^{1+\sigma}\langle u_+\rangle^{\frac{1}{2}(1+\sigma)}\big|\nabla(\partial_{h}^{\alpha_h-\beta_h}z_+^j\partial_{h}^{\beta_h}\partial_jz_-^i)\big|\Big)(\tau,x_h,y_3)\Big\|^2_{L^2_{x_h}}dy_3d\tau\bigg)^{\frac{1}{2}}\\
		&\lesssim\delta^{-1}\Big\|\Big(\langle u_-\rangle^{1+\sigma}\langle u_+\rangle^{\frac{1}{2}(1+\sigma)}\big|\nabla(\partial_{h}^{\alpha_h-\beta_h}z_+^j\partial_{h}^{\beta_h}\partial_jz_-^i)\big|\Big)(\tau,x_h,y_3)\Big\|_{L^2_t L^2_{x_h}L^1_{y_3}}\\
		&\lesssim\delta^{-1}\Big(\Big\|\Big(\langle u_-\rangle^{1+\sigma}\langle u_+\rangle^{\frac{1}{2}(1+\sigma)}\big|\partial_{h}^{\alpha_h-\beta_h}\nabla z_+^j\partial_{h}^{\beta_h}\partial_jz_-^i\big|\Big)(\tau,x_h,y_3)\Big\|_{L^2_t L^2_{x_h}L^1_{y_3}}\\
		&\ \ \ \ +\Big\|\Big(\langle u_-\rangle^{1+\sigma}\langle u_+\rangle^{\frac{1}{2}(1+\sigma)}\big|\partial_{h}^{\alpha_h-\beta_h}z_+^j\partial_{h}^{\beta_h}\partial_j\nabla z_-^i\big|\Big)(\tau,x_h,y_3)\Big\|_{L^2_t L^2_{x_h}L^1_{y_3}}\Big)\\
		&\stackrel{\text{H\"older}}{\lesssim}\delta^{-1}\Big(\big\|\langle u_+\rangle^{1+\sigma}\partial_{h}^{\beta_h}\partial_jz_-^i\big\|_{L^\infty_tL^\infty_{x_h}L^2_{x_3}}\cdot\Big\|\frac{\langle u_-\rangle^{1+\sigma}}{\langle u_+\rangle^{\frac{1}{2}(1+\sigma)}}\partial_{h}^{\alpha_h-\beta_h}\nabla z_+^j\Big\|_{L^2_tL^2_{x_h}L^2_{x_3}}\\
		&\ \ \ \ +\big\|\langle u_+\rangle^{1+\sigma}\partial_{h}^{\beta_h}\partial_j\nabla z_-^i\big\|_{L^\infty_tL^\infty_{x_h}L^2_{x_3}}\cdot\Big\|\frac{\langle u_-\rangle^{1+\sigma}}{\langle u_+\rangle^{\frac{1}{2}(1+\sigma)}}\partial_{h}^{\alpha_h-\beta_h} z_+^j\Big\|_{L^2_tL^2_{x_h}L^2_{x_3}}\Big)\\
		&\stackrel{\text{Sobolev (in $\mathbb{R}^2$)}}{\lesssim}\delta^{-1}\Big(\big\|\langle u_+\rangle^{1+\sigma}\partial_{h}^{\beta_h}\partial_jz_-^i\big\|_{L^\infty_tH^2_{x_h}L^2_{x_3}}\cdot\Big\|\frac{\langle u_-\rangle^{1+\sigma}}{\langle u_+\rangle^{\frac{1}{2}(1+\sigma)}}\partial_{h}^{\alpha_h-\beta_h}\nabla z_+^j\Big\|_{L^2_tL^2_x}\\
		&\ \ \ \ +\big\|\langle u_+\rangle^{1+\sigma}\partial_{h}^{\beta_h}\partial_j\nabla z_-^i\big\|_{L^\infty_tH^2_{x_h}L^2_{x_3}}\cdot\Big\|\frac{\langle u_-\rangle^{1+\sigma}}{\langle u_+\rangle^{\frac{1}{2}(1+\sigma)}}\partial_{h}^{\alpha_h-\beta_h} z_+^j\Big\|_{L^2_tL^2_x}\Big)\\
		&\lesssim \Big(\!\!\!\sum_{k\leqslant N+3}\!\!\!\delta^{-\frac{1}{2}}\big(E_-^{(k,0)}(z_-)\big)^{\frac{1}{2}}+\!\!\!\sum_{k\leqslant N+2}\!\!\!\delta^{-\frac{1}{2}}\big(E_-^{(k,0)}(\partial_3z_-)\big)^{\frac{1}{2}}\Big) 
	 \!\!\cdot\!\!\Big(\!\!\!\sum_{k\leqslant|\alpha_h|+1}\!\!\!\delta^{-\frac{1}{2}}\big(F_+^{(k,0)}(z_+)\big)^{\frac{1}{2}}+\!\!\!\sum_{k\leqslant|\alpha_h|}\!\!\!\delta^{-\frac{1}{2}}\big(F_+^{(k,0)}(\partial_3z_+)\big)^{\frac{1}{2}}\Big)\\
		&\stackrel{\eqref{improve2}}{\lesssim} C_1\varepsilon^2.\stepcounter{equation}\tag{\theequation}\label{eqH11}
	\end{align*}
	We can continue in this fashion to obtain   
	\begin{align*}
		&\ \ \ \ \delta^{-\frac{1}{2}}\big\|\langle u_-\rangle^{1+\sigma}\langle u_+\rangle^{\frac{1}{2}(1+\sigma)}\mathbf{L}_{\mathbf{12}}^{(\beta_h,0)}(\tau,x)\big\|_{L^2_tL^2_x}\\
		&\lesssim \Big(\!\!\!\sum_{k\leqslant N+2}\!\!\!\delta^{-\frac{1}{2}}\big(E_-^{(k,0)}(z_-)\big)^{\frac{1}{2}}+\!\!\!\sum_{k\leqslant N+1}\!\!\!\delta^{-\frac{1}{2}}\big(E_-^{(k,0)}(\partial_3z_-)\big)^{\frac{1}{2}}\Big) 
		\!\! \cdot\!\!\Big(\!\!\!\sum_{k\leqslant|\alpha_h|+1}\!\!\!\delta^{-\frac{1}{2}}\big(F_+^{(k,0)}(z_+)\big)^{\frac{1}{2}}+\!\!\!\sum_{k\leqslant|\alpha_h|}\!\!\!\delta^{-\frac{1}{2}}\big(F_+^{(k,0)}(\partial_3z_+)\big)^{\frac{1}{2}}\Big)\\		&\lesssim C_1\varepsilon^2.	\stepcounter{equation}\tag{\theequation}\label{eqH12}
	\end{align*}
	In this case, combining \eqref{eqH1-decomposition}, \eqref{eqH11} and \eqref{eqH12}, we are able to derive
	\begin{equation}\label{eqH1case1estimate} \delta^{-\frac{1}{2}}\big\|\langle u_-\rangle^{1+\sigma}\langle u_+\rangle^{\frac{1}{2}(1+\sigma)}\mathbf{L}_{\mathbf{1}}^{(\beta_h,0)}(\tau,x)\big\|_{L^2_tL^2_x}\lesssim
		C_1\varepsilon^2.
	\end{equation}
	
	\subsubsection*{\bf Case 2: $N\leqslant|\beta_h|\leqslant |\alpha_h|$}
	By virtue of $\operatorname{div}z_-=0$ and $ z^3_{-}|_{x_3=\pm\delta}=0$,  integration by parts gives
	\begin{align*}
		\mathbf{L}_{\mathbf{1}}^{(\beta_h,0)}(\tau,x)
		&=
		\underbrace{-\int_{\Omega_\delta}\partial_j G_\delta(x,y)\theta(|x_h-y_h|)\nabla\big(\partial_{h}^{\alpha_h-\beta_h}\partial_iz_+^j\partial_{h}^{\beta_h}z_-^i\big)(\tau,y)dy}_{\displaystyle \mathbf{L}_{\mathbf{13}}^{(\beta_h,0)}(\tau,x)}\\
		&\ \ \ \ \underbrace{-\int_{\Omega_\delta}\partial_j G_\delta(x,y)\partial_i\theta(|x_h-y_h|)\nabla\big(\partial_{h}^{\alpha_h-\beta_h}z_+^j\partial_{h}^{\beta_h}z_-^i\big)(\tau,y)dy}_{\displaystyle \mathbf{L}_{\mathbf{14}}^{(\beta_h,0)}(\tau,x)}.
	\end{align*}
	In the same manner, we can see that 
	\begin{align*}
		&\ \ \ \ \delta^{-\frac{1}{2}}\big\|\langle u_-\rangle^{1+\sigma}\langle u_+\rangle^{\frac{1}{2}(1+\sigma)}\mathbf{L}_{\mathbf{13}}^{(\beta_h,0)}(\tau,x)\big\|_{L^2_tL^2_x}\\
		&\lesssim \Big(\!\!\!\sum_{k\leqslant |\alpha_h|+1}\!\!\!\delta^{-\frac{1}{2}}\big(E_-^{(k,0)}(z_-)\big)^{\frac{1}{2}}+\!\!\!\sum_{k\leqslant |\alpha_h|}\!\!\!\delta^{-\frac{1}{2}}\big(E_-^{(k,0)}(\partial_3z_-)\big)^{\frac{1}{2}}\Big)\!\!
		\cdot\!\!\Big(\!\sum_{k\leqslant 5}\!\delta^{-\frac{1}{2}}\big(F_+^{(k,0)}(z_+)\big)^{\frac{1}{2}}+\!\sum_{k\leqslant 4}\!\delta^{-\frac{1}{2}}\big(F_+^{(k,0)}(\partial_3z_+)\big)^{\frac{1}{2}}\Big)\\
		&\lesssim C_1\varepsilon^2,\stepcounter{equation}\tag{\theequation}\label{eqH13}
	\end{align*}
	and
	\begin{align*}
		&\ \ \ \ 	\delta^{-\frac{1}{2}}\big\|\langle u_-\rangle^{1+\sigma}\langle u_+\rangle^{\frac{1}{2}(1+\sigma)}\mathbf{L}_{\mathbf{14}}^{(\beta_h,0)}(\tau,x)\big\|_{L^2_tL^2_x}\\
		&\lesssim \Big(\!\!\!\sum_{k\leqslant |\alpha_h|+1}\!\!\!\delta^{-\frac{1}{2}}\big(E_-^{(k,0)}(z_-)\big)^{\frac{1}{2}}+\!\!\!\sum_{k\leqslant |\alpha_h|}\delta^{-\frac{1}{2}}\big(E_-^{(k,0)}(\partial_3z_-)\big)^{\frac{1}{2}}\Big)\!\!\cdot\!\!\Big(\sum_{k\leqslant 4}\delta^{-\frac{1}{2}}\big(F_+^{(k,0)}(z_+)\big)^{\frac{1}{2}}+\sum_{k\leqslant 3}\delta^{-\frac{1}{2}}\big(F_+^{(k,0)}(\partial_3z_+)\big)^{\frac{1}{2}}\Big)\\
		&\lesssim C_1\varepsilon^2.\stepcounter{equation}\tag{\theequation}\label{eqH14}
	\end{align*}
	These estimates \eqref{eqH1-decomposition}, \eqref{eqH13} and \eqref{eqH14}  
	subsequently lead us to 
	\begin{equation}\label{eqH1case2estimate}
		\delta^{-\frac{1}{2}}\big\|\langle u_-\rangle^{1+\sigma}\langle u_+\rangle^{\frac{1}{2}(1+\sigma)}	\mathbf{L}_{\mathbf{1}}^{(\beta_h,0)}(\tau,x)\big\|_{L^2_tL^2_x}\lesssim C_1\varepsilon^2.
	\end{equation}
	
	Thus, for all $\beta_h\leqslant\alpha_h$, by noticing $N\geqslant 5$, we conclude from \textbf{Case 1} \eqref{eqH1case1estimate} and \textbf{Case 2} \eqref{eqH1case2estimate} that 
	\begin{equation}\label{eqH1estimate}
		\delta^{-\frac{1}{2}}\big\|\langle u_-\rangle^{1+\sigma}\langle u_+\rangle^{\frac{1}{2}(1+\sigma)}\mathbf{L}_{\mathbf{1}}^{(\beta_h,0)}(\tau,x)\big\|_{L^2_tL^2_x} 
		\lesssim C_1\varepsilon^2.
	\end{equation}
	Summing up \eqref{eqH1estimate} for all $\beta_h\leqslant\alpha_h$, we can summarize that 
	\begin{equation}\label{eqH1estimate-total}
		\delta^{-\frac{1}{2}}\big\|\langle u_-\rangle^{1+\sigma}\langle u_+\rangle^{\frac{1}{2}(1+\sigma)}\mathbf{L}_{\mathbf{1}}^{(\alpha_h,0)}(\tau,x)\big\|_{L^2_tL^2_x} 
		\lesssim C_1\varepsilon^2.
	\end{equation}
	
	\subsubsection*{\bf Estimate of $\mathbf{L}_{\mathbf{2}}^{(\alpha_h,0)}$}
	Based on the number of derivatives, we will distinguish two cases: 
	\[1\leqslant|\alpha_h|\leqslant N+2\ \ \text{ and }\ \ |\alpha_h|=0.\]
	
	\subsubsection*{\bf Case 1:  $1\leqslant |\alpha_h|\leqslant N+2$}

	We can use integration by parts to split $\mathbf{L}_{\mathbf{2}}^{(\alpha_h,0)}$ as follows:  for all $\gamma_h\leqslant\alpha_h$ with $|\gamma_h|=1$, there holds 
	\begin{align*}
		\mathbf{L}_{\mathbf{2}}^{(\alpha_h,0)}(\tau,x)
		&=-\int_{\Omega_\delta}\partial_i\partial_j G_\delta(x,y)\big(1-\theta(|x_h-y_h|)\big)\partial_{h}^{\gamma_h}\partial_{h}^{\alpha_h-\gamma_h} \nabla(z_+^jz_-^i)(\tau,y)dy\\
		&= \underbrace{-\int_{\Omega_\delta}\partial_{h}^{\gamma_h}\partial_i\partial_j G_\delta(x,y)\big(1-\theta(|x_h-y_h|)\big)\partial_{h}^{\alpha_h-\gamma_h}\nabla(z_+^jz_-^i)(\tau,y)dy}_{\mathbf{L}_{\mathbf{21}}^{(\alpha_h-\gamma_h,0)}(\tau,x)}\\
		&\ \ \ \ \underbrace{-\int_{\Omega_\delta}\partial_i\partial_j G_\delta(x,y)\partial_{h}^{\gamma_h}\theta(|x_h-y_h|)\partial_{h}^{\alpha_h-\gamma_h}\nabla(z_+^jz_-^i)(\tau,y)dy}_{\mathbf{L}_{\mathbf{22}}^{(\alpha_h-\gamma_h,0)}(\tau,x)}.\stepcounter{equation}\tag{\theequation}\label{eqH2-decomposition}
	\end{align*}
	On one hand, it is evident that the integral for $y$ in  $\mathbf{L}_{\mathbf{22}}^{(\alpha_h-\gamma_h,0)}$ exists when $y\in\{|x_h-y_h|\leqslant 2\}\times(-\delta,\delta)$. We can apply the argument for $\mathbf{L}_{\mathbf{1}}^{(\alpha_h,0)}$ as \eqref{eqH1-decomposition-total}-\eqref{eqH1estimate-total} 		 again, 
	with $\theta(|x_h-y_h|)$ replaced by $\partial_{y_h}^{\gamma_h}\theta(|x_h-y_h|)$ and $\alpha_h$ by $\alpha_h-\gamma_h$, to derive 
	\begin{equation}\label{eqH22}
		\delta^{-\frac{1}{2}}\big\|\langle u_-\rangle^{1+\sigma}\langle u_+\rangle^{\frac{1}{2}(1+\sigma)}\mathbf{L}_{\mathbf{22}}^{(\alpha_h-\gamma_h,0)}(\tau,x)\big\|_{L^2_tL^2_x} \lesssim C_1\varepsilon^2.
	\end{equation}
	On the other hand,  to evaluate  $\mathbf{L}_{\mathbf{21}}^{(\alpha_h-\gamma_h,0)}$,	we can  further decompose
	it as 
	\begin{equation*}
		\mathbf{L}_{\mathbf{21}}^{(\alpha_h-\gamma_h,0)}(\tau,x)
		=\sum_{\beta_h\leqslant\alpha_h-\gamma_h}C_{\alpha_h-\gamma_h,\beta_h}\underbrace{\int_{\Omega_\delta}\partial_h^{\gamma_h}\partial_i\partial_jG_\delta(x,y)\big(1-\theta(|x_h-y_h|)\big)\nabla\big(\partial_{h}^{\alpha_h-\gamma_h-\beta_h}z_+^j\partial_{h}^{\beta_h}z_-^i\big)(\tau,y)dy}_{\mathbf{L}_{\mathbf{21}}^{(\beta_h,0)}(\tau,x)}.
	\end{equation*}
	Every term $\mathbf{L}_{\mathbf{21}}^{(\beta_h,0)}$ therein can be handled in much the same way as $\mathbf{L}_{\mathbf{11}}^{(\beta_h,0)}$ and  $\mathbf{L}_{\mathbf{12}}^{(\beta_h,0)}$: 
	\begin{align*}
		&\ \ \ \ \delta^{-\frac{1}{2}}\big\|\langle u_-\rangle^{1+\sigma}\langle u_+\rangle^{\frac{1}{2}(1+\sigma)}\mathbf{L}_{\mathbf{21}}^{(\beta_h,0)}(\tau,x)\big\|_{L^2_tL^2_x}\\
		&\stackrel{\text{Lemma \ref{lemma:boundgreen}}}{\lesssim}\delta^{-\frac{1}{2}}\Big\|\langle u_-\rangle^{1+\sigma}\langle u_+\rangle^{\frac{1}{2}(1+\sigma)}\int_{-\delta}^{\delta}\int_{|x_h-y_h|\geqslant 1}\frac{1}{\delta}\frac{1}{|x_h-y_h|^3} \big|\nabla\big(\partial_{h}^{\alpha_h-\gamma_h-\beta_h}z_+^j\partial_{h}^{\beta_h}z_-^i\big)(\tau,y)\big|dy_hdy_3\Big\|_{L^2_tL^2_{x_h}L^2_{x_3}}\\
		&\stackrel{\eqref{eq:weight2}}{\lesssim}\delta^{-\frac{1}{2}}\Big\|\int_{-\delta}^{\delta}\int_{|x_h-y_h|\geqslant 1}\!\frac{1}{\delta}\frac{1}{|x_h-y_h|^{3-\frac{3}{2}(1+\sigma)}} \Big(\!\langle u_-\rangle^{1+\sigma}\langle u_+\rangle^{\frac{1}{2}(1+\sigma)}\big|\nabla\big(\partial_{h}^{\alpha_h-\gamma_h-\beta_h}z_+^j\partial_{h}^{\beta_h}z_-^i\big)\big|\Big)(\tau,y)dy_hdy_3\Big\|_{L^2_tL^2_{x_h}\!\! L^2_{x_3}}\\
		&\stackrel{\text{Minkowski}}{\lesssim}\!\delta^{-1}\!\bigg(\!\int_0^t\!\!\int_{-\delta}^{\delta}\Big\|\!\int_{|x_h-y_h|\geqslant 1}\!\!\frac{1}{|x_h-y_h|^{3-\frac{3}{2}(1+\sigma)}} \Big(\!\langle u_-\rangle^{1+\sigma}\langle u_+\rangle^{\frac{1}{2}(1+\sigma)}\big|\nabla\big(\partial_{h}^{\alpha_h-\gamma_h-\beta_h}\!z_+^j\partial_{h}^{\beta_h}\!z_-^i\big)\big|\Big)(\tau,y)dy_h\Big\|^2_{L^2_{x_h}}\! \! dy_3d\tau\!\!\bigg)^{\frac{1}{2}}\\
		&\stackrel{\text{Young}}{\lesssim}\delta^{-1}\bigg(\!\!\int_0^t\!\!\int_{-\delta}^{\delta}\!\Big\|\frac{1}{|x_h|^{3-\frac{3}{2}(1+\sigma)}}\chi_{|x_h|\geqslant 1}\Big\|^2_{L^2_{x_h}}\!\!\! \Big\|\Big(\langle u_-\rangle^{1+\sigma}\langle u_+\rangle^{\frac{1}{2}(1+\sigma)}\big|\nabla\big(\partial_{h}^{\alpha_h-\gamma_h-\beta_h}z_+^j\partial_{h}^{\beta_h}z_-^i\big)\big|\Big)(\tau,x_h,y_3)\Big\|^2_{L^1_{x_h}}\!\! dy_3d\tau\!\!\bigg)^{\frac{1}{2}}\\
		&\lesssim \delta^{-1} \Big\|\Big(\langle u_-\rangle^{1+\sigma}\langle u_+\rangle^{\frac{1}{2}(1+\sigma)}\big|\nabla\big(\partial_{h}^{\alpha_h-\gamma_h-\beta_h}z_+^j\partial_{h}^{\beta_h}z_-^i\big)\big|\Big)(\tau,x_h,y_3)\Big\|_{L^2_t L^1_{x_h}L^1_{y_3}}\\
		&\stackrel{\text{H\"older}}{\lesssim}\delta^{-1}\Big(\big\|\langle u_+\rangle^{1+\sigma}\partial_{h}^{\beta_h} \nabla z_-^i\big\|_{L^\infty_tL^2_{x_h}L^2_{x_3}}\cdot\Big\|\frac{\langle u_-\rangle^{1+\sigma}}{\langle u_+\rangle^{\frac{1}{2}(1+\sigma)}}\partial_{h}^{\alpha_h-\gamma_h-\beta_h}z_+^j\Big\|_{L^2_tL^2_{x_h}L^2_{x_3}}\\
		&\ \ \ \ +\big\|\langle u_+\rangle^{1+\sigma}\partial_{h}^{\beta_h} z_-^i\big\|_{L^\infty_tL^2_{x_h}L^2_{x_3}}\cdot\Big\|\frac{\langle u_-\rangle^{1+\sigma}}{\langle u_+\rangle^{\frac{1}{2}(1+\sigma)}}\partial_{h}^{\alpha_h-\gamma_h-\beta_h}\nabla z_+^j\Big\|_{L^2_tL^2_{x_h}L^2_{x_3}}\Big)\\
		&\lesssim \Big(\!\!\!\sum_{k\leqslant|\alpha_h|}\!\!\!\delta^{-\frac{1}{2}}\big(E_-^{(k,0)}(z_-)\big)^{\frac{1}{2}} +\!\!\!\sum_{k\leqslant|\alpha_h|-1}\!\!\!\delta^{-\frac{1}{2}}\big(E_-^{(k,0)}(\partial_3z_-)\big)^{\frac{1}{2}}\Big) \cdot\Big(\!\!\sum_{k\leqslant|\alpha_h|}\!\!\delta^{-\frac{1}{2}}\big(F_+^{(k,0)}(z_+)\big)^{\frac{1}{2}}+\!\!\sum_{k\leqslant|\alpha_h|-1}\!\!\delta^{-\frac{1}{2}}\big(F_+^{(k,0)}(\partial_3z_+)\big)^{\frac{1}{2}}\Big)\\
		&\stackrel{\eqref{improve2}}{\lesssim} C_1\varepsilon^2.
	\end{align*}
	Consequently, we obtain 
	\begin{equation}\label{eqH21}
		\delta^{-\frac{1}{2}}\big\|\langle u_-\rangle^{1+\sigma}\langle u_+\rangle^{\frac{1}{2}(1+\sigma)}\mathbf{L}_{\mathbf{21}}^{(\alpha_h-\gamma_h,0)}(\tau,x)\big\|_{L^2_tL^2_x}\lesssim C_1\varepsilon^2.
	\end{equation}
	Therefore, combining \eqref{eqH2-decomposition}, \eqref{eqH22} and \eqref{eqH21} gives rise to
	\begin{equation}\label{eqH2estimate} \delta^{-\frac{1}{2}}\big\|\langle u_-\rangle^{1+\sigma}\langle u_+\rangle^{\frac{1}{2}(1+\sigma)}\mathbf{L}_{\mathbf{2}}^{(\alpha_h,0)}(\tau,x)\big\|_{L^2_tL^2_x}\lesssim C_1\varepsilon^2.
	\end{equation}
	
	\subsubsection*{\bf Case 2:  $|\alpha_h|=0$}
	
	Using integration by parts, we see that
	\begin{align*}
		\mathbf{L}_{\mathbf{2}}^{(\alpha_h,0)}(\tau,x)
		&=
		\underbrace{-\int_{\Omega_\delta}\nabla\partial_i\partial_j G_\delta(x,y)\big(1-\theta(|x_h-y_h|)\big)(z_+^jz_-^i)(\tau,y)dy}_{\displaystyle \mathbf{L}_{\mathbf{23}}(\tau,x)}\\
		&\ \ \ \ \underbrace{-\int_{\Omega_\delta}\!\!\partial_i\partial_j G_\delta(x,y)\nabla\theta(|x_h-y_h|)(z_+^jz_-^i)(\tau,y)dy}_{\displaystyle \mathbf{L}_{\mathbf{24}}(\tau,x)}.
	\end{align*}
	On one hand, 	by the same method used to derive 
	\eqref{eqH21}, we can obtain the following estimate of $\mathbf{L}_{\mathbf{23}}$:
	\begin{equation*}
		\delta^{-\frac{1}{2}}\big\|\langle u_-\rangle^{1+\sigma}\langle u_+\rangle^{\frac{1}{2}(1+\sigma)}\mathbf{L}_{\mathbf{23}}(\tau,x)\big\|_{L^2_tL^2_x}\lesssim C_1\varepsilon^2.
	\end{equation*}
	On the other hand, similar arguments of $\mathbf{L}_{\mathbf{1}}^{(\alpha_h,0)}$ can be applied here to $\mathbf{L}_{\mathbf{24}}$ and hence 
	\begin{equation*} 
		\delta^{-\frac{1}{2}}\big\|\langle u_-\rangle^{1+\sigma}\langle u_+\rangle^{\frac{1}{2}(1+\sigma)}\mathbf{L}_{\mathbf{24}}(\tau,x)\big\|_{L^2_tL^2_x}\lesssim C_1\varepsilon^2.
	\end{equation*}
	In this case, gathering 
	the above estimates together shows
	\begin{equation}\label{eqH2estimate-case0} 
		\delta^{-\frac{1}{2}}\big\|\langle u_-\rangle^{1+\sigma}\langle u_+\rangle^{\frac{1}{2}(1+\sigma)}\mathbf{L}_{\mathbf{2}}^{(\alpha_h,0)}(\tau,x)\big\|_{L^2_tL^2_x}\lesssim C_1\varepsilon^2.
	\end{equation}
	
	In consequence, for all $\alpha_h$ with $0\leqslant|\alpha_h|\leqslant N+2$, we derive from \textbf{Case 1} \eqref{eqH2estimate} and \textbf{Case 2} \eqref{eqH2estimate-case0} that 
	\begin{equation}\label{eqH2estimate-total} 
		\delta^{-\frac{1}{2}}\big\|\langle u_-\rangle^{1+\sigma}\langle u_+\rangle^{\frac{1}{2}(1+\sigma)}\mathbf{L}_{\mathbf{2}}^{(\alpha_h,0)}(\tau,x)\big\|_{L^2_tL^2_x}\lesssim C_1\varepsilon^2.
	\end{equation}

	Up to now, according to \eqref{eq:H1H2}, \eqref{eqH1estimate-total} and \eqref{eqH2estimate-total}, we obtain for all $0\leqslant|\alpha_h|\leqslant N+2$ that 
	\begin{equation*}
		\delta^{-\frac{1}{2}}\big\|\langle u_-\rangle^{1+\sigma}\langle u_+\rangle^{\frac{1}{2}(1+\sigma)}\partial_{h}^{\alpha_h} \nabla p(\tau,x)\big\|_{L^2_tL^2_x}
		\lesssim C_1\varepsilon^2.
	\end{equation*}
	This completes the proof of the lemma. 
\end{proof}

\begin{lemma}\label{lemma:HH}
For any $\alpha_h\in(\mathbb{Z}_{\geqslant 0})^2$ and any $l\in\mathbb{Z}_{\geqslant 1}$ with 
	$1\leqslant
	|\alpha_h|+l\leqslant N+2$, there holds 
	\begin{equation*}
		\delta^{l-\frac{1}{2}}\big\|\langle u_-\rangle^{1+\sigma}\langle u_+\rangle^{\frac{1}{2}(1+\sigma)}\partial_{h}^{\alpha_h}\partial_3^l \nabla p\big\|_{L^2_tL^2_x}
		\lesssim C_1\varepsilon^2.
	\end{equation*}
\end{lemma}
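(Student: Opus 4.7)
The plan is to prove Lemma \ref{lemma:HH} by induction on $l$, reducing the number of vertical derivatives by two at each step. The crucial ingredient is the algebraic identity
\begin{equation*}
\partial_3^2 p = -\Delta_h p - \partial_i z_+^j \partial_j z_-^i,
\end{equation*}
obtained by isolating the vertical second derivative in $-\Delta p=\partial_i z_+^j\partial_j z_-^i$. Applying this for $l\geq 2$ yields the reduction
\begin{equation*}
\partial_h^{\alpha_h}\partial_3^l \nabla p = -\Delta_h\,\partial_h^{\alpha_h}\partial_3^{l-2}\nabla p\ -\ \partial_h^{\alpha_h}\partial_3^{l-2}\nabla(\partial_i z_+^j\partial_j z_-^i),
\end{equation*}
which trades each pair of vertical derivatives on $\nabla p$ for two horizontal derivatives plus a nonlinear source.

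For the inductive step with $l\geq 2$, the first summand falls under the induction hypothesis applied with the pair $(|\alpha_h|+2,\,l-2)$; the sum $|\alpha_h|+l\leq N+2$ is preserved, and the LHS weight $\delta^{l-1/2}$ exceeds the inductive weight $\delta^{(l-2)-1/2}$ by a spare factor $\delta^{2}\leq 1$. The second summand, after splitting $\nabla$ into horizontal and vertical pieces, produces contributions of the form $\mathbf{I}_\pm^{(\alpha_h+1,\,l-2)}$ and $\mathbf{I}_\pm^{(\alpha_h,\,l-1)}$, both covered by Lemma \ref{lemma1} (their index sum $\leq N+3$ fits the general statement), and in each case the $\delta$-powers line up with the required $\delta^{l-1/2}$.

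The base cases of the induction are $l=0$, which is exactly Lemma \ref{lemma:H}, and $l=1$, which must be handled separately. For $l=1$, I decompose $\partial_3 \nabla p=(\nabla_h\partial_3 p,\,\partial_3^2 p)$. The vertical component $\partial_3^2 p$ is processed via the identity: $\partial_h^{\alpha_h}\partial_3^2 p=-\partial_h^{\alpha_h}\Delta_h p-\partial_h^{\alpha_h}(\partial_i z_+^j\partial_j z_-^i)$, where the first piece carries $|\alpha_h|+1\leq N+2$ horizontal derivatives on $\nabla p$ and is controlled by Lemma \ref{lemma:H}, while the second is $\mathbf{I}_\pm^{(\alpha_h,0)}$ in Lemma \ref{lemma1}. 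For the horizontal component $\nabla_h\partial_3 p$, I go back to the Green's function formula \eqref{nabla pressure}, use $\nabla_{x_h}G_\delta=-\nabla_{y_h}G_\delta$, and integrate by parts in $y_h$ to land at
\begin{equation*}
\partial_3 \nabla_h p(t,x)=\int_{\Omega_\delta}\partial_{x_3}G_\delta(x,y)\,\nabla_{y_h}(\partial_i z_+^j\partial_j z_-^i)(t,y)\,dy,
\end{equation*}
whose kernel obeys $|\partial_{x_3}G_\delta|\lesssim \delta^{-1}|x_h-y_h|^{-1}$ by Lemma \ref{lemma:boundgreen} and is therefore integrable on the near-diagonal region in two horizontal dimensions. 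Applying $\partial_h^{\alpha_h}$ and mimicking the cut-off/Young/Sobolev machinery of the proof of Lemma \ref{lemma:H} (again using $\operatorname{div}z_\pm=0$ to absorb bad $\partial_3 z^3_\pm=-\nabla_h\cdot z^h_\pm$ factors) delivers the bound with the correct $\delta^{1/2}$ weight.

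The hard part is this horizontal component of $\partial_3\nabla p$ at $l=1$ when $|\alpha_h|$ is close to its maximum $N+1$. A naive Poincar\'e-type estimate $\|\nabla_h\partial_3 p\|\lesssim \delta\|\nabla_h\partial_3^2 p\|$ based on $\partial_3 p|_{x_3=\pm\delta}=0$, followed by the identity, would push $|\alpha_h|+2$ horizontal derivatives onto $\nabla p$ and exceed the $|\alpha_h|\leq N+2$ budget of Lemma \ref{lemma:H}. The resolution is to keep the single $\partial_{x_3}$ on the Green's function itself and never increase the horizontal-derivative load on $\nabla p$. Once the horizontal-component estimate at $l=1$ is in hand, the inductive $\delta^2$-gain in the reduction $l\mapsto l-2$ makes the bookkeeping of $\delta$-powers align automatically with the LHS weight $\delta^{l-1/2}$.
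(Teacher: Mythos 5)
Your proposal is correct in substance but takes a genuinely different route from the paper. The paper applies the weighted div--curl lemma (Lemma \ref{lemma:divcurl}) to the vector field $\partial_h^{\alpha_h}\nabla p$: since $\operatorname{curl}\nabla p=0$, $\operatorname{div}\nabla p=\Delta p=-\nabla z_+\cdot\nabla z_-$, and $(\nabla p)^3|_{x_3=\pm\delta}=0$ kills the boundary integrals, the full $\nabla^l$-norm of $\partial_h^{\alpha_h}\nabla p$ collapses in one stroke to the nonlinear source (Lemma \ref{lemma1}) plus the $l=0$ term (Lemma \ref{lemma:H}). You instead run an explicit induction on $l$ using $\partial_3^2p=-\Delta_hp-\partial_iz_+^j\partial_jz_-^i$, which is a hands-on version of the same elliptic reduction. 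Your bookkeeping in the inductive step is right: the induction hypothesis at $(|\alpha_h|+2,\,l-2)$ stays within the budget $|\alpha_h|+l\leqslant N+2$ with a spare $\delta^2$, and the source terms $\mathbf{I}_\pm^{(\alpha_h+e_i,\,l-2)}$ and $\mathbf{I}_\pm^{(\alpha_h,\,l-1)}$ carry exactly the coefficients $\delta^{(l-2)+\frac12}\cdot\delta$ and $\delta^{(l-1)+\frac12}$ demanded by Lemma \ref{lemma1} (their index sums are in fact $\leqslant N+1$, not $N+3$ as you wrote, but this only helps). What your route buys is independence from the div--curl machinery and from the boundary identity $(\nabla p)^3|_{x_3=\pm\delta}=0$; what it costs is a separate base case at $l=1$ for the mixed component, which the paper's argument handles uniformly.

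On that base case: the Green's-function treatment you propose for $\nabla_h\partial_3 p$ is workable (the far-field region still needs the extra integrations by parts of the $\mathbf{L_2}$-analysis, since $|x_h-y_h|^{-1}\chi_{|x_h-y_h|\geqslant1}$ is neither in $L^1$ nor $L^2$ of $\mathbb{R}^2$), but it is far heavier than necessary, and the difficulty you flag is not real. Simply commute: for $i=1,2$,
\begin{equation*}
\partial_h^{\alpha_h}\partial_i\partial_3 p=\partial_h^{\alpha_h+e_i}\big((\nabla p)^3\big),
\end{equation*}
which is a purely horizontal derivative of $\nabla p$ of order $|\alpha_h|+1\leqslant N+2$ (since $|\alpha_h|\leqslant N+1$ when $l=1$). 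Lemma \ref{lemma:H} then gives the bound with coefficient $\delta^{-\frac12}\geqslant\delta^{\frac12}$. Your worry about "pushing $|\alpha_h|+2$ horizontal derivatives onto $\nabla p$" arises only if one insists on writing $\nabla_h\partial_3 p$ as second derivatives of the scalar $p$ rather than one extra horizontal derivative of the third component of $\nabla p$; only one extra derivative is ever needed, so the budget of Lemma \ref{lemma:H} is never exceeded.
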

\begin{proof}
Applying the weighted div-curl lemma to  the vector field $\partial_{h}^{\alpha_h}\nabla p$ with the weight $\langle u_-\rangle^{1+\sigma}\langle u_+\rangle^{\frac{1}{2}(1+\sigma)}$, we have 
	\begin{align*}
		&\ \ \ \ \big\|\langle u_-\rangle^{1+\sigma}\langle u_+\rangle^{\frac{1}{2}(1+\sigma)}\partial_{h}^{\alpha_h}\partial_3^l \nabla p\big\|_{L^2_x}
			\lesssim \big\|\langle u_-\rangle^{1+\sigma}\langle u_+\rangle^{\frac{1}{2}(1+\sigma)}\nabla^l \partial_h^{\alpha_h}\nabla p\big\|_{L^2_x}\\
			&\stackrel{\text{Lemma \ref{lemma:divcurl}},\  \eqref{eq:weight3}}{\lesssim}
		\sum_{l_1=0}^{l-1} \big\|\langle u_-\rangle^{1+\sigma}\langle u_+\rangle^{\frac{1}{2}(1+\sigma)}\operatorname{div}\partial_h^{\alpha_h}\nabla^{l_1} \nabla p\big\|_{L^2_x}+\sum_{l_1=0}^{l-1} \big\|\langle u_-\rangle^{1+\sigma}\langle u_+\rangle^{\frac{1}{2}(1+\sigma)}\operatorname{curl}\partial_h^{\alpha_h}\nabla^{l_1} \nabla p\big\|_{L^2_x}\\
		&\ \ \ \  
		+\big\|\langle u_-\rangle^{1+\sigma}\langle u_+\rangle^{\frac{1}{2}(1+\sigma)}\partial_h^{\alpha_h}\nabla p\big\|_{L^2_x}\\
		&\ \ \ \  
		 +\sum_{l_1=0}^{l-1}\Big|\int_{\partial\Omega_{\delta}}\langle u_-\rangle^{1+\sigma}\langle u_+\rangle^{\frac{1}{2}(1+\sigma)}\Big(\nabla^{l_1}\partial_h^{\alpha_h}(\nabla p)^h\cdot \nabla^{l_1}\nabla_h\partial_h^{\alpha_h}(\nabla p)^3-\nabla^{l_1}\partial_h^{\alpha_h}(\nabla p)^3\cdot \nabla^{l_1}\nabla_h\partial_h^{\alpha_h}(\nabla p)^h\Big)dx_h\Big|.
	\end{align*}
For the terms on the right hand side, we notice that 
\begin{align*}
&\operatorname{div}\partial_h^{\alpha_h}\nabla^{l_1} \nabla p=\partial_h^{\alpha_h}\nabla^{l_1}\underbrace{\operatorname{div}\nabla p}_{=\Delta p} 
\stackrel{\eqref{eqpressure-system}_1}{=}-\partial_h^{\alpha_h}\nabla^{l_1}(\nabla z_+\cdot \nabla z_-),\\
&\operatorname{curl}\partial_h^{\alpha_h}\nabla^{l_1} \nabla p=\partial_h^{\alpha_h}\nabla^{l_1}\underbrace{\operatorname{curl}\nabla p}_{=0}=0,\\
&(\nabla p)^3\big|_{\partial\Omega_{\delta}}=(\nabla p)^3\big|_{x_3=\pm\delta}\stackrel{\eqref{MHD equation}_1}{=}-\big(\partial_tz_++(z_--B_0)\cdot \nabla z_+\big)^3\big|_{x_3=\pm\delta}\stackrel{\eqref{MHD equation}_4}{=}0.
\end{align*}
Hence we obtain
\begin{align*}
	&\ \ \ \ \big\|\langle u_-\rangle^{1+\sigma}\langle u_+\rangle^{\frac{1}{2}(1+\sigma)}\partial_{h}^{\alpha_h}\partial_3^l \nabla p\big\|_{L^2_x}\\
	&\lesssim\sum_{l_1=0}^{l-1} \big\|\langle u_-\rangle^{1+\sigma}\langle u_+\rangle^{\frac{1}{2}(1+\sigma)}\partial_h^{\alpha_h}\nabla^{l_1} (\nabla z_+\cdot \nabla z_-)\big\|_{L^2_x}+\big\|\langle u_-\rangle^{1+\sigma}\langle u_+\rangle^{\frac{1}{2}(1+\sigma)}\partial_h^{\alpha_h}\nabla p\big\|_{L^2_x}.
\end{align*}
It then follows that 
		\begin{align*}
			&\ \ \ \ \delta^{l-\frac{1}{2}}\big\|\langle u_-\rangle^{1+\sigma}\langle u_+\rangle^{\frac{1}{2}(1+\sigma)}\partial_{h}^{\alpha_h}\partial_3^l \nabla p\big\|_{L^2_x}\\
			&\lesssim\sum_{|\alpha_h'|+l_1\leqslant l-1}\delta^{l_1+\frac{1}{2}} \big\|\langle u_-\rangle^{1+\sigma}\langle u_+\rangle^{\frac{1}{2}(1+\sigma)}\partial_h^{\alpha_h}\partial_h^{\alpha_h'}\partial_3^{l_1} (\nabla z_+\cdot \nabla z_-)\big\|_{L^2_x}+\delta^{-\frac{1}{2}}\big\|\langle u_-\rangle^{1+\sigma}\langle u_+\rangle^{\frac{1}{2}(1+\sigma)}\partial_h^{\alpha_h}\nabla p\big\|_{L^2_x}\\
			&\stackrel{\text{Lemmas  \ref{lemma1} \&  \ref{lemma:H}}}{\lesssim}C_1\varepsilon^2.
	\end{align*}
We have thus proved this lemma. 
\end{proof}

It is trivial to see that  combining the preceding two lemmas yields the pressure estimates for all $\partial_{h}^{\alpha_h}\partial_3^l \nabla p$  with the coefficient $\delta^{l-\frac{1}{2}}$.
\begin{lemma}\label{lemma:HH'}
	For any $\alpha_h\in(\mathbb{Z}_{\geqslant 0})^2$ and any $l\in\mathbb{Z}_{\geqslant 0}$ with
	$0\leqslant
	|\alpha_h|+l\leqslant N+2$, there holds 
	\begin{equation*}
		\delta^{l-\frac{1}{2}}\big\|\langle u_-\rangle^{1+\sigma}\langle u_+\rangle^{\frac{1}{2}(1+\sigma)}\partial_{h}^{\alpha_h}\partial_3^l \nabla p\big\|_{L^2_tL^2_x}
		\lesssim C_1\varepsilon^2.
	\end{equation*}
\end{lemma}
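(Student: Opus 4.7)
The statement is a clean corollary of the two preceding pressure lemmas, so the plan is essentially a case split on whether any vertical derivative is present.

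First I would fix $\alpha_h \in (\mathbb{Z}_{\geqslant 0})^2$ and $l \in \mathbb{Z}_{\geqslant 0}$ with $|\alpha_h| + l \leqslant N+2$, and consider the two exclusive cases $l = 0$ and $l \geqslant 1$. In the case $l = 0$, the coefficient $\delta^{l-\frac{1}{2}}$ reduces to $\delta^{-\frac{1}{2}}$ and the expression $\partial_h^{\alpha_h}\partial_3^l \nabla p$ reduces to $\partial_h^{\alpha_h}\nabla p$, so the desired estimate is exactly the content of Lemma \ref{lemma:H} (which covers precisely the range $0 \leqslant |\alpha_h| \leqslant N+2$ needed here). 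In the case $l \geqslant 1$, I have $1 \leqslant |\alpha_h| + l \leqslant N+2$, which is exactly the hypothesis range of Lemma \ref{lemma:HH}, and the conclusion of that lemma is the desired bound verbatim.

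Combining the two cases, I obtain the uniform bound
\begin{equation*}
\delta^{l-\frac{1}{2}}\big\|\langle u_-\rangle^{1+\sigma}\langle u_+\rangle^{\frac{1}{2}(1+\sigma)}\partial_{h}^{\alpha_h}\partial_3^l \nabla p\big\|_{L^2_tL^2_x} \lesssim C_1\varepsilon^2
\end{equation*}
valid for all admissible pairs $(\alpha_h, l)$ with $0 \leqslant |\alpha_h| + l \leqslant N+2$. There is no genuine obstacle here: all the analytic work (the Green's function representation \eqref{nabla pressure}, the cutoff decomposition, and the weighted div-curl passage from purely horizontal to mixed derivatives) has already been absorbed into Lemmas \ref{lemma:H} and \ref{lemma:HH}. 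The only thing to verify is that the two index ranges cover the full range $0 \leqslant |\alpha_h| + l \leqslant N+2$ without gap, which is immediate: Lemma \ref{lemma:H} handles $l = 0$ (any $|\alpha_h| \leqslant N+2$), and Lemma \ref{lemma:HH} handles $l \geqslant 1$ (any $|\alpha_h| + l \leqslant N+2$ with $l \geqslant 1$). Hence the proof reduces to a one-line case analysis, and this is exactly the sense in which the author flags the result as trivial.
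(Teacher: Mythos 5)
Your proposal is correct and is exactly the paper's argument: the paper states Lemma \ref{lemma:HH'} as an immediate combination of Lemma \ref{lemma:H} (the case $l=0$, covering all $|\alpha_h|\leqslant N+2$) and Lemma \ref{lemma:HH} (the case $l\geqslant 1$, covering $1\leqslant|\alpha_h|+l\leqslant N+2$), which is precisely your case split. Nothing is missing.
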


In particular, using the notations in Section \ref{sec:notation}, we have a direct renormalization corollary of  Lemma \ref{lemma:HH'}. We point out that  this corollary will be used in the approximation part of Section \ref{sec:approximation}.
\begin{corollary}\label{remarkp}
	For any $\alpha_h\in(\mathbb{Z}_{\geqslant 0})^2$ and any $l\in\mathbb{Z}_{\geqslant 0}$ with 
$0\leqslant
|\alpha_h|+l\leqslant N+2$, there holds 
\begin{equation*}
	\big\|\langle u_-\rangle^{1+\sigma}\langle u_+\rangle^{\frac{1}{2}(1+\sigma)}\partial_{h}^{\alpha_h}\partial_3^l \nabla_\delta p_{(\delta)}\big\|_{L^2_tL^2_x}
	\lesssim C_1\varepsilon^2.
\end{equation*}
\end{corollary}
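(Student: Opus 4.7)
My plan is to prove this by direct unwinding of the rescaling $p_{(\delta)}(t,x_h,x_3)=p(t,x_h,\delta x_3)$ and the definition $\nabla_\delta=(\partial_1,\partial_2,\delta^{-2}\partial_3)$. By the chain rule, for $i=1,2$ one has
\[
\partial_h^{\alpha_h}\partial_3^l\partial_i p_{(\delta)}(t,x)=\delta^{l}(\partial_h^{\alpha_h}\partial_3^l\partial_i p)(t,x_h,\delta x_3),
\]
while for the third component
\[
\partial_h^{\alpha_h}\partial_3^l(\delta^{-2}\partial_3 p_{(\delta)})(t,x)=\delta^{l-1}(\partial_h^{\alpha_h}\partial_3^{l+1}p)(t,x_h,\delta x_3).
\]
The weight $\langle u_-\rangle^{1+\sigma}\langle u_+\rangle^{\frac{1}{2}(1+\sigma)}$ depends only on $(t,x_1)$ and is therefore invariant under the rescaling $y_3=\delta x_3$.

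First I would perform the change of variable $y_3=\delta x_3$ in each $L^2(\Omega_1)$-integral. This produces exactly the same scaling laws listed just before Corollary \ref{coro1}: a factor $\delta^{2(l-1/2)}$ for the two horizontal components (the scalar-type $z^h$-rescaling) and a factor $\delta^{2(l-3/2)}$ for the third component (the $z^3$-type rescaling, where the extra $\delta^{-2}$ inside $\nabla_\delta$ plays exactly the role that the $\delta^{-1}$ does in the definition $z^3_{(\delta)}=\delta^{-1}z^3(\cdot,\delta\cdot)$). After this bookkeeping, the $L^2_tL^2(\Omega_1)$-norm on the left hand side of the claim is bounded, up to universal constants, by the weighted $L^2_tL^2(\Omega_\delta)$-norm
\[
\delta^{l-\frac{1}{2}}\bigl\|\langle u_-\rangle^{1+\sigma}\langle u_+\rangle^{\frac{1}{2}(1+\sigma)}\partial_h^{\alpha_h}\partial_3^l\nabla p\bigr\|_{L^2_tL^2(\Omega_\delta)}
\]
supplied on the left hand side of Lemma \ref{lemma:HH'}. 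Invoking Lemma \ref{lemma:HH'} at the index $(\alpha_h,l)$, which is admissible since $|\alpha_h|+l\leq N+2$, then delivers the stated $C_1\varepsilon^2$ bound and finishes the proof.

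The main point of care in carrying out this plan is the third-component bookkeeping: one must check that the factor $\delta^{-2}$ inside $\nabla_\delta$, the Neumann boundary identity $\partial_3 p|_{x_3=\pm\delta}=0$ (which places $\partial_3 p$ in the same boundary class as the vertical field $z^3$), and the volume factor $\delta^{-1}$ arising from $dy_3=\delta\,dx_3$ all combine into exactly the $\delta^{l-3/2}$ required to match the $\delta^{l-1/2}$ produced by Lemma \ref{lemma:HH'} applied to the third component $\partial_3^{l+1}p$ of $\partial_3^l\nabla p$. This is precisely the mechanism underlying the word \emph{direct} in the statement, and is the same renormalization principle already at work in Corollary \ref{coro1}; once it is verified, the rest of the argument is a bookkeeping exercise of the kind performed in Section \ref{sec:notation}.
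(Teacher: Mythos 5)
Your reduction strategy (unwind the rescaling, change variables $y_3=\delta x_3$, and land on the weighted norms over $\Omega_\delta$) is the intended one, and your treatment of the two horizontal components is correct: they scale exactly like $\delta^{l-\frac{1}{2}}\|\langle u_-\rangle^{1+\sigma}\langle u_+\rangle^{\frac{1}{2}(1+\sigma)}\partial_h^{\alpha_h}\partial_3^l\partial_i p\|_{L^2(\Omega_\delta)}$, $i=1,2$, which Lemma \ref{lemma:HH'} controls. The gap is in the vertical component. Your own bookkeeping shows that
\[
\big\|\langle u_-\rangle^{1+\sigma}\langle u_+\rangle^{\frac{1}{2}(1+\sigma)}\partial_h^{\alpha_h}\partial_3^l\big(\delta^{-2}\partial_3 p_{(\delta)}\big)\big\|_{L^2(\Omega_1)}=\delta^{l-\frac{3}{2}}\big\|\langle u_-\rangle^{1+\sigma}\langle u_+\rangle^{\frac{1}{2}(1+\sigma)}\partial_h^{\alpha_h}\partial_3^{l+1} p\big\|_{L^2(\Omega_\delta)},
\]
whereas Lemma \ref{lemma:HH'} at index $(\alpha_h,l)$ only bounds this quantity with the coefficient $\delta^{l-\frac{1}{2}}=\delta\cdot\delta^{l-\frac{3}{2}}$. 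Since $\delta\leqslant 1$, the bound you need is \emph{stronger} by a full power of $\delta^{-1}$; the two coefficients do not ``match,'' and Lemma \ref{lemma:HH'} alone cannot close this step.

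The missing ingredient is Lemma \ref{lemma:HHH}, the estimate for $\partial_h^{\alpha_h}\partial_3^l\partial_3\nabla p$ at the \emph{lower order} coefficient $\delta^{l-\frac{1}{2}}$; its whole purpose is to supply one extra $\partial_3$ without paying a $\delta$. For $l\geqslant 1$, apply it at index $(\alpha_h,l-1)$: the third component of $\partial_3^{l-1}\partial_3\nabla p$ is $\partial_3^{l+1}p$ and the coefficient is $\delta^{(l-1)-\frac{1}{2}}=\delta^{l-\frac{3}{2}}$, exactly what is required. For $l=0$ even this does not apply directly, and you must genuinely use the Neumann condition $\partial_3 p|_{x_3=\pm\delta}=0$ (which you mention but never deploy quantitatively): writing $\partial_3 p(x_h,x_3)=\int_{-\delta}^{x_3}\partial_3^2p(x_h,s)\,ds$ gives the one-dimensional Poincar\'e inequality $\|\partial_3 p\|_{L^2_{x_3}(-\delta,\delta)}\lesssim\delta\|\partial_3^2p\|_{L^2_{x_3}(-\delta,\delta)}$, which converts $\delta^{-\frac{3}{2}}\|\langle\cdot\rangle\partial_h^{\alpha_h}\partial_3 p\|_{L^2(\Omega_\delta)}$ into $\delta^{-\frac{1}{2}}\|\langle\cdot\rangle\partial_h^{\alpha_h}\partial_3^2p\|_{L^2(\Omega_\delta)}$ (the weights are independent of $x_3$), and the latter is controlled by Lemma \ref{lemma:HHH} at $l=0$. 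With these two substitutions the proof closes; without them the vertical component is not proved.
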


To end this section, we give the pressure estimates for  $\partial_{h}^{\alpha_h}\partial_3^l\partial_3 \nabla p$ with the lower order coefficient $\delta^{l-\frac{1}{2}}$.
\begin{lemma}\label{lemma:HHH}
For any $\alpha_h\in(\mathbb{Z}_{\geqslant 0})^2$ and any $l\in\mathbb{Z}_{\geqslant 0}$ with 
$0\leqslant
|\alpha_h|+l\leqslant N+2$, there holds 
	\begin{equation*}
		\delta^{l-\frac{1}{2}}\big\|\langle u_-\rangle^{1+\sigma}\langle u_+\rangle^{\frac{1}{2}(1+\sigma)}\partial_{h}^{\alpha_h}\partial_3^l\partial_3 \nabla p\big\|_{L^2_tL^2_x}
		\lesssim C_1\varepsilon^2.
	\end{equation*}
\end{lemma}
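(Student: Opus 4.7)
The proof adapts the strategy of Lemma \ref{lemma:HH} (with one more derivative) while relying on a structural identity for $F:=\nabla z_+\cdot\nabla z_-$ that yields an improved estimate compared to Lemma \ref{lemma1}. Using $\operatorname{div}z_+=0$,
\[F=\partial_i z_+^j\partial_j z_-^i=\partial_j(z_-^i\partial_i z_+^j)-z_-^i\partial_i(\operatorname{div}z_+)=\operatorname{div}(z_-\cdot\nabla z_+)=\operatorname{div}\mathbf{J}_+^{(0,0)}.\]
Hence $\partial_h^{\alpha_h}\partial_3^l F=\partial_i(\partial_h^{\alpha_h}\partial_3^l\mathbf{J}_+^{(0,0),i})$. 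Splitting this divergence by index: for $i=1,2$ the contributions $\partial_h^{\alpha_h+\hat e_i}\partial_3^l\mathbf{J}_+^{(0,0),i}$ are components of $\mathbf{J}_+^{(\alpha_h+\hat e_i,l)}$, bounded by Lemma \ref{lemma2} under $|\alpha_h|+1+l\leqslant 2N-1$ (satisfied for $N\geqslant 5$ and $|\alpha_h|+l\leqslant N+2$), and the $i=3$ contribution $\partial_h^{\alpha_h}\partial_3^{l+1}\mathbf{J}_+^{(0,0),3}$ is a component of $\mathbf{K}_+^{(\alpha_h,l)}$, bounded by Lemma \ref{lemma3} under $|\alpha_h|+l\leqslant N+2$. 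Since both families of bounds carry coefficient $\delta^{l-\frac{1}{2}}$, one obtains the sharpened estimate
\[\delta^{l-\frac{1}{2}}\|\langle u_-\rangle^{1+\sigma}\langle u_+\rangle^{\frac{1}{2}(1+\sigma)}\partial_h^{\alpha_h}\partial_3^l F\|_{L^2_tL^2_x}\lesssim C_1\varepsilon^2 \quad\text{for }|\alpha_h|+l\leqslant N+2,\]
gaining one factor of $\delta$ over what Lemma \ref{lemma1} directly provides.

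With this improvement in hand, we apply the weighted div-curl lemma (Lemma \ref{lemma:divcurl}) to the vector field $v=\partial_h^{\alpha_h}\nabla p$ with $k=l+1$. Using $\operatorname{div}\nabla p=-F$ and $\operatorname{curl}\nabla p=0$, this yields
\[\|\lambda^{1/2}\partial_h^{\alpha_h}\partial_3^{l+1}\nabla p\|_{L^2_x}^2\lesssim \sum_{l_1=0}^{l}\|\lambda^{1/2}\partial_h^{\alpha_h}\nabla^{l_1}F\|_{L^2_x}^2+\|\lambda^{1/2}\partial_h^{\alpha_h}\nabla p\|_{L^2_x}^2+(\text{boundary}),\]
where $\lambda^{1/2}=\langle u_-\rangle^{1+\sigma}\langle u_+\rangle^{\frac{1}{2}(1+\sigma)}$. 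Multiplying by $\delta^{2(l-\frac{1}{2})}$ and integrating in time: the $F$-sum is controlled with the desired coefficient $\delta^{l-\frac{1}{2}}$ by the sharpened $F$-estimate (distributing $\nabla^{l_1}$ into horizontal versus vertical derivatives); the lower-order term is bounded by Lemma \ref{lemma:H} via $\delta^{l-\frac{1}{2}}=\delta^l\cdot\delta^{-\frac{1}{2}}\lesssim\delta^{-\frac{1}{2}}$; and the boundary contribution benefits from $v^3|_{\partial\Omega_\delta}=\partial_h^{\alpha_h}\partial_3 p|_{\partial\Omega_\delta}=0$, which makes all purely horizontal-derivative boundary terms vanish. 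The remaining boundary terms, those involving $\partial_3$-derivatives of $v^3$, are rewritten via the elliptic identity $\partial_3^2 p=-\Delta_h p-F$ and then bounded by a standard trace inequality on $\Omega_\delta$ combined with the bulk estimates from Lemmas \ref{lemma:H}, \ref{lemma:HH'}, and the sharpened $F$-bound.

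\textbf{Main obstacle.} The core difficulty is that a naive application of Lemma \ref{lemma1
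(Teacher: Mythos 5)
Your proposal follows the same skeleton as the paper's proof: apply the weighted div--curl lemma (Lemma \ref{lemma:divcurl}) to $\partial_h^{\alpha_h}\nabla p$ with one more derivative than in Lemma \ref{lemma:HH}, use $\operatorname{curl}\nabla p=0$ and $\operatorname{div}\nabla p=-\nabla z_+\cdot\nabla z_-$, and control the zeroth-order term by Lemma \ref{lemma:H}. Where you genuinely depart from the paper is in the treatment of the source terms $\partial_h^{\alpha_h}\nabla^{l_1}(\nabla z_+\cdot\nabla z_-)$, $0\leqslant l_1\leqslant l$. The paper simply invokes Lemma \ref{lemma1}, writing the bound with coefficients $\delta^{l_1+\frac{1}{2}}$; for the top term $l_1=l$ (all extra derivatives vertical) the available prefactor is $\delta^{l-\frac{1}{2}}=\delta^{-1}\delta^{l+\frac{1}{2}}$, so Lemma \ref{lemma1} alone is one power of $\delta$ short there, and the proof of Lemma \ref{lemma1} has no slack to spare (e.g.\ in \eqref{eqA6}--\eqref{eqA7} the $\delta$-budget is exactly saturated). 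Your identity $\nabla z_+\cdot\nabla z_-=\operatorname{div}(z_-\cdot\nabla z_+)$, which converts $\partial_h^{\alpha_h}\partial_3^{l_1}(\nabla z_+\cdot\nabla z_-)$ into components of $\mathbf{J}_+^{(\alpha_h+\hat e_i,l_1)}$ and $\mathbf{K}_+^{(\alpha_h,l_1)}$ and then applies Lemmas \ref{lemma2} and \ref{lemma3} (both carrying the coefficient $\delta^{l_1-\frac{1}{2}}$, and applicable since $|\alpha_h|+1+l_1\leqslant N+3\leqslant 2N-1$ for $N\geqslant 5$), supplies exactly the missing factor of $\delta$ and closes the top-order term cleanly. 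This is a correct and, in my view, necessary refinement of the paper's bookkeeping rather than a different proof.

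Two caveats. First, your proposal is cut off mid-sentence in the final paragraph, so the discussion of the ``main obstacle'' is incomplete, though the substantive argument is already in place above it. Second, your handling of the boundary integrals in Lemma \ref{lemma:divcurl} is only sketched: $v^3|_{\partial\Omega_\delta}=\partial_h^{\alpha_h}\partial_3p|_{x_3=\pm\delta}=0$ kills only those boundary terms in which $\nabla^{l_1}$ acts purely horizontally on $v^3$, and the remaining ones (with $\partial_3$ falling on $v^3$) do need the substitution $\partial_3^2p=-\Delta_hp-(\nabla z_+\cdot\nabla z_-)$ plus a trace argument as you indicate; this should be written out, since the paper itself is silent on these terms and they are not obviously negligible.
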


\begin{proof}
The almost same reasoning used in Lemma \ref{lemma:HH} applies to this lower order coefficient case. Hence we have
\begin{align*}
	&\ \ \ \ \big\|\langle u_-\rangle^{1+\sigma}\langle u_+\rangle^{\frac{1}{2}(1+\sigma)}\partial_{h}^{\alpha_h}\partial_3^l\partial_3 \nabla p\big\|_{L^2_x}\\
	&\lesssim \sum_{l_1=0}^{l} \big\|\langle u_-\rangle^{1+\sigma}\langle u_+\rangle^{\frac{1}{2}(1+\sigma)}\partial_h^{\alpha_h}\nabla^{l_1} (\nabla z_+\cdot \nabla z_-)\big\|_{L^2_x}+\big\|\langle u_-\rangle^{1+\sigma}\langle u_+\rangle^{\frac{1}{2}(1+\sigma)}\partial_h^{\alpha_h}\nabla p\big\|_{L^2_x},
\end{align*}
which together with Lemma \ref{lemma1} and Lemma \ref{lemma:H}  implies that 
	\begin{align*}
	&\ \ \ \ \delta^{l-\frac{1}{2}}\big\|\langle u_-\rangle^{1+\sigma}\langle u_+\rangle^{\frac{1}{2}(1+\sigma)}\partial_{h}^{\alpha_h}\partial_3^l\partial_3 \nabla p\big\|_{L^2_x}\\
	&\lesssim\sum_{|\alpha_h'|+l_1\leqslant l}\delta^{l_1+\frac{1}{2}} \big\|\langle u_-\rangle^{1+\sigma}\langle u_+\rangle^{\frac{1}{2}(1+\sigma)}\partial_h^{\alpha_h}\partial_h^{\alpha_h'}\partial_3^{l_1} (\nabla z_+\cdot \nabla z_-)\big\|_{L^2_x}+\delta^{-\frac{1}{2}}\big\|\langle u_-\rangle^{1+\sigma}\langle u_+\rangle^{\frac{1}{2}(1+\sigma)}\partial_h^{\alpha_h}\nabla p\big\|_{L^2_x}
	\lesssim C_1\varepsilon^2.
\end{align*}
This proves the lemma. 
\end{proof}

\section{Construction of scattering fields in $\Omega_\delta$}\label{sec:scattering}

In this section, we will study the scattering fields associated to the solution  $\big(z_+(t,x),z_-(t,x)\big)$  constructed in Theorem \ref{lemma:global} for the system \eqref{MHD equation}.

Towards this goal, we first define the  infinities. 
Given a point $(0, x_1,x_2,x_3) \in \{t=0\}\times\Omega_\delta$, it determines uniquely a left-traveling straight line $\ell_-$ and a right-traveling straight line $\ell_+$: 
\[\ell_-: \mathbb{R}\rightarrow \mathbb{R}\times \Omega_\delta, \ \ t\mapsto (t,x_1+t,x_2,x_3),\]
\[\ell_+: \mathbb{R}\rightarrow \mathbb{R}\times \Omega_\delta, \ \ t\mapsto (t,x_1-t,x_2,x_3).\]
It is clear that  $u_-\big|_{\ell_-}\equiv x_1$ and $u_+\big|_{\ell_+}\equiv x_1$. We also denote the line $\ell_-$ by $\ell_-(u_-,x_2,x_3)$  where $u_-$, $x_2$ and $x_3$ are constants, and denote the line $\ell_+$ by $\ell_+(u_+,x_2,x_3)$  where $u_+$, $x_2$ and $x_3$ are constants. 
We use $\mathcal{C}_+$ to denote the collection of all the left-traveling lines and $\mathcal{C}_-$ to denote the collection of all the right-traveling lines:
\[\mathcal{C}_+=\big\{\ell_-(u_-,x_2,x_3)\big|\,t=\infty,\, (u_-,x_2,x_3)\in \Omega_\delta\big\},\] 
\[\mathcal{C}_-=\big\{\ell_+(u_+,x_2,x_3)\big|\,t=\infty,\, (u_+,x_2,x_3)\in \Omega_\delta\big\}.\] 
We call $\mathcal{C}_+$ \emph{the left infinity} and  $\mathcal{C}_-$ \emph{the right infinity} respectively, and in fact they are the spaces where the scattering fields live. 
These descriptions can be easily seen from the following Figure \ref{fig:scattering}.

	\begin{figure}[ht]
			\vspace{-0.1cm}
	\centering
	\includegraphics[width=4in]{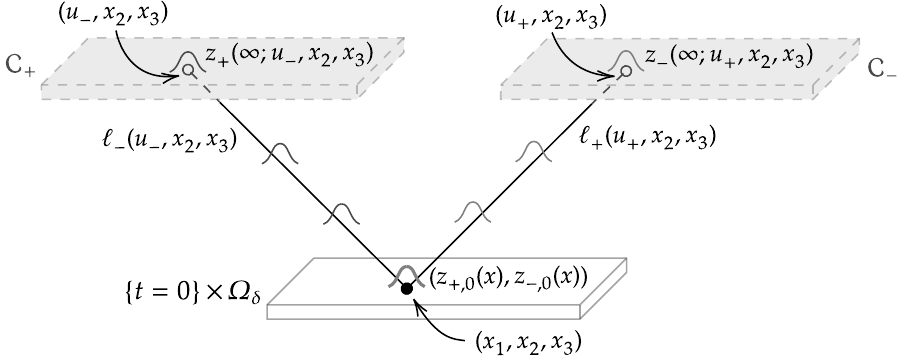}
	\vspace{-0.1cm}
	\caption{Infinities and scattering fields}
	\label{fig:scattering}
		\vspace{-0.1cm}
\end{figure}

We notice that the first two equations in \eqref{MHD equation} can be written as 
\begin{equation*}
	\begin{cases}
&\displaystyle
	\frac{d}{dt}\big(z_{+}(t,u_--t,x_2,x_3)\big) =-\nabla p(t,u_--t,x_2,x_3)-\big(z_{-}\cdot\nabla z_{+}\big)(t,u_--t,x_2,x_3),\\
&\displaystyle
	\frac{d}{dt}\big(z_{-}(t,u_++t,x_2,x_3)\big) =-\nabla p(t,u_++t,x_2,x_3)-\big(z_{+}\cdot\nabla z_{-}\big)(t,u_++t,x_2,x_3).
	\end{cases}
\end{equation*}
Integrating these two equations along  $\ell_-(u_+,x_2,x_3)$ and  $\ell_+(u_-,x_2,x_3)$ respectively
from time $0$ to time $t$ leads us to
\begin{equation*}
	\begin{cases}
&\displaystyle	z_+(t,u_--t,x_2,x_3)=z_+(0,u_-,x_2,x_3)-\int_0^t\big(\nabla p+z_{-}\cdot\nabla z_{+}\big)(\tau,u_--\tau,x_2,x_3)d\tau,\\
&\displaystyle	z_-(t,u_++t,x_2,x_3)=z_-(0,u_+,x_2,x_3)-\int_0^t\big(\nabla p+z_{+}\cdot\nabla z_{-}\big)(\tau,u_++\tau,x_2,x_3)d\tau.	\end{cases}
\end{equation*}
When $t\to \infty$, it is natural to expect that the following two formulas can  define the scattering fields $\delta^{-\frac{1}{2}}z_+(\infty;u_-,x_2,x_3)$ (on $\mathcal{C}_+$) and  $\delta^{-\frac{1}{2}}z_-(\infty;u_+,x_2,x_3)$ (on $\mathcal{C}_-$) respectively:
\begin{equation}\label{eq:def-sca}
\begin{cases}
	&\displaystyle
	\delta^{-\frac{1}{2}}z_{+}(\infty;u_-,x_2,x_3):= \delta^{-\frac{1}{2}}z_{+}(0,u_-,x_2,x_3)- \delta^{-\frac{1}{2}}\int_0^{\infty}(\nabla p+z_{-}\cdot\nabla z_{+})(\tau,u_--\tau,x_2,x_3)d\tau,\\
&\displaystyle 	\delta^{-\frac{1}{2}}z_{-}(\infty;u_+,x_2,x_3):= \delta^{-\frac{1}{2}}z_{-}(0,u_+,x_2,x_3)- \delta^{-\frac{1}{2}}\int_0^{\infty}(\nabla p+z_{+}\cdot\nabla z_{-})(\tau,u_++\tau,x_2,x_3)d\tau.
\end{cases}	
\end{equation}

In what follows, we turn to prove this expectation  valid and study the properties of scattering fields. The first main theorem of this paper is as follows:
\begin{theorem}[Scattering fields in $\Omega_{\delta}$]\label{thm1}  
All the integrals in  \eqref{eq:def-sca} converge. Therefore the vector fields $\delta^{-\frac{1}{2}}z_{+}(\infty;u_-,x_2,x_3)$ and  $\delta^{-\frac{1}{2}}z_{-}(\infty;u_+,x_2,x_3)$ are well-defined by \eqref{eq:def-sca}, and we call them the left scattering field and the right scattering field respectively. Moreover, for any $\alpha_h\in(\mathbb{Z}_{\geqslant 0})^2$ and $l\in\mathbb{Z}_{\geqslant 0}$ with 
$0\leqslant|\alpha_h|+l\leqslant N+2$,  there hold the following two properties of scattering fields:
\begin{enumerate}[(i)]
\item 
these scattering fields live in the following functional spaces in the weighted energy sense:
\begin{align*}
&\delta^{l-\frac{1}{2}}\partial_h^{\alpha_h}\partial_3^l z_{\pm}(\infty;u_\mp,x_2,x_3)\in L^2(\mathcal{C}_\pm,\langle u_\mp\rangle^{2(1+\sigma)}du_\mp dx_2dx_3),\\
&\delta^{-\frac{3}{2}}\partial_h^{\alpha_h} z^3_{\pm}(\infty;u_\mp,x_2,x_3)\in L^2(\mathcal{C}_\pm,\langle u_\mp\rangle^{2(1+\sigma)}du_\mp dx_2dx_3),\\
&\delta^{l-\frac{1}{2}}\partial_h^{\alpha_h}\partial_3^l(\partial_3 z_{\pm})(\infty;u_\mp,x_2,x_3)\in L^2(\mathcal{C}_\pm,\langle u_\mp\rangle^{2(1+\sigma)}du_\mp dx_2dx_3). 
\end{align*}
\item these scattering fields can be approximated by the large time solution in the weighted energy sense:
\begin{align*} 
&	\lim_{T\to\infty}\Big\|\delta^{l-\frac{1}{2}}\partial_h^{\alpha_h}\partial_3^lz_{\pm}(\infty;u_\mp,x_2,x_3)-\delta^{l-\frac{1}{2}}\partial_h^{\alpha_h}\partial_3^lz_{\pm}(T,u_\mp\mp T,x_2,x_3)\Big\|_{L^2(\mathcal{C}_\pm,\langle u_\mp\rangle^{2(1+\sigma)}du_\mp dx_2dx_3)}=0,\\
&	\lim_{T\to\infty}\Big\|\delta^{-\frac{3}{2}}\partial_h^{\alpha_h}z^3_{\pm}(\infty;u_\mp,x_2,x_3)-\delta^{-\frac{3}{2}}\partial_h^{\alpha_h}z^3_{\pm}(T,u_\mp\mp T,x_2,x_3)\Big\|_{L^2(\mathcal{C}_\pm,\langle u_\mp\rangle^{2(1+\sigma)}du_\mp dx_2dx_3)}=0,\\
&	\lim_{T\to\infty}\Big\|\delta^{l-\frac{1}{2}}\partial_h^{\alpha_h}\partial_3^l(\partial_3z_{\pm})(\infty;u_\mp,x_2,x_3)-\delta^{l-\frac{1}{2}}\partial_h^{\alpha_h}\partial_3^l(\partial_3z_{\pm})(T,u_\mp\mp T,x_2,x_3)\Big\|_{L^2(\mathcal{C}_\pm,\langle u_\mp\rangle^{2(1+\sigma)}du_\mp dx_2dx_3)}=0.
\end{align*}
\end{enumerate}
\end{theorem}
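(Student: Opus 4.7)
My plan is to establish \textbf{(i)} and \textbf{(ii)} in tandem by reducing both to a single Cauchy--Schwarz estimate along the characteristic lines, which turns each weighted $L^2$-norm on the scattering cone into the weighted space-time $L^2_tL^2_x$-norm already controlled by the estimates of Sections~\ref{sec:more estimates} and~\ref{sec:pressure}. The starting point is the identity (a differentiated version of \eqref{eq:def-sca})
\begin{equation*}
\partial_h^{\alpha_h}\partial_3^l z_+(\infty; u_-, x_2, x_3)-\partial_h^{\alpha_h}\partial_3^l z_+(T, u_- -T, x_2, x_3)=-\int_T^{\infty}\partial_h^{\alpha_h}\partial_3^l(\nabla p+z_-\cdot\nabla z_+)(\tau, u_- -\tau, x_2, x_3)\,d\tau,
\end{equation*}
which for $T=0$ identifies the scattering field in terms of the initial data, and for $T\to\infty$ will supply \textbf{(ii)}. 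The case of $z_-$ along $\ell_+$ is entirely symmetric.

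The key input is that along $\ell_-(u_-, x_2, x_3)$ the variable $u_+$ evolves as $u_+(\tau)=u_- -2\tau$, so that $\langle u_+(\tau)\rangle=(1+|u_- -2\tau-a|^2)^{1/2}$, and the substitution $s=2\tau-u_- +a$ gives, uniformly in $u_-$ and $a$,
\begin{equation*}
\int_0^{\infty}\frac{d\tau}{\langle u_+(\tau)\rangle^{1+\sigma}}\leqslant \frac{1}{2}\int_{-\infty}^{\infty}(1+s^2)^{-(1+\sigma)/2}\,ds=:C_\sigma<\infty,
\end{equation*}
precisely because $1+\sigma>1$. Cauchy--Schwarz with weight $\langle u_+(\tau)\rangle^{1+\sigma}$ then bounds the modulus squared of the characteristic integral by $C_\sigma\int_0^{\infty}\langle u_+(\tau)\rangle^{1+\sigma}|F|^2\,d\tau$; after multiplying by $\langle u_-\rangle^{2(1+\sigma)}$ and integrating against $du_-\,dx_2\,dx_3$, the change of variables $x_1=u_- -\tau$ (Jacobian one) recasts the right-hand side as $\|\langle u_-\rangle^{1+\sigma}\langle u_+\rangle^{(1+\sigma)/2}F\|_{L^2_tL^2_x}^2$.

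At this stage I would separate the three flavors of \textbf{(i)}. For $\partial_h^{\alpha_h}\partial_3^l z_\pm$ and $\partial_h^{\alpha_h}\partial_3^l(\partial_3 z_\pm)$ the $\delta^{l-\frac{1}{2}}$-prefactor matches precisely the scaling in Lemma~\ref{lemma2} and Lemma~\ref{lemma:HH'} (respectively Lemma~\ref{lemma3} and Lemma~\ref{lemma:HHH}), which thereby bound the tail integral by $C_1\varepsilon^2$; the corresponding initial-data piece reduces to $\delta^{l-\frac{1}{2}}(E_\pm^{(|\alpha_h|,l)}(z_{\pm,0}))^{1/2}\lesssim \varepsilon$ (or the $\partial_3$-variant) by the hypothesis on $\mathcal{E}(0)$. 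For the sharper $\delta^{-\frac{3}{2}}$-scaled bound on $\partial_h^{\alpha_h}z_\pm^3$, I would perform the same Cauchy--Schwarz-plus-change-of-variables argument inside the rescaled system~\eqref{eq:rescale} on $\Omega_1$ and invoke the renormalized bounds of Remark~\ref{remarkJ} and Corollary~\ref{remarkp}, which already absorb all $\delta$-factors; transferring back yields the claimed $\delta^{-\frac{3}{2}}$-scaling just as in the passage from Theorem~\ref{lemma:global} to Corollary~\ref{coro1}.

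For part \textbf{(ii)}, the same manipulation applied to the tail $\int_T^\infty$ in place of $\int_0^\infty$ gives exactly the weighted $L^2(\mathcal{C}_\pm)$-norm of the left-hand side of the displayed identity above, bounded by the $L^2_{[T,\infty)}L^2_x$-tail of the same space-time integrand. Since the full $[0,\infty)$-integral is finite by \textbf{(i)}, monotone convergence forces the tail to zero as $T\to\infty$. The main obstacle, as I see it, is the correct bookkeeping of $\delta$-powers for the vertical component: the naive route via the original equation for $z_\pm^3$ combined with Lemmas~\ref{lemma2} and~\ref{lemma:HH'} only produces a $\delta^{-\frac{1}{2}}$-scaled bound, falling short by one factor of $\delta$, and one genuinely needs to pass through the renormalized system~\eqref{eq:rescale} to recover the anisotropic $\delta^{-\frac{3}{2}}$-scaling stipulated in the statement.
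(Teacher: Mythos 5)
Your core estimate is exactly the one the paper uses: Cauchy--Schwarz along the characteristic with the weight $\langle u_+(\tau)\rangle^{1+\sigma}$ (using $u_+(\tau)=u_--2\tau$ and $\int_0^\infty\langle u_+(\tau)\rangle^{-(1+\sigma)}d\tau\lesssim 1$), followed by the change of variables that converts the $L^2(\mathcal{C}_\pm,\langle u_\mp\rangle^{2(1+\sigma)})$-norm of the characteristic integral into $\big\|\langle u_\mp\rangle^{1+\sigma}\langle u_\pm\rangle^{\frac{1}{2}(1+\sigma)}F\big\|_{L^2_tL^2_x}$, and then Lemmas \ref{lemma2}, \ref{lemma3}, \ref{lemma:HH'}, \ref{lemma:HHH}. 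Part (ii) via the vanishing of the $[T,\infty)$-tail is also the paper's argument. Your remark that the vertical component needs the anisotropic $\delta^{-\frac{3}{2}}$-scaling, recovered through the rescaled system and Remark \ref{remarkJ}/Corollary \ref{remarkp}, is a legitimate route and in fact more explicit than the paper's Lemma \ref{lemma5'}, which simply asserts the $z^3$-bound as a consequence of the preceding lemmas.

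The genuine gap is at the very first line: you take as your "starting point" a differentiated version of \eqref{eq:def-sca}, i.e.\ you assume that $\partial_h^{\alpha_h}\partial_3^l$ commutes with the improper integral $\int_T^\infty(\cdots)\,d\tau$. The scattering field is \emph{defined} only by the undifferentiated formula \eqref{eq:def-sca}, so $\partial_h^{\alpha_h}\partial_3^l z_\pm(\infty;\cdot)$ is a priori a distributional derivative of that function, and the identity you write down is precisely what has to be proved before any of your estimates apply to it. This is not automatic for an improper time integral whose integrand is only controlled in weighted $L^2_tL^2_x$, and it is where the paper spends most of its effort: Steps 2--4 of Lemmas \ref{lemma5} and \ref{lemma6} first show (via difference quotients, Fatou, and the Newton--Leibniz formula) that one derivative of the integral lies in the weighted $L^2(\mathcal{C}_+)$ space, then identify it with the integral of the derivative by a Fubini argument in $\mathcal{D}'(\mathcal{C}_+)$ (justified by the finiteness bounds \eqref{eq:lemma1-3}--\eqref{eq:lemma1-4} and \eqref{eq:lemma1-24}--\eqref{eq:lemma1-25}), and finally induct on $|\alpha_h|+l$. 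Your Step-1-type bound on $\int_0^\infty\partial_h^{\alpha_h}\partial_3^l(\nabla p+z_\mp\cdot\nabla z_\pm)\,d\tau$ supplies the ingredients for this commutation argument but does not replace it; as written, part (i) for $|\alpha_h|+l\geqslant 1$ and the corresponding limits in (ii) are asserted rather than derived. Supplying that interchange-of-derivative-and-integral step would complete the proof.
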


We remark here that 
based on Corollary \ref{coro1} and the notations before, Theorem \ref{thm1} also leads to an immediate consequence via renormalization:  
\begin{corollary}[Scattering fields in $\Omega_1$]\label{coro2}
Let  $\big(z_{+(\delta)}(t,x),z_{-(\delta)}(t,x)\big)$ be 	the solution  constructed in Corollary  \ref{coro1} for the rescaled system \eqref{eq:rescale}.
The vector fields $z_{+(\delta)}(\infty;u_-,x_2,x_3)$ (on the corresponding infinity  $\mathcal{C}_+=\big\{\ell_-(u_-,x_2,x_3)\big|\,t=\infty,\, (u_-,x_2,x_3)\in \Omega_1\big\}$) and  $z_{-(\delta)}(\infty;u_+,x_2,x_3)$ (on the corresponding infinity  $\mathcal{C}_-=\big\{\ell_+(u_+,x_2,x_3)\big|\,t=\infty,\, (u_+,x_2,x_3)\in \Omega_1\big\}$) are well-defined by the following \eqref{eq:def-sca2}:
\begin{equation}\label{eq:def-sca2}
	\begin{cases}
		&\displaystyle
		z_{+(\delta)}(\infty;u_-,x_2,x_3)= z_{+(\delta)}(0,u_-,x_2,x_3)- \int_0^{\infty}(\nabla_\delta p_{(\delta)}+z_{-(\delta)}\cdot\nabla z_{+(\delta)})(\tau,u_--\tau,x_2,x_3)d\tau,\\
		&\displaystyle 	z_{-(\delta)}(\infty;u_+,x_2,x_3)= z_{-(\delta)}(0,u_+,x_2,x_3)- \int_0^{\infty}(\nabla_\delta p_{(\delta)}+z_{+(\delta)}\cdot\nabla z_{-(\delta)})(\tau,u_++\tau,x_2,x_3)d\tau.
	\end{cases}	
\end{equation}
and we also call them the left scattering field and the right scattering field respectively. Moreover, for any $\alpha_h\in(\mathbb{Z}_{\geqslant 0})^2$ and $l\in\mathbb{Z}_{\geqslant 0}$ with 
$0\leqslant|\alpha_h|+l\leqslant N+2$,  there hold the following two properties of scattering fields:
\begin{enumerate}[(i)]
	\item 
	these scattering fields live in the following functional spaces in the weighted energy sense:
	\begin{align*}
		&\partial_h^{\alpha_h}\partial_3^l z_{\pm(\delta)}(\infty;u_\mp,x_2,x_3)\in L^2(\mathcal{C}_\pm,\langle u_\mp\rangle^{2(1+\sigma)}du_\mp dx_2dx_3),\\
		&\partial_h^{\alpha_h} z^3_{\pm(\delta)}(\infty;u_\mp,x_2,x_3)\in L^2(\mathcal{C}_\pm,\langle u_\mp\rangle^{2(1+\sigma)}du_\mp dx_2dx_3),\\
		&\partial_h^{\alpha_h}\partial_3^l(\partial_3 z_{\pm(\delta)})(\infty;u_\mp,x_2,x_3)\in L^2(\mathcal{C}_\pm,\langle u_\mp\rangle^{2(1+\sigma)}du_\mp dx_2dx_3).
	\end{align*}
	\item these scattering fields can be approximated by the large time solution in the weighted energy sense:
	\begin{align*} 
		&	\lim_{T\to\infty}\Big\|\partial_h^{\alpha_h}\partial_3^lz_{\pm(\delta)}(\infty;u_\mp,x_2,x_3)-\partial_h^{\alpha_h}\partial_3^lz_{\pm(\delta)}(T,u_\mp\mp T,x_2,x_3)\Big\|_{L^2(\mathcal{C}_\pm,\langle u_\mp\rangle^{2(1+\sigma)}du_\mp dx_2dx_3)}=0,\\
		&	\lim_{T\to\infty}\Big\|\partial_h^{\alpha_h}z^3_{\pm(\delta)}(\infty;u_\mp,x_2,x_3)- \partial_h^{\alpha_h}z^3_{\pm(\delta)}(T,u_\mp\mp T,x_2,x_3)\Big\|_{L^2(\mathcal{C}_\pm,\langle u_\mp\rangle^{2(1+\sigma)}du_\mp dx_2dx_3)}=0,\\
		&	\lim_{T\to\infty}\Big\| \partial_h^{\alpha_h}\partial_3^l(\partial_3z_{\pm(\delta)})(\infty;u_\mp,x_2,x_3)-\partial_h^{\alpha_h}\partial_3^l(\partial_3z_{\pm(\delta)})(T,u_\mp\mp T,x_2,x_3)\Big\|_{L^2(\mathcal{C}_\pm,\langle u_\mp\rangle^{2(1+\sigma)}du_\mp dx_2dx_3)}=0.
	\end{align*}
\end{enumerate}	
\end{corollary}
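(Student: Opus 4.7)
The plan is to reduce convergence of the improper integrals in \eqref{eq:def-sca} to the weighted spacetime bounds of Sections \ref{sec:more estimates} and \ref{sec:pressure} via a Cauchy--Schwarz argument along the characteristic lines. By symmetry I will only discuss $z_+$, and first treat $(\alpha_h,l)=(0,0)$; the rest follow by the same scheme after spatial differentiation. By the Cauchy criterion in the Banach space $L^2(\mathcal{C}_+, \langle u_-\rangle^{2(1+\sigma)}du_-dx_2dx_3)$, it is enough to prove that, writing $g:=\nabla p+z_-\cdot\nabla z_+$,
\[
I(T_1,T_2)^2:=\Big\|\delta^{-1/2}\int_{T_1}^{T_2}g(\tau,u_--\tau,x_2,x_3)\,d\tau\Big\|^2_{L^2(\mathcal{C}_+,\langle u_-\rangle^{2(1+\sigma)})}\xrightarrow[T_1,T_2\to\infty]{}0.
\]

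The key observation is that along $\ell_-(u_-,x_2,x_3)$ one has $u_+-a=u_--2\tau-a$, so the substitution $s=u_--2\tau-a$ gives
\[
A:=\sup_{u_-,a\in\mathbb{R}}\int_{T_1}^{T_2}\!\frac{d\tau}{\langle u_-\!-\!2\tau\!-\!a\rangle^{1+\sigma}}\leqslant\frac{1}{2}\int_{\mathbb{R}}\!\frac{ds}{\langle s\rangle^{1+\sigma}}<\infty,
\]
since $1+\sigma>1$. Cauchy--Schwarz in $\tau$ against $\langle u_+\rangle^{-(1+\sigma)}$ along $\ell_-$ then yields $|\int_{T_1}^{T_2}g\,d\tau|^2\leqslant A\int_{T_1}^{T_2}\langle u_+\rangle^{1+\sigma}|g|^2d\tau$. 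Taking the $L^2(\mathcal{C}_+,\langle u_-\rangle^{2(1+\sigma)})$-norm, swapping the order of integration, and substituting $u_-\mapsto x_1=u_--\tau$ for each fixed $\tau$ (noting that $\langle u_\pm\rangle$ on $\mathcal{C}_+$ matches the spacetime value at $(\tau,x)$) produces
\[
I(T_1,T_2)^2\lesssim \delta^{-1}\int_{T_1}^{T_2}\!\!\int_{\Omega_\delta}\langle u_-\rangle^{2(1+\sigma)}\langle u_+\rangle^{1+\sigma}|g|^2\,dx\,d\tau.
\]
By Lemmas \ref{lemma:H} and \ref{lemma2}, the right-hand side over $[0,\infty)$ is $\lesssim C_1^2\varepsilon^4$, and the tail integrand is in $L^1_{\tau,x}$, so dominated convergence drives $I(T_1,T_2)\to 0$. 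This simultaneously gives convergence of \eqref{eq:def-sca}, membership of the scattering field in the weighted $L^2$ space of property (i), and property (ii) for the $(0,0)$-case.

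For the higher-derivative claims, applying $\partial_h^{\alpha_h}\partial_3^l$ to the characteristic ODE replaces $g$ on the right-hand side by $\partial_h^{\alpha_h}\partial_3^l\nabla p$ plus $\mathbf{J}_+^{(\alpha_h,l)}$ or $\mathbf{K}_+^{(\alpha_h,l-1)}$; each of these is precisely what Lemmas \ref{lemma2}, \ref{lemma3}, \ref{lemma:HH'}, \ref{lemma:HHH} bound in $L^2_tL^2_x$ with weight $\langle u_-\rangle^{1+\sigma}\langle u_+\rangle^{(1+\sigma)/2}$ and prefactor $\delta^{l+1/2}$. Repeating verbatim the Cauchy--Schwarz-then-substitute argument yields the $\delta^{l-1/2}$ statements in (i)--(ii). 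For the $z_\pm^3$ refinement (prefactor $\delta^{-3/2}$), I would again run the same scheme but invoke the divergence-free identity $\partial_3 z_\pm^3=-\nabla_h\!\cdot\! z_\pm^h$ together with the boundary condition $z_\pm^3|_{x_3=\pm\delta}=0$ to trade each apparent $\partial_3$ on $z^3$ for a $\nabla_h$ on $z^h$, gaining the extra $\delta$-power needed; this is exactly the trick used in \eqref{eqA4} and the remark after Lemma \ref{lemma2}. Finally, Corollary \ref{coro2} follows by the rescalings listed in Section \ref{sec:notation}.

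The main obstacle is the careful bookkeeping of $\delta$-powers and weight splits --- in particular, matching the $\langle u_+\rangle^{(1+\sigma)/2}$ appearing in the energy norms with the $\langle u_+\rangle^{1+\sigma}$ produced by Cauchy--Schwarz on $\ell_-$, and exploiting the divergence-free refinement to reach the sharp $\delta^{-3/2}$ prefactor on $z_\pm^3$. Once these are organized, the remaining steps (Minkowski, Cauchy--Schwarz, change of variables, dominated convergence) are routine.
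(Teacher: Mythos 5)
Your reduction of the corollary to Theorem \ref{thm1} plus the rescalings of Section \ref{sec:notation} is exactly how the paper handles it, and your core computation for the $(0,0)$ case --- Cauchy--Schwarz in $\tau$ against $\langle u_+\rangle^{-(1+\sigma)}$ along $\ell_-$, the change of variables to the $(u_+,u_-,x_2,x_3)$ coordinates, and the reduction to Lemmas \ref{lemma:H} and \ref{lemma2} --- is precisely the paper's Lemma \ref{lemma4} (see \eqref{eq:lemma1}); your Cauchy-criterion framing also yields property (ii) directly, matching \eqref{eq:lemma1'}.

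The genuine gap is in the higher-derivative claims. Writing that ``applying $\partial_h^{\alpha_h}\partial_3^l$ to the characteristic ODE replaces $g$ by its derivatives'' presupposes that the distributional derivative of the improper integral $\int_0^\infty g\,d\tau$ equals $\int_0^\infty \partial_h^{\alpha_h}\partial_3^l g\,d\tau$. This interchange of an unbounded time integral with spatial differentiation is not automatic, and the paper spends most of Lemmas \ref{lemma5} and \ref{lemma6} (Steps 2--4: difference quotients, Fatou, and a Fubini-based distributional identity guarded by the absolute-integrability estimates \eqref{eq:lemma1-3}--\eqref{eq:lemma1-4}) justifying exactly this. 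Your scheme can be closed without that machinery --- differentiate the finite-time identity (legitimate, since the integral there is over a compact interval), show that $\partial_h^{\alpha_h}\partial_3^l z_+(T,u_--T,\cdot)$ is Cauchy in the weighted $L^2$ space, and identify its limit with $\partial_h^{\alpha_h}\partial_3^l z_+(\infty;\cdot)$ by continuity of distributional derivatives under $L^2$ convergence --- but you must say this; as written the identification step is missing. Two smaller points: the relevant prefactor for $\partial_h^{\alpha_h}\partial_3^l\nabla p$ and $\mathbf{J}_\pm^{(\alpha_h,l)}$ is $\delta^{l-\frac{1}{2}}$ (Lemmas \ref{lemma2} and \ref{lemma:HH'}), not $\delta^{l+\frac{1}{2}}$ (that exponent belongs to $\mathbf{I}_\pm^{(\alpha_h,l)}$ in Lemma \ref{lemma1}, which enters only through the pressure equation); and for the $\delta^{-\frac{3}{2}}$ statement on $\partial_h^{\alpha_h}z^3_\pm$ the identity $\partial_3 z^3_\pm=-\nabla_h\cdot z^h_\pm$ alone gains nothing when no $\partial_3$ acts on $z^3_\pm$ --- the extra power of $\delta$ comes from integrating $\partial_3 z^3_\pm$ in $x_3$ over $(-\delta,\delta)$ using the boundary condition, which is how the bootstrap norms $\delta^{-3}E_\pm^{(k,0)}(z_\pm^3)$ are controlled in the first place.
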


\begin{remark}\label{remark2D}
	When $\delta=0$, the domain $\Omega_{\delta}$ naturally becomes $\mathbb{R}^2$ and 
	we can rewrite \eqref{eq:def-sca2} as 
	\begin{equation}\label{eq:def-sca3}
		\begin{cases}
			&\displaystyle
			z_{+(0)}(\infty;u_-,x_2)= z_{+(0)}(0,u_-,x_2)- \int_0^{\infty}(\nabla p_{(0)}+z_{-(0)}\cdot\nabla z_{+(0)})(\tau,u_--\tau,x_2)d\tau,\\
			&\displaystyle 	z_{-(0)}(\infty;u_+,x_2)= z_{-(0)}(0,u_+,x_2)- \int_0^{\infty}(\nabla p_{(0)}+z_{+(0)}\cdot\nabla z_{-(0)})(\tau,u_++\tau,x_2)d\tau.
		\end{cases}	
	\end{equation}
	It is worth noting that in $\mathbb{R}^2$, $z_{+(0)}^h(\infty;u_-,x_2)$ and $z_{-(0)}^h(\infty;u_+,x_2)$, 
	i.e. the 2D version of the scattering fields $z_{+(0)}(\infty;u_-,x_2)$ and $z_{-(0)}(\infty;u_+,x_2)$  constructed in \eqref{eq:def-sca3}, 
	coincide with the scattering fields $z_{+}(\infty;u_-,x_2)$ and $z_{-}(\infty;u_+,x_2)$ in \eqref{eq:def-sca4} respectively. 
\end{remark}

Our task of this section reduces to showing  Theorem \ref{thm1}. 
The proof of Theorem \ref{thm1} naturally 
consists of 
the following six lemmas.
 
\begin{lemma}\label{welldefined}
The scattering fields $\delta^{-\frac{1}{2}}z_{\pm}(\infty;u_\mp,x_2,x_3)$ in \eqref{eq:def-sca} are well-defined.
\end{lemma}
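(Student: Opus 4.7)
The plan is to prove that the two integrals in \eqref{eq:def-sca} converge absolutely for almost every $(u_\mp,x_2,x_3)\in\Omega_\delta$; this will in fact yield the stronger statement that $\delta^{-\frac{1}{2}}\int_0^\infty(\nabla p + z_\mp\cdot\nabla z_\pm)(\tau,u_\mp\mp\tau,x_2,x_3)\,d\tau$ lies in the weighted space $L^2(\mathcal{C}_\pm,\langle u_\mp\rangle^{2(1+\sigma)}du_\mp dx_2 dx_3)$, which will also be the starting point for parts (i) and (ii) of Theorem \ref{thm1}. By symmetry I only treat the $z_+$ case, and I write $F_+:=\nabla p + z_-\cdot\nabla z_+$. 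Specializing Lemma \ref{lemma:H} to $|\alpha_h|=0$ and Lemma \ref{lemma2} to $|\alpha_h|=l=0$ and summing gives the fundamental input
\begin{equation*}
\delta^{-\frac{1}{2}}\big\|\langle u_-\rangle^{1+\sigma}\langle u_+\rangle^{\frac{1}{2}(1+\sigma)} F_+\big\|_{L^2_tL^2_x}\lesssim C_1\varepsilon^2.
\end{equation*}

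First I would fix $(u_-,x_2,x_3)$ and apply Cauchy--Schwarz in $\tau$ along the left-traveling line $\ell_-(u_-,x_2,x_3)$, inserting and dividing by $\langle u_-\rangle^{1+\sigma}\langle u_+\rangle^{\frac{1}{2}(1+\sigma)}$, to obtain
\begin{equation*}
\bigg(\!\int_0^\infty\!|F_+|(\tau,u_--\tau,x_2,x_3)\,d\tau\!\bigg)^{\!2}\!\leq\! \bigg(\!\int_0^\infty\!\!\frac{d\tau}{\langle u_-\rangle^{2(1+\sigma)}\langle u_+\rangle^{1+\sigma}}\!\bigg)\!\cdot\!\int_0^\infty\!\langle u_-\rangle^{2(1+\sigma)}\langle u_+\rangle^{1+\sigma}|F_+|^2 d\tau.
\end{equation*}
Along $\ell_-$ the weight $\langle u_-\rangle$ is constant in $\tau$, while $\langle u_+(\tau)\rangle=(1+|u_--2\tau-a|^2)^{1/2}$, so after the substitution $s=2\tau-u_-+a$ the first factor reduces to $\frac{1}{2}\langle u_-\rangle^{-2(1+\sigma)}\int_{-u_-+a}^\infty(1+s^2)^{-(1+\sigma)/2}\,ds$, which is finite and uniformly bounded in $u_-$ and $a$ because $1+\sigma>1$. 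Next I would multiply through by $\langle u_-\rangle^{2(1+\sigma)}$, integrate the resulting inequality over $(u_-,x_2,x_3)\in\Omega_\delta$, and apply Fubini together with the measure-preserving change of variable $x_1=u_--\tau$ (this is legitimate since all integrands are nonnegative) to obtain
\begin{equation*}
\Big\|\delta^{-\frac{1}{2}}\!\!\int_0^\infty\!|F_+|(\tau,u_--\tau,x_2,x_3)\,d\tau\Big\|^2_{L^2(\langle u_-\rangle^{2(1+\sigma)}du_- dx_2 dx_3)} \lesssim \delta^{-1}\big\|\langle u_-\rangle^{1+\sigma}\langle u_+\rangle^{\frac{1}{2}(1+\sigma)} F_+\big\|^2_{L^2_tL^2_x} \lesssim (C_1\varepsilon^2)^2.
\end{equation*}

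In particular the function $(u_-,x_2,x_3)\mapsto \delta^{-\frac{1}{2}}\int_0^\infty|F_+|\,d\tau$ is finite almost everywhere on $\Omega_\delta$, so the integral in the first line of \eqref{eq:def-sca} converges absolutely a.e.\ and the left scattering field is well defined; the right one follows by symmetrically exchanging $\pm$ and swapping the roles of $u_\pm$. The only subtle point in the argument is the uniform convergence of $\int_0^\infty\langle u_+(\tau)\rangle^{-(1+\sigma)}d\tau$ along every characteristic line, which is exactly what pins down the weight exponent $\sigma>0$; all remaining steps are a Fubini/Cauchy--Schwarz repackaging of the weighted estimates of Sections \ref{sec:more estimates} and \ref{sec:pressure}. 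I do not expect any other obstacle, and the same weighted-$L^2$ bound on $\int_0^\infty|F_+|\,d\tau$ (and its higher $\partial_h^{\alpha_h}\partial_3^l$ analogues obtained from the full statements of Lemmas \ref{lemma2}, \ref{lemma3}, \ref{lemma:HH'}, \ref{lemma:HHH}) is the natural foundation for the $L^2$ membership claimed in part (i) of Theorem \ref{thm1} and, by taking the lower limit $T\to\infty$ in the same Cauchy--Schwarz, for the large-time convergence claimed in part (ii).
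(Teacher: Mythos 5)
Your argument is correct, but it takes a genuinely different route from the paper's proof of Lemma \ref{welldefined}. The paper establishes a \emph{pointwise} decay bound: it applies a spacetime Sobolev embedding (in $t$ and $x$ jointly) to the quantities $\langle u_-\rangle^{1+\sigma}\langle u_+\rangle^{\frac{1}{2}(1+\sigma)}\nabla p$ and $\langle u_-\rangle^{1+\sigma}\langle u_+\rangle^{\frac{1}{2}(1+\sigma)}(z_-\cdot\nabla z_+)$, controls the resulting $L^2_{t,x}$ norms of up to three derivatives by Lemmas \ref{lemma:HH'} and \ref{lemma2}, and then invokes \eqref{eq:product} to conclude $\delta^{-\frac{1}{2}}|\nabla p+z_-\cdot\nabla z_+|\lesssim C_1\varepsilon^2(1+|\tau+a|)^{-(1+\sigma)}\in L^1_\tau$, so the integral in \eqref{eq:def-sca} converges at \emph{every} point of $\mathcal{C}_+$. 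Your Cauchy--Schwarz-along-characteristics argument is instead exactly the computation the paper defers to Lemma \ref{lemma4} (the derivation of \eqref{eq:lemma1}): it needs only the $|\alpha_h|=l=0$ cases of Lemmas \ref{lemma:H} and \ref{lemma2}, and it simultaneously delivers the weighted-$L^2$ membership of the time integral, which is the real input for Theorem \ref{thm1}(i)--(ii). The trade-off is that your route only gives absolute convergence for \emph{almost every} $(u_-,x_2,x_3)$, which is marginally weaker than the paper's literal claim in Theorem \ref{thm1} that ``all the integrals in \eqref{eq:def-sca} converge''; since everything downstream is phrased in weighted $L^2$, this a.e.\ definition suffices for the rigidity theory, but if you want the scattering field defined at every point you still need the paper's $L^\infty$ step (or an equivalent pointwise bound). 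Your computation of $\int_0^\infty\langle u_+(\tau)\rangle^{-(1+\sigma)}d\tau$ along $x_1=u_--\tau$, including the substitution $s=2\tau-u_-+a$ and the uniformity in $u_-$ and $a$, is accurate, as is the Fubini/change-of-variables identification of the resulting spacetime integral with $\|\langle u_-\rangle^{1+\sigma}\langle u_+\rangle^{\frac{1}{2}(1+\sigma)}F_+\|_{L^2_tL^2_x}^2$.
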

\begin{proof}
By symmetry, it suffices to show that $\delta^{-\frac{1}{2}}z_{+}(\infty;u_-,x_2,x_3)$ is well-defined. 

Using the fact $\langle u_-\rangle\geqslant 1$ and the standard Sobolev inequality in $\mathbb{R}^4$, we have 
\begin{align*}
&\ \ \ \ \delta^{-\frac{1}{2}}\langle u_-\rangle^{\frac{1}{2}(1+\sigma)}\langle u_+\rangle^{\frac{1}{2}(1+\sigma)} |\nabla p|\\
&\lesssim\delta^{-\frac{1}{2}}\big\|\langle u_-\rangle^{1+\sigma}\langle u_+\rangle^{\frac{1}{2}(1+\sigma)} \nabla p\big\|_{L^\infty_{t,x}}\\
&\lesssim\delta^{-\frac{1}{2}}\big\|\langle u_-\rangle^{1+\sigma}\langle u_+\rangle^{\frac{1}{2}(1+\sigma)} \nabla p\big\|_{L^2_{t,x}}+\delta^{-\frac{1}{2}}\big\|\nabla^3(\langle u_-\rangle^{1+\sigma}\langle u_+\rangle^{\frac{1}{2}(1+\sigma)} \nabla p)\big\|_{L^2_{t,x}}\\
&\stackrel{\eqref{eq:weight3}}{\lesssim}\delta^{-\frac{1}{2}}\big\|\langle u_-\rangle^{1+\sigma}\langle u_+\rangle^{\frac{1}{2}(1+\sigma)} \nabla p\big\|_{L^2_{t,x}}+\delta^{-\frac{1}{2}}\big\|\langle u_-\rangle^{1+\sigma}\langle u_+\rangle^{\frac{1}{2}(1+\sigma)} \nabla^3 \nabla p\big\|_{L^2_{t,x}}\\
&\lesssim
\sum_{0\leqslant|\alpha_h|\leqslant 3\atop 0\leqslant l\leqslant 3}\delta^{l-\frac{1}{2}}\big\|\langle u_-\rangle^{1+\sigma}\langle u_+\rangle^{\frac{1}{2}(1+\sigma)}\partial_{h}^{\alpha_h}\partial_3^l \nabla p\big\|_{L^2_tL^2_x}
\stackrel{\text{Lemma \ref{lemma:HH'}}}{\lesssim}C_1\varepsilon^2,\\
	&\ \ \ \ \delta^{-\frac{1}{2}}\langle u_-\rangle^{\frac{1}{2}(1+\sigma)}\langle u_+\rangle^{\frac{1}{2}(1+\sigma)} |z_-\cdot\nabla z_+|\\
	&\lesssim \delta^{-\frac{1}{2}}\big\|\langle u_-\rangle^{1+\sigma}\langle u_+\rangle^{\frac{1}{2}(1+\sigma)} (z_-\cdot\nabla z_+)\big\|_{L^\infty_{t,x}}\\
	&\lesssim\delta^{-\frac{1}{2}}\big\|\langle u_-\rangle^{1+\sigma}\langle u_+\rangle^{\frac{1}{2}(1+\sigma)} (z_-\cdot\nabla z_+)\big\|_{L^2_{t,x}}+\delta^{-\frac{1}{2}}\big\|\nabla^3(\langle u_-\rangle^{1+\sigma}\langle u_+\rangle^{\frac{1}{2}(1+\sigma)} (z_-\cdot\nabla z_+))\big\|_{L^2_{t,x}}\\
	&\stackrel{\eqref{eq:weight3}}{\lesssim}\delta^{-\frac{1}{2}}\big\|\langle u_-\rangle^{1+\sigma}\langle u_+\rangle^{\frac{1}{2}(1+\sigma)} (z_-\cdot\nabla z_+)\big\|_{L^2_{t,x}}+\delta^{-\frac{1}{2}}\big\|\langle u_-\rangle^{1+\sigma}\langle u_+\rangle^{\frac{1}{2}(1+\sigma)} \nabla^3 (z_-\cdot\nabla z_+)\big\|_{L^2_{t,x}}\\
	&\lesssim
	\sum_{0\leqslant|\alpha_h|\leqslant 3\atop 0\leqslant l\leqslant 3}\delta^{l-\frac{1}{2}}\big\|\langle u_-\rangle^{1+\sigma}\langle u_+\rangle^{\frac{1}{2}(1+\sigma)}\partial_{h}^{\alpha_h}\partial_3^l (z_-\cdot\nabla z_+)\big\|_{L^2_tL^2_x}
	\stackrel{\text{Lemma \ref{lemma2}}}{\lesssim}C_1\varepsilon^2.
\end{align*}
Combined with \eqref{eq:product}, these two estimates lead us to 
\begin{equation*}
\delta^{-\frac{1}{2}}|\nabla p+z_-\cdot \nabla z_+|\lesssim\frac{C_1\varepsilon^2}{\langle u_-\rangle^{\frac{1}{2}(1+\sigma)}\langle u_+\rangle^{\frac{1}{2}(1+\sigma)}}\lesssim\frac{C_1\varepsilon^2}{(1+|t+a|)^{1+\sigma}}\in L_t^1(\mathbb{R}).
\end{equation*}
Therefore the integral in \eqref{eq:def-sca} converges and hence $\delta^{-\frac{1}{2}}z_{+}(\infty;u_-,x_2,x_3)$ is well-defined. 
\end{proof}
 
\begin{lemma}\label{lemma4}
There hold
	\begin{equation}\label{eq1}
		 \delta^{-\frac{1}{2}}z_{\pm}(\infty;u_\mp,x_2,x_3)\in L^2(\mathcal{C}_\pm,\langle u_\mp\rangle^{2(1+\sigma)}du_\mp dx_2dx_3),
	\end{equation}
and 
\begin{equation}\label{eq2}
\lim_{T\to\infty}\Big\|\delta^{-\frac{1}{2}}z_{\pm}(\infty;u_\mp,x_2,x_3)-\delta^{-\frac{1}{2}}z_{\pm}(T,u_\mp\mp T,x_2,x_3)\Big\|_{L^2(\mathcal{C}_\pm,\langle u_\mp\rangle^{2(1+\sigma)}du_\mp dx_2dx_3)}=0.
\end{equation}
\end{lemma}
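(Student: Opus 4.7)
The plan is to deduce both \eqref{eq1} and \eqref{eq2} from the space-time $L^2$ estimates on $\nabla p$ and $z_-\cdot\nabla z_+$ already proved in Lemma \ref{lemma:HH'} and Lemma \ref{lemma2}, by testing against the characteristic line $\tau\mapsto(\tau,u_--\tau,x_2,x_3)$ along which $u_-$ is fixed while $u_+=u_--2\tau$ sweeps through all of $\mathbb{R}$. Treating only the $+$-case by symmetry, I first subtract the integrated form of the first equation in \eqref{MHD equation} along this characteristic from the definition \eqref{eq:def-sca}, using the identity $\frac{d}{d\tau}z_{+}(\tau,u_--\tau,x_2,x_3)=-(\nabla p+z_-\cdot\nabla z_+)(\tau,u_--\tau,x_2,x_3)$ that follows from $B_0=(1,0,0)$, to obtain
\[
\delta^{-\tfrac{1}{2}}z_{+}(\infty;u_-,x_2,x_3)-\delta^{-\tfrac{1}{2}}z_{+}(T,u_--T,x_2,x_3)=-\delta^{-\tfrac{1}{2}}\int_T^{\infty}F(\tau,u_--\tau,x_2,x_3)\,d\tau,
\]
where $F:=\nabla p+z_{-}\cdot\nabla z_{+}$. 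In particular, \eqref{eq2} reduces to showing that the $L^2(\mathcal{C}_+,\langle u_-\rangle^{2(1+\sigma)}du_-dx_2dx_3)$-norm of this tail, multiplied by $\delta^{-1/2}$, tends to zero as $T\to\infty$.

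The key step is a Cauchy--Schwarz in $\tau$ with weight $\langle u_+\rangle^{\pm(1+\sigma)/2}$, giving
\[
\Bigl|\int_T^{\infty}F(\tau,u_--\tau,x_2,x_3)\,d\tau\Bigr|^2\leq R(T;u_-)\int_T^{\infty}\langle u_+\rangle^{1+\sigma}|F|^2(\tau,u_--\tau,x_2,x_3)\,d\tau,
\]
where $R(T;u_-):=\int_T^{\infty}(1+|u_--2\tau-a|^2)^{-(1+\sigma)/2}d\tau$. The substitution $v=2\tau-u_-+a$ shows $R(T;u_-)\leq\tfrac{1}{2}\int_{-\infty}^{\infty}(1+v^2)^{-(1+\sigma)/2}\,dv=:C_\sigma<\infty$ uniformly in $T$, $u_-$, and the position parameter $a$, precisely because $\sigma>0$. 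I would then multiply by $\langle u_-\rangle^{2(1+\sigma)}$, integrate over $(u_-,x_2,x_3)\in\Omega_\delta$, swap the $\tau$-integral to the outside by Tonelli, and make the change of variables $x_1=u_--\tau$ at each fixed $\tau$, so that $du_-=dx_1$ and the weights $\langle u_\pm\rangle(\tau,u_--\tau,x_2,x_3)$ become the standard spacetime weights $\langle u_\pm\rangle(\tau,x_1)$. This yields
\[
\delta^{-1}\Bigl\|\int_T^{\infty}F(\tau,u_--\tau,\cdot)\,d\tau\Bigr\|_{L^2(\mathcal{C}_+,\langle u_-\rangle^{2(1+\sigma)}du_-dx_2dx_3)}^2\leq C_\sigma\,\delta^{-1}\bigl\|\langle u_-\rangle^{1+\sigma}\langle u_+\rangle^{\tfrac{1}{2}(1+\sigma)}F\bigr\|_{L^2([T,\infty)\times\Omega_\delta)}^2.
\]
By Lemma \ref{lemma2} and Lemma \ref{lemma:HH'} (each with $|\alpha_h|=l=0$), the right-hand side at $T=0$ is bounded by $(C_1\varepsilon^2)^2$, so the tail from $T$ to $\infty$ tends to $0$ and \eqref{eq2} follows.

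The remaining claim \eqref{eq1} then follows from the triangle inequality
\[
\delta^{-\tfrac{1}{2}}\|z_+(\infty;\cdot)\|_{L^2(\mathcal{C}_+,\langle u_-\rangle^{2(1+\sigma)}du_-dx_2dx_3)}\leq\delta^{-\tfrac{1}{2}}\|z_+(\infty;\cdot)-z_+(T,u_--T,\cdot)\|_{L^2}+\delta^{-\tfrac{1}{2}}\|z_+(T,u_--T,\cdot)\|_{L^2},
\]
since the second term, after the change of variables $x_1=u_--T$, equals $\delta^{-1/2}\sqrt{E_+^{(0,0)}(z_+(T))}$, which is uniformly bounded by $\sqrt{C_1}\,\varepsilon$ by \eqref{improve2}; letting $T\to\infty$ and using \eqref{eq2} gives \eqref{eq1}. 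The only genuine subtlety is arranging the Cauchy--Schwarz weights so that the factor $\langle u_+\rangle^{(1+\sigma)/2}$ present in Lemmas \ref{lemma2} and \ref{lemma:HH'} is exactly the one required to make $R(T;u_-)$ finite and uniform in $a$; this $a$-uniformity is what allows the argument to feed into the subsequent rigidity theorem.
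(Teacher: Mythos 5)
Your proposal is correct and follows essentially the same route as the paper: the weighted Cauchy--Schwarz in $\tau$ with the splitting $\langle u_+\rangle^{\mp(1+\sigma)/2}$ is exactly the H\"older step the paper performs after passing to the $(u_+,u_-,x_2,x_3)$ coordinates, and the spacetime norm is then controlled by the same two lemmas (the paper cites Lemma \ref{lemma:H}, which coincides with Lemma \ref{lemma:HH'} at $l=0$). The only cosmetic difference is that the paper obtains \eqref{eq1} by bounding the initial-data term and the full time integral separately, whereas you deduce it from \eqref{eq2} plus the uniform-in-$T$ energy bound; these are equivalent.
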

\begin{proof}
	Based on the symmetry, it suffices to derive the estimates on $z_{+}(\infty;u_-,x_2,x_3)$.
	
Firstly, by \eqref{improve2}, we note that 
\begin{align*}
	&\ \ \ \ \big\|\delta^{-\frac{1}{2}}z_+(0,u_-,x_2,x_3)\big\|_{L^2(\mathcal{C}_+,\langle u_-\rangle^{2(1+\sigma)}du_-dx_2dx_3)}\\
	&	= \delta^{-\frac{1}{2}}\big\|z_+(0,u_-,x_2,x_3)\big\|_{L^2(\Omega_\delta,\langle u_-\rangle^{2(1+\sigma)}du_-dx_2dx_3)}
	=\delta^{-\frac{1}{2}}E_+^{(0,0)}(z_{+,0})
	\lesssim  \varepsilon,
\end{align*}
which shows 
\begin{equation}\label{eq:lemma1-14}  \delta^{-\frac{1}{2}}z_{+}(0,u_-,x_2,x_3)\in L^2(\mathcal{C}_+,\langle u_-\rangle^{2(1+\sigma)}du_-dx_2dx_3).
\end{equation}
We also note that for any large $T>0$, \eqref{improve2} gives
\begin{equation}\label{eq:def-large}
	\delta^{-\frac{1}{2}}z_{+}(\infty;u_-,x_2,x_3)-\delta^{-\frac{1}{2}}z_{+}(T,u_--T,x_2,x_3)=-\delta^{-\frac{1}{2}}\int_T^{\infty}(\nabla p+z_{-}\cdot\nabla z_{+})(\tau,u_--\tau,x_2,x_3)d\tau.
\end{equation}

Secondly, using coordinate transformations (which allow us to perform the analysis on spacetimes) and H\"older inequality, we obtain
\begin{align*}
	&\ \ \ \ \Big\|\delta^{-\frac{1}{2}}\int_0^{\infty}  (\nabla p+z_{-}\cdot\nabla z_{+}) 
	(\tau,u_--\tau,x_2,x_3)d\tau\Big\|_{L^2(\mathcal{C}_+,\langle u_-\rangle^{2(1+\sigma)}du_-dx_2dx_3)}\\
	&
	\lesssim\delta^{-\frac{1}{2}}\Big\|\Big(\int_{\mathbb{R}}\frac{1}{\langle u_+\rangle^{1+\sigma}}du_+\Big)^{\frac{1}{2}}\Big(\int_{\mathbb{R}}\langle u_+\rangle^{1+\sigma} |(\nabla p+z_{-}\cdot\nabla z_{+}) 
	(u_+,u_-,x_2,x_3)|^2du_+\Big)^{\frac{1}{2}}\Big\|_{L^2(\mathcal{C}_+,\langle u_-\rangle^{2(1+\sigma)}du_-dx_2dx_3)}\\
	&\lesssim\delta^{-\frac{1}{2}}\big\|\langle u_-\rangle^{1+\sigma}\langle u_+\rangle^{\frac{1}{2}(1+\sigma)} (\nabla p+z_{-}\cdot\nabla z_{+}) 
	(u_+,u_-,x_2,x_3)\big\|_{L^2(\mathbb{R}\times\Omega_\delta,du_+du_-dx_2dx_3)}\\
	&\lesssim\delta^{-\frac{1}{2}}\big\|\langle u_-\rangle^{1+\sigma}\langle u_+\rangle^{\frac{1}{2}(1+\sigma)} \nabla p\big\|_{L^2_tL^2_x}+\delta^{-\frac{1}{2}}\big\|\langle u_-\rangle^{1+\sigma}\langle u_+\rangle^{\frac{1}{2}(1+\sigma)}(z_-\cdot \nabla z_+)\big\|_{L^2_tL^2_x}
	\stackrel{\text{Lemmas \ref{lemma:H} \& \ref{lemma2}}}{\lesssim}C_1\varepsilon^2.
\end{align*}	
This implies 
\begin{equation}\label{eq:lemma1}
	\delta^{-\frac{1}{2}}\int_0^{\infty} (\nabla p+z_{-}\cdot\nabla z_{+}) 
	(\tau,u_--\tau,x_2,x_3)d\tau\in L^2(\mathcal{C}_+,\langle u_-\rangle^{2(1+\sigma)}du_-dx_2dx_3),
\end{equation}
and
\begin{equation}\label{eq:lemma1'}
\lim_{T\to \infty}\Big\|\delta^{-\frac{1}{2}}\int_T^{\infty}  (\nabla p+z_{-}\cdot\nabla z_{+}) 
(\tau,u_--\tau,x_2,x_3)d\tau\Big\|_{L^2(\mathcal{C}_+,\langle u_-\rangle^{2(1+\sigma)}du_-dx_2dx_3)}=0.
\end{equation}

Finally, putting \eqref{eq:def-sca}, \eqref{eq:lemma1-14} and \eqref{eq:lemma1} together yields \eqref{eq1}, while \eqref{eq:def-large} and \eqref{eq:lemma1'} give rise to  \eqref{eq2}.
\end{proof}

We are now in a position to derive the uniform estimates concerning $\partial_h^{\alpha_h}z_\pm(\infty;u_\mp,x_2,x_3)$ with the coefficient $\delta^{-\frac{1}{2}}$.
 
\begin{lemma}\label{lemma5}
For any $\alpha_h\in(\mathbb{Z}_{\geqslant 0})^2$ with 
$1\leqslant|\alpha_h|\leqslant N+2$, there hold
\begin{equation*} 
\delta^{-\frac{1}{2}}\partial_h^{\alpha_h} z_{\pm}(\infty;u_\mp,x_2,x_3)\in L^2(\mathcal{C}_\pm,\langle u_\mp\rangle^{2(1+\sigma)}du_\mp dx_2dx_3),
\end{equation*}
and 
\begin{equation*} 
	\lim_{T\to\infty}\Big\|\delta^{-\frac{1}{2}}\partial_h^{\alpha_h}z_{\pm}(\infty;u_\mp,x_2,x_3)-\delta^{-\frac{1}{2}}\partial_h^{\alpha_h}z_{\pm}(T,u_\mp\mp T,x_2,x_3)\Big\|_{L^2(\mathcal{C}_\pm,\langle u_\mp\rangle^{2(1+\sigma)}du_\mp dx_2dx_3)}=0.
\end{equation*}
\end{lemma}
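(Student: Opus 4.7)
By the symmetry of the system, it suffices to establish the claims for $z_+(\infty;u_-,x_2,x_3)$; the argument for $z_-$ is completely parallel. The plan is to differentiate the defining formula \eqref{eq:def-sca} and then repeat the strategy of Lemma \ref{lemma4} with the derivatives $\partial_h^{\alpha_h}$ absorbed by the nonlinear and pressure estimates established in Sections \ref{sec:more estimates} and \ref{sec:pressure}.

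Since $\partial_{u_-}$ acting on a function of the form $f(\tau, u_- - \tau, x_2, x_3)$ reduces to $\partial_1 f$ evaluated at the same point, applying $\partial_h^{\alpha_h}$ to \eqref{eq:def-sca} yields
\begin{align*}
\delta^{-\frac{1}{2}}\partial_h^{\alpha_h}z_{+}(\infty;u_-,x_2,x_3) &= \delta^{-\frac{1}{2}}\partial_h^{\alpha_h}z_{+}(0,u_-,x_2,x_3) \\
&\quad - \delta^{-\frac{1}{2}}\int_0^\infty \partial_h^{\alpha_h}(\nabla p + z_-\cdot\nabla z_+)(\tau,u_--\tau,x_2,x_3)\,d\tau.
\end{align*}
For the initial data term, the smallness hypothesis of Theorem \ref{lemma:global} gives, for $|\alpha_h|\leqslant N+2\leqslant 2N$,
\[
\big\|\delta^{-\frac{1}{2}}\partial_h^{\alpha_h}z_+(0,u_-,x_2,x_3)\big\|_{L^2(\mathcal{C}_+,\langle u_-\rangle^{2(1+\sigma)}du_-dx_2dx_3)} \lesssim \delta^{-\frac{1}{2}}\bigl(E_+^{(|\alpha_h|,0)}(z_{+,0})\bigr)^{\frac12} \lesssim \varepsilon.
\]

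For the integral term, I would perform the same null-coordinate change $(\tau,x_1)\mapsto(u_+,u_-)$ (with unit-order Jacobian) as in the proof of Lemma \ref{lemma4}. Together with Hölder's inequality in $u_+$ against the integrable weight $\langle u_+\rangle^{-(1+\sigma)}$, this leads to
\begin{align*}
&\Big\|\delta^{-\frac{1}{2}}\!\int_0^\infty\!\partial_h^{\alpha_h}(\nabla p + z_-\cdot\nabla z_+)(\tau,u_--\tau,x_2,x_3)\,d\tau\Big\|_{L^2(\mathcal{C}_+,\langle u_-\rangle^{2(1+\sigma)}du_-dx_2dx_3)}\\
&\qquad \lesssim \delta^{-\frac{1}{2}}\big\|\langle u_-\rangle^{1+\sigma}\langle u_+\rangle^{\frac{1}{2}(1+\sigma)}\partial_h^{\alpha_h}\nabla p\big\|_{L^2_tL^2_x} + \delta^{-\frac{1}{2}}\big\|\langle u_-\rangle^{1+\sigma}\langle u_+\rangle^{\frac{1}{2}(1+\sigma)}\mathbf{J}_+^{(\alpha_h,0)}\big\|_{L^2_tL^2_x}.
\end{align*}
Since $|\alpha_h|\leqslant N+2$, the first term is controlled by Lemma \ref{lemma:H} and the second by Lemma \ref{lemma2} (taking $l=0$), each giving a bound of $C_1\varepsilon^2$. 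Combining with the initial-data bound yields the desired $L^2$ membership.

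For the convergence statement, using \eqref{eq:def-sca} I would write
\[
\delta^{-\frac{1}{2}}\partial_h^{\alpha_h}z_+(\infty;u_-,x_2,x_3)-\delta^{-\frac{1}{2}}\partial_h^{\alpha_h}z_+(T,u_--T,x_2,x_3) = -\delta^{-\frac{1}{2}}\!\int_T^\infty\!\partial_h^{\alpha_h}(\nabla p + z_-\cdot\nabla z_+)(\tau,u_--\tau,x_2,x_3)\,d\tau,
\]
and apply the same estimate with the $\tau$-integration restricted to $[T,\infty)$. Because the right-hand sides of Lemmas \ref{lemma:H} and \ref{lemma2} are finite global $L^2_tL^2_x$ norms, the corresponding tail norms vanish as $T\to\infty$, which gives the limit claim. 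The main (minor) obstacle is simply verifying that the admissible range $|\alpha_h|\leqslant N+2$ required here falls within the range of applicability of both Lemma \ref{lemma:H} and the remark following Lemma \ref{lemma2}, which is immediate since $N\geqslant 5$.
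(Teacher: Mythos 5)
Your overall architecture matches the paper's: differentiate \eqref{eq:def-sca}, bound the initial-data term by the weighted energy, and bound the time integral by the null-coordinate change plus H\"older against $\langle u_+\rangle^{-(1+\sigma)}$, invoking Lemma \ref{lemma:H} and Lemma \ref{lemma2}; the tail estimate over $[T,\infty)$ for the convergence claim is also the paper's mechanism. All of those estimates are correct as you state them.

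The genuine gap is the very first display: you pass $\partial_h^{\alpha_h}$ under the improper integral $\int_0^\infty(\cdots)\,d\tau$ without justification, and this interchange is precisely where the paper's proof concentrates its effort. The identity
\[
\partial_h^{\alpha_h}\int_0^{\infty}F(\tau,u_--\tau,x_2,x_3)\,d\tau=\int_0^{\infty}\partial_h^{\alpha_h}F(\tau,u_--\tau,x_2,x_3)\,d\tau
\]
is not automatic here, because the integrand is controlled only through spacetime $L^2_tL^2_x$ flux norms, not by a pointwise integrable dominating function that would let you differentiate under the integral sign in the classical way. The paper handles this in three substantive steps that your proposal omits entirely: (i) it shows that the left-hand side lies in the weighted $L^2(\mathcal{C}_+)$ space by a difference-quotient argument (Fatou's lemma plus Newton--Leibniz plus a translation-invariance trick in $u_-$), which reduces the derivative of the integral to the already-established bound on the integral of the derivative; (ii) it identifies the two sides as distributions via Fubini's theorem, which itself requires verifying the absolute convergence of the relevant spacetime integrals against test functions (the estimates \eqref{eq:lemma1-3}--\eqref{eq:lemma1-4}); and (iii) it upgrades the distributional identity to an identity in $L^2(\mathcal{C}_+,\langle u_-\rangle^{2(1+\sigma)}du_-dx_2dx_3)$ and inducts on $|\alpha_h|$ to reach all orders up to $N+2$. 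Your closing remark identifies the range check $|\alpha_h|\leqslant N+2$ as the "main (minor) obstacle," but that is immediate; the real obstacle is the commutation of the derivative with the infinite time integral, and as written your argument asserts the conclusion of that step rather than proving it.
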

\begin{proof}
By the symmetry considerations, we only give details for the estimates on $z_{+}(\infty;u_-,x_2,x_3)$.
		
Applying the derivative $	\partial_h^{\alpha_h}$ to 	\eqref{eq:def-sca}-\eqref{eq:def-large}, we have 
	\begin{equation*}
		\delta^{-\frac{1}{2}}\partial_h^{\alpha_h} z_{+}(\infty;u_-,x_2,x_3)=\delta^{-\frac{1}{2}}\partial_h^{\alpha_h} z_{+}(0,u_-,x_2,x_3)-\delta^{-\frac{1}{2}}\partial_h^{\alpha_h}\int_0^{\infty} (\nabla p+z_{-}\cdot\nabla z_{+}) 
			(\tau,u_--\tau,x_2,x_3)d\tau,
	\end{equation*}
and
\begin{equation}\label{eq:lemma1-11'}
	\delta^{-\frac{1}{2}}\partial_h^{\alpha_h} z_{+}(\infty;u_-,x_2,x_3)-\delta^{-\frac{1}{2}}\partial_h^{\alpha_h} z_{+}(T,u_--T,x_2,x_3)=-\delta^{-\frac{1}{2}}\partial_h^{\alpha_h}\int_T^{\infty} (\nabla p+z_{-}\cdot\nabla z_{+}) 
	(\tau,u_--\tau,x_2,x_3)d\tau.
\end{equation}
It is clear from \eqref{improve2} that 
\begin{align*}
&\ \ \ \ 	\big\|\delta^{-\frac{1}{2}}\partial_h^{\alpha_h}z_+(0,u_-,x_2,x_3)\big\|_{L^2(\mathcal{C}_+,\langle u_-\rangle^{2(1+\sigma)}du_-dx_2dx_3)}\\
&	= \delta^{-\frac{1}{2}}\big\|\partial_h^{\alpha_h}z_+(0,u_-,x_2,x_3)\big\|_{L^2(\Omega_\delta,\langle u_-\rangle^{2(1+\sigma)}du_-dx_2dx_3)}
=\delta^{-\frac{1}{2}}E_+^{(|\alpha_h|,0)}(z_{+,0})
\lesssim  	\varepsilon,
\end{align*}
which means 
\begin{equation*}
\delta^{-\frac{1}{2}}\partial_h^{\alpha_h} z_{+}(0,u_-,x_2,x_3)\in L^2(\mathcal{C}_+,\langle u_-\rangle^{2(1+\sigma)}du_-dx_2dx_3).
\end{equation*}
Therefore it suffices to show that 
\begin{equation}\label{eq:lemma1-5}
\delta^{-\frac{1}{2}}\partial_h^{\alpha_h}\int_0^{\infty} (\nabla p+z_{-}\cdot\nabla z_{+}) 
(\tau,u_--\tau,x_2,x_3)d\tau\in L^2(\mathcal{C}_+,\langle u_-\rangle^{2(1+\sigma)}du_-dx_2dx_3),
\end{equation}
and 
\begin{equation}\label{eq:lemma1-5'}
	\lim_{T\to \infty}\Big\|\delta^{-\frac{1}{2}}\partial_h^{\alpha_h}\int_T^{\infty}  (\nabla p+z_{-}\cdot\nabla z_{+}) 
	(\tau,u_--\tau,x_2,x_3)d\tau\Big\|_{L^2(\mathcal{C}_+,\langle u_-\rangle^{2(1+\sigma)}du_-dx_2dx_3)}=0.
\end{equation}

The rest of this proof is divided into four steps. 

\subsubsection*{\bf Step 1:} We first prove that 
\begin{equation}\label{eq:lemma1-6}
\delta^{-\frac{1}{2}}\int_0^{\infty} \partial_h^{\alpha_h}(\nabla p+z_{-}\cdot\nabla z_{+}) 
(\tau,u_--\tau,x_2,x_3)d\tau\in L^2(\mathcal{C}_+,\langle u_-\rangle^{2(1+\sigma)}du_-dx_2dx_3).
\end{equation}

In fact, 
this can also be proved via coordinate transformations and H\"older inequality:
\begin{align*}
&\ \ \ \ \Big\|\delta^{-\frac{1}{2}}\int_0^{\infty} \partial_h^{\alpha_h}(\nabla p+z_{-}\cdot\nabla z_{+}) 
(\tau,u_--\tau,x_2,x_3)d\tau\Big\|_{L^2(\mathcal{C}_+,\langle u_-\rangle^{2(1+\sigma)}du_-dx_2dx_3)}\\
&
\lesssim\delta^{-\frac{1}{2}}\Big\|\Big(\!\!\int_{\mathbb{R}}\frac{1}{\langle u_+\rangle^{1+\sigma}}du_+\!\Big)^{\!\frac{1}{2}}\Big(\!\!\int_{\mathbb{R}}\langle u_+\rangle^{1+\sigma} |\partial_h^{\alpha_h}(\nabla p+z_{-}\cdot\nabla z_{+}) 
(u_+,u_-,x_2,x_3)|^2du_+\!\Big)^{\!\frac{1}{2}}\Big\|_{L^2(\mathcal{C}_+,\langle u_-\rangle^{2(1+\sigma)}du_-dx_2dx_3)}\\
&\lesssim\delta^{-\frac{1}{2}}\big\|\langle u_-\rangle^{1+\sigma}\langle u_+\rangle^{\frac{1}{2}(1+\sigma)} \partial_h^{\alpha_h}(\nabla p+z_{-}\cdot\nabla z_{+}) 
(u_+,u_-,x_2,x_3)\big\|_{L^2(\mathbb{R}\times\Omega_\delta,du_+du_-dx_2dx_3)}\\
&\lesssim\delta^{-\frac{1}{2}}\big\|\langle u_-\rangle^{1+\sigma}\langle u_+\rangle^{\frac{1}{2}(1+\sigma)}\partial_{h}^{\alpha_h} \nabla p\big\|_{L^2_tL^2_x}+\delta^{-\frac{1}{2}}\big\|\langle u_-\rangle^{1+\sigma}\langle u_+\rangle^{\frac{1}{2}(1+\sigma)}\mathbf{J}_+^{(\alpha_h,0)}\big\|_{L^2_tL^2_x}\!\!\!\!\!\!\!
\stackrel{\text{Lemmas \ref{lemma:H} \&  \ref{lemma2}}}{\lesssim}\!\!\!\!\! C_1\varepsilon^2.\stepcounter{equation}\tag{\theequation}\label{eq:lemma1-7}
\end{align*}

\subsubsection*{\bf Step 2:} We next prove that for any $\gamma_h\leqslant\alpha_h$ with $|\gamma_h|=1$, 
there holds 
\begin{equation}\label{eq:lemma1-8}
\delta^{-\frac{1}{2}}\partial_h^{\gamma_h}\int_0^{\infty} \partial_h^{\alpha_h-\gamma_h}(\nabla p+z_{-}\cdot\nabla z_{+}) 
(\tau,u_--\tau,x_2,x_3)d\tau\in L^2(\mathcal{C}_+,\langle u_-\rangle^{2(1+\sigma)}du_-dx_2dx_3).
\end{equation}

In this step, 
	we only consider the case where 
	the outermost derivative of the above term 
	is taken as  $\partial_h^{\gamma_h}=\partial_{1}$, and the $\partial_{2}$ case can be treated in the same way.  
	Now it suffices to show that 
	\begin{equation}\label{eq:lemma1-1}
	\delta^{-\frac{1}{2}}\partial_{1}\int_0^{\infty} \partial_h^{\alpha_h-\gamma_h}(\nabla p+z_{-}\cdot\nabla z_{+}) 
	(\tau,u_--\tau,x_2,x_3)d\tau\in L^2(\mathcal{C}_+,\langle u_-\rangle^{2(1+\sigma)}du_-dx_2dx_3).
	\end{equation}

In fact, by definition, we  deduce that 
	\begin{align*}
	&\ \ \ \ \Big\|\delta^{-\frac{1}{2}}\partial_{1} \int_0^{\infty}\partial_h^{\alpha_h-\gamma_h}(\nabla p+z_{-}\cdot\nabla z_{+}) (\tau,u_--\tau,x_2,x_3)d\tau \Big\|_{L^2(\mathcal{C}_+,\langle u_{-}\rangle^{2(1+\sigma)}du_-dx_2dx_3)}\\
	&=\delta^{-\frac{1}{2}}\Big\|\lim_{h\to 0} \int_0^{\infty}\frac{1}{h}\big(\partial_h^{\alpha_h-\gamma_h}(\nabla p+z_{-}\cdot\nabla z_{+}) (\tau,u_-+h-\tau,x_2,x_3)\\
	&\ \ \ \ \ \ \ \ \ \ \ \ \ \ \ \ \ \ \ \ \ \ \ \  -\partial_h^{\alpha_h-\gamma_h}(\nabla p+z_{-}\cdot\nabla z_{+}) (\tau,u_--\tau,x_2,x_3)\big)
	d\tau \Big\|_{L^2(\mathcal{C}_+,\langle u_{-}\rangle^{2(1+\sigma)}du_-dx_2dx_3)}\\
	&\stackrel{\text{Fatou}}{\leqslant}\delta^{-\frac{1}{2}}\liminf_{h\to 0}\Big\| \int_0^{\infty}\frac{1}{h}\big(\partial_h^{\alpha_h-\gamma_h}(\nabla p+z_{-}\cdot\nabla z_{+}) (\tau,u_-+h-\tau,x_2,x_3)\\
	&\ \ \ \ \ \ \ \ \ \ \ \ \ \ \ \ \ \ \ \ \ \ \ \ \ \ \ \ \ \ \  -\partial_h^{\alpha_h-\gamma_h}(\nabla p+z_{-}\cdot\nabla z_{+}) (\tau,u_--\tau,x_2,x_3)\big)
	d\tau \Big\|_{L^2(\mathcal{C}_+,\langle u_{-}\rangle^{2(1+\sigma)}du_-dx_2dx_3)}\\
	&\stackrel{\text{Newton-Leibniz}}{\leqslant}\!\!\!\!\!\delta^{-\frac{1}{2}}\liminf_{h\to 0}\Big\|\!\! \int_0^{\infty}\!\!\!\int_0^1\!\!\partial_{1}\partial_h^{\alpha_h-\gamma_h}(\nabla p+z_{-}\cdot\nabla z_{+}) (\tau,u_-+\theta h-\tau,x_2,x_3)d\theta d\tau \Big\|_{L^2(\mathcal{C}_+,\langle u_{-}\rangle^{2(1+\sigma)}du_-dx_2dx_3)}\\
	&\leqslant\delta^{-\frac{1}{2}}\liminf_{h\to 0}\Big\|\int_0^1 \int_0^{\infty}\partial_{1}\partial_h^{\alpha_h-\gamma_h}(\nabla p+z_{-}\cdot\nabla z_{+}) (\tau,u_-+\theta h-\tau,x_2,x_3)d\tau d\theta  \Big\|_{L^2(\mathcal{C}_+,\langle u_{-}\rangle^{2(1+\sigma)}du_-dx_2dx_3)}\\
	&\leqslant\delta^{-\frac{1}{2}}\liminf_{h\to 0}\int_0^1\Big\| \int_0^{\infty}\partial_{1}\partial_h^{\alpha_h-\gamma_h}(\nabla p+z_{-}\cdot\nabla z_{+}) (\tau,u_-+\theta h-\tau,x_2,x_3)d\tau  \Big\|_{L^2(\mathcal{C}_+,\langle u_{-}\rangle^{2(1+\sigma)}du_-dx_2dx_3)}d\theta \\
	&\stackrel{\text{set }U_-=u_-+\theta h}{\leqslant}\!\!\delta^{-\frac{1}{2}}\liminf_{h\to 0}\int_0^1\Big\|\! \int_0^{\infty}\!\!\partial_{1}\partial_h^{\alpha_h-\gamma_h}(\nabla p+z_{-}\cdot\nabla z_{+}) (\tau,U_--\tau,x_2,x_3)d\tau  \Big\|_{L^2(\mathcal{C}_+,\langle u_{-}\rangle^{2(1+\sigma)}dU_-dx_2dx_3)}d\theta\\
	&\leqslant \Big\|\delta^{-\frac{1}{2}}\int_0^{\infty} \partial_h^{\alpha_h}(\nabla p+z_{-}\cdot\nabla z_{+}) 
	(\tau,u_--\tau,x_2,x_3)d\tau\Big\|_{L^2(\mathcal{C}_+,\langle u_-\rangle^{2(1+\sigma)}du_-dx_2dx_3)}
	\stackrel{\eqref{eq:lemma1-7}}{\lesssim} C_1\varepsilon^2.
	\end{align*}
 Hence \eqref{eq:lemma1-1} holds and therefore \eqref{eq:lemma1-8} follows.

\subsubsection*{\bf Step 3:} We turn to prove that for any $\gamma_h\leqslant\alpha_h$ with $|\gamma_h|=1$, as vector fields in $L^2(\mathcal{C}_+,\langle u_{-}\rangle^{2(1+\sigma)} du_-dx_2dx_3)$, 
there holds 
	\begin{align*}
	& 
	\delta^{-\frac{1}{2}}\partial_h^{\gamma_h}\int_0^{\infty} \partial_h^{\alpha_h-\gamma_h}(\nabla p+z_{-}\cdot\nabla z_{+}) 
	(\tau,u_--\tau,x_2,x_3)d\tau\\
	&\stackrel{ L^2(\mathcal{C}_+,\langle u_-\rangle^{2(1+\sigma)}du_-dx_2dx_3)}{=\joinrel=\joinrel=\joinrel=\joinrel=\joinrel=\joinrel=\joinrel=\joinrel=\joinrel=\joinrel=\joinrel=\joinrel=\joinrel=\joinrel=\joinrel=\joinrel=}\delta^{-\frac{1}{2}}\int_0^{\infty} \partial_h^{\alpha_h}(\nabla p+z_{-}\cdot\nabla z_{+}) 
	(\tau,u_--\tau,x_2,x_3)d\tau.\stepcounter{equation}\tag{\theequation}\label{eq:lemma1-9}
\end{align*}

In view of \eqref{eq:lemma1-6} and \eqref{eq:lemma1-8}, it suffices to show the following equation in the sense of distributions:
\begin{equation}\label{eq:lemma1-2}
	\delta^{-\frac{1}{2}}\partial_h^{\gamma_h}\int_0^{\infty} \partial_h^{\alpha_h-\gamma_h}(\nabla p+z_{-}\cdot\nabla z_{+}) 
	(\tau,u_--\tau,x_2,x_3)d\tau
	\stackrel{ \mathcal{D}'(\mathcal{C}_+)}{=\joinrel=\joinrel=\joinrel=}\delta^{-\frac{1}{2}}\int_0^{\infty}  \partial_h^{\alpha_h}(\nabla p+z_{-}\cdot\nabla z_{+}) 
	(\tau,u_--\tau,x_2,x_3)d\tau. 
\end{equation}

Based on \eqref{eq:lemma1} and \eqref{eq:lemma1-6},  
both the two time integrals in \eqref{eq:lemma1-2} are locally integrable functions. 
Let us take an arbitrary vector field $\varphi\in \mathcal{D}(\mathcal{C}_+)$. It is clear that $\varphi\in \mathcal{D}(\mathcal{C}_+)\subset L^2(\mathcal{C}_+)$, which also gives  $\partial_{h}^{\gamma_h}\varphi\in \mathcal{D}(\mathcal{C}_+)\subset L^2(\mathcal{C}_+)$. 
These facts enable us to infer that the following two spacetime integrals are finite:
\begin{align*}
	&\ \ \ \
	\delta^{-\frac{1}{2}}\int_{[0,\infty)\times \mathcal{C}_+}\big| \partial_h^{\alpha_h-\gamma_h}(\nabla p+z_{-}\cdot\nabla z_{+}) 
	(\tau,u_--\tau,x_2,x_3)\cdot\partial_{h}^{\gamma_h}\varphi(u_-,x_2,x_3)\big|d\tau du_-dx_2dx_3\\
	&\lesssim \delta^{-\frac{1}{2}}\int_{\mathbb{R}\times\mathcal{C}_+}\big|\partial_h^{\alpha_h-\gamma_h}(\nabla p+z_{-}\cdot\nabla z_{+}) 
	(u_+,u_-,x_2,x_3)\big|\cdot|\partial_{h}^{\gamma_h}\varphi(u_-,x_2,x_3)|du_+du_-dx_2dx_3\\
	&\lesssim
	\delta^{-\frac{1}{2}}\Big(\int_{\mathbb{R}\times \mathcal{C}_+}\langle  u_+\rangle^{1+\sigma}\big|\partial_h^{\alpha_h-\gamma_h}(\nabla p+z_{-}\cdot\nabla z_{+}) 
	(u_+,u_-,x_2,x_3)\big|^2 \Big)^{\frac{1}{2}}
	\Big(\int_{\mathbb{R}\times \mathcal{C}_+}\frac{|\partial_{h}^{\gamma_h}\varphi(u_-,x_2,x_3)|^2}{\langle u_+\rangle^{1+\sigma}}\Big)^{\frac{1}{2}}\\
	&\lesssim\delta^{-\frac{1}{2}}\Big(\int_{\mathbb{R}\times \mathcal{C}_+}\langle u_-\rangle^{2(1+\sigma)}\langle u_+\rangle^{1+\sigma}\big|\partial_h^{\alpha_h-\gamma_h}(\nabla p+z_{-}\cdot\nabla z_{+}) (u_+,u_-,x_2,x_3)\big|^2\Big)^{\frac{1}{2}}\\
	&\ \ \ \ \times \Big(\int_{\mathbb{R}}\frac{1}{\langle u_+\rangle^{1+\sigma}}\Big(\int_{\mathcal{C}_+} |\partial_{h}^{\gamma_h} \varphi(u_-,x_2,x_3)|^2  du_-dx_2dx_3 \Big)du_+\Big)^{\frac{1}{2}}\\
	&\lesssim\varepsilon^2\|\partial_{h}^{\gamma_h}\varphi\|_{L^2}<\infty,\stepcounter{equation}\tag{\theequation}\label{eq:lemma1-3}\\
	&\ \ \ \
	\delta^{-\frac{1}{2}}\int_{[0,\infty)\times \mathcal{C}_+}\big| \partial_h^{\alpha_h}(\nabla p+z_{-}\cdot\nabla z_{+}) 
	(\tau,u_--\tau,x_2,x_3)\cdot\varphi(u_-,x_2,x_3)\big|d\tau du_-dx_2dx_3\\
	&\lesssim \delta^{-\frac{1}{2}}\int_{\mathbb{R}\times\mathcal{C}_+}\big|\partial_h^{\alpha_h}(\nabla p+z_{-}\cdot\nabla z_{+}) 
	(u_+,u_-,x_2,x_3)\big|\cdot|\varphi(u_-,x_2,x_3)|du_+du_-dx_2dx_3\\
	&\lesssim
	\delta^{-\frac{1}{2}}\Big(\int_{\mathbb{R}\times \mathcal{C}_+}\langle  u_+\rangle^{1+\sigma}\big|\partial_h^{\alpha_h}(\nabla p+z_{-}\cdot\nabla z_{+}) 
	(u_+,u_-,x_2,x_3)\big|^2 \Big)^{\frac{1}{2}}
	\Big(\int_{\mathbb{R}\times \mathcal{C}_+}\frac{|\varphi(u_-,x_2,x_3)|^2}{\langle u_+\rangle^{1+\sigma}}\Big)^{\frac{1}{2}}\\
	&\lesssim\delta^{-\frac{1}{2}}\Big(\int_{\mathbb{R}\times \mathcal{C}_+}\langle u_-\rangle^{2(1+\sigma)}\langle u_+\rangle^{1+\sigma}\big|\partial_h^{\alpha_h}(\nabla p+z_{-}\cdot\nabla z_{+}) (u_+,u_-,x_2,x_3)\big|^2\Big)^{\frac{1}{2}}\\
	&\ \ \ \ \times \Big(\int_{\mathbb{R}}\frac{1}{\langle u_+\rangle^{1+\sigma}}\Big(\int_{\mathcal{C}_+} \varphi(u_-,x_2,x_3)|^2  du_-dx_2dx_3 \Big)du_+\Big)^{\frac{1}{2}}\\
	&\lesssim\varepsilon^2\|\varphi\|_{L^2}<\infty.\stepcounter{equation}\tag{\theequation}\label{eq:lemma1-4}
\end{align*}
We remark here that the estimates \eqref{eq:lemma1-3}-\eqref{eq:lemma1-4} will ensure the subsequent applications of Fubini's theorem to commute the order of integrals.

Using the above estimates, integration by parts and Fubini's theorem repeatedly, we then derive in the sense of distributions that 
	\begin{align*}
	&\ \ \ \  \Big\langle \delta^{-\frac{1}{2}}\partial_{h}^{\gamma_h}\int_0^{\infty}\partial_h^{\alpha_h-\gamma_h}(\nabla p+z_{-}\cdot\nabla z_{+}) 
		(\tau,u_--\tau,x_2,x_3)d\tau,\ \varphi(u_-,x_2,x_3)\Big\rangle\\
		&=-\delta^{-\frac{1}{2}}\Big\langle\int_0^{\infty}\partial_h^{\alpha_h-\gamma_h}(\nabla p+z_{-}\cdot\nabla z_{+}) 
		(\tau,u_--\tau,x_2,x_3)d\tau,\  \partial_{h}^{\gamma_h} \varphi(u_-,x_2,x_3)\Big\rangle\\
		&=-\delta^{-\frac{1}{2}}\int_{\mathcal{C}_+}\Big(\int_0^{\infty}\partial_h^{\alpha_h-\gamma_h}(\nabla p+z_{-}\cdot\nabla z_{+}) 
		(\tau,u_--\tau,x_2,x_3)d\tau\Big)\cdot \partial_{h}^{\gamma_h}\varphi(u_-,x_2,x_3)du_-dx_2dx_3\\
		&\stackrel{ 
		\eqref{eq:lemma1-3}}{=}-\delta^{-\frac{1}{2}}\int_{[0,\infty) \times \mathcal{C}_+}\partial_h^{\alpha_h-\gamma_h}(\nabla p+z_{-}\cdot\nabla z_{+}) (\tau,u_--\tau,x_2,x_3)\cdot \partial_{h}^{\gamma_h}\varphi(u_-,x_2,x_3)d\tau du_-dx_2dx_3\\
		&=-\delta^{-\frac{1}{2}}\int_{[0,\infty)} \Big(\int_{\mathcal{C}_+}\partial_h^{\alpha_h-\gamma_h}(\nabla p+z_{-}\cdot\nabla z_{+}) (\tau,u_--\tau,x_2,x_3)\cdot \partial_{h}^{\gamma_h}\varphi(u_-,x_2,x_3)du_-dx_2dx_3\Big)d\tau \\
		&=\delta^{-\frac{1}{2}}\int_{[0,\infty)} \Big(\int_{\mathcal{C}_+}\partial_h^{\alpha_h}(\nabla p+z_{-}\cdot\nabla z_{+}) (\tau,u_--\tau,x_2,x_3)\cdot \varphi(u_-,x_2,x_3)du_-dx_2dx_3\Big)d\tau \\
		&=\delta^{-\frac{1}{2}}\int_{[0,\infty)\times \mathcal{C}_+}\partial_h^{\alpha_h}(\nabla p+z_{-}\cdot\nabla z_{+}) (\tau,u_--\tau,x_2,x_3)\cdot \varphi(u_-,x_2,x_3)du_-dx_2dx_3d\tau\\
		&\stackrel{ 
		\eqref{eq:lemma1-4}}{=}\delta^{-\frac{1}{2}}\int_{\mathcal{C}_+}\Big(\int_{0}^{\infty} \partial_h^{\alpha_h}(\nabla p+z_{-}\cdot\nabla z_{+})(\tau,u_--\tau,x_2,x_3)d\tau\Big)\cdot \varphi(u_-,x_2,x_3)du_-dx_2dx_3\\
		&=\Big\langle\delta^{-\frac{1}{2}}\int_0^{\infty}  \partial_h^{\alpha_h}(\nabla p+z_{-}\cdot\nabla z_{+})(\tau,u_--\tau,x_2,x_3)d\tau,\ \varphi(u_-,x_2,x_3)\Big\rangle.
	\end{align*}
This implies \eqref{eq:lemma1-2} immediately. Thus we have proved \eqref{eq:lemma1-9}.

\subsubsection*{\bf Step 4:} We are now ready to show \eqref{eq:lemma1-5} and \eqref{eq:lemma1-5'}.

By induction on $\alpha_h$,  we obtain that the following equation holds in the sense of  weighted $L^2$
space $L^2(\mathcal{C}_+,\langle u_{-}\rangle^{2(1+\sigma)} du_-dx_2dx_3)$ as an immediate consequence of  \eqref{eq:lemma1-9}:
\begin{align*}
	& 
	\delta^{-\frac{1}{2}}\partial_h^{\alpha_h}\int_0^{\infty}(\nabla p+z_{-}\cdot\nabla z_{+}) 
	(\tau,u_--\tau,x_2,x_3)d\tau\\
	&\stackrel{ L^2(\mathcal{C}_+,\langle u_-\rangle^{2(1+\sigma)}du_-dx_2dx_3)}{=\joinrel=\joinrel=\joinrel=\joinrel=\joinrel=\joinrel=\joinrel=\joinrel=\joinrel=\joinrel=\joinrel=\joinrel=\joinrel=\joinrel=\joinrel=\joinrel=}\delta^{-\frac{1}{2}}\int_0^{\infty} \partial_h^{\alpha_h}(\nabla p+z_{-}\cdot\nabla z_{+}) 
	(\tau,u_--\tau,x_2,x_3)d\tau.\stepcounter{equation}\tag{\theequation}\label{eq:lemma1-10}
\end{align*}
This proves \eqref{eq:lemma1-5}. 

Moreover, by \eqref{eq:lemma1-10} and \eqref{eq:lemma1-7}, there holds 
\begin{align*}
&\ \ \ \ \Big\|\delta^{-\frac{1}{2}}\partial_h^{\alpha_h}\int_0^{\infty} (\nabla p+z_{-}\cdot\nabla z_{+}) 
(\tau,u_--\tau,x_2,x_3)d\tau\Big\|_{L^2(\mathcal{C}_+,\langle u_-\rangle^{2(1+\sigma)}du_-dx_2dx_3)}\\
&=\Big\|\delta^{-\frac{1}{2}}\int_0^{\infty} \partial_h^{\alpha_h}(\nabla p+z_{-}\cdot\nabla z_{+}) 
(\tau,u_--\tau,x_2,x_3)d\tau\Big\|_{L^2(\mathcal{C}_+,\langle u_-\rangle^{2(1+\sigma)}du_-dx_2dx_3)}
\lesssim C_1\varepsilon^2.\stepcounter{equation}\tag{\theequation}\label{eq:lemma1-12}
\end{align*}
As a direct consequence of   \eqref{eq:lemma1-10} and  \eqref{eq:lemma1-12}, we obtain 
\begin{align*}
&\ \ \ \ \lim_{T\to \infty}\Big\|\delta^{-\frac{1}{2}}\partial_h^{\alpha_h}\int_T^{\infty} (\nabla p+z_{-}\cdot\nabla z_{+}) 
(\tau,u_--\tau,x_2,x_3)d\tau\Big\|_{L^2(\mathcal{C}_+,\langle u_-\rangle^{2(1+\sigma)}du_-dx_2dx_3)}\\
&=\lim_{T\to \infty}\Big\|\delta^{-\frac{1}{2}}\int_T^{\infty}\partial_h^{\alpha_h} (\nabla p+z_{-}\cdot\nabla z_{+}) 
(\tau,u_--\tau,x_2,x_3)d\tau\Big\|_{L^2(\mathcal{C}_+,\langle u_-\rangle^{2(1+\sigma)}du_-dx_2dx_3)}=0.
\end{align*}
Together with \eqref{eq:lemma1-11'}, this gives rise to \eqref{eq:lemma1-5'}.

The proof of this lemma is now complete.
\end{proof}

Clearly, the above two lemmas also establish the following lemma as a direct consequence. We remark here that we do not need to consider other terms (such as  
 $\partial_h^{\alpha_h}\partial_3^lz^3_{\pm}$) in this lemma since they can be covered by the subsequent lemmas together with the fact that $\operatorname{div}z_\pm=0$ gives  $\partial_3z_\pm^h=-\partial_hz_\pm^h$.

\begin{lemma}\label{lemma5'}
	For any $\alpha_h\in(\mathbb{Z}_{\geqslant 0})^2$  with 
	$0\leqslant|\alpha_h|\leqslant N+2$, there hold
	\begin{equation*}
		\delta^{-\frac{3}{2}}\partial_h^{\alpha_h}z^3_{\pm}(\infty;u_\mp,x_2,x_3)\in L^2(\mathcal{C}_\pm,\langle u_\mp\rangle^{2(1+\sigma)}du_\mp dx_2dx_3),
	\end{equation*}
	and 
	\begin{equation*} 
		\lim_{T\to\infty}\Big\|\delta^{-\frac{3}{2}}\partial_h^{\alpha_h}z^3_{\pm}(\infty;u_\mp,x_2,x_3)-\delta^{-\frac{3}{2}}\partial_h^{\alpha_h}z^3_{\pm}(T,u_\mp\mp T,x_2,x_3)\Big\|_{L^2(\mathcal{C}_\pm,\langle u_\mp\rangle^{2(1+\sigma)}du_\mp dx_2dx_3)}=0.
	\end{equation*}
\end{lemma}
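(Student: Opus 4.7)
By symmetry it suffices to treat $z^3_+$. The argument follows the four-step scheme of Lemma \ref{lemma5} with $z_+$ replaced by $z^3_+$ and the prefactor $\delta^{-1/2}$ replaced by $\delta^{-3/2}$. Taking the third component of the equation $\partial_t z_++(z_--B_0)\cdot\nabla z_+=-\nabla p$ and integrating along the leftward characteristic $\ell_-(u_-,x_2,x_3)$ yields
\begin{equation*}
\delta^{-\frac{3}{2}}\partial_h^{\alpha_h}z^3_+(\infty;u_-,x_2,x_3)=\delta^{-\frac{3}{2}}\partial_h^{\alpha_h}z^3_+(0,u_-,x_2,x_3)-\delta^{-\frac{3}{2}}\partial_h^{\alpha_h}\!\int_0^{\infty}\!\bigl(\partial_3 p+z_-\cdot\nabla z^3_+\bigr)(\tau,u_--\tau,x_2,x_3)\,d\tau,
\end{equation*}
together with its truncation from $T$ to $\infty$. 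The initial-data piece is bounded in the target norm $L^2(\mathcal{C}_+,\langle u_-\rangle^{2(1+\sigma)}du_-dx_2dx_3)$ by $\delta^{-3/2}\bigl(E_+^{(|\alpha_h|,0)}(z^3_{+,0})\bigr)^{1/2}\lesssim\varepsilon$ via the bootstrap assumption \eqref{assumption2} on $z^3_\pm$.

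The crucial step is to prove
\begin{equation*}
\delta^{-\frac{3}{2}}\Big\|\langle u_-\rangle^{1+\sigma}\langle u_+\rangle^{\frac{1}{2}(1+\sigma)}\partial_h^{\alpha_h}\bigl(\partial_3 p+z_-\cdot\nabla z^3_+\bigr)\Big\|_{L^2_tL^2_x(\Omega_\delta)}\lesssim C_1\varepsilon^2.
\end{equation*}
I plan to obtain this by passing to the rescaled frame. Since the weights depend only on $(t,x_1)$, the change of variable $x_3=\delta y_3$ produces the identity
\begin{equation*}
\delta^{-\frac{3}{2}}\bigl\|\langle u_-\rangle^{1+\sigma}\langle u_+\rangle^{\frac{1}{2}(1+\sigma)}\partial_h^{\alpha_h}\partial_3 p\bigr\|_{L^2_tL^2_x(\Omega_\delta)}=\bigl\|\langle u_-\rangle^{1+\sigma}\langle u_+\rangle^{\frac{1}{2}(1+\sigma)}\partial_h^{\alpha_h}(\nabla_\delta p_{(\delta)})^3\bigr\|_{L^2_tL^2_x(\Omega_1)},
\end{equation*}
and, exploiting the scaling $(z_-\cdot\nabla z^3_+)(t,x_h,x_3)=\delta\,(z_{-(\delta)}\cdot\nabla z^3_{+(\delta)})(t,x_h,x_3/\delta)$,
\begin{equation*}
\delta^{-\frac{3}{2}}\bigl\|\langle u_-\rangle^{1+\sigma}\langle u_+\rangle^{\frac{1}{2}(1+\sigma)}\partial_h^{\alpha_h}(z_-\cdot\nabla z^3_+)\bigr\|_{L^2_tL^2_x(\Omega_\delta)}=\bigl\|\langle u_-\rangle^{1+\sigma}\langle u_+\rangle^{\frac{1}{2}(1+\sigma)}\partial_h^{\alpha_h}(z_{-(\delta)}\cdot\nabla z^3_{+(\delta)})\bigr\|_{L^2_tL^2_x(\Omega_1)}.
\end{equation*}
The two right-hand sides are then each dominated by $C_1\varepsilon^2$ through Corollary \ref{remarkp} and Remark \ref{remarkJ} respectively.

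With this integrand bound in hand, the coordinate change $(\tau,x_1)\leftrightarrow(u_+,u_-)$ combined with Cauchy--Schwarz against the integrable weight $1/\langle u_+\rangle^{1+\sigma}$ yields both the $L^2$-membership of $\delta^{-3/2}\partial_h^{\alpha_h}z^3_+(\infty;\cdot)$ on $\mathcal{C}_+$ and the large-time approximation, exactly as in Lemma \ref{lemma5}. The commutation of $\partial_h^{\alpha_h}$ past the time integral is justified by the Newton--Leibniz, Fatou, and Fubini chain carried out in Steps 2--4 of that proof, and it transfers verbatim since only horizontal derivatives are involved. The main obstacle I expect is the bookkeeping of the two rescaling identities displayed above: they are precisely the mechanism by which the extra $\delta^{-1}$ factor (relative to Lemma \ref{lemma5}) is absorbed into the anisotropic scaling of $z^3_+$ and $\partial_3 p$; once they are set up, the remainder is mechanical.
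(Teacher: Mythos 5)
Your proof is correct, but it is a genuinely different and substantially heavier route than the one the paper takes. The paper disposes of this lemma in one sentence: since $\delta$ is a \emph{fixed} positive number, the $\delta^{-3/2}$-weighted norm of $\partial_h^{\alpha_h}z^3_\pm$ differs from the $\delta^{-1/2}$-weighted norm only by the constant factor $\delta^{-1}$, so both the $L^2$-membership and the vanishing of the limit as $T\to\infty$ follow verbatim from the third components of Lemmas \ref{lemma4} and \ref{lemma5} (qualitative statements are insensitive to a fixed multiplicative constant). You instead rerun the entire scattering construction for the vertical component with the anisotropic prefactor: you rewrite the third component of the transport identity, reduce the key integrand bound to the rescaled frame on $\Omega_1$ via the exact scaling identities for $(\nabla_\delta p_{(\delta)})^3$ and $(z_{-(\delta)}\cdot\nabla z_{+(\delta)})^3$, and invoke Corollary \ref{remarkp} and Remark \ref{remarkJ}; your two rescaling identities are correct, and the Fatou/Newton--Leibniz/Fubini machinery of Lemma \ref{lemma5} indeed transfers unchanged since only horizontal derivatives are commuted past the time integral. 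What your approach buys is a \emph{uniform-in-$\delta$} quantitative bound $\lesssim C_1\varepsilon^2$ on the scattering-field error for $\delta^{-3/2}\partial_h^{\alpha_h}z^3_\pm$, which is strictly more than the lemma asserts; what it costs is reliance on the vertical components of Corollary \ref{remarkp} and Remark \ref{remarkJ}, whose own derivation (the paper calls them ``direct renormalization'' consequences, but for the third components this hides a use of the Neumann boundary condition for $p$, the divergence-free relation $\partial_3z^3_\pm=-\nabla_h\cdot z^h_\pm$, and Lemmas \ref{lemma:HHH} and \ref{lemma3}) is not spelled out; since the paper states those results, your citation of them is legitimate, and no gap remains.
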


We turn to derive the uniform estimates concerning $\partial_h^{\alpha_h}\partial_3^lz_\pm(\infty;u_\mp,x_2,x_3)$ with the coefficient $\delta^{l-\frac{1}{2}}$.
 
\begin{lemma}\label{lemma6}
	For any $\alpha_h\in(\mathbb{Z}_{\geqslant 0})^2$ and $l\in\mathbb{Z}_{\geqslant 1}$ with 
	$1\leqslant|\alpha_h|+l\leqslant N+2$, there hold
	\begin{equation*}
\delta^{l-\frac{1}{2}}
	\partial_h^{\alpha_h}\partial_3^l z_{\pm}(\infty;u_\mp,x_2,x_3)\in L^2(\mathcal{C}_\pm,\langle u_\mp\rangle^{2(1+\sigma)}du_\mp dx_2dx_3),\end{equation*}
and 
\begin{equation*} 
	\lim_{T\to\infty}\Big\|\delta^{l-\frac{1}{2}}\partial_h^{\alpha_h}\partial_3^lz_{\pm}(\infty;u_\mp,x_2,x_3)-\delta^{l-\frac{1}{2}}\partial_h^{\alpha_h}\partial_3^lz_{\pm}(T,u_\mp\mp T,x_2,x_3)\Big\|_{L^2(\mathcal{C}_\pm,\langle u_\mp\rangle^{2(1+\sigma)}du_\mp dx_2dx_3)}=0.
\end{equation*}
\end{lemma}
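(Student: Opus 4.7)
The plan is to mirror the proof of Lemma \ref{lemma5} verbatim, treating the vertical derivatives $\partial_3^l$ on exactly the same footing as the horizontal derivatives $\partial_h^{\alpha_h}$, since neither operator touches the characteristic parameter $\tau$ along which we integrate. By symmetry it is enough to handle the $+$ case. Applying $\partial_h^{\alpha_h}\partial_3^l$ to \eqref{eq:def-sca} and to its truncated analogue yields
\begin{align*}
\delta^{l-\tfrac{1}{2}}\partial_h^{\alpha_h}\partial_3^l z_{+}(\infty;u_-,x_2,x_3)
&=\delta^{l-\tfrac{1}{2}}\partial_h^{\alpha_h}\partial_3^l z_{+}(0,u_-,x_2,x_3)\\
&\quad -\delta^{l-\tfrac{1}{2}}\partial_h^{\alpha_h}\partial_3^l\int_0^{\infty}(\nabla p+z_{-}\cdot\nabla z_{+})(\tau,u_--\tau,x_2,x_3)\,d\tau,
\end{align*}
and the analogous identity with $\int_0^{\infty}$ replaced by $\int_T^{\infty}$ on the right and $z_+(T,u_--T,\cdot)$ on the left. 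The initial-data contribution is in the target space with norm $\lesssim \delta^{l-\frac{1}{2}}\bigl(E_+^{(|\alpha_h|,l)}(z_{+,0})\bigr)^{1/2}\lesssim \varepsilon$ by the smallness hypothesis of Theorem \ref{lemma:global}, so the whole task reduces to controlling the time integral and its $T$-truncation.

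First I would show that $\partial_h^{\alpha_h}\partial_3^l$ commutes with the $\tau$-integral inside $L^2(\mathcal{C}_+,\langle u_-\rangle^{2(1+\sigma)}du_-dx_2dx_3)$, by induction on $|\alpha_h|+l$ peeling off one derivative at a time. This repeats Steps 2--3 of Lemma \ref{lemma5}: a single $\partial_1$, $\partial_2$ or $\partial_3$ is moved outside via a difference-quotient plus Fatou plus Newton--Leibniz argument, and the resulting distributional equality is then justified by Fubini's theorem, whose hypothesis follows from Hölder in $u_+$ against the integrable weight $\langle u_+\rangle^{-(1+\sigma)}$. Since $\partial_3$ neither touches $\tau$ nor changes the change-of-variables Jacobian on $(\tau,x_1)\mapsto(u_+,u_-)$, the same manipulations go through unchanged.

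Once the derivatives are inside the integral, a further application of that change of variables and Cauchy--Schwarz in $u_+$ gives
\begin{align*}
&\Big\|\delta^{l-\tfrac{1}{2}}\!\int_0^{\infty}\!\partial_h^{\alpha_h}\partial_3^l(\nabla p+z_-\cdot\nabla z_+)(\tau,u_--\tau,x_2,x_3)\,d\tau\Big\|_{L^2(\mathcal{C}_+,\langle u_-\rangle^{2(1+\sigma)}du_-dx_2dx_3)}\\
&\lesssim \delta^{l-\tfrac{1}{2}}\big\|\langle u_-\rangle^{1+\sigma}\langle u_+\rangle^{\frac{1}{2}(1+\sigma)}\partial_h^{\alpha_h}\partial_3^l\nabla p\big\|_{L^2_tL^2_x}
+\delta^{l-\tfrac{1}{2}}\big\|\langle u_-\rangle^{1+\sigma}\langle u_+\rangle^{\frac{1}{2}(1+\sigma)}\mathbf{J}_+^{(\alpha_h,l)}\big\|_{L^2_tL^2_x}.
\end{align*}
These are precisely the quantities controlled by Lemma \ref{lemma:HH'} and Lemma \ref{lemma2}, both carrying the matching prefactor $\delta^{l-1/2}$ throughout the admissible range $|\alpha_h|+l\leqslant N+2$; each is $\lesssim C_1\varepsilon^2$. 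The approximation statement follows immediately by replacing the lower limit $0$ by $T$ in the commuted identity and invoking dominated convergence, since the pointwise $L^2(\mathcal{C}_+,\langle u_-\rangle^{2(1+\sigma)})$-majorant is in $L^1_\tau$ by the same two lemmas.

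The only conceptual issue is bookkeeping: one must verify at every inductive step that no $\delta$-power is lost and that every ``one derivative lower'' bound used to justify Fubini is itself available. This is automatic here, because Lemmas \ref{lemma:HH'} and \ref{lemma2} are stated for the full range $0\leqslant|\alpha_h|+l\leqslant N+2$ with the correct $\delta^{l-\frac{1}{2}}$ coefficient, so no rescaling surprise can arise. Thus there is no genuinely new obstacle beyond Lemma \ref{lemma5}; the extra vertical derivatives are processed identically because they commute with the characteristic integration.
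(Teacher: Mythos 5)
Your proposal is correct and follows essentially the same route as the paper: split off the initial-data term, commute $\partial_h^{\alpha_h}\partial_3^l$ with the $\tau$-integral one derivative at a time via the difference-quotient/Fatou/Newton--Leibniz argument (with the shift taken in $x_3$ for the vertical derivative) justified by Fubini, and then bound the commuted integral by the weighted spacetime norms of $\partial_h^{\alpha_h}\partial_3^l\nabla p$ and $\mathbf{J}_+^{(\alpha_h,l)}$ via Lemmas \ref{lemma:HH} and \ref{lemma2}, which carry the matching $\delta^{l-\frac{1}{2}}$ prefactor. The paper's Steps 1--4 implement exactly this, and your observation that $\partial_3$ does not interact with the characteristic integration is the precise reason the $x_3$-shift version of Step 2 goes through unchanged.
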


\begin{proof}
		By the symmetry considerations, it suffices to give details for the estimate on $z_{+}(\infty;u_-,x_2,x_3)$.
		
Applying the derivative $	\partial_h^{\alpha_h}\partial_3^l$ to 	\eqref{eq:def-sca} and \eqref{eq:def-large} gives rise to 
	\begin{align*} 
	&\ \ \ \ 	\delta^{l-\frac{1}{2}}\partial_h^{\alpha_h}\partial_3^l z_{+}(\infty;u_-,x_2,x_3)\\
	&=\delta^{l-\frac{1}{2}}\partial_h^{\alpha_h}\partial_3^l z_{+}(0,u_-,x_2,x_3)-\delta^{l-\frac{1}{2}}\partial_h^{\alpha_h}\partial_3^l\int_0^{\infty} (\nabla p+z_{-}\cdot\nabla z_{+}) 
		(\tau,u_--\tau,x_2,x_3)d\tau,\stepcounter{equation}\tag{\theequation}\label{eq:lemma1-15}
	\end{align*}
and 
	\begin{align*} 
&\ \ \ \ 	\delta^{l-\frac{1}{2}}\partial_h^{\alpha_h}\partial_3^l z_{+}(\infty;u_-,x_2,x_3)-\delta^{l-\frac{1}{2}}\partial_h^{\alpha_h}\partial_3^l z_{+}(T,u_--T,x_2,x_3)
\\&
=-\delta^{l-\frac{1}{2}}\partial_h^{\alpha_h}\partial_3^l\int_T^{\infty} (\nabla p+z_{-}\cdot\nabla z_{+}) 
	(\tau,u_--\tau,x_2,x_3)d\tau.\stepcounter{equation}\tag{\theequation}\label{eq:lemma1-15'}
\end{align*}
According to \eqref{improve2}, we have
	\begin{align*}
		&\ \ \ \ 	\big\|\delta^{l-\frac{1}{2}}\partial_h^{\alpha_h}\partial_3^lz_+(0,u_-,x_2,x_3)\big\|_{L^2(\mathcal{C}_+,\langle u_-\rangle^{2(1+\sigma)}du_-dx_2dx_3)}\\
		&	=\delta^{l-\frac{1}{2}} \big\|\partial_h^{\alpha_h}\partial_3^lz_+(0,u_-,x_2,x_3)\big\|_{L^2(\Omega_\delta,\langle u_-\rangle^{2(1+\sigma)}du_-dx_2dx_3)}		=\delta^{l-\frac{1}{2}}E_+^{(|\alpha_h|,l)}(z_{+,0})
		\lesssim 
		\varepsilon,
	\end{align*}
and hence 
	\begin{equation}\label{eq:lemma1-16}
		\delta^{l-\frac{1}{2}}\partial_h^{\alpha_h}\partial_3^l z_{+}(0,u_-,x_2,x_3)\in L^2(\mathcal{C}_+,\langle u_-\rangle^{2(1+\sigma)}du_-dx_2dx_3).
	\end{equation}
By virtue of \eqref{eq:lemma1-15}, \eqref{eq:lemma1-15'} and \eqref{eq:lemma1-16}, our task is now reduced to showing that
	\begin{equation}\label{eq:lemma1-26}
	\delta^{l-\frac{1}{2}}	\partial_h^{\alpha_h}\partial_3^l\int_0^{\infty} (\nabla p+z_{-}\cdot\nabla z_{+}) 
		(\tau,u_--\tau,x_2,x_3)d\tau\in L^2(\mathcal{C}_+,\langle u_-\rangle^{2(1+\sigma)}du_-dx_2dx_3),
	\end{equation}
and 
\begin{equation}\label{eq:lemma1-26'}
	\lim_{T\to \infty}\Big\|\delta^{-\frac{1}{2}}\partial_h^{\alpha_h}\partial_3^l\int_T^{\infty}  (\nabla p+z_{-}\cdot\nabla z_{+}) 
	(\tau,u_--\tau,x_2,x_3)d\tau\Big\|_{L^2(\mathcal{C}_+,\langle u_-\rangle^{2(1+\sigma)}du_-dx_2dx_3)}=0.
\end{equation}
	
	The rest of this proof is divided into four steps. 
	
	\subsubsection*{\bf Step 1:} We first prove that 
	\begin{equation}\label{eq:lemma1-17}
		\delta^{l-\frac{1}{2}}\int_0^{\infty} \partial_h^{\alpha_h}\partial_3^l(\nabla p+z_{-}\cdot\nabla z_{+}) 
		(\tau,u_--\tau,x_2,x_3)d\tau\in L^2(\mathcal{C}_+,\langle u_-\rangle^{2(1+\sigma)}du_-dx_2dx_3).
	\end{equation}

	In fact, 
	this can be proved via coordinate transformations and H\"older inequality:
\begin{align*}
	&\ \ \ \ \Big\|\delta^{l-\frac{1}{2}}\int_0^{\infty} \partial_h^{\alpha_h}\partial_3^l(\nabla p+z_{-}\cdot\nabla z_{+}) 
	(\tau,u_--\tau,x_2,x_3)d\tau\Big\|_{L^2(\mathcal{C}_+,\langle u_-\rangle^{2(1+\sigma)}du_-dx_2dx_3)}\\
	&
	\lesssim\delta^{l-\frac{1}{2}}\Big\|\Big(\!\!\int_{\mathbb{R}}\!\frac{1}{\langle u_+\rangle^{1+\sigma}}du_+\!\Big)^{\!\!\frac{1}{2}}\!\Big(\!\!\int_{\mathbb{R}}\!\!\langle u_+\rangle^{1+\sigma} |\partial_h^{\alpha_h}\partial_3^l(\nabla p+z_{-}\cdot\nabla z_{+}) 
	(u_+,u_-,x_2,x_3)|^2du_+\!\Big)^{\!\!\frac{1}{2}}\Big\|_{L^2(\mathcal{C}_+,\langle u_-\rangle^{2(1+\sigma)}du_-dx_2dx_3)}\\
	&\lesssim\delta^{l-\frac{1}{2}}\big\|\langle u_-\rangle^{1+\sigma}\langle u_+\rangle^{\frac{1}{2}(1+\sigma)} \partial_h^{\alpha_h}\partial_3^l(\nabla p+z_{-}\cdot\nabla z_{+}) 
	(u_+,u_-,x_2,x_3)\big\|_{L^2(\mathbb{R}\times\Omega_\delta,du_+du_-dx_2dx_3)}\\
	&\lesssim\delta^{l-\frac{1}{2}}\big\|\langle u_-\rangle^{1+\sigma}\langle u_+\rangle^{\frac{1}{2}(1+\sigma)}\partial_{h}^{\alpha_h}\partial_3^l \nabla p\big\|_{L^2_tL^2_x}+\delta^{l-\frac{1}{2}}\big\|\langle u_-\rangle^{1+\sigma}\langle u_+\rangle^{\frac{1}{2}(1+\sigma)}{\mathbf{J}_+^{(\alpha_h,l)}}\big\|_{L^2_tL^2_x}\!\!\!\!\!\!\!
	\stackrel{\text{Lemmas \ref{lemma:HH} \& \ref{lemma2}}}{\lesssim}\!\!\!\!\! C_1\varepsilon^2.\stepcounter{equation}\tag{\theequation}\label{eq:lemma1-18}
\end{align*}

	\subsubsection*{\bf Step 2:} We show that  
	\begin{equation}\label{eq:lemma1-19}
	\delta^{l-\frac{1}{2}}\partial_h^{\gamma_h}\int_0^{\infty} \partial_h^{\alpha_h-\gamma_h}\partial_3^l(\nabla p+z_{-}\cdot\nabla z_{+}) (\tau,u_--\tau,x_2,x_3)d\tau\in L^2(\mathcal{C}_+,\langle u_-\rangle^{2(1+\sigma)}du_-dx_2dx_3)
	\end{equation}
for any $\gamma_h\leqslant\alpha_h$ with $|\gamma_h|=1$, and 
\begin{equation}\label{eq:lemma1-20}
	\delta^{l-\frac{1}{2}}\partial_3\int_0^{\infty} \partial_h^{\alpha_h}\partial_3^{l-1}(\nabla p+z_{-}\cdot\nabla z_{+}) 
	(\tau,u_--\tau,x_2,x_3)d\tau\in L^2(\mathcal{C}_+,\langle u_-\rangle^{2(1+\sigma)}du_-dx_2dx_3).
\end{equation}

 We note that \eqref{eq:lemma1-19} can be proved by similar methods used for  \eqref{eq:lemma1-8} and \eqref{eq:lemma1-20}. Thus it is sufficient to show \eqref{eq:lemma1-20} now. 
In fact,  
there holds  
\begin{align*}
	&\ \ \ \ \Big\|\delta^{l-\frac{1}{2}}\partial_3\int_0^{\infty} \partial_h^{\alpha_h}\partial_3^{l-1}(\nabla p+z_{-}\cdot\nabla z_{+}) (\tau,u_--\tau,x_2,x_3)d\tau \Big\|_{L^2(\mathcal{C}_+,\langle u_{-}\rangle^{2(1+\sigma)}du_-dx_2dx_3)}\\
	&=\delta^{l-\frac{1}{2}}\Big\|\lim_{h\to 0} \int_0^{\infty}\frac{1}{h}\big(\partial_h^{\alpha_h}\partial_3^{l-1}(\nabla p+z_{-}\cdot\nabla z_{+}) (\tau,u_--\tau,x_2,x_3+h)\\
	&\ \ \ \ \ \ \ \ \ \ \ \ \ \ \ \ \ \ \ \ \ \ \ \  -\partial_h^{\alpha_h}\partial_3^{l-1}(\nabla p+z_{-}\cdot\nabla z_{+}) (\tau,u_--\tau,x_2,x_3)\big)
	d\tau \Big\|_{L^2(\mathcal{C}_+,\langle u_{-}\rangle^{2(1+\sigma)}du_-dx_2dx_3)}\\
	&\stackrel{\text{Fatou}}{\leqslant}\delta^{l-\frac{1}{2}}\liminf_{h\to 0}\Big\| \int_0^{\infty}\frac{1}{h}\big(\partial_h^{\alpha_h}\partial_3^{l-1}(\nabla p+z_{-}\cdot\nabla z_{+}) (\tau,u_--\tau,x_2,x_3+h)\\
	&\ \ \ \ \ \ \ \ \ \ \ \ \ \ \ \ \ \ \ \ \ \ \ \ \ \ \ \ \ \ \  -\partial_h^{\alpha_h}\partial_3^{l-1}(\nabla p+z_{-}\cdot\nabla z_{+}) (\tau,u_--\tau,x_2,x_3)\big)
	d\tau \Big\|_{L^2(\mathcal{C}_+,\langle u_{-}\rangle^{2(1+\sigma)}du_-dx_2dx_3)}\\
	&\stackrel{\text{Newton-Leibniz}}{\leqslant}\!\!\!\!\!\delta^{l-\frac{1}{2}}\liminf_{h\to 0}\Big\| \int_0^{\infty}\!\!\!\int_0^1\partial_h^{\alpha_h}\partial_3^{l}(\nabla p+z_{-}\cdot\nabla z_{+}) (\tau,u_--\tau,x_2,x_3+\theta h)d\theta d\tau \Big\|_{L^2(\mathcal{C}_+,\langle u_{-}\rangle^{2(1+\sigma)}du_-dx_2dx_3)}\\
	&\leqslant\delta^{l-\frac{1}{2}}\liminf_{h\to 0}\Big\|\int_0^1 \int_0^{\infty}\partial_h^{\alpha_h}\partial_3^{l}(\nabla p+z_{-}\cdot\nabla z_{+}) (\tau,u_--\tau,x_2,x_3+\theta h)d\tau d\theta  \Big\|_{L^2(\mathcal{C}_+,\langle u_{-}\rangle^{2(1+\sigma)}du_-dx_2dx_3)}\\
	&\leqslant\delta^{l-\frac{1}{2}}\liminf_{h\to 0}\int_0^1\Big\| \int_0^{\infty}\partial_h^{\alpha_h}\partial_3^{l}(\nabla p+z_{-}\cdot\nabla z_{+}) (\tau,u_--\tau,x_2,x_3+\theta h)d\tau  \Big\|_{L^2(\mathcal{C}_+,\langle u_{-}\rangle^{2(1+\sigma)}du_-dx_2dx_3)}d\theta \\
	&\stackrel{\text{set }X_3=x_3+\theta h}{\leqslant}\delta^{l-\frac{1}{2}}\liminf_{h\to 0}\int_0^1\Big\| \int_0^{\infty}\partial_h^{\alpha_h}\partial_3^{l}(\nabla p+z_{-}\cdot\nabla z_{+}) (\tau,u_--\tau,x_2,X_3)d\tau  \Big\|_{L^2(\mathcal{C}_+,\langle u_{-}\rangle^{2(1+\sigma)}du_-dx_2dX_3)}d\theta\\
	&\leqslant \Big\|\delta^{l-\frac{1}{2}}\int_0^{\infty} \partial_h^{\alpha_h}\partial_3^l(\nabla p+z_{-}\cdot\nabla z_{+}) 
	(\tau,u_--\tau,x_2,x_3)d\tau\Big\|_{L^2(\mathcal{C}_+,\langle u_-\rangle^{2(1+\sigma)}du_-dx_2dx_3)}
	\stackrel{\eqref{eq:lemma1-18}}{\lesssim} C_1\varepsilon^2.
\end{align*}
The proof of this step is complete.

	\subsubsection*{\bf Step 3:} We prove that  as vector fields in $L^2(\mathcal{C}_+,\langle u_{-}\rangle^{2(1+\sigma)} du_-dx_2dx_3)$, 
	there hold
	\begin{align*}
		& 
		\delta^{l-\frac{1}{2}}\partial_h^{\gamma_h}\int_0^{\infty} \partial_h^{\alpha_h-\gamma_h}\partial_3^l(\nabla p+z_{-}\cdot\nabla z_{+}) 
		(\tau,u_--\tau,x_2,x_3)d\tau\\
		&\stackrel{ L^2(\mathcal{C}_+,\langle u_-\rangle^{2(1+\sigma)}du_-dx_2dx_3)}{=\joinrel=\joinrel=\joinrel=\joinrel=\joinrel=\joinrel=\joinrel=\joinrel=\joinrel=\joinrel=\joinrel=\joinrel=\joinrel=\joinrel=\joinrel=\joinrel=}\delta^{l-\frac{1}{2}}\int_0^{\infty} \partial_h^{\alpha_h}\partial_3^l(\nabla p+z_{-}\cdot\nabla z_{+}) 
		(\tau,u_--\tau,x_2,x_3)d\tau\stepcounter{equation}\tag{\theequation}\label{eq:lemma1-21}
	\end{align*}
	for any $\gamma_h\leqslant\alpha_h$ with $|\gamma_h|=1$, and 
		\begin{align*}
		& 
		\delta^{l-\frac{1}{2}}\partial_3\int_0^{\infty} \partial_h^{\alpha_h}\partial_3^{l-1}(\nabla p+z_{-}\cdot\nabla z_{+}) 
		(\tau,u_--\tau,x_2,x_3)d\tau\\
		&\stackrel{ L^2(\mathcal{C}_+,\langle u_-\rangle^{2(1+\sigma)}du_-dx_2dx_3)}{=\joinrel=\joinrel=\joinrel=\joinrel=\joinrel=\joinrel=\joinrel=\joinrel=\joinrel=\joinrel=\joinrel=\joinrel=\joinrel=\joinrel=\joinrel=\joinrel=}\delta^{l-\frac{1}{2}}\int_0^{\infty} \partial_h^{\alpha_h}\partial_3^l(\nabla p+z_{-}\cdot\nabla z_{+}) 
		(\tau,u_--\tau,x_2,x_3)d\tau.\stepcounter{equation}\tag{\theequation}\label{eq:lemma1-22}
	\end{align*}

Since \eqref{eq:lemma1-21} can be proved by similar methods used for  \eqref{eq:lemma1-9} and \eqref{eq:lemma1-22}, it is now suffices to show \eqref{eq:lemma1-22}. 
By virtue of \eqref{eq:lemma1-17} and  \eqref{eq:lemma1-20}, we only need to prove in the sense of distributions that 
	\begin{equation}\label{eq:lemma1-23}
		\delta^{l-\frac{1}{2}}\partial_3\int_0^{\infty}\!\! \partial_h^{\alpha_h}\partial_3^{l-1}(\nabla p+z_{-}\cdot\nabla z_{+}) 
		(\tau,u_--\tau,x_2,x_3)d\tau
		\stackrel{ \mathcal{D}'(\mathcal{C}_+)}{=\joinrel=\joinrel=\joinrel=}\delta^{l-\frac{1}{2}}\!\int_0^{\infty}\!\! \partial_h^{\alpha_h}\partial_3^l(\nabla p+z_{-}\cdot\nabla z_{+}) 
		(\tau,u_--\tau,x_2,x_3)d\tau. 
	\end{equation}

	Based on \eqref{eq:lemma1} and \eqref{eq:lemma1-17}, 
	both the two time integrals in \eqref{eq:lemma1-23} are locally integrable functions. 
For any vector field  $\varphi\in \mathcal{D}(\mathcal{C}_+)\subset L^2(\mathcal{C}_+)$, we have $\partial_3\varphi\in \mathcal{D}(\mathcal{C}_+)\subset L^2(\mathcal{C}_+)$. Hence, by the estimates in proofs of \eqref{eq:lemma1} and \eqref{eq:lemma1-17}, we can show   the following two spacetime integrals  finite to make the use of Fubini's theorem legitimate:
	\begin{align*}
		&\ \ \ \
		\delta^{l-\frac{1}{2}}\int_{[0,\infty)\times \mathcal{C}_+}\big|\partial_h^{\alpha_h}\partial_3^{l-1}(\nabla p+z_{-}\cdot\nabla z_{+}) 
		(\tau,u_--\tau,x_2,x_3)\cdot\partial_{h}^{\gamma_h}\varphi(u_-,x_2,x_3)\big|d\tau du_-dx_2dx_3\\
		&\lesssim \delta^{l-\frac{1}{2}}\int_{\mathbb{R}\times\mathcal{C}_+}\big|\partial_h^{\alpha_h}\partial_3^{l-1}(\nabla p+z_{-}\cdot\nabla z_{+}) 
		(u_+,u_-,x_2,x_3)\big|\cdot|\partial_{h}^{\gamma_h}\varphi(u_-,x_2,x_3)|du_+du_-dx_2dx_3\\
		&\lesssim
		\delta^{l-\frac{1}{2}}\Big(\int_{\mathbb{R}\times \mathcal{C}_+}\langle  u_+\rangle^{1+\sigma}\big|\partial_h^{\alpha_h}\partial_3^{l-1}(\nabla p+z_{-}\cdot\nabla z_{+})(u_+,u_-,x_2,x_3)\big|^2 \Big)^{\frac{1}{2}}
		\Big(\int_{\mathbb{R}\times \mathcal{C}_+}\frac{|\partial_3\varphi(u_-,x_2,x_3)|^2}{\langle u_+\rangle^{1+\sigma}}\Big)^{\frac{1}{2}}\\
		&\lesssim\delta^{l-\frac{1}{2}}\Big(\int_{\mathbb{R}\times \mathcal{C}_+}\langle u_-\rangle^{2(1+\sigma)}\langle u_+\rangle^{1+\sigma}\big|\partial_h^{\alpha_h}\partial_3^{l-1}(\nabla p+z_{-}\cdot\nabla z_{+}) (u_+,u_-,x_2,x_3)\big|^2\Big)^{\frac{1}{2}}\\
		&\ \ \ \ \times \Big(\int_{\mathbb{R}}\frac{1}{\langle u_+\rangle^{1+\sigma}}\Big(\int_{\mathcal{C}_+} |\partial_3 \varphi(u_-,x_2,x_3)|^2  du_-dx_2dx_3 \Big)du_+\Big)^{\frac{1}{2}}\\
		&\lesssim\varepsilon^2\|\partial_3\varphi\|_{L^2}<\infty,\stepcounter{equation}\tag{\theequation}\label{eq:lemma1-24}\\
		&\ \ \ \
		\delta^{l-\frac{1}{2}}\int_{[0,\infty)\times \mathcal{C}_+}\big|\partial_h^{\alpha_h}\partial_3^l(\nabla p+z_{-}\cdot\nabla z_{+}) 
		(\tau,u_--\tau,x_2,x_3)\cdot\varphi(u_-,x_2,x_3)\big|d\tau du_-dx_2dx_3\\
		&\lesssim \delta^{l-\frac{1}{2}}\int_{\mathbb{R}\times\mathcal{C}_+}\big|\partial_h^{\alpha_h}\partial_3^l(\nabla p+z_{-}\cdot\nabla z_{+}) 
		(u_+,u_-,x_2,x_3)\big|\cdot|\varphi(u_-,x_2,x_3)|du_+du_-dx_2dx_3\\
		&\lesssim
		\delta^{l-\frac{1}{2}}\Big(\int_{\mathbb{R}\times \mathcal{C}_+}\langle  u_+\rangle^{1+\sigma}\big|\partial_h^{\alpha_h}\partial_3^l(\nabla p+z_{-}\cdot\nabla z_{+}) 
		(u_+,u_-,x_2,x_3)\big|^2 \Big)^{\frac{1}{2}}
		\Big(\int_{\mathbb{R}\times \mathcal{C}_+}\frac{|\varphi(u_-,x_2,x_3)|^2}{\langle u_+\rangle^{1+\sigma}}\Big)^{\frac{1}{2}}\\
		&\lesssim\delta^{l-\frac{1}{2}}\Big(\int_{\mathbb{R}\times \mathcal{C}_+}\langle u_-\rangle^{2(1+\sigma)}\langle u_+\rangle^{1+\sigma}\big|\partial_h^{\alpha_h}\partial_3^l(\nabla p+z_{-}\cdot\nabla z_{+}) (u_+,u_-,x_2,x_3)\big|^2\Big)^{\frac{1}{2}}\\
		&\ \ \ \ \times \Big(\int_{\mathbb{R}}\frac{1}{\langle u_+\rangle^{1+\sigma}}\Big(\int_{\mathcal{C}_+} \varphi(u_-,x_2,x_3)|^2  du_-dx_2dx_3 \Big)du_+\Big)^{\frac{1}{2}}\\
		&\lesssim\varepsilon^2\|\varphi\|_{L^2}<\infty.\stepcounter{equation}\tag{\theequation}\label{eq:lemma1-25}
	\end{align*}
    
Applying \eqref{eq:lemma1-24}-\eqref{eq:lemma1-25}, integration by parts and Fubini's theorem repeatedly then gives in the sense of distributions that 
	\begin{align*}
		&\ \ \ \  \Big\langle\delta^{l-\frac{1}{2}} \partial_3\int_0^{\infty} \partial_h^{\alpha_h}\partial_3^{l-1}(\nabla p+z_{-}\cdot\nabla z_{+}) 
		(\tau,u_--\tau,x_2,x_3)d\tau,\ \varphi(u_-,x_2,x_3)\Big\rangle\\
		&=-\delta^{l-\frac{1}{2}}\Big\langle\int_0^{\infty}\partial_h^{\alpha_h}\partial_3^{l-1}(\nabla p+z_{-}\cdot\nabla z_{+}) 
		(\tau,u_--\tau,x_2,x_3)d\tau,\  \partial_3 \varphi(u_-,x_2,x_3)\Big\rangle\\
		&=-\delta^{l-\frac{1}{2}}\int_{\mathcal{C}_+}\Big(\int_0^{\infty}\partial_h^{\alpha_h}\partial_3^{l-1}(\nabla p+z_{-}\cdot\nabla z_{+}) 
		(\tau,u_--\tau,x_2,x_3)d\tau\Big)\cdot \partial_3\varphi(u_-,x_2,x_3)du_-dx_2dx_3\\
		&\stackrel{ 
			\eqref{eq:lemma1-24}}{=}-\delta^{l-\frac{1}{2}}\int_{[0,\infty) \times \mathcal{C}_+}\partial_h^{\alpha_h}\partial_3^{l-1}(\nabla p+z_{-}\cdot\nabla z_{+}) (\tau,u_--\tau,x_2,x_3)\cdot \partial_3\varphi(u_-,x_2,x_3)d\tau du_-dx_2dx_3\\
		&=-\delta^{l-\frac{1}{2}}\int_{[0,\infty)} \Big(\int_{\mathcal{C}_+}\partial_h^{\alpha_h}\partial_3^{l-1}(\nabla p+z_{-}\cdot\nabla z_{+}) (\tau,u_--\tau,x_2,x_3)\cdot \partial_3\varphi(u_-,x_2,x_3)du_-dx_2dx_3\Big)d\tau \\
		&=\delta^{l-\frac{1}{2}}\int_{[0,\infty)} \Big(\int_{\mathcal{C}_+}\partial_h^{\alpha_h}\partial_3^l(\nabla p+z_{-}\cdot\nabla z_{+}) (\tau,u_--\tau,x_2,x_3)\cdot \varphi(u_-,x_2,x_3)du_-dx_2dx_3\Big)d\tau \\
		&=\delta^{l-\frac{1}{2}}\int_{[0,\infty)\times \mathcal{C}_+}\partial_h^{\alpha_h}\partial_3^l(\nabla p+z_{-}\cdot\nabla z_{+}) (\tau,u_--\tau,x_2,x_3)\cdot \varphi(u_-,x_2,x_3)du_-dx_2dx_3d\tau\\
		&\stackrel{  
		\eqref{eq:lemma1-25}}{=}\delta^{l-\frac{1}{2}}\int_{\mathcal{C}_+}\Big(\int_{0}^{\infty} \partial_h^{\alpha_h}\partial_3^l(\nabla p+z_{-}\cdot\nabla z_{+})(\tau,u_--\tau,x_2,x_3)d\tau\Big)\cdot \varphi(u_-,x_2,x_3)du_-dx_2dx_3\\
		&=\Big\langle\delta^{l-\frac{1}{2}}\int_0^{\infty}  \partial_h^{\alpha_h}\partial_3^l(\nabla p+z_{-}\cdot\nabla z_{+})(\tau,u_--\tau,x_2,x_3)d\tau,\ \varphi(u_-,x_2,x_3)\Big\rangle.
	\end{align*}
which yields \eqref{eq:lemma1-23}. Thus we have finished this step.

	\subsubsection*{\bf Step 4:} Finally, we  prove \eqref{eq:lemma1-26} and \eqref{eq:lemma1-26'}. 
	
	By induction on $\alpha_h$ and $l$, we can apply \eqref{eq:lemma1-21} and \eqref{eq:lemma1-22} repeatedly to get  the following equation  in the sense of weighted $L^2$ space $L^2(\mathcal{C}_+,\langle u_{-}\rangle^{2(1+\sigma)} du_-dx_2dx_3)$:
	\begin{align*}
		& 
		\delta^{l-\frac{1}{2}}\partial_h^{\alpha_h}\partial_3^l\int_0^{\infty}(\nabla p+z_{-}\cdot\nabla z_{+}) 
		(\tau,u_--\tau,x_2,x_3)d\tau\\
		&\stackrel{ L^2(\mathcal{C}_+,\langle u_-\rangle^{2(1+\sigma)}du_-dx_2dx_3)}{=\joinrel=\joinrel=\joinrel=\joinrel=\joinrel=\joinrel=\joinrel=\joinrel=\joinrel=\joinrel=\joinrel=\joinrel=\joinrel=\joinrel=\joinrel=\joinrel=}\delta^{l-\frac{1}{2}}\int_0^{\infty} \partial_h^{\alpha_h}\partial_3^l(\nabla p+z_{-}\cdot\nabla z_{+}) 
		(\tau,u_--\tau,x_2,x_3)d\tau,\stepcounter{equation}\tag{\theequation}\label{eq:lemma1-27}
	\end{align*}
which together with \eqref{eq:lemma1-17} leads us to  \eqref{eq:lemma1-26}. 
Moreover, we can derive from \eqref{eq:lemma1-27} and  \eqref{eq:lemma1-18}  that 
	\begin{align*}
		&\ \ \ \ \Big\|\delta^{l-\frac{1}{2}}\partial_h^{\alpha_h}\partial_3^l\int_0^{\infty} (\nabla p+z_{-}\cdot\nabla z_{+}) 
		(\tau,u_--\tau,x_2,x_3)d\tau\Big\|_{L^2(\mathcal{C}_+,\langle u_-\rangle^{2(1+\sigma)}du_-dx_2dx_3)}\\
		&=\Big\|\delta^{l-\frac{1}{2}}\int_0^{\infty} \partial_h^{\alpha_h}\partial_3^l(\nabla p+z_{-}\cdot\nabla z_{+}) 
		(\tau,u_--\tau,x_2,x_3)d\tau\Big\|_{L^2(\mathcal{C}_+,\langle u_-\rangle^{2(1+\sigma)}du_-dx_2dx_3)}
		\lesssim C_1\varepsilon^2, 
	\end{align*}
which leads us to \eqref{eq:lemma1-26'} immediately.

Therefore we have ended the proof of this lemma.
\end{proof}

Let us also derive the uniform estimates concerning $\partial_h^{\alpha_h}\partial_3^l(\partial_3z_\pm)(\infty;u_\mp,x_2,x_3)$ with the lower order coefficient $\delta^{l-\frac{1}{2}}$. 
\begin{lemma}\label{lemma7}
	For any $\alpha_h\in(\mathbb{Z}_{\geqslant 0})^2$ and $l\in\mathbb{Z}_{\geqslant 0}$ with 
	$0\leqslant|\alpha_h|+l\leqslant N+2$, there holds 
	\begin{equation*}
		\delta^{l-\frac{1}{2}}
		\partial_h^{\alpha_h}\partial_3^l (\partial_3z_{\pm})(\infty;u_\mp,x_2,x_3)\in L^2(\mathcal{C}_\pm,\langle u_\mp\rangle^{2(1+\sigma)}du_\mp dx_2dx_3),
	\end{equation*}
and 
\begin{equation*} 
	\lim_{T\to\infty}\Big\|\delta^{l-\frac{1}{2}}\partial_h^{\alpha_h}\partial_3^l(\partial_3z_{\pm})(\infty;u_\mp,x_2,x_3)-\delta^{l-\frac{1}{2}}\partial_h^{\alpha_h}\partial_3^l(\partial_3z_{\pm})(T,u_\mp\mp T,x_2,x_3)\Big\|_{L^2(\mathcal{C}_\pm,\langle u_\mp\rangle^{2(1+\sigma)}du_\mp dx_2dx_3)}=0.
\end{equation*}
\end{lemma}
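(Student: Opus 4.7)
The plan is to mirror the four-step strategy of Lemma \ref{lemma6} essentially verbatim, but with the nonlinear and pressure estimates Lemma \ref{lemma2} and Lemma \ref{lemma:HH} replaced by their one-extra-$\partial_3$ counterparts, namely Lemma \ref{lemma3} and Lemma \ref{lemma:HHH}. The crucial structural point is that these two replacement lemmas carry the same $\delta^{l-\frac{1}{2}}$ prefactor (rather than $\delta^{l+\frac{1}{2}}$), so no power of $\delta$ is lost in absorbing the extra $\partial_3$. By the $\pm$ symmetry I would only write out the $+$ case.

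First, I would apply $\partial_h^{\alpha_h}\partial_3^{l+1}$ to the defining formulas \eqref{eq:def-sca} and \eqref{eq:def-large}, obtaining
\begin{align*}
\delta^{l-\frac{1}{2}}\partial_h^{\alpha_h}\partial_3^{l+1}z_+(\infty;u_-,x_2,x_3)&=\delta^{l-\frac{1}{2}}\partial_h^{\alpha_h}\partial_3^{l+1}z_+(0,u_-,x_2,x_3)\\
&\quad -\delta^{l-\frac{1}{2}}\partial_h^{\alpha_h}\partial_3^{l+1}\int_0^{\infty}(\nabla p+z_-\cdot\nabla z_+)(\tau,u_--\tau,x_2,x_3)\,d\tau,
\end{align*}
together with the analogous large-time difference identity. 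The initial piece contributes at most $\delta^{l-\frac{1}{2}}\bigl(E_+^{(|\alpha_h|,l)}(\partial_3 z_{+,0})\bigr)^{1/2}\lesssim\varepsilon$ by \eqref{improve2}, so the task reduces to handling the time integral.

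Second, the Step 1 bound would proceed by pushing the derivatives inside the integral, applying the coordinate change $(t,x_1)\to(u_+,u_-)$, and extracting $\langle u_+\rangle^{\frac{1}{2}(1+\sigma)}$ via H\"older in $u_+$ against the integrable weight $\langle u_+\rangle^{-(1+\sigma)}$, giving
\[\delta^{l-\frac{1}{2}}\bigl\|\langle u_-\rangle^{1+\sigma}\langle u_+\rangle^{\frac{1}{2}(1+\sigma)}\partial_h^{\alpha_h}\partial_3^l\partial_3\nabla p\bigr\|_{L^2_tL^2_x}+\delta^{l-\frac{1}{2}}\bigl\|\langle u_-\rangle^{1+\sigma}\langle u_+\rangle^{\frac{1}{2}(1+\sigma)}\mathbf{K}_+^{(\alpha_h,l)}\bigr\|_{L^2_tL^2_x}\lesssim C_1\varepsilon^2,\]
where the first term is controlled by Lemma \ref{lemma:HHH} and the second by Lemma \ref{lemma3}. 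For Step 2, each outermost derivative — whether a horizontal $\partial_h^{\gamma_h}$ or the vertical $\partial_3$ — can be passed inside the time integral by the Fatou plus Newton--Leibniz difference quotient argument from the proof of Lemma \ref{lemma6}; for $\partial_3$ this uses translation invariance in $x_3$ via $X_3=x_3+\theta h$, which is legitimate since $x_3$ is decoupled from $t$ in the parameterization of $\mathcal{C}_+$. Step 3 then establishes the commutation of differentiation with time integration in $L^2(\mathcal{C}_+,\langle u_-\rangle^{2(1+\sigma)}du_-dx_2dx_3)$ by testing against $\varphi\in\mathcal{D}(\mathcal{C}_+)$, invoking Fubini via finiteness bounds analogous to \eqref{eq:lemma1-24}--\eqref{eq:lemma1-25}, and integrating by parts to move the derivative onto $\varphi$. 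Step 4 inductively peels off all derivatives in $\partial_h^{\alpha_h}\partial_3^l\partial_3$; combined with Step 1 this gives both the weighted $L^2$ membership and the $T\to\infty$ approximation.

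The main obstacle should be a cosmetic bookkeeping issue rather than a genuine analytic difficulty: namely, the outermost derivative in the commutation argument (Steps 2--3) must sometimes be the pure $\partial_3$, which does not correspond to the integration variable $u_-$ along $\mathcal{C}_+$. This is precisely why I parallel the treatment of \eqref{eq:lemma1-20}--\eqref{eq:lemma1-22} in Lemma \ref{lemma6} rather than that of \eqref{eq:lemma1-8}--\eqref{eq:lemma1-9}, so that the $\partial_3$ case is absorbed into the same framework. Once this is handled, the induction on $(\alpha_h,l)$ proceeds just as in Lemma \ref{lemma6}, and the proof concludes.
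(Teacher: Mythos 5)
Your proposal is correct and matches the paper's own proof, which likewise reduces Lemma \ref{lemma7} to the four-step argument of Lemma \ref{lemma6} and notes that the only substantive change is in Step 1, where the bound follows from Lemma \ref{lemma:HHH} for the pressure term and Lemma \ref{lemma3} for $\mathbf{K}_+^{(\alpha_h,l)}$, both carrying the same $\delta^{l-\frac{1}{2}}$ prefactor. Your handling of the initial-data term via $E_+^{(|\alpha_h|,l)}(\partial_3 z_{+,0})$ and of the outermost $\partial_3$ in the commutation steps is exactly what the paper's reference back to Lemma \ref{lemma6} entails.
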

\begin{proof}
By the symmetry considerations, we only need to consider  
the scattering field  $z_{+}(\infty;u_-,x_2,x_3)$.
Similar arguments  
 in Lemma \ref{lemma6} remain valid for this lemma. The only difference lies in the  \textbf{Step 1}, which can be modified as follows:
 \begin{align*}
 	&\ \ \ \ \Big\|\delta^{l-\frac{1}{2}}\int_0^{\infty} \partial_h^{\alpha_h}\partial_3^l\partial_3(\nabla p+z_{-}\cdot\nabla z_{+}) 
 	(\tau,u_--\tau,x_2,x_3)d\tau\Big\|_{L^2(\mathcal{C}_+,\langle u_-\rangle^{2(1+\sigma)}du_-dx_2dx_3)}\\
 	& 
 	\lesssim\!\delta^{l\!-\!\frac{1}{2}\!}\Big\|\Big(\!\!\int_{\mathbb{R}}\!\!\frac{1}{\langle u_+\rangle^{1+\sigma}}du_+\!\!\Big)^{\!\!\frac{1}{2}}\Big(\!\!\int_{\mathbb{R}}\!\!\langle u_+\rangle^{1+\sigma}\! |\partial_h^{\alpha_h}\partial_3^l\partial_3(\nabla p+z_{-}\cdot\nabla z_{+}) 
 	(u_+,u_-,x_2,x_3)|^2du_+\!\!\Big)^{\!\!\frac{1}{2}}\!\Big\|_{L^2(\mathcal{C}_+,\langle u_-\rangle^{2(1+\sigma)}du_-dx_2dx_3)}\\
 	&\lesssim\delta^{l-\frac{1}{2}}\big\|\langle u_-\rangle^{1+\sigma}\langle u_+\rangle^{\frac{1}{2}(1+\sigma)} \partial_h^{\alpha_h}\partial_3^l\partial_3(\nabla p+z_{-}\cdot\nabla z_{+}) 
 	(u_+,u_-,x_2,x_3)\big\|_{L^2(\mathbb{R}\times\Omega_\delta,du_+du_-dx_2dx_3)}\\
 	&\lesssim\delta^{l-\frac{1}{2}}\big\|\langle u_-\rangle^{1+\sigma}\langle u_+\rangle^{\frac{1}{2}(1+\sigma)}\partial_{h}^{\alpha_h}\partial_3^l\partial_3 \nabla p\big\|_{L^2_tL^2_x}+\delta^{l-\frac{1}{2}}\big\|\langle u_-\rangle^{1+\sigma}\langle u_+\rangle^{\frac{1}{2}(1+\sigma)}{\mathbf{K}_+^{(\alpha_h,l)}}\big\|_{L^2_tL^2_x}
 	\stackrel{\text{Lemmas \ref{lemma:HHH} \& \ref{lemma3}}}{\lesssim} C_1\varepsilon^2. 
 \end{align*}	
We have thus proved this lemma.
\end{proof}

Gathering the above six lemmas  (from Lemma \ref{welldefined} to Lemma \ref{lemma7}) gives Theorem \ref{thm1} immediately.

\section{Rigidity from infinity in $\Omega_{\delta}$}  
\label{sec:rigidity}

Based on the properties of scattering fields shown in the last section, we are  ready to establish the rigidity from infinity for 3D Alfv\'en waves in thin domains $\Omega_{\delta}$. The second main theorem of this paper is as follows:
\begin{theorem}[Rigidity theorem in  $\Omega_{\delta}$]\label{thm2}
If the scattering fields constructed in Theorem \ref{thm1} 
vanish on the infinities, i.e.
\[\begin{cases}
	&\delta^{-\frac{1}{2}}z_+(\infty;u_-,x_2,x_3)\equiv 0 \ \ \text{on} \ \ \mathcal{C}_+,\\
	&\delta^{-\frac{1}{2}}z_-(\infty;u_+,x_2,x_3)\equiv 0 \ \ \text{on} \ \ \mathcal{C}_-,
\end{cases}
\]
then the initial Alfv\'en waves governed by the system  \eqref{MHD equation} vanish identically, i.e.  \[\big(z_{+,0}(x),z_{-,0}(x)\big)\equiv(0,0)\ \text{ for all }x\in\Omega_\delta,\] and hence
the Alfv\'en waves  governed by the system  \eqref{MHD equation} vanish identically, i.e. \[\big(z_+(t,x),z_-(t,x)\big)\equiv(0,0)\  \text{ for all }(t,x)\in \mathbb{R}\times \Omega_\delta.\] 
\end{theorem}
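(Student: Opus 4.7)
The strategy is to convert the vanishing of both scattering fields into an integral identity for the initial data, then close a quadratic self-bound of the form $\mathcal{E}(0)\lesssim \mathcal{E}(0)^2$ by re-using the uniform nonlinear estimates (Lemmas~\ref{lemma2}, \ref{lemma3}) and the pressure estimates (Lemmas~\ref{lemma:H}, \ref{lemma:HH'}, \ref{lemma:HHH}) already established in Sections~\ref{sec:more estimates}--\ref{sec:pressure}. Smallness of the data, which is inherited from the hypothesis of Theorem~\ref{lemma:global}, then forces $\mathcal{E}(0)=0$, hence $(z_{+,0},z_{-,0})\equiv (0,0)$, and finally the uniqueness clause of Theorem~\ref{lemma:global} propagates this to $(z_+,z_-)\equiv(0,0)$ on all of $\mathbb{R}\times\Omega_\delta$.

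\textbf{Integral identity for the data.} Since the scattering fields $\delta^{-1/2}z_{\pm}(\infty;u_\mp,\cdot)$ are identically zero, formula \eqref{eq:def-sca} collapses to
\begin{equation*}
z_{\pm,0}(u_\mp,x_2,x_3)=\int_0^\infty (\nabla p+z_\mp\cdot\nabla z_\pm)(\tau,u_\mp\mp\tau,x_2,x_3)\,d\tau.
\end{equation*}
Applying $\partial_h^{\alpha_h}\partial_3^l$ for every $|\alpha_h|+l\leqslant N+2$ and justifying the commutation with the time integral in the weighted $L^2(\mathcal{C}_\pm,\langle u_\mp\rangle^{2(1+\sigma)}du_\mp dx_2 dx_3)$ topology exactly as in Steps~2--3 of Lemmas~\ref{lemma5}--\ref{lemma7} (Fatou combined with Newton--Leibniz to push derivatives under the integral, followed by the distributional identification afforded by Fubini), we obtain the same formula with $\partial_h^{\alpha_h}\partial_3^l$ pasted on both sides.

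\textbf{Quadratic closing estimate.} Taking the weighted $L^2$-norm on $\mathcal{C}_\pm$ of the differentiated identity, the elementary Cauchy--Schwarz split based on $\int \langle u_\pm\rangle^{-(1+\sigma)}du_\pm<\infty$ reduces the right-hand side to the spacetime norm $\|\langle u_\mp\rangle^{1+\sigma}\langle u_\pm\rangle^{(1+\sigma)/2}\partial_h^{\alpha_h}\partial_3^l(\nabla p+z_\mp\cdot\nabla z_\pm)\|_{L^2_tL^2_x}$ (the same manipulation appears in \eqref{eq:lemma1-7} and \eqref{eq:lemma1-18}). Lemmas~\ref{lemma2}, \ref{lemma3}, \ref{lemma:H}, \ref{lemma:HH'} and \ref{lemma:HHH} bound this by $C_1\varepsilon^2$; squaring and summing over the indices appearing in $\mathcal{E}(0)$, and including the $\delta^{-3/2}$-weighted $z^3$-components (handled through the divergence relation $\partial_3 z_\pm^3=-\nabla_h\cdot z_\pm^h$), yield
\begin{equation*}
\mathcal{E}(0)\leqslant C\varepsilon^4\sim C\,\mathcal{E}(0)^2,
\end{equation*}
since the bootstrap parameter $\varepsilon$ can be taken to be a constant multiple of $\sqrt{\mathcal{E}(0)}$.

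\textbf{Conclusion and main obstacle.} Because Theorem~\ref{lemma:global} requires $\mathcal{E}(0)\leqslant\varepsilon_0^2$ with $\varepsilon_0$ small enough that $C\varepsilon_0^2<1$, the inequality $\mathcal{E}(0)\leqslant C\mathcal{E}(0)^2$ forces $\mathcal{E}(0)=0$; hence $(z_{+,0},z_{-,0})\equiv(0,0)$, and the uniqueness in Theorem~\ref{lemma:global} finishes the argument. The delicate point, in my view, is to ensure that the closing estimate is \emph{genuinely quadratic} in $\mathcal{E}(0)$: each bound in Lemmas~\ref{lemma2}--\ref{lemma:HHH} decomposes as an $L^\infty_x$-factor controlled via Sobolev by $\sqrt{\mathcal{E}}$, times a weighted $L^2$-factor controlled by $\sqrt{\mathcal{E}(0)}$ through $\mathcal{E}\lesssim\mathcal{E}(0)$, so the product is $O(\mathcal{E}(0))$ and its square is $O(\mathcal{E}(0)^2)$. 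Bookkeeping the $\delta$-powers so that they match on both sides of $\mathcal{E}(0)\leqslant C\mathcal{E}(0)^2$, especially for the vertical $z_\pm^3$ and $\partial_3$-companion norms, is tedious but follows the same pattern as \eqref{eqA4}; once this is in place the smallness intrinsic to the rigidity regime closes the argument with no further input.
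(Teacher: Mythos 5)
Your reduction of the hypothesis to the integral identity $z_{\pm,0}=\int_0^\infty(\nabla p+z_\mp\cdot\nabla z_\pm)(\tau,u_\mp\mp\tau,x_2,x_3)\,d\tau$, and the ensuing bound $\delta^{l-\frac12}\|\langle u_\mp\rangle^{1+\sigma}\partial_h^{\alpha_h}\partial_3^l z_{\pm,0}\|_{L^2}\lesssim C_1\varepsilon^2$, are correct for $|\alpha_h|+l\leqslant N+2$: this is exactly the combination of \eqref{eq:lemma1-7}, \eqref{eq:lemma1-18} with Lemmas \ref{lemma2}, \ref{lemma3}, \ref{lemma:H}, \ref{lemma:HH'}, \ref{lemma:HHH}. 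The gap is in the closing step. The energy $\mathcal{E}(0)$ of Theorem \ref{lemma:global} contains norms with up to $2N$ derivatives, and the bootstrap parameter $\varepsilon$ must dominate the \emph{full} energy, $\mathcal{E}(0)\leqslant\varepsilon^2$, before any of the quoted lemmas are available. The weighted estimates carrying the extra factor $\langle u_\pm\rangle^{\frac12(1+\sigma)}$ are only established for $|\alpha_h|+l\leqslant N+2$, and the very top order $k+l=2N$ of $\mathcal{E}(0)$ is in any case out of reach (the requisite flux bounds would involve $2N+1$ derivatives of $z_\pm$). Hence what you actually obtain is $\mathcal{E}_{\leqslant N+2}(0)\leqslant C\,\mathcal{E}(0)^2$, where $\mathcal{E}_{\leqslant N+2}(0)$ is the truncation of $\mathcal{E}(0)$ to at most $N+2$ derivatives. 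Taking $\varepsilon^2=\mathcal{E}_{\leqslant N+2}(0)$ is not legitimate (the bootstrap needs the top-order norms under $\varepsilon^2$), and with the admissible choice $\varepsilon^2=\mathcal{E}(0)$ the inequality $\mathcal{E}_{\leqslant N+2}(0)\leqslant C\,\mathcal{E}(0)^2\leqslant C\varepsilon_0^2\,\mathcal{E}(0)$ does not force anything to vanish, since $\mathcal{E}(0)$ may be dominated by its high-order part. The obstruction is structural rather than a matter of $\delta$-bookkeeping: every product estimate in Sections \ref{sec:more estimates}--\ref{sec:pressure} loses derivatives (the $L^\infty_x$ factor costs two derivatives through Lemma \ref{lemma:sobolev}, and the nonlinearity carries a $\nabla$), so a fixed-regularity quadratic self-bound of the form $\mathcal{E}(0)\leqslant C\,\mathcal{E}(0)^2$ never closes.

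The paper avoids this entirely. It uses part (ii) of Theorem \ref{thm1} to convert the vanishing of the scattering fields into smallness $<\epsilon^2$ of the solution at a large time $T_\epsilon$, for an \emph{arbitrary} $\epsilon>0$; it then recenters the weights by resetting the position parameter to $a=T_\epsilon$ and runs the uniform weighted energy estimates of Theorem \ref{lemma:global} backward in time from $T_\epsilon$ to $0$. The backward estimate is linear in the smallness parameter ($\mathcal{E}\leqslant C\,\mathcal{E}(0)$ with $C$ independent of $a$ and $\delta$), so letting $\epsilon\to 0$ yields $z_{\pm,0}\equiv 0$ without any quadratic gain. To repair your argument you would need to replace the self-bound by a limiting mechanism of this kind; as written, the proposal does not establish the theorem.
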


To construct this rigidity is mainly motivated by the previous work \cite{Li-Yu}. 
In a similar manner, the rigidity in Theorem \ref{thm2} has the physical interpretation that the 3D Alfv\'en waves  produced from the plasma in thin domains  $\Omega_{\delta}$ are characterized by their scattering fields detected by faraway observers, and hence  there are no  Alfv\'en waves  
 in thin domains  $\Omega_{\delta}$ if no waves are detected by faraway observers.

 We remark here that 
 based on Corollary \ref{coro2} and the notations before, Theorem \ref{thm2}  can  be immediately rephrased as the following renormalization result:
 
 \begin{corollary}[Rigidity theorem in $\Omega_1$]\label{coro3}
 	If the scattering fields constructed in Corollary \ref{coro2} 
 	vanish on the infinities, i.e.
 	\[\begin{cases}
 		&z_{+(\delta)}(\infty;u_-,x_2,x_3)\equiv 0 \ \ \text{on} \ \ \mathcal{C}_+,\\
 		&z_{-(\delta)}(\infty;u_+,x_2,x_3)\equiv 0 \ \ \text{on} \ \ \mathcal{C}_-,
 	\end{cases}
 	\]
 	then the initial Alfv\'en waves governed by the rescaled system  \eqref{eq:rescale} vanish identically, i.e.  \[\big(z_{+(\delta),0}(x),z_{-(\delta),0}(x)\big)\equiv(0,0)\ \text{ for all }x\in\Omega_1,\] and hence
 	the Alfv\'en waves governed by the rescaled system  \eqref{eq:rescale}  vanish identically, i.e.,  \[\big(z_{+(\delta)}(t,x),z_{-(\delta)}(t,x)\big)\equiv(0,0)\  \text{ for all }(t,x)\in \mathbb{R}\times \Omega_1.\] 
 \end{corollary}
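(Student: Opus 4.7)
The plan is to convert the vanishing of the scattering fields into an integral identity for $(z_{+,0},z_{-,0})$ and its derivatives, take its weighted $L^{2}$ norm on the infinities $\mathcal{C}_\pm$, and use the bilinear/pressure machinery of Sections~\ref{sec:more estimates}--\ref{sec:pressure} to derive a self-improving quadratic inequality on $\mathcal{E}(0)$ which, together with smallness, forces $\mathcal{E}(0)=0$.

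Substituting $z_\pm(\infty;u_\mp,x_2,x_3)\equiv 0$ into the defining formula \eqref{eq:def-sca} immediately produces, pointwise on $\mathcal{C}_\pm$,
\[
z_{\pm,0}(u_\mp,x_2,x_3)=\int_0^{\infty}(\nabla p+z_\mp\cdot\nabla z_\pm)(\tau,u_\mp\mp\tau,x_2,x_3)\,d\tau.
\]
Since every derivative of a vanishing scattering field also vanishes, the commutation of $\partial_h^{\alpha_h}\partial_3^{l}$ with $\int_0^\infty d\tau$ carried out in Steps 2--3 of the proofs of Lemmas~\ref{lemma5}--\ref{lemma7} promotes this to the weighted-$L^{2}(\mathcal{C}_\pm)$ identity
\[
\partial_h^{\alpha_h}\partial_3^{l}z_{\pm,0}=\int_0^{\infty}\partial_h^{\alpha_h}\partial_3^{l}(\nabla p+z_\mp\cdot\nabla z_\pm)\,d\tau
\]
for every $|\alpha_h|+l\leqslant N+2$, together with the $z^3_{\pm,0}$- and $\partial_3z_{\pm,0}$-analogues produced by Lemma~\ref{lemma5'} and Lemma~\ref{lemma7} respectively.

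Now I take the $L^{2}(\mathcal{C}_\pm,\langle u_\mp\rangle^{2(1+\sigma)}du_\mp\,dx_2\,dx_3)$ norm of each such derivative identity. Since $u_\mp|_{t=0}=x_1$, the left-hand side squared is precisely the corresponding summand $E_\pm^{(|\alpha_h|,l)}(z_{\pm,0})$ of $\mathcal{E}(0)$ (and analogously for the $z^3$- and $\partial_3z$-pieces), while the Cauchy-Schwarz-in-$\tau$ step used in Step~1 of Lemma~\ref{lemma5} controls the right-hand side squared by
\[
C\big\|\langle u_\mp\rangle^{1+\sigma}\langle u_\pm\rangle^{\frac{1+\sigma}{2}}\partial_h^{\alpha_h}\partial_3^{l}(\nabla p+z_\mp\cdot\nabla z_\pm)\big\|_{L^2_tL^2_x}^{2}.
\]
Applying the bilinear estimates of Lemmas~\ref{lemma2}--\ref{lemma3} and the pressure estimates of Lemmas~\ref{lemma:HH'}--\ref{lemma:HHH}, the $\delta$-powers are arranged so that the bootstrap identification $\varepsilon^{2}\sim\mathcal{E}(0)$ yields, for every admissible multi-index,
\[
\delta^{2(l-\tfrac{1}{2})}E_\pm^{(|\alpha_h|,l)}(z_{\pm,0})+\delta^{-3}E_\pm^{(k,0)}(z_{\pm,0}^{3})+\delta^{2(l-\tfrac{1}{2})}E_\pm^{(k,l)}(\partial_3z_{\pm,0})\leqslant CC_1^{2}\big(\mathcal{E}(0)\big)^{2}.
\]
Summing these bounds over all multi-indices that appear in $\mathcal{E}(0)$ (Theorem~\ref{lemma:global}) produces the self-improving inequality
\[
\mathcal{E}(0)\leqslant C'C_1^{2}\big(\mathcal{E}(0)\big)^{2}.
\]
Because the scattering construction of Theorem~\ref{thm1} already enforces $\mathcal{E}(0)\leqslant\varepsilon_0^{2}$ with $\varepsilon_0$ a small universal constant, choosing $\varepsilon_0$ so that $C'C_1^{2}\varepsilon_0^{2}<1$ forces $\mathcal{E}(0)=0$, hence $(z_{+,0},z_{-,0})\equiv(0,0)$ on $\Omega_\delta$; the uniqueness clause of Theorem~\ref{lemma:global} then propagates this identity to $(z_+,z_-)\equiv(0,0)$ on $\mathbb{R}\times\Omega_\delta$.

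The main obstacle is the uniform $\delta$-power bookkeeping across the heterogeneous summands of $\mathcal{E}(0)$: one must verify that every bilinear/pressure estimate from Sections~\ref{sec:more estimates}--\ref{sec:pressure} contributes \emph{exactly} the $\delta$-factor prescribed by the corresponding summand (especially for the $\delta^{-3}$-weighted $z_\pm^3$-terms and the $\partial_3 z_\pm$-terms), so that no leftover $\delta$-factors obstruct the closure of the quadratic inequality. A secondary, purely technical point is the rigorous passage of $\partial_h^{\alpha_h}\partial_3^{l}$ under $\int_0^\infty d\tau$ in the weighted $L^{2}(\mathcal{C}_\pm)$ topology, but this is already established in Steps~2--3 of the proofs of Lemmas~\ref{lemma6}--\ref{lemma7} and so can be invoked without further work.
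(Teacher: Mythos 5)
Your proposal takes a genuinely different route from the paper (which proves this corollary simply by rescaling Theorem~\ref{thm2}, itself proved dynamically in Section~\ref{sec:rigidity}), but the route has a gap at its final, decisive step. The static identity $z_{\pm,0}=\int_0^\infty(\nabla p+z_\mp\cdot\nabla z_\pm)\,d\tau$ and the bilinear/pressure lemmas do give
$\big\|\langle u_\mp\rangle^{1+\sigma}\partial_h^{\alpha_h}\partial_3^l z_{\pm,0}\big\|_{L^2}\lesssim C_1\varepsilon^2$ with $\varepsilon^2\sim\mathcal{E}(0)$, but this cannot be closed into $\mathcal{E}(0)\leqslant C'C_1^2(\mathcal{E}(0))^2$ for two reasons. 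First, the scattering identities and the estimates of Sections~\ref{sec:more estimates}--\ref{sec:pressure} only reach $|\alpha_h|+l\leqslant N+2$ (Lemma~\ref{lemma:H} stops at $N+2$, Lemma~\ref{lemma2} at $2N-1$), whereas $\mathcal{E}(0)$ contains summands up to $2N$ derivatives; so at best you obtain $\mathcal{E}_{\leqslant N+2}(0)\lesssim(\mathcal{E}(0))^2$ with \emph{different} quantities on the two sides. Second, and more fundamentally, even that weaker inequality only yields $\mathcal{E}_{\leqslant N+2}(0)\lesssim\varepsilon_0^2\,\mathcal{E}(0)$, i.e.\ a fixed smallness bound determined by the given data — not vanishing. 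A quadratic self-improvement $X\leqslant CX^2$ forces $X=0$ only when the \emph{same} $X$ appears on both sides and $CX<1$; here the right-hand side is the square of the full (fixed, possibly nonzero) energy, and there is no mechanism to iterate or shrink it, since the initial data do not change.

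The paper's argument avoids this by being dynamic rather than static: Theorem~\ref{thm1}(ii) converts the vanishing of the scattering fields into the statement that for \emph{every} $\epsilon>0$ there is a time $T_\epsilon$ at which the solution is $\epsilon$-small in the weighted norm with position parameter $a=T_\epsilon$; one then solves the system backwards from $t=T_\epsilon$ and invokes the uniform-in-$(a,\delta)$ energy estimates of Theorem~\ref{lemma:global} to bound the data at $t=0$ by $C\epsilon^2$. The arbitrariness of $\epsilon$ — not a quadratic inequality — is what forces $\big(z_{\pm(\delta),0}\big)\equiv 0$, and the movable position parameter $a$ (which your argument never uses) is essential for the weights at time $T_\epsilon$ to match the scattering norm. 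To repair your proof you would need to replace the static quadratic estimate by this backward-in-time, $\epsilon$-arbitrary argument, and then pass to the rescaled variables $z_{\pm(\delta)}$ via Corollary~\ref{coro2} as the paper does.
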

 
The rest of this section is devoted to proving Theorem \ref{thm2}.
Now we suppose that the scattering fields 
	vanish identically at infinities, that is,
	\[\delta^{-\frac{1}{2}}z_\pm(\infty;x_1,x_2,u_\mp)\equiv 0\  \text{on $\mathcal{C}_\pm$.}\]
	
	Let $\epsilon<\varepsilon_0$ be an arbitrarily given small positive constant. By virtue of Theorem \ref{thm1} (ii), we can rephrase the vanishing property of scattering fields to the large time behavior of the solution, i.e.  there exists a large time $T_\epsilon>0$ such that we have the following smallness condition in the weighted energy sense:
	\begin{align*}
		\sum_{+,-}\bigg(&\sum_{0\leqslant|\alpha_h|+l\leqslant N+2}\Big\|\delta^{l-\frac{1}{2}}\partial_h^{\alpha_h}\partial_3^lz_{\pm}(T_\epsilon,u_\mp\mp T_\epsilon,x_2,x_3)\Big\|_{L^2(\Omega_{\delta},\langle u_\mp\rangle^{2(1+\sigma)}du_\mp dx_2dx_3)}^2\\
		&+\sum_{0\leqslant|\alpha_h|\leqslant N+2}\Big\|\delta^{-\frac{3}{2}}\partial_h^{\alpha_h}z^3_{\pm}(T_\epsilon,u_\mp\mp T_\epsilon,x_2,x_3)\Big\|_{L^2(\Omega_{\delta},\langle u_\mp\rangle^{2(1+\sigma)}du_\mp dx_2dx_3)}^2\\
		&+\sum_{0\leqslant|\alpha_h|+l\leqslant N+2}\Big\|\delta^{l-\frac{1}{2}}\partial_h^{\alpha_h}\partial_3^l(\partial_3z_{\pm})(T_\epsilon,u_\mp\mp T_\epsilon,x_2,x_3)\Big\|_{L^2(\Omega_{\delta},\langle u_\mp\rangle^{2(1+\sigma)}du_\mp dx_2dx_3)}^2\bigg)<\epsilon^2.
	\end{align*}

	We are in a position to study the position parameter $a$. As depicted in the following  Figure \ref{fig:rigidity},
	\begin{figure}[ht]
			\vspace{-0.1cm}
		\centering
		\includegraphics[width=3.8in]{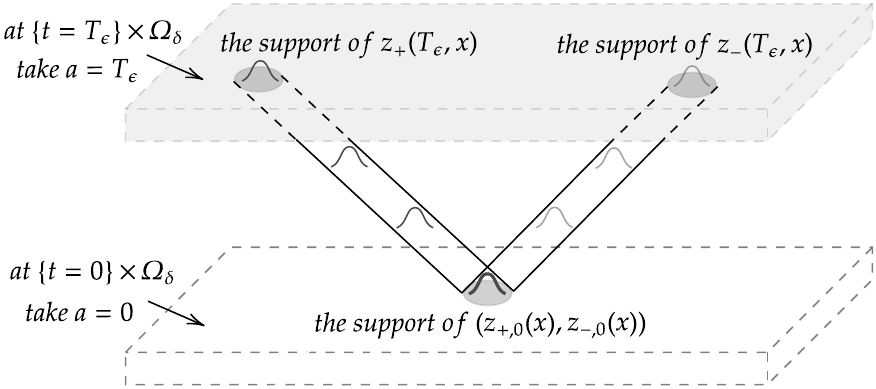}
		\vspace{-0.1cm}
		\caption{The position parameter in rigidity}
		\label{fig:rigidity}
			\vspace{-0.1cm}
	\end{figure}
	 at the initial time slice $\{t=0\}\times\Omega_{\delta}$, the position parameter $a=a_0=0$ is given so that we can construct the solution $z_\pm(t,x)$. At the time slice $\{t=T_\epsilon\}\times\Omega_{\delta}$, within the Cartesian coordinates, we can translate the above smallness condition as
	\begin{align*}
	\sum_{+,-}\bigg(&\sum_{0\leqslant|\alpha_h|+l\leqslant N+2}\Big\|\big(1+|x_1\pm T_\epsilon|^2\big)^{\frac{1+\sigma}{2}}\delta^{l-\frac{1}{2}}\partial_h^{\alpha_h}\partial_3^lz_{\pm}(T_\epsilon,x)\Big\|_{L^2(\Omega_{\delta},dx)}^2\\
	&+\sum_{0\leqslant|\alpha_h|\leqslant N+2}\Big\|\big(1+|x_1\pm T_\epsilon|^2\big)^{\frac{1+\sigma}{2}}\delta^{-\frac{3}{2}}\partial_h^{\alpha_h}z^3_{\pm}(T_\epsilon,x)\Big\|_{L^2(\Omega_{\delta},dx)}^2\\
	&+\sum_{0\leqslant|\alpha_h|+l\leqslant N+2}\Big\|\big(1+|x_1\pm T_\epsilon|^2\big)^{\frac{1+\sigma}{2}}\delta^{l-\frac{1}{2}}\partial_h^{\alpha_h}\partial_3^l(\partial_3z_{\pm})(T_\epsilon,x)\Big\|_{L^2(\Omega_{\delta},dx)}^2\bigg)<\epsilon^2.
	\end{align*}

Let us take $a=T_\epsilon$ as the new position parameter and regard $\big(z_{+}(T_\epsilon,x),z_{-}(T_\epsilon,x)\big)$ as the new initial data for the MHD system \eqref{MHD equation}. In this way, the smallness condition can be rewritten as 
		\begin{align*}
		\sum_{+,-}\bigg(&\sum_{0\leqslant|\alpha_h|+l\leqslant N+2}\Big\|\big(1+|x_1\pm a|^2\big)^{\frac{1+\sigma}{2}}\delta^{l-\frac{1}{2}}\partial_h^{\alpha_h}\partial_3^lz_{\pm}(T_\epsilon,x)\Big\|_{L^2(\Omega_{\delta},dx)}^2\\
		&+\sum_{0\leqslant|\alpha_h|\leqslant N+2}\Big\|\big(1+|x_1\pm a|^2\big)^{\frac{1+\sigma}{2}}\delta^{-\frac{3}{2}}\partial_h^{\alpha_h}z^3_{\pm}(T_\epsilon,x)\Big\|_{L^2(\Omega_{\delta},dx)}^2\\
		&+\sum_{0\leqslant|\alpha_h|+l\leqslant N+2}\Big\|\big(1+|x_1\pm a|^2\big)^{\frac{1+\sigma}{2}}\delta^{l-\frac{1}{2}}\partial_h^{\alpha_h}\partial_3^l(\partial_3z_{\pm})(T_\epsilon,x)\Big\|_{L^2(\Omega_{\delta},dx)}^2\bigg)<\epsilon^2.
	\end{align*}

We turn to solve the system backwards in time.
Denote $\widetilde{t}:=T_\epsilon-t$. The initial setting now turns into $\{t=T_\epsilon\}\times\Omega_{\delta}=\{\widetilde{t}=0\}\times\Omega_{\delta}$. Therefore we have
\begin{align*} 
	\sum_{+,-}\bigg(&\sum_{0\leqslant|\alpha_h|+l\leqslant N+2}\Big\|\big(1+|x_1\pm a|^2\big)^{\frac{1+\sigma}{2}}\delta^{l-\frac{1}{2}}\partial_h^{\alpha_h}\partial_3^lz_{\pm}(\widetilde{t}=0,x)\Big\|_{L^2(\Omega_{\delta},dx)}^2\\
	&+\sum_{0\leqslant|\alpha_h|\leqslant N+2}\Big\|\big(1+|x_1\pm a|^2\big)^{\frac{1+\sigma}{2}}\delta^{-\frac{3}{2}}\partial_h^{\alpha_h}z^3_{\pm}(\widetilde{t}=0,x)\Big\|_{L^2(\Omega_{\delta},dx)}^2\\
	&+\sum_{0\leqslant|\alpha_h|+l\leqslant N+2}\Big\|\big(1+|x_1\pm a|^2\big)^{\frac{1+\sigma}{2}}\delta^{l-\frac{1}{2}}\partial_h^{\alpha_h}\partial_3^l(\partial_3z_{\pm})(\widetilde{t}=0,x)\Big\|_{L^2(\Omega_{\delta},dx)}^2\bigg)<\epsilon^2.
\end{align*}

We are now ready to apply the  uniform (with respect to $\delta$) weighted energy estimates  which are independent of the position parameter. It should be noted that  the behaviors of $z_\pm$ in the process of solving the system backwards in time are the same as those of $z_\mp$ in the standard version of  uniform (with respect to $\delta$) weighted energy estimates due to the symmetry of time and the symmetry of space.
Consequently, according to Theorem \ref{lemma:global}, we infer that 
		\begin{align*} 
	\sum_{+,-}\bigg(&\sum_{0\leqslant|\alpha_h|+l\leqslant N+2}\Big\|\big(1+|u_\pm\pm a|^2\big)^{\frac{1+\sigma}{2}}\delta^{l-\frac{1}{2}}\partial_h^{\alpha_h}\partial_3^lz_{\mp}(\widetilde{t},x)\Big\|_{L^2(\Omega_{\delta},dx)}^2\\
		&+\sum_{0\leqslant|\alpha_h|\leqslant N+2}\Big\|\big(1+|u_\pm\pm a|^2\big)^{\frac{1+\sigma}{2}}\delta^{-\frac{3}{2}}\partial_h^{\alpha_h}z^3_{\mp}(\widetilde{t},x)\Big\|_{L^2(\Omega_{\delta},dx)}^2\\
		&+\sum_{0\leqslant|\alpha_h|+l\leqslant N+2}\Big\|\big(1+|u_\pm\pm a|^2\big)^{\frac{1+\sigma}{2}}\delta^{l-\frac{1}{2}}\partial_h^{\alpha_h}\partial_3^l(\partial_3z_{\mp})(\widetilde{t},x)\Big\|_{L^2(\Omega_{\delta},dx)}^2\bigg)<C\epsilon^2,
	\end{align*}
	where $C$ is a universal constant. This gives 
\begin{align*} 
	\sum_{+,-}\bigg(&\sum_{0\leqslant|\alpha_h|+l\leqslant N+2}\Big\|\big(1+|u_\mp\mp a|^2\big)^{\frac{1+\sigma}{2}}\delta^{l-\frac{1}{2}}\partial_h^{\alpha_h}\partial_3^lz_{\pm}(\widetilde{t},x)\Big\|_{L^2(\Omega_{\delta},dx)}^2\\
	&+\sum_{0\leqslant|\alpha_h|\leqslant N+2}\Big\|\big(1+|u_\mp\mp a|^2\big)^{\frac{1+\sigma}{2}}\delta^{-\frac{3}{2}}\partial_h^{\alpha_h}z^3_{\pm}(\widetilde{t},x)\Big\|_{L^2(\Omega_{\delta},dx)}^2\\
	&+\sum_{0\leqslant|\alpha_h|+l\leqslant N+2}\Big\|\big(1+|u_\mp\mp a|^2\big)^{\frac{1+\sigma}{2}}\delta^{l-\frac{1}{2}}\partial_h^{\alpha_h}\partial_3^l(\partial_3z_{\pm})(\widetilde{t},x)\Big\|_{L^2(\Omega_{\delta},dx)}^2\bigg)<C\epsilon^2.
\end{align*}

By definition of $u_\pm$ in \eqref{def:u}, it then follows that 
\begin{align*} 
	\sum_{+,-}\bigg(&\sum_{0\leqslant|\alpha_h|+l\leqslant N+2}\Big\|\big(1+|x_1\pm \widetilde{t}\mp a|^2\big)^{\frac{1+\sigma}{2}}\delta^{l-\frac{1}{2}}\partial_h^{\alpha_h}\partial_3^lz_{\pm}(\widetilde{t},x)\Big\|_{L^2(\Omega_{\delta},dx)}^2\\
	&+\sum_{0\leqslant|\alpha_h|\leqslant N+2}\Big\|\big(1+|x_1\pm \widetilde{t}\mp  a|^2\big)^{\frac{1+\sigma}{2}}\delta^{-\frac{3}{2}}\partial_h^{\alpha_h}z^3_{\pm}(\widetilde{t},x)\Big\|_{L^2(\Omega_{\delta},dx)}^2\\
	&+\sum_{0\leqslant|\alpha_h|+l\leqslant N+2}\Big\|\big(1+|x_1\pm \widetilde{t}\mp a|^2\big)^{\frac{1+\sigma}{2}}\delta^{l-\frac{1}{2}}\partial_h^{\alpha_h}\partial_3^l(\partial_3z_{\pm})(\widetilde{t},x)\Big\|_{L^2(\Omega_{\delta},dx)}^2\bigg)<C\epsilon^2.
\end{align*}
This estimate indeed holds for all $\widetilde{t}$. In particular, we take $\widetilde{t}=T_\epsilon$ to get 
\begin{align*} 
	\sum_{+,-}\bigg(&\sum_{0\leqslant|\alpha_h|+l\leqslant N+2}\Big\|\big(1+|x_1|^2\big)^{\frac{1+\sigma}{2}}\delta^{l-\frac{1}{2}}\partial_h^{\alpha_h}\partial_3^lz_{\pm}(\widetilde{t}=T_\epsilon,x)\Big\|_{L^2(\Omega_{\delta},dx)}^2\\
	&+\sum_{0\leqslant|\alpha_h|\leqslant N+2}\Big\|\big(1+|x_1|^2\big)^{\frac{1+\sigma}{2}}\delta^{-\frac{3}{2}}\partial_h^{\alpha_h}z^3_{\pm}(\widetilde{t}=T_\epsilon,x)\Big\|_{L^2(\Omega_{\delta},dx)}^2\\
	&+\sum_{0\leqslant|\alpha_h|+l\leqslant N+2}\Big\|\big(1+|x_1|^2\big)^{\frac{1+\sigma}{2}}\delta^{l-\frac{1}{2}}\partial_h^{\alpha_h}\partial_3^l(\partial_3z_{\pm})(\widetilde{t}=T_\epsilon,x)\Big\|_{L^2(\Omega_{\delta},dx)}^2\bigg)<C\epsilon^2.
\end{align*}

We now return to replace $\widetilde{t}$ by $t$.  Hence,  at the time slice $\{t=0\}\times\Omega_{\delta}$, it holds that 
\begin{align*} 
	\sum_{+,-}\bigg(&\sum_{0\leqslant|\alpha_h|+l\leqslant N+2}\Big\|\big(1+|x_1|^2\big)^{\frac{1+\sigma}{2}}\delta^{l-\frac{1}{2}}\partial_h^{\alpha_h}\partial_3^lz_{\pm}(0,x)\Big\|_{L^2(\Omega_{\delta},dx)}^2\\
	&+\sum_{0\leqslant|\alpha_h|\leqslant N+2}\Big\|\big(1+|x_1|^2\big)^{\frac{1+\sigma}{2}}\delta^{-\frac{3}{2}}\partial_h^{\alpha_h}z^3_{\pm}(0,x)\Big\|_{L^2(\Omega_{\delta},dx)}^2\\
	&+\sum_{0\leqslant|\alpha_h|+l\leqslant N+2}\Big\|\big(1+|x_1|^2\big)^{\frac{1+\sigma}{2}}\delta^{l-\frac{1}{2}}\partial_h^{\alpha_h}\partial_3^l(\partial_3z_{\pm})(0,x)\Big\|_{L^2(\Omega_{\delta},dx)}^2\bigg)<C\epsilon^2,
\end{align*}
which means
\begin{align*} 
	\sum_{+,-}\bigg(&\sum_{0\leqslant|\alpha_h|+l\leqslant N+2}\Big\|\big(1+|x_1|^2\big)^{\frac{1+\sigma}{2}}\delta^{l-\frac{1}{2}}\partial_h^{\alpha_h}\partial_3^lz_{\pm,0}\Big\|_{L^2(\Omega_{\delta},dx)}^2
	+\!\!\sum_{0\leqslant|\alpha_h|\leqslant N+2}\!\!\Big\|\big(1+|x_1|^2\big)^{\frac{1+\sigma}{2}}\delta^{-\frac{3}{2}}\partial_h^{\alpha_h}z^3_{\pm,0}\Big\|_{L^2(\Omega_{\delta},dx)}^2\\
	&+\sum_{0\leqslant|\alpha_h|+l\leqslant N+2}\Big\|\big(1+|x_1|^2\big)^{\frac{1+\sigma}{2}}\delta^{l-\frac{1}{2}}\partial_h^{\alpha_h}\partial_3^l(\partial_3z_{\pm,0})\Big\|_{L^2(\Omega_{\delta},dx)}^2\bigg)<C\epsilon^2.
\end{align*}
Here, at the time slice $\{t=0\}\times\Omega_{\delta}$, we notice that  the new weights associated to $a$ indeed coincide with the original weights, i.e. $\big(1+|x_1|^2\big)^{\omega}$.
	
	Since $\epsilon$ is arbitrary, we conclude that 
	\begin{align*} 
		\sum_{+,-}\bigg(&\sum_{0\leqslant|\alpha_h|+l\leqslant N+2}\Big\|\big(1+|x_1|^2\big)^{\frac{1+\sigma}{2}}\delta^{l-\frac{1}{2}}\partial_h^{\alpha_h}\partial_3^lz_{\pm,0}\Big\|_{L^2(\Omega_{\delta},dx)}^2
		+\!\!\sum_{0\leqslant|\alpha_h|\leqslant N+2}\!\!\Big\|\big(1+|x_1|^2\big)^{\frac{1+\sigma}{2}}\delta^{-\frac{3}{2}}\partial_h^{\alpha_h}z^3_{\pm,0}\Big\|_{L^2(\Omega_{\delta},dx)}^2\\
		&+\sum_{0\leqslant|\alpha_h|+l\leqslant N+2}\Big\|\big(1+|x_1|^2\big)^{\frac{1+\sigma}{2}}\delta^{l-\frac{1}{2}}\partial_h^{\alpha_h}\partial_3^l(\partial_3z_{\pm,0})\Big\|_{L^2(\Omega_{\delta},dx)}^2\bigg)=0,
	\end{align*}
which implies that the initial Alfv\'en waves $z_{\pm,0}$ indeed vanish. 
Therefore the Alfv\'en waves $z_{\pm}(t,x)$  vanish identically. This establishes  Theorem \ref{thm2} as desired.

\section{Asymptotics of rigidity theorem from $\Omega_{\delta}$ to $\mathbb{R}^2$}\label{sec:approximation}

In view of the above theorems, we are now in a position to investigate the approximation of the rigidity from infinity theorem from thin domains $\Omega_{\delta}$ to $\mathbb{R}^2$. Indeed, an immediate consequence is as follows:

\begin{corollary}[Asymptotics of rigidity theorem from $\Omega_{\delta}$ to $\mathbb{R}^2$ as   $\delta$ goes to zero]\label{thm3}
Under the assumptions of Theorem \ref{lemma:approx}, if  $z_{+(\delta)}(\infty;u_-,x_2,x_3)$ and $z_{-(\delta)}(\infty;u_+,x_2,x_3)$ are the scattering fields constructed in Corollary \ref{coro2} for the rescaled system \eqref{eq:rescale}, then there exist scattering fields  $z_{+(0)}^h(\infty;u_-,x_2)$ and $z_{-(0)}^h(\infty;u_+,x_2)$ such that for any $x_3\in(-1,1)$, there hold
\begin{equation}\label{eq:approx}
	\begin{split}
		\lim_{\delta\to 0}z_{\pm(\delta)}^h(\infty;u_-,x_2,x_3)&=z_{\pm(0)}^h(\infty;u_+,x_2)\ \ \text{ in }H^N(\mathbb{R}^2),\\
		\lim_{\delta\to 0}z_{\pm(\delta)}^3(\infty;u_-,x_2,x_3)&=0\ \ \text{ in }H^{N-1}(\mathbb{R}^2).
\end{split}\end{equation}
Moreover,	if the scattering fields  $z_{+(0)}^h(\infty;u_-,x_2)$ and $z_{-(0)}^h(\infty;u_+,x_2)$
vanish on the infinities, i.e.
\[\begin{cases}
	&z^h_{+(0)}(\infty;u_-,x_2)\equiv 0 \ \ \text{on} \ \ \mathcal{C}_+ \text{ (the 2D version of the $\mathcal{C}_+$ above)},\\
	&z^h_{-(0)}(\infty;u_+,x_2)\equiv 0 \ \ \text{on} \ \ \mathcal{C}_- \text{ (the 2D version of the $\mathcal{C}_-$ above)},
\end{cases}
\]
then the initial Alfv\'en waves governed by the 2D version of the rescaled system  \eqref{eq:rescale} vanish identically, i.e.  \[\big(z^h_{+(0),0}(x),z^h_{-(0),0}(x)\big)\equiv(0,0)\ \text{ for all }x\in\mathbb{R}^2,\] and hence
the Alfv\'en waves governed by the 2D version of the rescaled system  \eqref{eq:rescale}  vanish identically, i.e.,  \[\big(z^h_{+(0)}(t,x),z^h_{-(0)}(t,x)\big)\equiv(0,0)\  \text{ for all }(t,x)\in \mathbb{R}\times \mathbb{R}^2.\] 
\end{corollary}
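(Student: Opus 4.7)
The plan is to deduce Corollary \ref{thm3} in two stages: first establish the convergence \eqref{eq:approx} of scattering fields by passing to the limit $\delta \to 0$ in the explicit representation \eqref{eq:def-sca2}, and then invoke the 2D rigidity theorem \ref{rigidity theorem 1 2d} from the appendix on the limiting scattering fields. The key ingredients are Theorem \ref{lemma:approx} (convergence of the solutions $z_{\pm(\delta)} \to z_{\pm(0)}$ together with $z_{\pm(\delta)}^3 \to 0$), Remark \ref{remarkJ} (uniform-in-$\delta$ bounds for the nonlinear terms $\mathbf{J}_{\pm(\delta)}^{(\alpha_h,l)}$), and Corollary \ref{remarkp} (uniform-in-$\delta$ bounds for $\nabla_\delta p_{(\delta)}$).

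For the convergence step, I would define the candidate 2D scattering field by the formula \eqref{eq:def-sca3} applied to $(z_{+(0)}^h, z_{-(0)}^h)$, and then split the difference
\[
z_{\pm(\delta)}^h(\infty;u_\mp,x_2,x_3) - z_{\pm(0)}^h(\infty;u_\mp,x_2)
\]
into an initial-data piece and an integral piece. The initial-data piece goes to zero in $H^{N+1}(\Omega_1)$ by hypothesis. The integral piece
\[
\int_0^\infty \Bigl[(\nabla_\delta p_{(\delta)} + z_{\mp(\delta)}\cdot\nabla z_{\pm(\delta)})(\tau,u_\mp\mp\tau,x_2,x_3) - (\nabla p_{(0)} + z_{\mp(0)}^h\cdot\nabla z_{\pm(0)}^h)(\tau,u_\mp\mp\tau,x_2)\Bigr]d\tau
\]
is handled by dominated convergence: Remark \ref{remarkJ} and Corollary \ref{remarkp} provide the weighted $L^2_tL^2_x$-integrable dominating function $\langle u_\mp\rangle^{-(1+\sigma)}\langle u_\pm\rangle^{-\frac{1}{2}(1+\sigma)}$ times a uniform constant, while Theorem \ref{lemma:approx} gives the pointwise (in fact $H^{N-1}$-strong) convergence of the integrand. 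The vertical component statement $\lim_{\delta\to 0}z_{\pm(\delta)}^3(\infty;\cdot) = 0$ in $H^{N-1}$ follows similarly by writing down its integral formula and using $z_{\pm(\delta)}^3 \to 0$ from Theorem \ref{lemma:approx} together with the same uniform bounds.

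For the rigidity step, once \eqref{eq:approx} is proved, the limiting objects $z_{\pm(0)}^h(\infty;u_\mp,x_2)$ are, by Remark \ref{remark2D}, precisely the 2D scattering fields for the 2D Alfv\'en system governed by $(z_{+(0)}^h, z_{-(0)}^h)$. Hence the hypothesis that $z_{\pm(0)}^h(\infty;u_\mp,x_2)\equiv 0$ is a genuine vanishing-at-infinity condition for the 2D scattering fields, and Theorem \ref{rigidity theorem 1 2d} applies directly to conclude that the 2D initial data $(z_{+(0),0}^h, z_{-(0),0}^h)$, and consequently the full 2D solution $(z_{+(0)}^h(t,x), z_{-(0)}^h(t,x))$, vanish identically.

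The main obstacle is the passage to the limit inside the improper time integral in \eqref{eq:def-sca2}, because the integrand involves the anisotropic gradient $\nabla_\delta = (\partial_1,\partial_2,\delta^{-2}\partial_3)$ acting on $p_{(\delta)}$, whose bad $\delta^{-2}$ factor must be absorbed by the structural gain provided by Corollary \ref{remarkp}. A secondary difficulty is that the convergence in Theorem \ref{lemma:approx} is stated for fixed $x_3\in(-1,1)$, so the dominated-convergence argument has to be set up along characteristic lines $\ell_\pm(u_\mp,x_2,x_3)$ with the third slot frozen; verifying that the uniform weighted bounds in Remark \ref{remarkJ} and Corollary \ref{remarkp} propagate to each such fixed-$x_3$ slice (via the Sobolev trace in the $x_3$-direction already embedded in Lemma \ref{lemma:sobolev}) is the principal technical checkpoint, but no new estimate beyond those already developed in Sections \ref{sec:more estimates}--\ref{sec:scattering} is required.
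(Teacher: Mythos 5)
Your proposal is correct, and the convergence part \eqref{eq:approx} follows essentially the same route as the paper: establish a $\delta$-independent, time-integrable pointwise dominating function for the integrand $\nabla_\delta p_{(\delta)}+z_{\mp(\delta)}\cdot\nabla z_{\pm(\delta)}$ from Remark \ref{remarkJ} and Corollary \ref{remarkp}, then pass to the limit under the integral sign via dominated convergence together with Theorem \ref{lemma:approx}. One small imprecision: you describe the dominating function as a ``weighted $L^2_tL^2_x$-integrable'' bound, but dominated convergence for the improper time integral needs a \emph{pointwise} $L^1_t$ majorant; the paper obtains this by first upgrading the weighted $L^2_{t,x}$ estimates of Remark \ref{remarkJ} and Corollary \ref{remarkp} to $L^\infty_{t,x}$ via the 4D Sobolev embedding (exactly as in Lemma \ref{welldefined}) and then invoking $\langle u_+\rangle\langle u_-\rangle\gtrsim 1+|t+a|$ from \eqref{eq:product} to get the $(1+|t+a|)^{-(1+\sigma)}$ decay. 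This is a routine repair, not a gap.

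The one genuine divergence from the paper is in the rigidity step. You conclude by applying the appendix result, Theorem \ref{rigidity theorem 1 2d}, directly to the limiting 2D fields, whereas the paper instead deduces the 2D rigidity from Corollary \ref{coro3} (the renormalized thin-domain rigidity theorem, read in its 2D version) and only afterwards observes, in Remark \ref{remark1}, that this coincides with Theorem \ref{rigidity theorem 1 2d}. Both routes are valid and the paper explicitly states their equivalence; the difference is one of emphasis. The paper's choice makes the corollary a genuine \emph{consequence} of the thin-domain theory (the $\delta\to 0$ arrow in Figure \ref{fig:relations}), so that the agreement with the appendix theorem becomes a nontrivial consistency check, whereas your choice imports the 2D rigidity as an external input. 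If you follow your route, you should also verify that the limiting initial data $(z^h_{+(0),0},z^h_{-(0),0})$ satisfy the smallness hypothesis of Theorem \ref{Main Energy Estimates MHD 2d}, which follows from the assumed $H^{N+1}$ convergence of the initial data and the uniform bounds of Corollary \ref{coro1}.
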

\begin{proof}

Similar arguments used in the proof of Lemma \ref{welldefined} 
enable us to derive that 
\begin{align*}
&	 \langle u_\mp\rangle^{\frac{1}{2}(1+\sigma)}\langle u_\pm\rangle^{\frac{1}{2}(1+\sigma)} |\nabla_\delta p_{(\delta)}|
	\lesssim 
	\sum_{0\leqslant|\alpha_h|\leqslant 3\atop 0\leqslant l\leqslant 3}\big\|\langle u_\mp\rangle^{1+\sigma}\langle u_\pm\rangle^{\frac{1}{2}(1+\sigma)}\partial_{h}^{\alpha_h}\partial_3^l \nabla_\delta p_{(\delta)}\big\|_{L^2_tL^2_x}
	\stackrel{\text{Corollary \ref{remarkp}}}{\lesssim}C_1\varepsilon^2,\\
&	\langle u_\mp\rangle^{\frac{1}{2}(1+\sigma)}\langle u_\pm\rangle^{\frac{1}{2}(1+\sigma)} |z_{\mp(\delta)}\cdot\nabla z_{\pm(\delta)}|
	\lesssim\!\!\! 
	\sum_{0\leqslant|\alpha_h|\leqslant 3\atop 0\leqslant l\leqslant 3}\big\|\langle u_\mp\rangle^{1+\sigma}\langle u_\pm\rangle^{\frac{1}{2}(1+\sigma)}\partial_{h}^{\alpha_h}\partial_3^l (z_{\mp(\delta)}\cdot\nabla z_{\pm(\delta)})\big\|_{L^2_tL^2_x}
\!\!\!	\stackrel{\text{Remark \ref{remarkJ}}}{\lesssim}\!\!\! C_1\varepsilon^2.
\end{align*}
In view of \eqref{eq:product}, these two estimates then yield
\begin{equation*}
	|\nabla_\delta p_{(\delta)}+z_{\mp(\delta)}\cdot \nabla z_{\pm(\delta)}|\lesssim\frac{C_1\varepsilon^2}{\langle u_\mp\rangle^{\frac{1}{2}(1+\sigma)}\langle u_\pm\rangle^{\frac{1}{2}(1+\sigma)}}\lesssim\frac{C_1\varepsilon^2}{(1+|t+a|)^{1+\sigma}}\in L_t^1(\mathbb{R}).
\end{equation*}
This means that the two integrands in \eqref{eq:def-sca2} are uniformly integrable. 

 Therefore,  by the Lebesgue's dominated convergence theorem and Theorem \ref{lemma:approx}, we infer that 
\begin{align*}
\lim_{\delta\to 0}\int_0^\infty(\nabla_\delta p_{(\delta)}+z_{\mp(\delta)}\cdot\nabla z_{\pm(\delta)})(\tau,u_\mp\mp\tau,x_2,x_3)d\tau
&=\int_0^\infty\lim_{\delta\to 0}(\nabla_\delta p_{(\delta)}+z_{\mp(\delta)}\cdot\nabla z_{\pm(\delta)})(\tau,u_\mp\mp\tau,x_2,x_3)d\tau\\
&=\int_0^\infty(\nabla p_{(0)}+z_{\mp(0)}\cdot\nabla z_{\pm(0)})(\tau,u_\mp\mp\tau,x_2)d\tau,
\end{align*}
where the limit holds in $H^N(\mathbb{R}^2)$ for 
the horizontal component 
and in $H^{N-1}(\mathbb{R}^2)$ for 
the vertical component. 
Together with \eqref{eq:def-sca2}, Theorem \ref{lemma:approx} and Remark \ref{remark2D}, this gives rise to 
\begin{align*}
\lim_{\delta\to 0}z_{\pm(\delta)}(\infty;u_\mp,x_2,x_3)
&= \lim_{\delta\to 0}z_{\pm(\delta)}(0,u_\mp,x_2,x_3)- \lim_{\delta\to 0}\int_0^{\infty}(\nabla_\delta p_{(\delta)}+z_{\mp(\delta)}\cdot\nabla z_{\pm(\delta)})(\tau,u_\mp\mp\tau,x_2,x_3)d\tau\\
&=z_{\pm(0)}(0,u_\mp,x_2)-\int_0^\infty(\nabla p_{(0)}+z_{\mp(0)}\cdot\nabla z_{\pm(0)})(\tau,u_\mp\mp\tau,x_2)d\tau
=z_{\pm(0)}(\infty;u_\mp,x_2),
\end{align*}
where the limit holds in $H^N(\mathbb{R}^2)$ 
for the horizontal  
component and in $H^{N-1}(\mathbb{R}^2)$ for  
the vertical component. 
Precisely, we obtain \eqref{eq:approx} as asserted. 

By virtue of Corollary \ref{coro3}, it now follows that 
 	if the scattering fields constructed in Remark \ref{remark2D} 
vanish on the infinities, i.e.
\[\begin{cases}
	&z_{+(0)}(\infty;u_-,x_2)\equiv 0 \ \ \text{on} \ \ \mathcal{C}_+,\\
	&z_{-(0)}(\infty;u_+,x_2)\equiv 0 \ \ \text{on} \ \ \mathcal{C}_-,
\end{cases}
\]
then the initial Alfv\'en waves governed by the rescaled system  \eqref{eq:rescale} vanish identically, i.e.  \[\big(z_{+(0),0}(x),z_{-(0),0}(x)\big)\equiv(0,0)\ \text{ for all }x\in\mathbb{R}^2,\] and hence
the Alfv\'en waves governed by the rescaled system  \eqref{eq:rescale}  vanish identically, i.e.,  \[\big(z_{+(0)}(t,x),z_{-(0)}(t,x)\big)\equiv(0,0)\  \text{ for all }(t,x)\in \mathbb{R}\times \mathbb{R}^2.\] 
We note that the scattering fields 
$z_{+(0)}^h(\infty;u_-,x_2)$ and $z_{-(0)}^h(\infty;u_+,x_2)$ are the 2D version of the scattering fields $z_{+(0)}(\infty;u_-,x_2)$ and $z_{-(0)}(\infty;u_+,x_2)$; the initial data $\big(z^h_{+(0),0}(x),z^h_{-(0),0}(x)\big)$ are the 2D version of the initial data $\big(z_{+(0),0}(x),z_{-(0),0}(x)\big)$; and the Alfv\'en waves $\big(z^h_{+(0)}(t,x),z^h_{-(0)}(t,x)\big)$ are the 2D version of the Alfv\'en waves $\big(z_{+(0)}(t,x),z_{-(0)}(t,x)\big)$. Consequently, the rigidity part of Corollary \ref{thm3} follows immediately. 

Up to now, we have finished the proof of Corollary \ref{thm3}. 
\end{proof}

\begin{remark}\label{remark1}
	In particular, the scattering fields 
	$z_{+(0)}^h(\infty;u_-,x_2)$ and $z_{-(0)}^h(\infty;u_+,x_2)$ are the 2D version of the scattering fields $z_{+(0)}(\infty;u_-,x_2)$ and $z_{-(0)}(\infty;u_+,x_2)$ constructed in Remark \ref{remark2D} for the rescaled system \eqref{eq:rescale}. 
	We also remark that the scattering fields  $z_{+(0)}^h(\infty;u_-,x_2)$ and $z_{-(0)}^h(\infty;u_+,x_2)$ coincide with the scattering fields $z_{+}(\infty;u_-,x_2)$ and $z_{-}(\infty;u_+,x_2)$  in Theorem \ref{futurescatteringfields MHD 2d} as \eqref{eq:def-sca4} respectively. Furthermore, the rigidity part of Corollary \ref{thm3} indeed coincides with the 2D version of the rigidity from infinity theorem constructed in \cite{Li-Yu}. For the readers' convenience, this 2D version of the rigidity theorem in \cite{Li-Yu} is provided as Theorem \ref{rigidity theorem 1 2d} in the appendix. For clarity of comparison,  
	we illustrate the relations among these results in the following Figure \ref{fig:relations}.
	\begin{figure}[ht]
		\vspace{-0.1cm}
		\centering
		\includegraphics[width=6in]{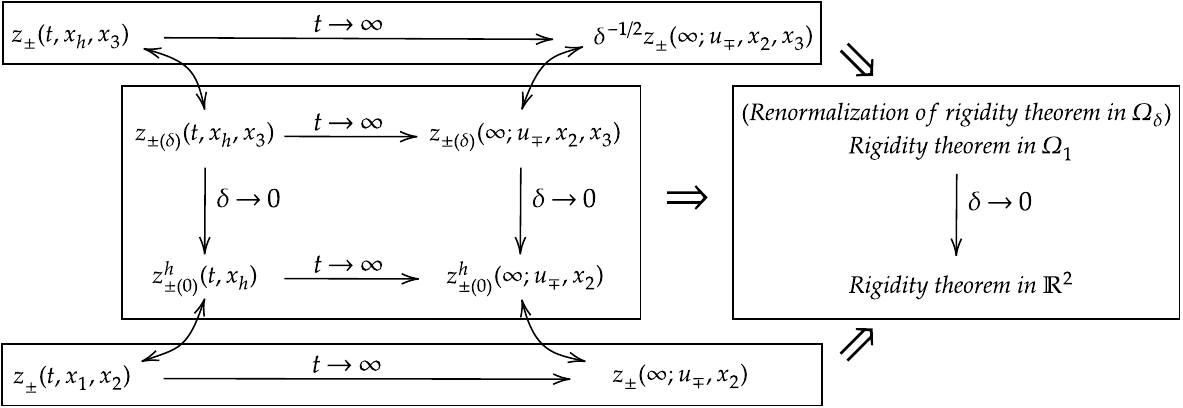}
		\vspace{-0.1cm}
		\caption{Relations among rigidity theorems}
		\label{fig:relations}
		\vspace{-0.1cm}
	\end{figure}
\end{remark}

\begin{remark}
	Though the  approximation of global existence result from $\Omega_{\delta}$ to $\mathbb{R}^2$  in \cite{Xu} is nontrivial, 	we see that the  approximation of rigidity result from $\Omega_{\delta}$ to $\mathbb{R}^2$ in this paper  can be regarded a direct consequence.  
	This is trivial based on  the geometric  fact that $\Omega_{\delta}$ can be viewed as $\mathbb{R}^2$ when $\delta\to 0$,    the 
	approximation result in Theorem 1.3 from \cite{Xu} that  3D Alfv\'en waves in $\Omega_{\delta}$ converge to  2D Alfv\'en waves in $\mathbb{R}^2$ 
	when $\delta\to 0$, and in particular 
	the rigidity from infinity  conditions that the scattering fields for Alfv\'en waves 
	are vanishing at their corresponding infinities. However, 
	things will become completely different if we study the  approximation of  inverse scattering result from $\Omega_{\delta}$ to $\mathbb{R}^2$ when given arbitrarily small scattering fields (instead of vanishing scattering fields).     This will be treated in a forthcoming paper since the problem is more complicated and requires involved techniques. 
\end{remark}

\begin{appendix}
	\section{Rigidity from infinity in $\mathbb{R}^2$}
	
	In this appendix, we extend the rigidity results in \cite{Li-Yu} for 3D Alfv\'en waves in $\mathbb{R}^3$  to the case of 2D Alfv\'en waves in $\mathbb{R}^2$ for the readers' convenience. 
	
	Precisely, we study the most interesting physical situation in 2D MHD where a strong background magnetic field generates Alfv\'en waves. For consistency in 
	this paper, we follow the notations of initial data $(z_{+,0},z_{-,0})$, weight functions as in \eqref{def:weight}, and so forth (only differences lie  in replacing $\Omega_\delta$ by $\mathbb{R}^2$ and all the 3D coordinates by the corresponding 2D coordinates); and we  
	assume that the strong background  magnetic field is taken as the unit vector field along the $x_1$-axis in $\mathbb{R}^2$: $B_0=(1,0)$.  
	
	Since there never exist boundary conditions in $\mathbb{R}^2$, the system for $(z_+,z_-)$ now can be written as the first four equations in \eqref{MHD equation}, i.e. 
	\begin{equation}\label{MHD equation2}\begin{cases}
			&\partial_{t}z_{+}+(z_--B_0)\cdot \nabla z_{+} =-\nabla p,\\
			&\partial_{t}z_{-}+(z_++B_0)\cdot \nabla z_{-} =-\nabla p,\\
			&\operatorname{div} z_{+}=0,\ \
			\operatorname{div} z_{-}=0.
	\end{cases}\end{equation}
	
We now list the three main theorems for the 2D case to give a more precise comparison with the 3D case in \cite{Li-Yu} and the thin-domain case in the previous sections.
	\begin{theorem}[Weighted energy estimates in $\mathbb{R}^2$]\label{Main Energy Estimates MHD 2d}  Let $N_* \in \mathbb{Z}_{\geqslant 5}$ and $\sigma \in\big(0,\frac{1}{3}\big)$. There exists a universal constant $\varepsilon_0\in(0,1)$ such that if the initial data $\big(z_{+,0}(x),z_{-,0}(x)\big)$ of \eqref{MHD equation2} satisfy
		\begin{equation*}
			\mathcal{E}^{N_*}(0): =\sum_{+,-}\sum_{k=0}^{N_*+1}\Big\|\left(1+|x_1\pm a|^2\right)^{\frac{1+\sigma}{2}}\nabla^{k} z_{\pm}(0,x)\Big\|_{L^2(\mathbb{R}^2)}^2\leqslant\varepsilon_0^2,
		\end{equation*}
		then the system \eqref{MHD equation2} admits a unique global solution  $\big(z_+(t,x),z_-(t,x)\big)$. Moreover, there exists a universal constant $C$ such that the following weighted energy estimates hold:
		\begin{equation*}
			\sum_{+,-}\sum_{k=0}^{N_*+1}\sup_{t\geqslant 0}\Big\|\left(1+|u_\mp \pm a|^2\right)^{\frac{1+\sigma}{2}}\nabla^{k}z_{\pm}(t,x)\Big\|_{L^2(\mathbb{R}^2)}^2 \leqslant C \mathcal{E}^{N_*}(0).
		\end{equation*}
	\end{theorem}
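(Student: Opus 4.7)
My plan is to follow the bootstrap/continuity framework of \cite{He-Xu-Yu}, specialized to $\mathbb{R}^2$ where boundary integrals vanish, the Sobolev embedding $H^2(\mathbb{R}^2)\hookrightarrow L^\infty(\mathbb{R}^2)$ is available, and no thickness parameter $\delta$ needs tracking. On a maximal existence interval $[0,t^*]$ I would posit the bootstrap hypotheses
\[
\|z^1_\pm\|_{L^\infty_tL^\infty_x}\leqslant 1,\qquad E^{(k)}_\pm\leqslant 2C_1\varepsilon^2,\qquad F^{(k)}_\pm\leqslant 2C_1\varepsilon^2\quad(0\leqslant k\leqslant N_*+1),
\]
where $E^{(k)}_\pm=\sup_{[0,t^*]}\sum_{|\alpha|=k}\|\langle u_\mp\rangle^{1+\sigma}\nabla^\alpha z_\pm(t,\cdot)\|_{L^2(\mathbb{R}^2)}^2$ and $F^{(k)}_\pm$ is the corresponding flux with density $\langle u_\mp\rangle^{2(1+\sigma)}\langle u_\pm\rangle^{-(1+\sigma)}|\nabla^\alpha z_\pm|^2$, patterned after Section \ref{sec:notation} with $\delta=1$ and no $\partial_3$-components. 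The goal is to halve these constants; the standard continuation criterion then yields global existence, and the universal constant $C$ is automatically independent of $a$ since $a$ enters only through translation of the weights, whose properties in Lemma \ref{lemma:weights} are $a$-uniform.

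The key step is the weighted energy identity obtained by applying $\partial^\alpha$ with $|\alpha|\leqslant N_*+1$ to the equations in \eqref{MHD equation2} and pairing against a ghost-weight multiplier of the form $e^{-q(u_\pm)}\langle u_\mp\rangle^{2(1+\sigma)}\partial^\alpha z_\pm$, with $q'(s)\sim\langle s\rangle^{-(1+\sigma)}$ chosen so that $e^{-q}$ is bounded above and below. The linear transport $\partial_t\mp\partial_1$ applied to $e^{-q(u_\pm)}$ produces $2q'(u_\pm)e^{-q(u_\pm)}$, which, paired with $\langle u_\mp\rangle^{2(1+\sigma)}|\partial^\alpha z_\pm|^2$, yields exactly the spacetime density of $F^{(k)}_\pm$ on the left-hand side after integrating in time; this is the 2D analogue of the Alfv\'en ghost-weight flux mechanism of \cite{He-Xu-Yu}, and gives an identity of the schematic form $E^{(k)}_\pm(t)+F^{(k)}_\pm(t)\lesssim E^{(k)}_\pm(0)+(\text{commutator})+(\text{pressure})$.

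The two right-hand errors are the nonlinear commutator $[\partial^\alpha,z_\mp\cdot\nabla]z_\pm$ and the pressure term $\partial^\alpha\nabla p$. For the commutator, I would split each product according to which factor carries the higher order: the lower-order factor is placed in $L^\infty_tL^\infty_x$ weighted by $\langle u_\pm\rangle^{1+\sigma}$ (or $\langle u_\mp\rangle^{1+\sigma}$) via the 2D Sobolev embedding, while the higher-order factor is absorbed into the corresponding flux norm through the weight ratio $\langle u_\mp\rangle^{1+\sigma}/\langle u_\pm\rangle^{(1+\sigma)/2}$, so that Cauchy--Schwarz gives $O(\varepsilon^2)$ bounds mirroring Lemmas \ref{lemma1}--\ref{lemma2} minus the $\delta$-bookkeeping. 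For the pressure, I would use $-\Delta p=\partial_iz_+^j\partial_jz_-^i$ together with the 2D Newtonian kernel $-\tfrac{1}{2\pi}\log|x-y|$, decomposing the convolution through the cut-off $\theta$ of \eqref{cutoff}: on the near-diagonal region, Lemma \ref{lemma:weights}(i) transfers the weight from $x$ to $y$ and Young's inequality applied to $|x-y|^{-2}$ converges by the 2D analogue of \eqref{observe integration}; on the far region, Lemma \ref{lemma:weights}(ii) balances polynomial weight growth against the $|x-y|^{-3}$ decay obtained by moving one extra derivative off the nonlinearity and onto the kernel, exactly as in the proof of Lemma \ref{lemma:H}.

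The principal obstacle, as in the thin-domain setting, will be propagating the $\langle u_\pm\rangle^{(1+\sigma)/2}$ decay factor through the non-local pressure estimate without losing it, since the Riesz transform is not localized and the weight is anisotropic in $x_1$ versus $(x_2)$. Once the weighted pressure bound is in place, choosing $\varepsilon_0$ so that the accumulated nonlinear and pressure constant is strictly less than $C_1/2$ closes the bootstrap, halves the constants, and produces the asserted estimate with a universal $C$.
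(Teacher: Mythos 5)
Your proposal follows essentially the same route as the paper's proof: the paper establishes Theorem \ref{Main Energy Estimates MHD 2d} by declaring it "almost the same" as the 3D argument of \cite{Li-Yu} (itself the ghost-weight bootstrap of \cite{He-Xu-Yu}), and the only part it works out in detail is exactly the part you identify as the crux, namely the weighted estimate of $\nabla p$ through the 2D Newtonian kernel $-\frac{1}{2\pi}\log|x-y|$, split by the cutoff $\theta$, with weights transferred from $x$ to $y$ via Lemma \ref{lemma:weights} and closed by Young's inequality against the integrability facts \eqref{observe integration}. One detail of your sketch, as literally written, would fail: on the near-diagonal region you propose applying Young's inequality to the kernel $|x-y|^{-2}$, but $|x|^{-2}\chi_{|x|\leqslant 2}\notin L^1(\mathbb{R}^2)$ by \eqref{observe integration} (the threshold is $\gamma<2$). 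The paper's decomposition avoids this by keeping \emph{both} derivatives on the quadratic nonlinearity near the diagonal, so the kernel there is only $|\nabla\log|x-y||\sim|x-y|^{-1}\in L^1_{\mathrm{loc}}(\mathbb{R}^2)$, and by integrating by parts \emph{twice} only in the far region $|x-y|\geqslant 1$, where the resulting $|x-y|^{-3}$ decay beats the transferred weight growth $|x-y|^{\frac{3}{2}(1+\sigma)}$ precisely because $\sigma<\frac{1}{3}$; an intermediate annulus term from the cutoff derivatives is handled like the near piece. With that correction your plan coincides with the paper's.
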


	\begin{theorem}[Scattering fields in $\mathbb{R}^2$]\label{futurescatteringfields MHD 2d}
		For the solution $\left(z_+(t,x),z_-(t,x)\right)$ constructed in Theorem \ref{Main Energy Estimates MHD 2d}, the following two vector fields
		\begin{equation}\label{eq:def-sca4}
			\begin{cases}
				&\displaystyle z_+(\infty;u_-,x_2):=z_+(0,u_-,x_2)-\int_0^{\infty} \left(\nabla p+z_{-}\cdot\nabla z_{+}\right)(\tau,u_--\tau,x_2)d\tau\\
				&\displaystyle z_-(\infty;u_+,x_2):=z_-(0,u_+,x_2)-\int_0^{\infty} \left(\nabla p+z_{+}\cdot\nabla z_{-}\right)(\tau,u_++\tau,x_2)d\tau
			\end{cases}
		\end{equation}
		are well-defined on the infinities   $\mathcal{C}_+$  (the 2D version of the $\mathcal{C}_+$ above) and $\mathcal{C}_-$ (the 2D version of the  $\mathcal{C}_-$ above) respectively. We call  $z_+(\infty;u_-,x_2)$ as the left scattering field and $z_-(\infty;u_+,x_2)$ as the right  scattering field. Moreover, for any $\beta\in(\mathbb{Z}_{\geqslant 0})^2$ with $0\leqslant |\beta|\leqslant N_*$ there hold the following two properties of scattering fields:
		\begin{enumerate}[(i)]
	\item these scattering fields live in the following functional spaces in the weighted energy sense:
	\[\nabla^\beta z_\pm(\infty;u_\mp,x_2)\in L^2(\mathcal{C}_\pm,\langle u_\mp\rangle^{2(1+\sigma)}du_\mp dx_2).\]
	\item these scattering fields can be approximated by the large time solution in the weighted energy sense:
	\[\lim_{T\to\infty}\Big\|\nabla^{\beta} z_{\pm}(\infty;u_\mp,x_2)-\nabla^\beta z_{\pm}(T,u_\mp\mp T,x_2)\Big\|_{L^2(\mathcal{C}_\pm,\langle u_\mp\rangle^{2(1+\sigma)}du_\mp dx_2)}=0\]
		\end{enumerate}
	\end{theorem}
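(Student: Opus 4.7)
The plan is to mimic the proof of Theorem \ref{thm1} in the 3D thin-domain setting, but in the simpler 2D Euclidean geometry. First I would establish the 2D analogues of the nonlinear estimate (Lemma \ref{lemma2}) and the pressure estimate (Lemma \ref{lemma:H}), namely
\begin{equation*}
\big\|\langle u_\mp\rangle^{1+\sigma}\langle u_\pm\rangle^{\frac{1}{2}(1+\sigma)}\,\partial^\beta(\nabla p)\big\|_{L^2_tL^2_x}
+\big\|\langle u_\mp\rangle^{1+\sigma}\langle u_\pm\rangle^{\frac{1}{2}(1+\sigma)}\,\partial^\beta(z_\mp\cdot\nabla z_\pm)\big\|_{L^2_tL^2_x}\lesssim \mathcal{E}^{N_*}(0)
\end{equation*}
for $0\leqslant|\beta|\leqslant N_*$. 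The nonlinear part is a direct Hölder/Sobolev splitting into the two cases $|\beta|\leqslant \tfrac{N_*}{2}$ and $|\beta|>\tfrac{N_*}{2}$, placing the $L^\infty$ norm on whichever factor has fewer derivatives and using the 2D Sobolev embedding $H^2\hookrightarrow L^\infty$; the pressure part follows from the representation $p = (-\Delta)^{-1}(\partial_i z_+^j\partial_j z_-^i)$ in $\mathbb{R}^2$, combined with the weight properties \eqref{eq:weight1}--\eqref{eq:weight3} and the 2D integrability facts in \eqref{observe integration}, after an integration by parts to move derivatives onto the kernel and a near/far splitting of $|x-y|$ via a cutoff $\theta$.

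With these integrand bounds in hand, well-definedness of \eqref{eq:def-sca4} follows from a 2D Sobolev-in-$(t,x)$ argument: the weighted $L^\infty_{t,x}$ norm of $\nabla p + z_\mp\cdot\nabla z_\pm$ is controlled by finitely many weighted $L^2_tL^2_x$ norms of derivatives, so
\begin{equation*}
|\nabla p + z_\mp\cdot\nabla z_\pm|\lesssim \frac{\mathcal{E}^{N_*}(0)}{\langle u_\mp\rangle^{\frac{1}{2}(1+\sigma)}\langle u_\pm\rangle^{\frac{1}{2}(1+\sigma)}}\lesssim \frac{\mathcal{E}^{N_*}(0)}{(1+|t+a|)^{1+\sigma}},
\end{equation*}
which is $L^1_t$. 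For the weighted $L^2$ membership on $\mathcal{C}_\pm$ and the large-time approximation, I would perform the coordinate change from $(t,x_1)$ to $(u_+,u_-)$ on $\mathbb{R}\times\mathbb{R}$, and estimate
\begin{equation*}
\Big\|\int_0^\infty F(\tau,u_\mp\mp\tau,x_2)\,d\tau\Big\|_{L^2(\mathcal{C}_\pm,\langle u_\mp\rangle^{2(1+\sigma)})}
\lesssim \Big\|\Big(\int_{\mathbb{R}}\tfrac{du_\pm}{\langle u_\pm\rangle^{1+\sigma}}\Big)^{1/2}\|\langle u_\pm\rangle^{\frac{1}{2}(1+\sigma)}F\|_{L^2_{u_\pm}}\Big\|_{L^2},
\end{equation*}
which immediately gives the $L^2$ bound with weight $\langle u_\mp\rangle^{2(1+\sigma)}$; the approximation statement in (ii) for $\beta=0$ then follows by replacing $\int_0^\infty$ with $\int_T^\infty$ and letting $T\to\infty$ via dominated convergence.

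To handle a general derivative $\nabla^\beta$, I would follow the four-step distributional argument from the proof of Lemma \ref{lemma5}: (Step 1) verify that $\int_0^\infty \partial^\beta F\,d\tau$ lies in the weighted $L^2$ space by the coordinate change above; (Step 2) show that each single derivative can be pulled inside via a Fatou + Newton--Leibniz + translation-invariance argument; (Step 3) identify $\partial^{\gamma}\int_0^\infty\partial^{\beta-\gamma}F\,d\tau$ with $\int_0^\infty\partial^\beta F\,d\tau$ in the sense of distributions by integration by parts against test functions in $\mathcal{D}(\mathcal{C}_\pm)$, where Fubini is justified by the spacetime bounds analogous to \eqref{eq:lemma1-3}--\eqref{eq:lemma1-4}; (Step 4) induct on $|\beta|$. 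The approximation for derivatives then follows by the same truncation-to-$[T,\infty)$ trick.

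The main obstacle I anticipate is Step 3, the distributional identification of $\nabla^\beta$ commuting with the improper $t$-integral along the characteristic line $\{x_1=u_\mp\mp\tau\}$. The subtlety is that the $u_\mp$-derivative corresponds (through the change of variables) to a derivative transverse to the integration direction, so one must genuinely invoke Fubini and justify the absolute integrability $\int_0^\infty\int_{\mathcal{C}_\pm}|\partial^\beta F|\,|\varphi|$ with a test function $\varphi\in\mathcal{D}(\mathcal{C}_\pm)$; the key splitting is to insert $\langle u_\pm\rangle^{(1+\sigma)/2}\cdot\langle u_\pm\rangle^{-(1+\sigma)/2}$ as in \eqref{eq:lemma1-3}, using the $L^1_{u_\pm}$-integrability of $\langle u_\pm\rangle^{-(1+\sigma)}$ together with the uniform weighted energy bound from Theorem \ref{Main Energy Estimates MHD 2d}. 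Once this commutation is justified, the rest of the proof is a mechanical translation of Lemmas \ref{welldefined}--\ref{lemma5} to the 2D Euclidean setting, with the bookkeeping simplified by the absence of the thickness factors $\delta$ and the vertical variable $x_3$.
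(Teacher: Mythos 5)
Your proposal is correct and follows essentially the same route as the paper, which itself proves this theorem by transplanting the argument of Theorem \ref{thm1} (and of \cite{Li-Yu}) to $\mathbb{R}^2$, the only genuinely new ingredient being the pressure estimate through the 2D logarithmic Newtonian potential with the near/far cutoff splitting and an integration by parts in the far region to upgrade the kernel decay from $|x-y|^{-1}$ to $|x-y|^{-3}$. The one point worth making explicit in your write-up is that the far-field term requires $|x|^{-(3-\frac{3}{2}(1+\sigma))}\chi_{|x|\geqslant 1}\in L^2(\mathbb{R}^2)$, which is exactly where the restriction $\sigma<\frac{1}{3}$ enters.
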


	\begin{theorem}[Rigidity theorem in $\mathbb{R}^2$]\label{rigidity theorem 1 2d}
		If the scattering fields  constructed in Theorem \ref{futurescatteringfields MHD 2d} vanish on the  infinities, i.e. 
		\begin{equation*}
			\begin{cases}
				& z_+(\infty;u_-,x_2)\equiv 0\ \ \text{on} \ \ \mathcal{C}_+\text{ (the 2D version of the  $\mathcal{C}_+$ above)},\\
				& z_-(\infty;u_+,x_2)\equiv 0\ \ \text{on} \ \ \mathcal{C}_-\text{ (the 2D version of the  $\mathcal{C}_-$ above)},
			\end{cases}
		\end{equation*}	
	then the initial Alfv\'en waves vanish identically, i.e.  \[\big(z_{+,0}(x),z_{-,0}(x)\big)\equiv(0,0)\ \text{ for all }x\in\mathbb{R}^2,\] and hence
	the Alfv\'en waves  vanish identically, i.e.,  \[\big(z_+(t,x),z_-(t,x)\big)\equiv(0,0)\  \text{ for all }(t,x)\in \mathbb{R}\times \mathbb{R}^2.\] 
	\end{theorem}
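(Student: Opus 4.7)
\textbf{Proof proposal for Theorem \ref{rigidity theorem 1 2d}.} The plan is to mirror the argument carried out in Section \ref{sec:rigidity} for Theorem \ref{thm2}, but in the simpler setting of $\mathbb{R}^2$ where neither the thickness parameter $\delta$ nor the slip boundary integrals appear. The strategy rests on three ingredients already available: (a) the position-parameter-independent weighted energy estimates of Theorem \ref{Main Energy Estimates MHD 2d}; (b) the approximation-by-large-time-solution property of the scattering fields in Theorem \ref{futurescatteringfields MHD 2d} (ii); and (c) the time-reversal symmetry of \eqref{MHD equation2}, under which interchanging $t \to -t$ also interchanges the roles of $z_+$ and $z_-$.

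First, I would convert the hypothesis into a smallness statement at a large time. Given an arbitrary $\epsilon \in (0,\varepsilon_0)$, the vanishing of $z_\pm(\infty;u_\mp,x_2)$ combined with Theorem \ref{futurescatteringfields MHD 2d} (ii) yields a time $T_\epsilon > 0$ such that, in the $(u_\mp,x_2)$-coordinates on $\mathcal{C}_\pm$,
\begin{equation*}
\sum_{+,-}\sum_{|\beta|\leqslant N_*}\Bigl\|\langle u_\mp\rangle^{1+\sigma}\nabla^{\beta} z_{\pm}(T_\epsilon,u_\mp\mp T_\epsilon,x_2)\Bigr\|_{L^2(\mathcal{C}_\pm,du_\mp dx_2)}^2 < \epsilon^2.
\end{equation*}
Returning to Cartesian coordinates on the slice $\{t=T_\epsilon\}\times\mathbb{R}^2$, this reads
\begin{equation*}
\sum_{+,-}\sum_{|\beta|\leqslant N_*}\Bigl\|(1+|x_1\pm T_\epsilon|^2)^{\frac{1+\sigma}{2}}\nabla^{\beta} z_{\pm}(T_\epsilon,x)\Bigr\|_{L^2(\mathbb{R}^2)}^2 < \epsilon^2.
\end{equation*}

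Next, I would choose a new position parameter $a := T_\epsilon$ and regard $\bigl(z_+(T_\epsilon,\cdot),z_-(T_\epsilon,\cdot)\bigr)$ as initial data for \eqref{MHD equation2}, but solved \emph{backwards} in time via $\widetilde{t}:=T_\epsilon - t$. Under this time reversal, the system \eqref{MHD equation2} for $(z_+,z_-)$ transforms into the analogous system for $(\tilde z_-,\tilde z_+)$ with background $-B_0$, so the roles of the $+$ and $-$ characteristic directions are swapped while the algebraic structure that drives Theorem \ref{Main Energy Estimates MHD 2d} is preserved. Because the constants in that theorem are \emph{independent of the position parameter} $a$, applying it to the reversed flow with weight centered at $\mp a = \mp T_\epsilon$ yields, for every $\widetilde t \in [0,T_\epsilon]$,
\begin{equation*}
\sum_{+,-}\sum_{|\beta|\leqslant N_*}\Bigl\|(1+|u_\mp\mp a|^2)^{\frac{1+\sigma}{2}}\nabla^{\beta} z_{\pm}(\widetilde t,x)\Bigr\|_{L^2(\mathbb{R}^2)}^2 < C\epsilon^2,
\end{equation*}
with $C$ a universal constant.

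Finally, I would evaluate this estimate at $\widetilde t = T_\epsilon$, i.e.\ back at the original initial slice $t=0$. At that slice $u_\mp = x_1$ and the weight becomes simply $(1+|x_1|^2)^{\frac{1+\sigma}{2}}$, so
\begin{equation*}
\sum_{+,-}\sum_{|\beta|\leqslant N_*}\Bigl\|(1+|x_1|^2)^{\frac{1+\sigma}{2}}\nabla^{\beta} z_{\pm,0}(x)\Bigr\|_{L^2(\mathbb{R}^2)}^2 < C\epsilon^2.
\end{equation*}
Since $\epsilon$ was arbitrary, the left-hand side must vanish, forcing $z_{\pm,0}\equiv 0$ on $\mathbb{R}^2$, and then uniqueness of the solution gives $z_\pm(t,x)\equiv 0$ on $\mathbb{R}\times\mathbb{R}^2$. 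The main point where care is required is step (ii), justifying that the weighted energy estimate of Theorem \ref{Main Energy Estimates MHD 2d} genuinely applies to the time-reversed problem with the roles of $(z_+,z_-)$ swapped; this is transparent from the symmetry of the Els\"asser formulation, but one must verify that the sign of the background field merely exchanges which variable is propagated along $\ell_+$ versus $\ell_-$ without affecting the constants, so that the position-parameter-free bound at $\widetilde t = T_\epsilon$ can be directly pulled back to obtain the weighted smallness of the original data at $t=0$.
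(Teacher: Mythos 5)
Your proposal is correct and follows essentially the same route as the paper: the appendix defers to the argument of Section \ref{sec:rigidity} (and of \cite{Li-Yu}), which is exactly your scheme — convert the vanishing of the scattering fields into weighted smallness at a large time $T_\epsilon$ via Theorem \ref{futurescatteringfields MHD 2d} (ii), reset the position parameter to $a=T_\epsilon$, solve backwards in time using the $a$-independent estimates of Theorem \ref{Main Energy Estimates MHD 2d} with the roles of $z_+$ and $z_-$ exchanged, evaluate at $\widetilde t=T_\epsilon$ where the weight reduces to $(1+|x_1|^2)^{\frac{1+\sigma}{2}}$, and let $\epsilon\to 0$. The one point you rightly flag — that time reversal swaps the characteristic directions without affecting the universal constants — is handled in the paper by the same symmetry observation, so no gap remains.
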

	
\begin{remark}
	In fact, the rigidity from infinity for Alfv\'en waves in $\mathbb{R}^2$  constructed in Theorem \ref{rigidity theorem 1 2d} coincides with the rigidity part of Corollary \ref{thm3}, and hence  coincides with the approximation of the rigidity from infinity for Alfv\'en waves in $\Omega_{\delta}$ propagating along the horizontal direction as $\delta$ goes to zero. Combined  with Remark \ref{remark1}, this means that   the two perspectives  on the rigidity for Alfv\'en waves in $\mathbb{R}^2$ from Corollary \ref{thm3} and Theorem \ref{rigidity theorem 1 2d} coincide with each other and are perfectly unified. We also remark here that these relations have been nicely depicted in Figure \ref{fig:relations}.
\end{remark}

The proofs of these results are indeed almost the same as that used in \cite{Li-Yu}, and hence we only  sketch the differences between them in the rest of this paper. 

In fact, it suffices to make suitable modifications on \cite{Li-Yu}: On one hand, we need to replace $\delta\in\big(0,\frac{2}{3}\big)$ therein by $\sigma\in\big(0,\frac{1}{3}\big)$, and replace $\mathbb{R}^3$ by $\mathbb{R}^2$ and replace all the 3D coordinates (such as $(u_+,x_2,x_3)$) by the corresponding 2D coordinates (such as $(u_+,x_2)$), which will be carried out repeatedly in what follows without further comment. On the other hand, since the pressure term therein  involves the Newtonian potential, we need to modify the related estimates on the pressure (we point out that the results are the same while the details are different), especially including the bound on $\mathbf{I'}$ (i.e. from (2.23) to (2.30)) and the proof of Corollary 2.12 therein. In the rest of this paper, we will use two subsections to modify these two pressure estimates respectively.

\subsection{Modification on the estimate of $\mathbf{I'}$}
By taking divergence on both sides of the first equation of \eqref{MHD equation2} and using $\operatorname{div} z_{\pm}=0$, we can obtain the first equation in  \eqref{eqpressure-system}, i.e. $-\Delta p=\partial_iz_-^j\partial_jz_+^i$. According to the Newtonian potential in the 2D case, we can infer on each time slice $\Sigma_{\tau}=\{(t,x)\in\mathbb{R}\times\mathbb{R}^2\,|\,t=\tau\}$ that
\begin{equation*} 
	p(\tau,x)=-\frac{1}{2\pi} \int_{\mathbb{R}^2}\log{|x-y|}\cdot \big(\partial_{i}z^{j}_{-}\partial_{j}z^{i}_{+}\big)(\tau,y)dy.
\end{equation*}
Therefore, the following decomposition of $\nabla p$ holds on any time slice $\Sigma_{\tau}$:
\begin{align*}
	\nabla p(\tau,x)=\ &
	-\frac{1}{2\pi}\nabla\int_{\mathbb{R}^2}\log{|x-y|}\cdot\big(\partial_{i}z^{j}_{-}\partial_{j}z^{i}_{+}\big)(\tau,y)dy=
	-\frac{1}{2\pi}\int_{\mathbb{R}^2}\nabla\log{|x-y|}\cdot\big(\partial_{i}z^{j}_{-}\partial_{j}z^{i}_{+}\big)(\tau,y)dy\\
	=\ &-\frac{1}{2\pi}\int_{\mathbb{R}^2}\nabla\log{|x-y|}\cdot \theta(|x-y|)\cdot \big(\partial_{i}z^{j}_{-}\partial_{j}z^{i}_{+}\big)(\tau,y)dy\\
	& -\frac{1}{2\pi}\int_{\mathbb{R}^2}\nabla\log{|x-y|}\cdot \big(1-\theta(|x-y|)\big)\cdot\big(\partial_{i}z^{j}_{-}\partial_{j}z^{i}_{+}\big)(\tau,y)dy,
\end{align*}
where the smooth cutoff function $\theta(r)$ is still taken as \eqref{cutoff}. 
Since $\operatorname{div} z_\pm=0$, we can integrate by parts to derive 
\begin{align*}
	\!\!\!	\nabla p(\tau,x)
	=\ &
	-\frac{1}{2\pi}\int_{\mathbb{R}^2}\nabla\log{|x-y|}\cdot	\theta(|x-y|)\cdot\big(\partial_{i}z^{j}_{-}\partial_{j}z^{i}_{+}\big)(\tau,y)dy\\
	\ &  +\frac{1}{2\pi}\int_{\mathbb{R}^2}\partial_{i}\left(\nabla\log{|x-y|} \cdot\left(1-	\theta(|x-y|)\right)\right)\cdot\big(z^{j}_{-}\partial_{j}z^{i}_{+}\big)(\tau,y)dy\\
	=\ &
	-\frac{1}{2\pi}\int_{\mathbb{R}^2}\nabla\log{|x-y|}\cdot	\theta(|x-y|)\cdot\big(\partial_{i}z^{j}_{-}\partial_{j}z^{i}_{+}\big)(\tau,y)dy\\
	\ &  -\frac{1}{2\pi}\int_{\mathbb{R}^2}\partial_{j}\partial_{i}\left(\nabla\log{|x-y|}\! \cdot\!\left(1-	\theta(|x-y|)\right)\right)\!\cdot\!\big(z^{j}_{-}z^{i}_{+}\big)(\tau,y)dy.\stepcounter{equation}\tag{\theequation}\label{eq:nabla p 2d}
\end{align*}
Using the property of the cutoff function $\theta(r)$, we can bound $\nabla p$ as follows:
\begin{align*}
	\left|\nabla p(\tau ,x)\right|
	\lesssim &
	\int_{|x-y|\leqslant 2}
	\left|\nabla\log{|x-y|}\right|\cdot\left|	\theta(|x-y|)\right|\cdot\big|\big(\partial_{i}z^{j}_{-}\partial_{j}z^{i}_{+}\big)(\tau,y)\big|dy\\
	&+\int_{|x-y|\geqslant 1}
	\left|\partial_{j}\partial_{i}\left(\nabla\log{|x-y|}\right)\right|\cdot \left|1-\theta(|x-y|)\right|\cdot 
	\big|\big(z^{j}_{-}z^{i}_{+}\big)(\tau,y)\big|dy\\
	&
	+\int_{1\leqslant |x-y|\leqslant 2}\Big(\left|\partial_{i}\left(\nabla\log{|x-y|}\right)\right|\cdot \left|\theta'(|x-y|)\right|	+\left|\nabla\log{|x-y|}\right|\cdot  \left|\theta''(|x-y|)\right|\Big) \cdot \big| \big(z^{j}_{-}z^{i}_{+}\big)(\tau,y)\big|dy\\
	\lesssim &\underbrace{\int_{|x-y|\leqslant 2}\frac{\big|\big(\nabla z_{-}\cdot \nabla z_{+}\big)(\tau ,y)\big|}{|x-y|^{}}dy}_{\mathbf{A_{1}}}+
	\underbrace{\int _{|x-y|\geqslant 1}\frac{\big|\big(z_{-}\cdot z_{+}\big)(\tau ,y)\big|}{|x-y|^{3}}dy}_{\mathbf{A_{2}}}+\underbrace{\int _{1\leqslant |x-y|\leqslant 2} \left|\left(z_{-}\cdot z_{+}\right)(\tau ,y)\right|dy}_{\mathbf{A_{3}}}.\stepcounter{equation}\tag{\theequation}\label{decomposition of nabla p 2d}
\end{align*}
Hence we obtain 
\begin{align*}
	\mathbf{I'} 
	&\lesssim 
	\underbrace{\int_{[0,t]\times\mathbb{R}^2}\langle u_{-}\rangle^{2(1+\sigma)}\langle u_{+}\rangle^{1+\sigma} \left|\mathbf{A_1}\right|^2dxd\tau}_{\mathbf{I_1}}
	+\underbrace{\int_{[0,t]\times\mathbb{R}^2}\langle u_{-}\rangle^{2(1+\sigma)}\langle u_{+}\rangle^{1+\sigma} \left|\mathbf{A_2}\right|^2dxd\tau}_{\mathbf{I_2}}\\
	&+\underbrace{\int_{[0,t]\times\mathbb{R}^2}\langle u_{-}\rangle^{2(1+\sigma)}\langle u_{+}\rangle^{1+\sigma} \left|\mathbf{A_3}\right|^2dxd\tau}_{\mathbf{I_3}}.\stepcounter{equation}\tag{\theequation}\label{eq:I'}
\end{align*}

Before proceeding further, we collect some preliminary results in \cite{Li-Yu}. 
For example,  the properties \eqref{eq:weight1}-\eqref{eq:weight2} now can be improved as follows: 
\begin{equation}\label{eq:xgeqleq}
	(\langle u_{\mp}\rangle^{1+\sigma}\langle u_{\pm}\rangle^{\frac{1+\sigma}{2}})(\tau,x)\lesssim\begin{cases}
		(\langle u_{\mp}\rangle^{1+\sigma}\langle u_{\pm}\rangle^{\frac{1+\sigma}{2}})(\tau,y)+|x-y|^{\frac{3(1+\sigma)}{2}}&\ \ \ \ \text{if}\ \  |x-y|\geqslant 1,\\
		(\langle u_{\mp}\rangle^{1+\sigma}\langle u_{\pm}\rangle^{\frac{1+\sigma}{2}})(\tau,y)&\ \ \ \ \text{if}\ \  |x-y|\leqslant 2.
	\end{cases}
\end{equation}
Moreover, 
there hold the following pointwise estimate and spacetime estimate for $z_\pm$:
\begin{enumerate}
	\item[(i)] (Weighted Sobolev inequality)
	For all $k\leqslant N_*-2$ and multi-indices $\alpha$ with $\left|\alpha\right|=k$, we have	
	\begin{equation}\label{eq:Sobolev}
		\langle u_{\mp}\rangle^\sigma|z_{\pm}|+\langle u_{\mp}\rangle^\sigma|\nabla z_{\pm}^{(\alpha)}|\lesssim C_1\varepsilon.
	\end{equation}
	\item[(ii)] (Weighted spacetime estimate)
	For all $0\leqslant k\leqslant N_*$ and multi-indices $\alpha$ with $\left|\alpha\right|=k$, we have
	\begin{equation}\label{eq:flux}
		\int_{[0,t]\times\mathbb{R}^2}\frac{\langle u_{\mp}\rangle^{2(1+\sigma)}}{\langle u_{\pm}\rangle^{\sigma}}|z_{\pm}|^2dxd\tau+\int_{[0,t]\times\mathbb{R}^2}\frac{\langle u_{\mp}\rangle^{2(1+\sigma)}}{\langle u_{\pm}\rangle^{\sigma}}\big|\nabla z^{(\alpha)}_{\pm}\big|^2dxd\tau\lesssim \big(C_1\big)^2\varepsilon^2.
	\end{equation}
\end{enumerate}  
The proofs of these results are omitted since the 2D case can also be treated in the same manner. We refer the readers to Lemma 2.4, Lemma 2.5 and Lemma 2.7 in \cite{Li-Yu} for more details. 

We now turn to bound $\mathbf{I_1}$, $\mathbf{I_2}$ and $\mathbf{I_3}$ one by one:

For $\mathbf{I_1}$, using the definition of $\mathbf{A_1}$, we derive
\begin{align*}
	\langle u_-\rangle^{1+\sigma}\langle u_+\rangle^{\frac{{1+\sigma}}{2}}\left|\mathbf{A_1}\right|
	&=
	\int_{|x-y|\leqslant 2}\frac{\big(\langle u_-\rangle^{1+\sigma}\langle u_+\rangle^{\frac{{1+\sigma}}{2}}\big)(\tau,x)\left|\left(\nabla z_{-}\cdot\nabla z_{+}\right)(\tau,y)\right|}{|x-y|}dy\\
	&\stackrel{\eqref{eq:xgeqleq}}{\lesssim}
	\int_{|x-y|\leqslant 2}\frac{\left(\langle u_-\rangle^{1+\sigma}\langle u_+\rangle^{\frac{{1+\sigma}}{2}}\right)(\tau,y)\left|\left(\nabla z_{-}\cdot\nabla z_{+}\right)(\tau,y)\right|}{|x-y|}dy\\
	&\leqslant \left\|\langle u_{+}\rangle^{{1+\sigma}}\,\nabla z_-\right\|_{L^\infty_x}\int_{|x-y|\leqslant 2}\frac{\langle u_{-}\rangle^{{{1+\sigma}}}(\tau,y)\left|\nabla z_+(\tau,y)\right|}{\langle u_{+}\rangle^{\frac{{1+\sigma}}{2}}(\tau,y)|x-y|}dy\\
	&\stackrel{\eqref{eq:Sobolev}}{\lesssim}
	C_1\varepsilon\int_{|x-y|\leqslant 2}\frac{1}{|x-y|}\frac{\langle u_{-}\rangle^{{1+\sigma}}(\tau,y)}{\langle u_{+}\rangle^{\frac{{1+\sigma}}{2}}(\tau,y)} \left|\nabla z_+(\tau,y)\right|dy.
\end{align*}
By \eqref{observe integration},  we notice that $\dfrac{1}{|x|}\text{\Large$\text{\Large$\chi$}$}_{|x|\leqslant 2}\in L^1(\mathbb{R}^2)$. Using Young's inequality, we obtain
\begin{equation*}
	\big\|\langle u_-\rangle^{1+\sigma}\langle u_+\rangle^{\frac{{1+\sigma}}{2}}\mathbf{A_1}\big\|^2_{L^2(\mathbb{R}^2)}\!\!
	\lesssim\left(C_1\right)^2\varepsilon^2\,\Big\|\frac{1}{|x|}\text{\Large$\text{\Large$\chi$}$}_{|x|\leqslant 2}\Big\|^2_{L^1(\mathbb{R}^2)} \Big\|\frac{\langle u_{-}\rangle^{{1+\sigma}}}{\langle u_{+}\rangle^{\frac{{1+\sigma}}{2}}}\nabla z_{+}\Big\|^2_{L^2(\mathbb{R}^2)}\!\!
	\lesssim\left(C_1\right)^2\varepsilon^2\,\Big\|\frac{\langle u_{-}\rangle^{{1+\sigma}}}{\langle u_{+}\rangle^{\frac{{1+\sigma}}{2}}}\nabla z_{+}\Big\|^2_{L^2(\mathbb{R}^2)}.
\end{equation*}
We then apply \eqref{eq:flux} to infer that 
\begin{equation}\label{eq:I1 2d}
	\mathbf{I_{1}} \lesssim \left(C_1\right)^4\varepsilon^4.
\end{equation}

Here we notice that $\mathbf{I_3}$ can be treated in exactly the same manner. Consequently, we get 
\begin{equation}\label{eq:I3 2d}
	\mathbf{I_3}\lesssim \left(C_1\right)^4\varepsilon^4.
\end{equation}

For $\mathbf{I_2}$, we obtain
\begin{align*}
	\mathbf{I_2}
	&=\int_{0}^{t}\Bigg\|\int_{|x-y|\geqslant 1}\langle u_{-}\rangle^{{1+\sigma}}(\tau,x)\langle u_{+}\rangle^{\frac{{1+\sigma}}{2}}(\tau,x)\frac{\left|(z_{-}\cdot z_{+})(\tau,y)\right|}{|x-y|^3}dy\Bigg\|_{L^2(\mathbb{R}^2)}^2d\tau\\
	&\stackrel{\eqref{eq:xgeqleq}}{\lesssim} \int_{0}^{t}\Bigg\|\int_{|x-y|\geqslant 1}\big(\big(\langle u_{-}\rangle^{{1+\sigma}}\langle u_{+}\rangle^{\frac{{1+\sigma}}{2}}\big)(\tau,y)+|x-y|^{\frac{3{1+\sigma}}{2}}\big)\frac{\left|\left(z_{-}\cdot z_{+}\right)(\tau,y)\right|}{|x-y|^3}dy\Bigg\|_{L^2(\mathbb{R}^2)}^2d\tau\\
	&\lesssim
	\underbrace{\int_{0}^{t}\bigg\|\int_{|x-y|\geqslant 1}\!\!\!\big(\langle u_{-}\rangle^{{1+\sigma}}\langle u_{+}\rangle^{\frac{{1+\sigma}}{2}}\big)(\tau,y)\frac{\left|\left(z_{-}\cdot z_{+}\right)(\tau,y)\right|}{|x-y|^3}dy\bigg\|_{L^2(\mathbb{R}^2)}^2\!\!\!\!\!\!d\tau}_{\mathbf{I_{21}}} +\underbrace{\int_{0}^{t}\bigg\|\int_{|x-y|\geqslant 1}\!\!\!\frac{\left|\left(z_{-}\cdot z_{+}\right)(\tau,y)\right|}{|x-y|^{3-\frac{3(1+\sigma)}{2}}}dy\bigg\|_{L^2(\mathbb{R}^2)}^2\!\!\!\!\!\!d\tau}_{\mathbf{I_{22}}}.
\end{align*}
For $\mathbf{I_{21}}$, since $\dfrac{1}{|x|^3}\text{\Large$\chi$}_{|x|\geqslant 1}\in L^1(\mathbb{R}^2)$, we can derive
\begin{align*}
	\mathbf{I_{21}}&
	\stackrel{\text{Young}}{\lesssim}
	\int_{0}^{t}\left\|\frac{1}{|x|^3}\text{\Large$\chi$}_{|x|\geqslant 1}\right\|_{L^1(\mathbb{R}^2)}^2 \left\|\langle u_{+}\rangle^{{1+\sigma}}z_{-}\right\|_{L^\infty(\mathbb{R}^2)}^2\left\|\frac{\langle u_{-}\rangle^{{1+\sigma}}}{\langle u_{+}\rangle^{\frac{{1+\sigma}}{2}}}z_{+}\right\|_{L^2(\mathbb{R}^2)}^2d\tau \\
	&\stackrel{\eqref{eq:Sobolev}}{\lesssim}\left(C_1\right)^2\varepsilon^2\,\int_{0}^{t}\left\|\frac{\langle u_{-}\rangle^{{1+\sigma}}}{\langle u_{+}\rangle^{\frac{{1+\sigma}}{2}}}z_{+}\right\|_{L^2(\mathbb{R}^2)}^2d\tau
	\stackrel{\eqref{eq:flux}}{\lesssim} \left(C_1\right)^4\varepsilon^4.
\end{align*}
For $\mathbf{I_{22}}$, since $\dfrac{1}{|x|^{3-\frac{3(1+\sigma)}{2}}}\text{\Large$\chi$}_{|x|\geqslant 1}\in L^2(\mathbb{R}^2)$ holds when ${1+\sigma}\in(1,\frac{4}{3})$ (this is the place we have constraint on $\sigma$), then we have
\begin{align*}
	\mathbf{I_{22}}&
	\stackrel{\text{Young}}{\lesssim}
	\int_{0}^{t}\Bigg\|\frac{1}{|x|^{3-\frac{3(1+\sigma)}{2}}}\text{\Large$\chi$}_{|x|\geqslant 1}\Bigg\|_{L^2(\mathbb{R}^2)}^2\left\|z_{-}z_{+}\right\|_{L^1(\mathbb{R}^2)}^2d\tau\\
	&\lesssim \int_{0}^{t}\left\|z_{-} z_{+}\right\|_{L^1(\mathbb{R}^2)}^2d\tau
=\int_{0}^{t}\left\|\frac{1}{\langle u_{+}\rangle^{{1+\sigma}}\langle u_{-}\rangle^{{1+\sigma}}}\langle u_{+}\rangle^{{1+\sigma}}z_{-}\langle u_{-}\rangle^{{1+\sigma}} z_{+}\right\|_{L^1(\mathbb{R}^2)}^2d\tau\\
	&\stackrel{\eqref{eq:product}}{\lesssim}  \int_{0}^{t}\frac{1}{\left(1+\left|\tau+a\right|\right)^{2(1+\sigma)}}\left\|\langle u_{+}\rangle^{{1+\sigma}}z_{-}\right\|_{L^2(\mathbb{R}^2)}^2\left\|\langle u_{-}\rangle^{{1+\sigma}}z_{+}\right\|_{L^2(\mathbb{R}^2)}^2d\tau\\
	&\lesssim \left(C_1\right)^4\varepsilon^4\int_{0}^{t}\frac{1}{\left(1+\left|\tau+a\right|\right)^{2(1+\sigma)}}d\tau	\lesssim \left(C_1\right)^4\varepsilon^4.
\end{align*}
Therefore we can similarly obtain
\begin{equation}\label{eq:I2 2d}
	\mathbf{I_2}\lesssim \left(C_1\right)^4\varepsilon^4.
\end{equation}

Adding \eqref{eq:I1 2d}, \eqref{eq:I3 2d} and \eqref{eq:I2 2d} together, and then using \eqref{eq:I'}, we can similarly obtain
\begin{equation*}
	\mathbf{I'}\lesssim \left(C_1\right)^4\varepsilon^4.
\end{equation*}

\subsection{Modification on the proof of Corollary 2.12}

In fact, Corollary 2.12 \cite{Li-Yu} in the 2D case is the following Corollary \ref{coro:bound on p 2d}:

\begin{corollary}\label{coro:bound on p 2d}
	For the solution $\left(z_+,z_-\right)$ constructed in Theorem \ref{Main Energy Estimates MHD 2d}, for $l=1,2$, for all $(\tau,x)\in \mathbb{R}\times \mathbb{R}^2$, we have the following estimates on the pressure term:
	\begin{equation*}
		\left|\nabla^l p\,(\tau,x)\right|\lesssim\frac{\varepsilon^2}{\left(1+\left|\tau+a\right|\right)^{1+\sigma}}.
	\end{equation*}
\end{corollary}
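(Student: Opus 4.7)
The plan is to establish the pointwise decay of $\nabla^l p$ by combining the integral representation of $p$ with uniform weighted $L^\infty$ bounds on $z_\pm$ and their derivatives, then exploiting the factorization $\langle u_+\rangle \langle u_-\rangle \gtrsim 1 + |\tau+a|$ from \eqref{eq:product} to convert spatial weight decay into temporal decay. The case $l=1$ is essentially a repackaging of the decomposition \eqref{decomposition of nabla p 2d} already proved in the paper, while $l=2$ requires a further integration by parts to tame the $|x-y|^{-2}$ singularity of $\nabla^2 \log|x-y|$ in the near field.

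First I would upgrade \eqref{eq:Sobolev} to an $\langle u_\mp\rangle^{1+\sigma}$-weighted $L^\infty$ bound. Applying the 2D Sobolev embedding $H^2(\mathbb{R}^2) \hookrightarrow L^\infty(\mathbb{R}^2)$ to the weighted function $\langle u_\mp\rangle^{1+\sigma}\nabla^k z_\pm$ and using the weight derivative bounds analogous to \eqref{eq:weight3}, one obtains, for $0\leqslant k\leqslant 2$ and $N_*\geqslant 5$,
\[
\big\|\langle u_\mp\rangle^{1+\sigma}\nabla^k z_\pm\big\|_{L^\infty_x} \lesssim \varepsilon.
\]
Combining the bounds for $z_+$ and $z_-$ at the same point $(\tau,y)$ and invoking \eqref{eq:product} then gives the uniform-in-$y$ pointwise estimate
\[
\big|\nabla^j z_+(\tau,y)\,\nabla^k z_-(\tau,y)\big| \lesssim \frac{\varepsilon^2}{(\langle u_+\rangle\langle u_-\rangle)^{1+\sigma}(\tau,y)} \lesssim \frac{\varepsilon^2}{(1+|\tau+a|)^{1+\sigma}}, \quad j,k\leqslant 2.
\]

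For $l=1$, plug this into the decomposition $|\nabla p|\lesssim \mathbf{A}_1+\mathbf{A}_2+\mathbf{A}_3$ from \eqref{decomposition of nabla p 2d}. Factoring the pointwise bound out of each integral and using the 2D kernel integrability facts
\[
\int_{|x-y|\leqslant 2}\frac{dy}{|x-y|}\lesssim 1,\quad \int_{|x-y|\geqslant 1}\frac{dy}{|x-y|^3}\lesssim 1,\quad \int_{1\leqslant|x-y|\leqslant 2} dy \lesssim 1,
\]
yields the claimed bound $|\nabla p(\tau,x)|\lesssim \varepsilon^2/(1+|\tau+a|)^{1+\sigma}$.

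For $l=2$ the kernel $\nabla^2 \log|x-y|\sim |x-y|^{-2}$ is not locally integrable in $\mathbb{R}^2$, so a more delicate decomposition is needed. Writing $\partial_k\partial_l p = \partial_l(-\Delta)^{-1}\partial_k(\partial_i z_-^j\partial_j z_+^i)$ and splitting via the cut-off $\theta(|x-y|)$, one keeps one derivative on the kernel in the near region $|x-y|\leqslant 2$ (producing an integrable $|x-y|^{-1}$ singularity multiplied by terms of the form $\nabla^2 z_\pm\cdot\nabla z_\mp$), and in the far region $|x-y|\geqslant 1$ one integrates by parts twice in $y$, using $\operatorname{div} z_\pm=0$, to push all derivatives onto the kernel, producing an $|x-y|^{-3}$ factor multiplied by $z_-^j z_+^i$. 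The commutator terms where derivatives hit the cut-off $\theta$ are supported in the benign annulus $1\leqslant|x-y|\leqslant 2$ and are handled just like $\mathbf{A}_3$. Combining these three contributions with the pointwise bound from Step 1 gives $|\nabla^2 p(\tau,x)|\lesssim \varepsilon^2/(1+|\tau+a|)^{1+\sigma}$.

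The main technical point will be the bookkeeping for the $l=2$ case: one must organize the two integrations by parts so that no boundary contribution from $\partial\Omega$ appears (there is none since we are in $\mathbb{R}^2$) and so that each resulting term has either an integrable kernel against a uniformly bounded pointwise product, or a compactly supported cut-off derivative on an annulus where the kernel is smooth. Once this organization is made explicit, each piece is estimated exactly as in the $l=1$ case, and both desired pointwise bounds follow uniformly in $(\tau,x)$.
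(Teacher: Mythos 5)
Your proposal is correct and follows essentially the same route as the paper: the cut-off decomposition of the Newtonian-potential representation of $\nabla p$, weighted $L^\infty$ bounds on $z_\pm$ and their derivatives combined with $\langle u_+\rangle\langle u_-\rangle\gtrsim 1+|\tau+a|$ to convert spatial weights into temporal decay, and (for $l=2$) integration by parts in the far field using $\operatorname{div} z_\pm=0$. The only slip is cosmetic: pushing both derivatives onto the kernel in the far region produces $|x-y|^{-4}$, not $|x-y|^{-3}$ (the paper instead keeps one derivative on the solution and gets $|x-y|^{-3}$ against $z_-\cdot\nabla z_+$); either kernel is integrable on $\{|x-y|\geqslant 1\}\subset\mathbb{R}^2$, so the conclusion is unaffected.
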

\begin{proof}
	For $l=1$, we will estimate $\nabla p$ by using \eqref{decomposition of nabla p 2d}. Next, we will bound $\mathbf{A_1}$, $\mathbf{A_2}$ and $\mathbf{A_3}$ one by one. For $\mathbf{A_1}$, we can also infer
	\begin{align*}
		\langle u_-\rangle^{1+\sigma}\langle u_+\rangle^{{1+\sigma}}\left|\mathbf{A_1}\right|
		&=\int_{|x-y|\leqslant 2}\frac{\left(\langle u_-\rangle^{1+\sigma}\langle u_+\rangle^{{1+\sigma}}\right)(\tau,x)\left|\left(\nabla z_{-}\cdot\nabla z_{+}\right)(\tau,y)\right|}{|x-y|}dy\\
		&\stackrel{\eqref{eq:xgeqleq}}{\lesssim}\!
		\int_{|x-y|\leqslant 2}\frac{\left(\langle u_-\rangle^{1+\sigma}\langle u_+\rangle^{ {1+\sigma}}\right)(\tau,y)\left|\left(\nabla z_{-}\cdot\nabla z_{+}\right)(\tau,y)\right|}{|x-y|}dy
		\stackrel{\eqref{eq:Sobolev}}{\lesssim}\!
		\int_{|x-y|\leqslant 2}\frac{\varepsilon^2}{|x-y|}dy\lesssim \varepsilon^2.
	\end{align*}
	We repeat the estimate on $\mathbf{A_1}$ to the estimate on $\mathbf{A_3}$. It then follows that 
	\begin{equation*}
		\langle u_+\rangle^{1+\sigma}\langle u_-\rangle^{1+\sigma}\left|\mathbf{A_3}\right|
		\lesssim\varepsilon^2.
	\end{equation*}
	For $\mathbf{A_2}$, we can similarly use \eqref{eq:product} to obtain $1+|\tau+a| \lesssim \langle u_{+}\rangle\langle u_{-}\rangle$, and hence there holds 
	\begin{align*}
		\left(1+|\tau+a|\right)^{{{1+\sigma}}}\left|\mathbf{A_2}\right|&\lesssim\int_{|x-y|\geqslant 1}\frac{\langle u_+\rangle^{1+\sigma}(\tau,y)\langle u_-\rangle^{1+\sigma}(\tau,y)\left| z_{-}(\tau,y)\right|\left|z_{+}(\tau,y)\right|}{|x-y|^3}dy
		\lesssim \int_{|x-y|\geqslant 1}\frac{\varepsilon^2}{|x-y|^3}dy
		\lesssim \varepsilon^2.
	\end{align*}
	Putting all the estimates on $\mathbf{A_i}$ together, we conclude that 
	\begin{equation*}\label{nablap 2d}
		|\nabla p(\tau,x)|\lesssim\frac{\varepsilon^2}{\left(1+\left|\tau+a\right|\right)^{1+\sigma}}.
	\end{equation*}

	In order to bound $\nabla^2 p$, we take $\nabla$ on both sides of \eqref{eq:nabla p 2d}. Similar to the derivation of \eqref{decomposition of nabla p 2d}, we can derive
	\begin{align*}
		|\nabla^2 p(\tau,x)|
		\lesssim
		& \underbrace{\sum_{l_{1},l_{2}=1}^2 \int_{|x-y|\leqslant 2}\frac{1}{|x-y|}\left|\left(\nabla^{l_1} z_{-}\cdot\nabla^{l_2} z_{+}\right)(\tau,y)\right|dy}_{\mathbf{B_1}}\\
		&+\underbrace{\int_{|x-y|\geqslant 1}\frac{1}{|x-y|^3}\left| \left(z_{-}\cdot\nabla z_{+}\right)(\tau,y)\right|dy}_{\mathbf{B_2}}	+\underbrace{\int_{1\leqslant |x-y|\leqslant 2}\frac{1}{|x-y|^2}\left|\left(z_{-}\cdot\nabla z_{+}\right)(\tau,y)\right|dy}_{\mathbf{B_3}},
	\end{align*}
	where  $(l_1,l_2)=(1,1),$ $(1,2)$ or $(2,1)$. We can repeat the above estimate on $\mathbf{A_i}$ to give the estimate on $\mathbf{B_i}$, and thus imply the estimate on $\nabla^2 p$.
\end{proof}

\end{appendix}

\end{document}